\newtheorem{prop}{Proposition}
\newtheorem{definition}[prop]{Definition}
\newtheorem{defn}[prop]{Definition}
\newtheorem{theorem}[prop]{Theorem}
\newtheorem{thm}[prop]{Theorem}
\newtheorem{cor}[prop]{Corollary}
\newtheorem{lemma}[prop]{Lemma}
\newtheorem{claim}[prop]{Claim}
\newtheorem{rem}[prop]{Remark}
\numberwithin{prop}{section}
\newtheorem{exam}[prop]{Example}
\newtheorem{exer}[prop]{Exercise}
\newtheorem{conj}[prop]{Conjecture}
\newcommand{\RR}{\ensuremath{\mathbb{R}}}
\newcommand{\NN}{\ensuremath{\mathbb{N}}}
\newcommand{\ZZ}{\ensuremath{\mathbb{Z}}}
\newcommand{\QQ}{\ensuremath{\mathbb{Q}}}
\DeclareMathOperator{\Sub}{Sub}
\DeclareMathOperator{\irs}{IRS}
\newcommand{\BZ}{{\mathbb{Z}}}
\newcommand{\BN}{{\mathbb{N}}}
\newcommand{\BR}{{\mathbb{R}}}
\newcommand{\BC}{{\mathbb{C}}}
\newcommand{\act}{\curvearrowright}
\newcommand{\gd}{\delta}
\newcommand{\gC}{\Gamma}
\newcommand{\gc}{\gamma}
\newcommand{\gs}{\sigma}
\newcommand{\gS}{\Sigma}
\newcommand{\gO}{\Omega}
\newcommand{\gep}{\epsilon}
\newcommand{\gL}{\Lambda}
\newcommand{\ga}{\alpha}
\newcommand{\gt}{\tau}
\newcommand{\vol}{\text{vol}}
\newcommand{\SL}{\text{SL}}
\newcommand{\PSL}{\text{PSL}}
\newcommand{\SO}{\text{SO}}
\def\sub{\text{Sub}}
\def\irs{\text{IRS}}
\title{Things we can learn by considering random locally symmetric manifolds}
\author{Tsachik Gelander}
\begin{document}

\maketitle

\begin{abstract}
In recent years various results about locally symmetric manifolds were proven using probabilistic approaches. One of the approaches is to consider random manifolds by associating a probability measure to the space of discrete subgroups of the isometry Lie group. 
The main goals are to prove results about deterministic groups and manifolds by considering appropriate measures. 
In this overview paper, we describe several such results, observing the evolution process of the measures involved. Starting with a result whose proof considered finitely supported measures (more precisely, measures supported on finitely many conjugacy classes) and proceeding with results that were an outcome of the successful and popular theory of IRS (invariant random subgroups). In the last couple of years, the theory has expanded to SRS (stationary random subgroups) allowing to deal with a lot more problems and establish stronger results. In the last section, we shall review a very recent (yet unpublished) result whose proof make use of random subgroups which are not even stationary.

\end{abstract}

%\tableofcontents
{
  \hypersetup{linkcolor=blue}
  \tableofcontents
}

\section{Introduction}

The study of discrete subgroups of Lie groups has played a central role in many parts of mathematics since the middle of the previous century, and has been continuously extremely active. In recent years the theory has been boosted by the development of several probabilistic approaches to study these deterministic highly structured objects. In this overview paper, I will focus on one of these approaches, namely the consideration of random discrete subgroups. Particularly, I will try to describe my personal journey in the development of the theory.

Consider an adjoint semisimple Lie group without compact factors $G$.
By a random discrete subgroup we mean a Borel regular probability measure $m$ on the (metrizable, locally compact) space $\sub_d(G)$ of discrete subgroups of $G$. Let $K\le G$ be a maximal compact subgroup and $X=G/K$ the associated Riemannian symmetric space. Any discrete subgroup $\gL\le G$ corresponds to a locally symmetric orbifold $M=\gL\backslash X$ (which is a manifold if $\Lambda$ is torsion-free). Thus a random discrete subgroup $m$ corresponds to a random $X$-orbifold.

Below we shall describe various theorems about deterministic discrete subgroups and locally symmetric manifolds which are proven by considerations of random subgroups. We wish to demonstrate the development of the theory by showing that the measures $m$ with which we have been working evolved over the years and become more general and sophisticated.  This evolution was required by the type of the results the problems that were considered. 

We start with the result from \cite{ccc}. In that paper A. Levit and the author showed that, counted appropriately, most hyperbolic manifolds are non-arithmetic. More precisely, fixing any dimension $d$ (suppose $d\ge 4$) the number of non-arithmetic complete hyperbolic $d$-manifolds of volume at most $v$ considered up to commensurability is super-exponential as a function of $v$, while the number of arithmetic ones is sub-exponential (see \S \ref{sec:CCC} for more precise statements). The proof relies on a model for constructing random hyperbolic manifolds. However, the probability spaces considered in this work are finite, namely, we analyze a model for constructing random graphs of manifolds which results on finitely many possibilities corresponding to graphs of bounded size. Therefore, one can argue that this is not a probabilistic proof but a combinatorial one. 
The corresponding measures $m$ on $\sub_d(G)$ are not finitely supported but rather supported on finitely many conjugacy classes. This is a very primitive form of the general notion of invariant random subgroups (IRS).

An IRS on $G$ is a probability measure on $\sub(G)$ which is invariant under the $G$-action by conjugation. I believe it is fair to say that the theory of IRS has proven to be much more powerful than the expectation. In particular, IRS in semisimple Lie groups turned out to be an extremely significant tool to the study of lattices and their asymptotic invariants. In \S \ref{sec:IRS} we shall review some of the basics of IRS in semisimple Lie groups. We shall then demonstrate the power of IRS by presenting two theorems about lattices. 
The first is the celebrated theorem of Kazhdan and Margulis establishing the lower bound on the co-volume of lattices. In \S \ref{sec:KM} we will present a surprisingly short proof of this classical theorem using a direct analysis of the space $\text{IRS}(G)$ of invariant random subgroups of $G$. The novelty here is not in the result (although the IRS version is not covered by the original proof of Kazhdan and Margulis) but rather in the simplicity and the straightforwardness of the argument.  

One of the highlights of the theory of IRS is the Stuck--Zimmer rigidity theorem:

\begin{thm}[Stuck--Zimmer]
Let $G$ be a simple Lie group of rank at least $2$. Let $\mu$ be an IRS in $G$ without atoms. Then a $\mu$ random subgroup is almost surely a lattice in $G$.
\end{thm}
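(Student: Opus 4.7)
My plan is to reduce the statement to a structural dichotomy for measure-preserving $G$-actions and then apply the deep rigidity machinery available in higher rank. First, by decomposing $\mu$ into its $G$-ergodic components, we may assume $\mu$ is ergodic. Every IRS arises as the stabilizer distribution of some measure-preserving $G$-action: one can take the tautological action on $(\sub(G), \mu)$ and pass to an essentially-free extension, or invoke the Abert--Glasner--Virag construction. This reduces matters to showing that for the resulting ergodic p.m.p.\ action $G \curvearrowright (X, \nu)$, the stabilizer of a $\nu$-generic point is a lattice in $G$.

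Next, I would establish discreteness. Let $H$ denote a $\mu$-random closed subgroup and consider its identity component $H^0$. The map $H \mapsto H^0$ is $G$-equivariant, and its image is encoded by the Lie subalgebra $\mathrm{Lie}(H^0) \subseteq \mathfrak{g}$. Using Howe--Moore mixing on the finite-dimensional Grassmannian of subalgebras, one shows that $H^0$ is $\mu$-almost surely normalized by all of $G$, hence trivial or all of $G$ by simplicity. The possibility $H^0 = G$ forces $H = G$ (a closed subgroup of a connected group containing an open neighborhood of the identity is the whole group), which is excluded by the no-atoms hypothesis. Hence $H$ is discrete almost surely.

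The core of the argument is to upgrade discreteness to finite covolume. The essential input is the Nevo--Zimmer intermediate factor theorem: for $G$ simple of real rank at least $2$, minimal parabolic $P < G$, stationary measure $m_P$ on $G/P$, and any ergodic p.m.p.\ action $(X, \nu)$, every measurable $G$-invariant factor of $(X \times G/P, \nu \times m_P)$ sitting between $X$ and $X \times G/P$ must have the form $X \times G/Q$ for some parabolic $Q \supseteq P$. Suppose on a positive-measure set the stabilizer $H$ is discrete but not a lattice, i.e.\ of infinite covolume. Then the orbit of $(x, gP)$ in $X \times G/P$ carries additional $G$-invariant structure beyond what comes from parabolic factors, and packaging this equivariantly over $X$ via the stabilizer map produces an intermediate $G$-factor of $(X \times G/P, \nu \times m_P)$ not of the Nevo--Zimmer form --- a contradiction. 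Thus $H$ is $\mu$-almost surely a lattice.

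The main obstacle is the Nevo--Zimmer theorem itself: its proof leans on Margulis-type rigidity methods, property (T), and the detailed structure of boundary actions, and this is precisely where the rank $\ge 2$ hypothesis is used in an essential way. The surrounding reductions (ergodic decomposition, realizing the IRS as stabilizers, discreteness, and converting a non-lattice stabilizer into a forbidden intermediate factor) are comparatively routine, though the final conversion requires care to correctly identify the factor produced by the stabilizer map and to separately rule out finite non-trivial stabilizers, which one handles via ergodicity combined with the non-compactness of $G$.
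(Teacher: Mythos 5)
The paper does not prove the Stuck--Zimmer theorem; it is quoted from \cite{SZ}, and the only hints the paper gives about its mechanism are the remark that it ``relies heavily on the Nevo--Zimmer intermediate factor theorem'' and the Hartman--Tamuz formulation (Theorem \ref{thm:SZ,HT}) that higher rank alone forces the stabilizers to be co-amenable. Measured against that outline, your proposal gets the preliminary reductions right (ergodic decomposition, realization of the IRS as a stabilizer distribution, discreteness via a Borel-density-type argument --- the paper's route to discreteness is Furstenberg's lemma rather than Howe--Moore, but either works), and it correctly identifies the Nevo--Zimmer IFT as the central deep input.

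The gap is in the final step, and it is substantive. You claim that if the stabilizer $H$ is discrete of infinite covolume, then ``packaging'' the stabilizer data over $X$ produces an intermediate $G$-factor of $(X \times G/P, \nu \times m_P)$ that is not of the form $X \times G/Q$, contradicting IFT. That is not how the argument closes, and the vague ``additional $G$-invariant structure'' cannot be made to do this work. The IFT, together with amenability of the boundary action $G \curvearrowright G/P$, yields only that the stabilizers are $\mu$-a.s.\ \emph{co-amenable} in $G$; this is exactly the Hartman--Tamuz reformulation that the paper records as Theorem \ref{thm:SZ,HT}, and it holds for every higher-rank semisimple group, with or without property $(T)$. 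One then needs a second, separate input: Kazhdan's property $(T)$, which $G$ has automatically because it is simple of rank $\ge 2$. Property $(T)$ upgrades ``discrete and co-amenable'' to ``lattice'' (an asymptotically invariant sequence in $L^2(G/H)$ plus the spectral gap forces an invariant vector, i.e.\ finite covolume). Your sketch does not invoke $(T)$ at this stage, and you even misplace it by claiming $(T)$ sits inside the proof of Nevo--Zimmer; it does not --- IFT is valid for $\SL_2(\BR)\times\SL_2(\BR)$, which has no $(T)$. If the IFT alone produced a contradiction from non-lattice stabilizers as you assert, the argument would prove the Stuck--Zimmer \emph{conjecture} (Conjecture \ref{conj:SZ}) in full, which is open precisely because the co-amenability-to-lattice step is unavailable without $(T)$ or a substitute. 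So the missing idea is the two-stage structure: IFT $\Rightarrow$ co-amenability, then property $(T)$ (available here from simplicity and rank $\ge 2$) $\Rightarrow$ lattice.
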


The theorem also has a variant for semisimple groups but then the requirement is that $G$ or at least one of its factors has Kazhdan's property $(T)$. The Stuck--Zimmer theorem is a generalization of the normal subgroup theorem of Margulis and as such it is expected that it should hold also for higher rank semisimple groups without property $(T)$, e.g. for $G=\SL_2(\BR)\times\SL_2(\BR)$. This is the famous Stuck--Zimmer conjecture. In \S \ref{sec:SZ} we briefly discuss the Stuck--Zimmer theorem and the conjecture. In \S \ref{sec:rk-1-example} we shall recall the failure of the IRS rigidity in rank one groups by sketching a simple example from \cite{7B}.

In \S \ref{sec:BS} we review the Beniamini--Schramm space $\mathcal{BS}(X)$ associated to a symmetric space $X=G/K$ and its relation with IRS in $G$. The interplay between the geometric point of view naturally associated with $\mathcal{BS}(X)$ and the analytic point of view associated with $\text{IRS}(G)$ is crucial in the proof of the result of the 7 samurai \cite{7A}. In \S \ref{sec:7s} we describe the result of \cite{7A} concerning the asymptotic geometry of higher rank manifolds of finite volume when the volume tends to infinity.
The result is that the injectivity radius cannot be bounded when the volume tends to infinity, and in fact that it grows to infinity at `almost all points'. More precisely, if $\text{rank}(G)\ge 2$ then $\forall R,\gep>0$ there is $V$ such that if $M=\gC\backslash X$ has volume $\ge V$ then $\vol(M_{\ge R})/\vol(M)\ge 1-\gep$, where $M_{\ge R}$ is the $R$-thick part of $M$ where the injectivity radius is $\ge R$. This geometric result has plenty of applications in the theory of analytic invariants. The most famous application is the convergence of normalized Betti numbers (see \cite{7-note, ABBG}), but there are plenty of more refined applications (see \cite{7A}). In \S \ref{sec:Betti} 
we will give the proof of the result concerning the Betti numbers in a special case.

While the notion of invariant random subgroups is quite powerful, the same properties that make it so remarkable are also the ones that demonstrate its limitations. For instance, in view of the Stuck--Zimmer rigidity theorem every non-atomic IRS in $G=\SL_3(\BR)$ is supported on lattices. Therefore IRS are useless in the study of thin subgroups or general discrete subgroups of infinite covolume in $\SL_3(\BR)$. More generally, as semisimple Lie groups are very far from being amenable, focusing on invariant measures is a huge compromise and restricts by much the scope of problems that can be investigated. For this reason, the theory of random subgroups has been expanded in the recent years to non-invariant measures.

Let now $\mu$ be a probability measure on $G$. A probability measure $\nu$ on $\sub(G)$ is called $\mu$-stationary random subgroup (or $\mu$-SRS) if $\nu=\mu*\nu$, where as always $G$ acts on $\sub(G)$ by conjugation. 
The advantage of SRS on IRS is that while it is a much more general notion, a great deal of the theory can be extended to this context. 
Moreover, unlike IRS which are limited to special subgroups, {\it any} closed subgroup $\Lambda$ is related to some SRS. Indeed, one can consider the $\mu$ random walk in $\sub(G)$ starting at $\Lambda$ and hope that some stationary limits retain interesting properties of $\Lambda$. More precisely, consider 
the Cesaro averages
$$
 \nu_n=\frac{1}{n}\sum_{i=1}^n\mu^{(i)}*\gd_\Lambda,
$$
and let $\nu_\infty$ be a weak-$*$ limit. Hopefully by analyzing $\nu_\infty$ one can learn about the initial point $\Lambda$. In order to process this philosophy, one should choose the probability measure $\mu$ carefully.
In \cite{GLM} A. Levit, G.A. Margulis, and myself have worked out a specific measure $\mu=\mu_G$ which possesses some wonderful properties. In particular, as shown in \cite[Theorem 2.2]{FG} for every discrete group $\Lambda$, an SRS $\nu_\infty$ defined as a weak-$*$ limit of Cesaro averages as above is supported on discrete groups. 

In \cite{FG}, M. Fraczyk and I proved a stiffness result for SRS supported on discrete subgroups. This allowed us to prove the following conjecture of Margulis. Recall that a subgroup $\gL\le G$ is called {\it confined} if there is a compact set $C\subset G$ such that 
$$
 C\cap \Lambda^g\setminus\{1\}\ne \emptyset
$$ 
for every $g\in G$.

\begin{thm}\label{thm:confined}
Let $G$ be a simple Lie group of rank at least $2$. A discrete subgroup $\Lambda\le G$ is confined iff it is a lattice.
\end{thm}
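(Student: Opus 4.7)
The easy direction, that a lattice is confined, follows from reduction theory. If $\Lambda$ is a lattice then $M=\Lambda\backslash X$ decomposes into a compact thick part together with finitely many cusps. The systole (the length of the shortest non-trivial loop at a point) is continuous, hence bounded on the compact thick part, and is automatically small inside the cusps. Thus the systole is uniformly bounded on $M$, and lifting this bound to $G$ produces a compact set $C\subset G$ such that $\Lambda^g\cap(C\setminus\{1\})\ne\emptyset$ for every $g\in G$.

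For the converse, let $\Lambda\le G$ be discrete and confined by a compact set $C$. Following the strategy outlined just before the statement, let $\mu=\mu_G$ be the distinguished probability measure on $G$ from \cite{GLM}, form the Cesaro averages
$$
 \nu_n=\frac{1}{n}\sum_{i=1}^n\mu^{(i)}\ast\delta_\Lambda,
$$
and let $\nu_\infty$ be any weak-$\ast$ subsequential limit. Then $\nu_\infty$ is a $\mu$-SRS, and by \cite[Theorem~2.2]{FG} it is supported on discrete subgroups of $G$.

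Confinement is used at exactly one place, to guarantee that $\nu_\infty$ is non-trivial. Indeed the Chabauty-closed set $\Omega_C=\{H\in\sub(G):H\cap(C\setminus\{1\})\ne\emptyset\}$ contains every conjugate $\Lambda^g$, so $\nu_n(\Omega_C)=1$ for all $n$, and the portmanteau theorem gives $\nu_\infty(\Omega_C)=1$; in particular $\nu_\infty$ gives no mass to $\{1\}$. Now invoke the main technical result of \cite{FG}, the SRS analogue of Stuck--Zimmer for simple groups of rank $\ge 2$: every $\mu$-SRS supported on discrete subgroups and not concentrated at $\{1\}$ must be supported on lattices. Hence $\nu_\infty$-a.e.\ subgroup is a lattice.

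It remains to descend from the random subgroup $\nu_\infty$ back to $\Lambda$. The measure $\nu_\infty$ is supported on the Chabauty closure of the $G$-conjugacy class of $\Lambda$, and in rank at least $2$ higher-rank rigidity rules out a non-lattice discrete subgroup having a lattice in that closure; hence $\Lambda$ itself must be a lattice. The principal obstacle in the whole plan is the SRS stiffness theorem of \cite{FG}, which extends Stuck--Zimmer from the invariant to the much more permissive stationary setting. The confinement hypothesis enters only through the soft non-triviality estimate above, and the final descent to $\Lambda$ is a higher-rank rigidity fact whose failure in rank one is well known (e.g., Schottky groups in $\SL_2(\BR)$ can degenerate under conjugation to punctured-surface lattices).
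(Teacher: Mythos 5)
Your proof follows the paper's argument in all essentials: Cesaro averages of the $\mu_G$-random walk, discreteness of the weak-$*$ limit via \cite[Theorem~2.2]{FG}, the stiffness theorem (discrete SRS $\Rightarrow$ IRS) combined with the classical Stuck--Zimmer theorem, and a local-rigidity argument to pass from ``$\overline{\Lambda^G}$ contains a lattice'' to ``$\Lambda$ is a lattice''; the only organizational difference is that \cite{FG} first establishes the stronger statement $\nu_n\to\delta_{\{1\}}$ for any discrete $\Lambda$ of infinite covolume (Theorem \ref{thm:FG-main}) and then derives the confinement theorem as a corollary, whereas you feed confinement in from the start to get $\nu_\infty(\Omega_C)=1$. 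Two small inaccuracies worth flagging: the statement ``not concentrated at $\{1\}$ must be supported on lattices'' is correct here only because you already know $\nu_\infty(\Omega_C)=1$ (a priori the limit could split mass between $\{1\}$ and lattices), and the closing Schottky-group aside is misleading since Schottky groups are not confined and the failure in rank one lies in stiffness rather than in a Chabauty degeneration of a free group to a lattice.
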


Note that this result is new even for subgroups of lattices (e.g. subgroups of $\SL(3,\BZ)$) in which case it establishes a strong version of Margulis' normal subgroup theorem.
 
 Recall that $X=G/K$ denotes the associated symmetric space. 
 Theorem \ref{thm:confined} is equivalent to the following: an $X$-orbifold $M=\gL\backslash X$ of infinite volume
admits injected contractible balls of any radius. In other words, bounded injectivity radius implies finite (and in view of \cite{7A} bounded) volume. 

Section \ref{sec:SRS} is dedicated to an overview of SRS in semisimple Lie groups, and in \S \ref{sec:confined} we give the proof of Theorem \ref{thm:confined}.

In the final section \S \ref{sec:spectral-gap}  I will report on an ongoing joint project with Arie Levit and Uri Bader where we proved in particular the following theorem:
\begin{thm}
Let $G$ be a semisimple Lie group without compact factors and suppose $\text{rank}(G)\ge 2$. Let $\Gamma\le G$ be an irreducible lattice and $\Lambda\le \Gamma$ a confined subgroup. Then $\Lambda$ is of finite index in $\Gamma$.
\end{thm}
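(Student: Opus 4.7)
Proof plan.

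The plan is to encode the confined subgroup $\Lambda$ as a random subgroup of $G$ and then exploit higher-rank rigidity to force it to have finite index. I would start by forming the Cesaro averages
\[
\nu_n=\frac{1}{n}\sum_{i=1}^{n}\mu_G^{(i)}*\delta_\Lambda
\]
using the canonical measure $\mu_G$ of \cite{GLM}, and take a weak-$*$ accumulation point $\nu_\infty$. By \cite[Theorem 2.2]{FG}, $\nu_\infty$ is a $\mu_G$-stationary random subgroup supported on discrete subgroups of $G$. Since $\Lambda$ is confined by a compact set $C\subset G$, every conjugate $\Lambda^g$ meets $C\setminus\{1\}$, so $\nu_\infty$ charges only nontrivial subgroups; moreover, $\Lambda\le\Gamma$ implies that every subgroup in the support of $\nu_\infty$ lies inside some conjugate of $\Gamma$.

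The next step is to promote $\nu_\infty$ from stationary to invariant and then apply IRS rigidity. When $G$ is simple of rank $\ge 2$, Theorem \ref{thm:confined} already identifies a $\nu_\infty$-typical subgroup as a lattice in $G$; being a sub-lattice of a conjugate of $\Gamma$, it must have finite index there, and $G$-ergodicity on the support propagates this back to the orbit of $\Lambda$ itself. In the semisimple but non-simple case---say $G=\SL_2(\BR)\times\SL_2(\BR)$---where property $(T)$ may fail, I would exploit the irreducibility of $\Gamma$ (each projection to a noncompact factor is dense) together with a Nevo--Zimmer-type factor theorem to drive a stationary-to-invariant mechanism. Once $\nu_\infty$ is an IRS supported on subgroups of conjugates of $\Gamma$, a Margulis-normal-subgroup-style rigidity statement for IRS sitting inside irreducible higher-rank lattices forces a $\nu_\infty$-typical subgroup to be of finite index in its ambient conjugate of $\Gamma$, and unwinding the construction yields $[\Gamma:\Lambda]<\infty$.

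The main obstacle is the stationary-to-invariant promotion in the semisimple case without property $(T)$. Here neither Stuck--Zimmer nor the simple-case argument for Theorem \ref{thm:confined} applies directly, and a new input is required. This is precisely the role of the spectral gap results that form the subject of \S \ref{sec:spectral-gap}, and the author's remark that the final proof employs random subgroups which are not even stationary suggests that the actual execution may sidestep an explicit invariance upgrade, running the argument entirely through non-stationary averages with spectral gap of a carefully chosen averaging operator on $\sub(G)$ doing the work of forcing the orbit of $\Lambda$ to consist of lattices.
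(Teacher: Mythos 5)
Your proposal correctly sets up the SRS machinery (Cesaro averages of the $\mu_G$-walk, discreteness of the weak-$*$ limit via \cite[Theorem 2.2]{FG}), correctly disposes of the case where $G$ is simple of rank $\ge 2$ via Theorem~\ref{thm:confined}, and correctly flags the stationary-to-invariant upgrade in the semisimple non-$(T)$ case as the crux. But at that crux the plan is circular rather than merely incomplete. You write that once $\nu_\infty$ is an IRS supported on subgroups of conjugates of $\Gamma$, ``a Margulis-normal-subgroup-style rigidity statement for IRS sitting inside irreducible higher-rank lattices forces a $\nu_\infty$-typical subgroup to be of finite index.'' That statement is exactly what the theorem is asserting (in measured form); no such off-the-shelf result exists without property $(T)$ --- indeed the Stuck--Zimmer conjecture, which would supply it, is open precisely in the case $G=\SL_2(\BR)\times\SL_2(\BR)$ that you single out. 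Invoking a ``Nevo--Zimmer-type factor theorem'' to promote stationarity to invariance is also not enough: the stiffness Theorem~\ref{thm:stiffness} in the simple case rests on Nevo--Zimmer for a higher-rank simple group, and in the product case the decomposition Theorem~\ref{thm:stationary-decomposition} only produces an IRS, after which you are still stuck without a Stuck--Zimmer-type conclusion.

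There is also a technical slip: it is not true in general that every subgroup in the support of $\nu_\infty$ lies inside some conjugate of $\Gamma$. Chabauty limits of conjugates $\Lambda^{g_n}\le\Gamma^{g_n}$ need not be contained in conjugates of $\Gamma$ when $\Gamma$ is non-uniform, since $\Gamma^{g_n}$ can itself degenerate.

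What the paper's argument (from \cite{BGL}) actually does is structurally different in two ways you did not anticipate. First, it splits off the \emph{co-amenable} case and handles it by a direct Margulis-style $L^2$-argument: starting from an asymptotically $G$-invariant sequence $f_n\in L^2(G/\Lambda)$, one modifies it to a sequence $h_n$ that is asymptotically invariant under one factor $G_1$ but uniformly non-invariant under the other factor $G_2$, then pushes the probability densities $|h_n|^2$ forward through the stabilizer map to obtain discrete random subgroups that are asymptotically $G_1$-invariant but uniformly non-$G_2$-invariant. This contradicts a new spectral gap theorem for products (Theorems~\ref{theorem:getting spectral gap} and~\ref{theorem:getting spectral gap - analytic groups}), which plays the role of an approximate version of Lemma~\ref{lem:G1dense->G2-Invartant}. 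These pushed-forward measures are the ``non-stationary random subgroups'' you guess at, but they arise from $L^2$-vectors, not from modified Cesaro averages. Second, the general case is \emph{reduced} to the co-amenable one: the SRS limit is shown via Theorem~\ref{thm:stationary-decomposition} to be an irreducible IRS, and then the Hartman--Tamuz strengthening of Stuck--Zimmer (Theorem~\ref{thm:SZ,HT}) --- which holds without $(T)$ but only gives co-amenability, not the lattice property --- shows a typical subgroup is co-amenable; local rigidity then transports the conclusion back to $\Lambda$. In short, the new spectral gap theorem is not an optional shortcut around the invariance upgrade; it is the mandatory replacement for the missing Stuck--Zimmer conjecture, and the reduction to co-amenability is the essential organizing step that your proposal does not contain.
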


The novelty of this theorem is that it holds regardless of property $(T)$. This is a generalization of the normal subgroup theorem that holds in all classical cases. In order to prove this theorem we had to establish a new spectral gap for products. The proof relies on the study of certain random subgroups which are not even stationary. 

\medskip
\noindent
{\bf Acknowledgement:} I wish to thank the referee for several good suggestions and for spotting plenty of typos.

\section{Most hyperbolic manifolds are non-arithmetic}\label{sec:CCC}

%This section revisits my joint work with Arie Levit \cite{ccc}.

%\begin{abstract}
A celebrated result of Margulis \cite{Ma2} asserts that finite volume locally symmetric manifolds of rank $>1$ are arithmetic.
Corlette \cite{cor} and Gromov--Schoen \cite{GS} extended this result
to quaternionic and octonionic hyperbolic spaces.
Thus, the only families of finite volume locally symmetric manifolds which may admit 
non-arithmetic members are the real and complex hyperbolic.

A remarkable paper of Gromov and Piatetski--Shapiro \cite{GPS} establishes the existence of a non-arithmetic (real) hyperbolic manifold of finite volume, in any given dimension.

We prove that in fact almost all hyperbolic manifolds are non-arithmetic, with respect to a certain way of counting. Recall that two manifolds are commensurable if they share a common finite cover. 
Fixing the dimension $d>3$, and counting up to commensurability, we show that the number of non-arithmetic hyperbolic $d$-manifolds of volume bounded by $V$ is super-exponential in $V$, 
while the number of arithmetic ones tends to be polynomial.

%\medskip
%\noindent
%{\bf Acknowledgment:} I wish to thank Gil Goffer for making the graphics for these notes.

%\end{abstract}

%%%%%%%%%%%%
%%%%%%%%%%%%
%%%%%%%%%%%%

\subsection*{Arithmetic Groups}

Let us start with some examples:

\begin{exam}
\begin{itemize}
\item $SL(n,\ZZ)$ is a non-uniform lattice in $SL(n,\RR)$.
\item Let $Q(x,y,z,w)=x^2+y^2+z^2-\sqrt{2}w^2$, let $\mathbb{G}=\mathbb{SO}(Q)$ and let $\Gamma=\mathbb{G}(\ZZ[\sqrt{2}])$. Then $\Gamma=\mathbb{H}(\ZZ)$ for a $\QQ$-group $\mathbb{H}$ satisfying 
$$
 \mathbb{H}(\RR)\cong {SO}(3,1)\times SO(4).
$$ 
(The factors correspond to the Galois conjugates of $Q$.)
$\Gamma$ is a lattice in $\mathbb{H}(\RR)$.
Since $SO(4)$ is compact, $\Gamma$ projects to a lattice in $SO(3,1$).
\end{itemize} 
\end{exam}

\begin{definition}
A subgroup $\Gamma\le G$ is called {\it arithmetic} if there is a $\QQ$-algebraic group $\mathbb{H}$ and a surjective map $f:\mathbb{H}(\RR)\to G$ with compact kernel, such that $f(\mathbb{H}(\ZZ))$ 
is commensurable with $\Gamma$.
\end{definition}

One cornerstone of the theory of lattices is the following:
\begin{theorem}[Borel--Harish-Chandra \cite{BoHC}]
Suppose that $G$ is semisimple. Then every arithmetic group is a lattice.
\end{theorem}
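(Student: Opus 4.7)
The plan is to reduce to the standard case and then invoke the reduction theory of arithmetic groups via Siegel sets. First, I would observe that if $f\colon \mathbb{H}(\RR)\to G$ is surjective with compact kernel $K_0$, then pushing forward Haar measure preserves finiteness of covolume, and $f(\mathbb{H}(\ZZ))$ is a lattice in $G$ precisely when $\mathbb{H}(\ZZ)\cdot K_0$ has finite covolume in $\mathbb{H}(\RR)$, equivalently when $\mathbb{H}(\ZZ)$ is a lattice in $\mathbb{H}(\RR)$. Commensurability is preserved under this pushforward. Moreover, since $G$ is semisimple, one may replace $\mathbb{H}$ by its semisimple part (the non-semisimple radical lands in the compact kernel up to finite index, and the integral points there contribute only a finite group). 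Thus it suffices to prove that $\mathbb{H}(\ZZ)$ is a lattice in $\mathbb{H}(\RR)$ for a semisimple $\QQ$-group $\mathbb{H}$.

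Next, I would fix a faithful $\QQ$-rational embedding $\mathbb{H}\hookrightarrow\SL_n$, which exists since every linear algebraic $\QQ$-group admits such a representation. Choose a minimal $\QQ$-parabolic subgroup $\mathbf{P}\le\mathbb{H}$ and an Iwasawa-type decomposition $\mathbb{H}(\RR)=KAN$, where $K$ is a maximal compact subgroup, $A$ is a maximal $\QQ$-split torus, and $N$ is the unipotent radical of $\mathbf{P}(\RR)$. For $t>0$ and a compact $\omega\subset N$, define the Siegel set
\[
  \mathfrak{S}_{t,\omega}=K\cdot A_t\cdot\omega,\qquad A_t=\{a\in A:\alpha(a)\ge t\text{ for every simple }\QQ\text{-root }\alpha\}.
\]
Reduction theory then furnishes two key properties: (i) there exist a finite set $F\subset\mathbb{H}(\QQ)$ and appropriate parameters $t,\omega$ such that $\mathbb{H}(\RR)=F\cdot\mathfrak{S}_{t,\omega}\cdot\mathbb{H}(\ZZ)$, so that finitely many translates of $\mathfrak{S}_{t,\omega}$ cover a fundamental domain; and (ii) for each $\gamma\in\mathbb{H}(\ZZ)$, the intersection $\mathfrak{S}_{t,\omega}\cap\gamma\mathfrak{S}_{t,\omega}$ is nonempty for only finitely many $\gamma$, so that the covering is essentially non-overlapping.

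The remaining analytic step is to show that $\mathfrak{S}_{t,\omega}$ has finite Haar measure. Decomposing Haar measure along $KAN$ produces a Jacobian of the form $\prod_{\alpha\in\Phi^+}\alpha(a)^{-m_\alpha}\,dk\,da\,dn$, where $m_\alpha$ are root multiplicities and $\Phi^+$ denotes the positive restricted roots. Since every positive root is a nonnegative integer combination of simple roots and simple roots take values bounded below on $A_t$, one checks convergence of the integral over $A_t\cdot\omega$. Combined with (i), this yields that $\mathbb{H}(\ZZ)$ has finite covolume in $\mathbb{H}(\RR)$, proving the theorem.

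The main obstacle is establishing the covering property (i) for an arbitrary semisimple $\QQ$-group. For $\mathbb{H}=\SL_n$ this is classical Minkowski reduction theory, carried out explicitly on the space of unimodular lattices via Mahler's compactness criterion, which controls how far an integral basis can be pushed into the positive Weyl chamber. Extending it to general $\mathbb{H}$ requires careful analysis of the $\QQ$-parabolic $\mathbf{P}$, the interplay between $\QQ$-roots and $\RR$-roots, and control of the orbit of $\mathbb{H}(\ZZ)$ on a suitable representation space. The standard modern route embeds $\mathbb{H}\hookrightarrow\SL_n$ and transports Siegel sets via an orbit map through a chosen highest-weight vector, reducing the covering statement to geometry of numbers for $\SL_n(\ZZ)$.
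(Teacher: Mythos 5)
The paper does not prove this theorem; it is cited directly from the original Borel--Harish-Chandra paper, so there is no argument in the text to compare against. Your sketch follows the standard reduction-theory route via Siegel sets, which is the right approach, but there is a convention mismatch that would derail the volume computation as written. With the ordering $g=kan$ and $N$ the unipotent radical of the minimal parabolic (so the restricted roots are positive on the Lie algebra of $N$), the Haar-measure Jacobian is $e^{2\rho(\log a)}=\prod_{\alpha>0}\alpha(a)^{+m_\alpha}$, i.e.\ \emph{positive} exponents; paired with your chamber $A_t=\{a:\alpha(a)\ge t\}$, the integral $\int_{A_t}\prod_\alpha\alpha(a)^{m_\alpha}\,da$ diverges. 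The two standard fixes are either to use the opposite ordering $\mathfrak{S}=\omega\cdot A_t\cdot K$ (so $g=nak$, Jacobian $\prod_\alpha\alpha(a)^{-m_\alpha}$, and covering of the form $\mathbb{H}(\RR)=\mathbb{H}(\ZZ)\cdot F\cdot\mathfrak{S}$), or to keep $K\cdot A_t\cdot\omega$ but reverse the chamber to $A_t=\{a:\alpha(a)\le t\}$, as in Borel's \emph{Introduction aux groupes arithm\'etiques}. With a consistent choice your convergence argument goes through.

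Two smaller points. Your reduction to semisimple $\mathbb{H}$ is correct, but the cleaner statement is that the radical of $\mathbb{H}$ has compact real points --- its unipotent part must be trivial, since a nontrivial connected unipotent real group is noncompact, and the remaining torus is $\RR$-anisotropic --- so the radical lies entirely inside $\ker f$, not merely ``up to finite index,'' and its integer points form a finite group. Also, for reduction theory over $\QQ$ the torus $A$ should be a maximal $\QQ$-split torus and $\mathbf{P}$ a minimal $\QQ$-parabolic, so the relevant decomposition is the Langlands decomposition of $\mathbf{P}(\RR)$ rather than the full Iwasawa $KAN$ (they agree only when the $\QQ$- and $\RR$-ranks of $\mathbb{H}$ coincide). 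You correctly single out the covering property (i) as the genuine content; that is exactly where one embeds $\mathbb{H}\hookrightarrow\SL_n$ and transports reduction theory for $\SL_n(\ZZ)$ via a suitable orbit map.
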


A locally symmetric manifold $M=\Gamma\backslash G/K$ associated to an arithmetic group $\Gamma\le G$ is called arithmetic. In particular, arithmetic manifolds have finite Riemannian volume.

Selberg conjectured that under certain conditions the converse of Borel--Harish-Chandra theorem is also true, i.e. that finiteness of volume implies arithmeticity. The formulation of the conjecture was corrected by Piatetski--Shapiro (who properly defined arithmetic groups) and was later proved by Margulis:

\begin{theorem}[Margulis]
Irreducible\footnote{Recall that a locally symmetric manifold is irreducible if it is not commensurable to a direct product of manifolds of smaller dimensions. 
If $G$ is simple all manifolds are locally symmetric.} locally symmetric manifolds associated to a semisimple Lie group of real rank $>1$ are arithmetic.
\end{theorem}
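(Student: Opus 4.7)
The plan is to follow Margulis' two-step strategy: first establish a superrigidity theorem for representations of irreducible lattices into algebraic groups over local fields, and then deduce arithmeticity from it. Let $\gC\le G$ be an irreducible lattice, with $G$ a connected semisimple real Lie group of real rank at least $2$ and, without loss of generality, with trivial center. Fix an embedding $\gC\hookrightarrow G$ via the defining representation and pass to the adjoint representation so as to work inside $\GL_N$.

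For the superrigidity step, I would take a simple algebraic group $\mathbf{H}$ over a local field $k$ and a homomorphism $\rho\colon \gC\to \mathbf{H}(k)$ with Zariski-dense, unbounded image. The goal is to extend $\rho$ to a continuous homomorphism of $G$ (up to compact factors on the Archimedean side, or an isogeny into the building-theoretic automorphism group on the non-Archimedean side). The core construction is a $\gC$-equivariant measurable boundary map $\phi\colon G/P\to \mathbf{H}(k)/\mathbf{Q}$ from the Furstenberg boundary of $G$ to a suitable flag variety of $\mathbf{H}$; its existence follows from Furstenberg's amenability argument applied to minimal parabolics. The rigidity of $\phi$ is where the higher-rank hypothesis enters decisively: one uses the Mautner phenomenon and Moore's ergodicity on $G/P\times G/P$, combined with the fact that in rank $\ge 2$ Weyl-chamber dynamics generate enough ``transverse'' directions to force $\phi$ to be rational. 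Rationality of $\phi$ then yields an algebraic extension of $\rho$, which the cocycle/density lemma promotes to a continuous extension of $G$.

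From superrigidity to arithmeticity: let $k$ be the trace field of $\Ad(\gC)$. Applying non-Archimedean superrigidity at every finite place and using that unbounded images would force $\gC$ to have non-integral traces at that place, one concludes that $k$ is a number field and that the traces of $\Ad(\gC)$ are algebraic integers in $k$. Next, for each Galois embedding $\gs\colon k\hookrightarrow \BC$, apply Archimedean superrigidity to the representation $\gs\circ\Ad$: each conjugate either extends continuously (giving a real factor of the ambient group) or has precompact image (giving a compact factor). Assembling these via restriction of scalars produces a $\BQ$-algebraic group $\mathbf{H}=\text{Res}_{k/\BQ}\mathbf{G}_k$ and a homomorphism $f\colon \mathbf{H}(\BR)\to G$ with compact kernel under which the image of $\mathbf{H}(\BZ)$ is commensurable to $\gC$. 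Borel density together with the integrality of traces at all places pins down the denominators and finishes the identification.

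The main obstacle is the proof of superrigidity itself, and specifically pushing the measurable boundary map to a rational one: this is the step where the assumption $\rk(G)\ge 2$ is indispensable, and where the non-Archimedean case becomes substantially harder because smooth geometry must be replaced by the affine Bruhat--Tits building of $\mathbf{H}(k)$. A secondary but delicate obstacle is the global bookkeeping needed to promote local integrality of traces at every place into a genuine $\BQ$-structure with $\gC$ commensurable to the integer points; here one needs a uniform control on denominators across infinitely many non-Archimedean places, which again reduces to a careful application of the non-Archimedean superrigidity theorem.
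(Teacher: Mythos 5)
The paper does not prove this theorem: it is stated as a classical result of Margulis and cited to \cite{Ma2} as background for the Gromov--Piatetski-Shapiro construction, with no proof given. Your sketch is therefore not comparable to anything in the paper; it is rather a high-level outline of Margulis' own proof, and as such it is correct in its overall architecture.

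A few remarks on the sketch itself. The two-step strategy (superrigidity for linear representations over all local fields, then deduction of arithmeticity) is the right one, and you correctly place the essential use of $\rank_\BR(G)\ge 2$ in the rationality of the measurable boundary map $\phi\colon G/P\to \mathbf{H}(k)/\mathbf{Q}$; in Margulis' argument this is driven by contracting dynamics along singular directions, for which more than one chamber wall is needed. You should also be aware that irreducibility of the lattice is not just a hypothesis carried along — it enters concretely in the superrigidity proof via Moore ergodicity (ergodicity of each non-compact factor on $G/\gC$), and it is what lets you conclude at the finite places that boundedness forces integrality simultaneously, rather than factor by factor. Finally, in the arithmeticity step, the restriction-of-scalars group is built from the group $\mathbf{G}_k$ of Zariski closure of $\Ad(\gC)$ over the trace field $k$; one must check that $k$ is totally real or CM as dictated by the Archimedean conjugates having either a non-compact factor of $G$ or a compact image, and that the resulting $f\colon \mathbf{H}(\BR)\to G$ is surjective with compact kernel --- this last point uses Borel density to identify the non-compact part. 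None of these refinements changes your plan; they are the places where the ``global bookkeeping'' you flag as the secondary obstacle actually lives.
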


This was extended for certain rank one spaces:

\begin{theorem}[Corlette \cite{cor}, Gromov--Schoen \cite{GS}]
Finite volume locally symmetric manifolds associated to $Sp(n,1)$ and $F_4(-20)$ are arithmetic.
\end{theorem}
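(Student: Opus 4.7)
The plan is to follow Margulis's arithmeticity paradigm: reduce arithmeticity of a lattice $\Gamma\le G$ (with $G=Sp(n,1)$ or $F_4(-20)$) to two forms of superrigidity, and then apply Margulis's arithmeticity criterion, which takes superrigidity as its sole input. Concretely, I would aim to prove: (i) Archimedean superrigidity, i.e. any homomorphism $\rho:\Gamma\to H(\BR)$ into a real simple algebraic group, with Zariski-dense unbounded image, extends to a continuous homomorphism $G\to H(\BR)$; and (ii) non-Archimedean superrigidity, the same statement with $H$ defined over a non-Archimedean local field $k$, where the hypothesis becomes that the image acts on the Bruhat--Tits building without global fixed point. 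Granted both, Margulis's theorem produces a $\BQ$-structure and realizes $\Gamma$ as an arithmetic lattice.

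For (i) the approach is Corlette's harmonic map method. Given $\rho$, use the finite covolume of $\Gamma$ together with the reductivity of the Zariski closure of $\rho(\Gamma)$ to produce a $\rho$-equivariant harmonic map $f:X\to Y$, where $X=G/K$ and $Y=H(\BR)/K_H$ are the symmetric spaces. The central step is to show $f$ is totally geodesic. This is forced by a refined Bochner--Matsushima identity that exploits the restricted holonomy of $X$: the parallel $4$-form coming from $Sp(n)\cdot Sp(1)$-holonomy in the quaternionic case, and the parallel Cayley form coming from $Spin(9)$-holonomy in the octonionic case, both yield a sign-definite curvature term forcing $\nabla df\equiv 0$. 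A totally geodesic equivariant map descends to a Lie algebra homomorphism $\mathfrak{g}\to\mathfrak{h}$ and hence to the required extension of $\rho$.

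For (ii) one cannot use a Riemannian target, so the plan is to follow Gromov--Schoen and replace $Y$ by a CAT$(0)$ Bruhat--Tits building. This requires first developing the basic calculus of harmonic maps into singular NPC spaces: an energy functional, existence of equivariant energy minimizers under reductivity-type hypotheses, and Lipschitz/H{\"o}lder regularity up to and across the singular locus. One then proves an analog of the Bochner--Matsushima identity in this setting, using the same parallel-form mechanism from the domain; the conclusion is that the image of $f$ is contained in a single apartment, which is a flat and hence already a Riemannian target. This again yields an extension of $\rho$ to $G$.

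The main obstacle is unquestionably the non-Archimedean half, Step (ii): the whole machinery of harmonic maps into singular CAT$(0)$ spaces has to be built by hand, and the key technical difficulty is establishing enough regularity of the minimizer that the Bochner identity can be integrated across the singular set of $Y$. The Archimedean half is comparatively clean once the quaternionic/Cayley algebraic identities are in place, and it is precisely the presence of the parallel $4$-form or Cayley form that singles out $Sp(n,1)$ and $F_4(-20)$ among rank-one groups; the analogous scheme fails for $SO(n,1)$ and $SU(n,1)$, consistent with the existence of non-arithmetic lattices there.
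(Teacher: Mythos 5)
The paper does not prove this theorem; it is stated as background and simply attributed to Corlette \cite{cor} and Gromov--Schoen \cite{GS}. Your sketch is a faithful and accurate reconstruction of how those two references actually establish arithmeticity: Corlette's harmonic-map proof of Archimedean superrigidity via the Bochner--Matsushima identity with the parallel $Sp(n)\cdot Sp(1)$ $4$-form (resp.\ $Spin(9)$ Cayley $8$-form), Gromov--Schoen's extension to non-Archimedean targets by building harmonic-map theory into Bruhat--Tits buildings, and then Margulis's criterion converting the two superrigidities into arithmeticity; your closing remark explaining why the mechanism singles out $Sp(n,1)$ and $F_4(-20)$ among rank-one groups is also correct.
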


The remaining cases are the real and the complex hyperbolic spaces.\footnote{In the complex hyperbolic case there are few examples of non-arithmetic finite volume manifolds in complex dimensions 2 and 3, but the question is wide open for higher dimensions.}

%%%%%%%%

\subsection*{The Gromov--Piatetski-Shapiro construction}

In \cite{GPS} Gromov and Piatetski-Shapiro constructed a non-arithmetic hyperbolic manifold in every dimension. (Prior to that only finitely many examples associated with certain reflection groups were known.) Their idea was to take two arithmetic non-commensurable manifolds $M_1$ and $M_2$ which can be cut along isometric totally geodesic co-dimension one manifolds, cut them, and then glue them by identifying the boundaries of the two. 

Recall that two manifolds $M_1$ and $M_2$ are commensurable if they admit a common finite cover. Two subgroups 
$\Gamma_i,~i=1,2\le G$ are commensurable if their intersection $\Gamma_1\cap\Gamma_2$ has finite index in both.

\medskip

Here is the recipe for the proof of their theorem:

\begin{enumerate}

\item Let $H\le G$ be algebraic groups, and $\Gamma\le G$ an arithmetic subgroup. If $\Gamma\cap H$ is Zariski dense in $H$, then it is an arithmetic lattice in $H$. (This is a consequence of the Borel--Harish-Chandra theorem. For instance, suppose $\Gamma=G(\ZZ)$ and let $\Delta=\Gamma\cap H$. If $\Delta$ is Zariski dense in $H$, then $H$ is defined over $\QQ$ and $\Delta=H(\ZZ)$ is a lattice in $H$.)

\item Let $\Gamma_1,\Gamma_2\le SO(n,1)$ be two arithmetic subgroups. If $\Gamma_1\cap\Gamma_2$ is Zariski dense in $SO(n,1)$ then $\Gamma_1$ and $\Gamma_2$ are commensurable. (Consider $G=SO(n,1)\times SO(n,1)$ and $H$ as the diagonal, take $\Gamma=\Gamma_1\times \Gamma_2$ and apply $(1)$.)

\item Let $M$ be a finite volume complete hyperbolic $n$-manifold and $U\subset M$ a submanifold with a totally geodesic boundary and a non-empty interior. Then $\pi_1(U)$ is Zariski dense in $SO(n,1)$.

\item Let $K$ be a totally real number field. Let $Q_1,Q_2$ be two quadratic forms over $K$ which have signature $(n,1)$ over $\mathbb{R}$ with $n\ge 2$, while all their non-trivial Galois conjugates are positive definite. Let $\Gamma_i$ be the arithmetic group in $SO(n,1)$ corresponding to $Q_i$ and $M_i$ the corresponding hyperbolic orbifold. Then $M_1$ and $M_2$ are commensurable if and only if $Q_1$ is isometric to $\lambda Q_2$ for some scalar $\lambda\in K$ (see \cite[\S 2.6]{GPS}).

\end{enumerate}

\begin{exer}
Using Items $(2),(3)$, show that if $M_1$ and $M_2$ are two non-commensurable arithmetic hyperbolic $n$-manifolds, that admit isometric totally geodesic co-dimension one submanifolds $N_1$ and $N_2$. Then the manifold $M$ obtained by cutting the $M_i$'s along the $N_i's$ and then gluing the two pieces along the boundaries is non-arithmetic.
\end{exer}

Item $(4)$ implies that when $n+1$ is even, the discriminant of the form (valued in $k^*/(k^*)^2$) is invariant under scalar multiplication and hence well defined on a commensurability class.
This is what stands behind the following:

\begin{exam}
1) Suppose that $n+1$ is even, consider the forms:

$$
 Q_1= x_1^2+x_2^2+\ldots+x_n^2-x_{n+1}^2,~\text{and}~Q_2= 2x_1^2+x_2^2+\ldots+x_n^2-x_{n+1}^2
$$
and let $M_1,M_2$ be the corresponding manifolds (take finite covers to get rid of torsion). Then $M_1$ and $M_2$ are non-commensurable and admit 
isometric co-dimension one totally geodesic cuts $N_i$ (along $x_2\equiv 0$). Note that $M_1$ and $M_2$ are non-compact.

\medskip

2) To get a compact example, take 
$$
 R_1= x_1^2+x_2^2+\ldots+x_n^2-\sqrt{2}x_{n+1}^2,~\text{and}~R_2= 3x_1^2+x_2^2+\ldots+x_n^2-\sqrt{2}x_{n+1}^2.
$$
 
\end{exam}

These examples thus provide compact and non-compact finite volume non-arithmetic hyperbolic manifolds in any odd dimension. 
To obtain the even-dimensional examples, Gromov and Piatetski-Shapiro took, inside the above example, a co-dimension $1$ sub-manifold that intersects the $N_i$'s transversally-orthogonally.

%%%%%%%%%%%

\subsection*{Counting hyperbolic manifolds}

Denote by $\rho_n(v)$ the number of complete hyperbolic $n$-manifolds of volume $\le v$, considered up to isometry. 
By Kazhdan--Margulis theorem $\rho_n(v)=0$ for sufficiently small $v$. Teichmuller theory implies that $\rho_2(4\pi)=2^{\aleph_0}$. Thurston showed that $\rho_3(v)=\aleph_0$, for $v\ge v_0$, where $v_0$ is the smallest volume of a complete non-compact hyperbolic $3$ manifold. For $n\ge 4$ however we have

\begin{thm}[Wang \cite{Wang}]
For $n\ge 4$, $\rho_n(v)<\infty$ for all $v>0$.
\end{thm}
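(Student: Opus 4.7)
The plan is to pass from hyperbolic manifolds to lattices in $G = \mathrm{PO}(n,1)$ and then combine two ingredients: a compactness statement coming from the thick--thin decomposition, and a rigidity statement that is genuinely stronger for $n\ge 4$ than for $n=2,3$. First I would observe that each complete hyperbolic $n$-manifold of volume $\le v$ corresponds to a conjugacy class of a torsion-free lattice $\Gamma\le G$ with $\mathrm{vol}(\Gamma\backslash G)\le v$. So it suffices to show that the set $\mathcal{L}_v\subset \mathrm{Sub}_d(G)/G$ of $G$-conjugacy classes of such lattices is finite.

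For compactness, I would use the Margulis constant $\mu_n>0$ furnished by the Kazhdan--Margulis theorem, together with the standard structure of the thin part (disjoint cusps and Margulis tubes) in constant negative curvature. An embedded ball of radius $\mu_n/2$ around any point of the thick part $M_{\ge\mu_n}$ has volume at least some uniform constant $V_0=V_0(n)>0$; hence $M_{\ge\mu_n}$ contains at most $v/V_0$ disjoint such balls, so it is covered by boundedly many balls of radius $\mu_n$, and in particular has bounded diameter. This, together with the fact that the noncompact ends are standard cusps, lets one show (by a classical Chabauty-topology argument à la Mahler) that $\mathcal{L}_v$ is a \emph{compact} subset of $\mathrm{Sub}(G)/G$: no sequence of lattices of bounded covolume can degenerate, because Kazhdan--Margulis prevents elements of $\Gamma$ from approaching the identity outside a uniform neighborhood.

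For discreteness, I would invoke local rigidity of finite-volume lattices in $G=\mathrm{PO}(n,1)$ for $n\ge 4$. In the cocompact case this is Calabi--Weil rigidity, and in the noncompact finite-volume case it is the Garland--Raghunathan theorem; both apply precisely because $\mathrm{PO}(n,1)$ is not locally isomorphic to $\mathrm{PSL}_2(\mathbb{R})$ or $\mathrm{PSL}_2(\mathbb{C})$ when $n\ge 4$. Concretely, every finite-volume hyperbolic $n$-manifold ($n\ge 4$) is infinitesimally rigid, so the inclusion $\Gamma\hookrightarrow G$ has a neighborhood in $\mathrm{Hom}(\Gamma,G)$ consisting entirely of conjugates. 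Combined with Mostow rigidity this shows that each point of $\mathcal{L}_v$ is isolated in the Chabauty topology modulo conjugation.

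A compact space that is also discrete is finite, which concludes the proof. The key place where $n\ge 4$ enters is exactly the rigidity step: in dimension $n=3$, Thurston's hyperbolic Dehn surgery theorem produces continuous families of pairwise nonisometric finite-volume hyperbolic manifolds, so $\mathcal{L}_v$ has accumulation points and the argument collapses. I expect the main technical nuisance to be the compactness step in the noncompact case, where one must verify that cusps cannot collide or collapse in a limit; the cleanest way is to use the standard description of the $\mu_n$-thin part and control the Euclidean holonomy of cusp cross-sections, but a direct Chabauty argument exploiting Kazhdan--Margulis to bound the translation lengths from below near the identity also works.
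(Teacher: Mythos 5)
The paper does not prove Wang's theorem; it only cites it and summarizes Wang's argument in one line, namely that the set of conjugacy classes of lattices of covolume at most $v$ is discrete and compact in a suitable topology, hence finite. Your proposal is a correct elaboration of exactly that two-ingredient strategy (Kazhdan--Margulis/Chabauty compactness plus local rigidity), so you are taking the same route. Two small imprecisions worth tightening: the passage from local rigidity of the inclusion $\Gamma\hookrightarrow G$ to Chabauty-isolation should rest on lower semi-continuity of covolume rather than on Mostow rigidity --- once a small conjugate $g_n\Gamma g_n^{-1}$ embeds into $\Gamma_n$, semi-continuity of covolume along the Chabauty-convergent sequence forces the index $[\Gamma_n : g_n\Gamma g_n^{-1}]$ to equal one for large $n$ --- and the bounded-diameter claim for the thick part tacitly uses its connectedness, which holds for $n\ge 3$ but deserves a word.
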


Wang's proof is non-effective and gives no estimate on $\rho_n(v)$. Indeed, Wang's beautiful argument basically shows that with respect to a certain topology, the set of conjugacy classes of lattices in $SO(n,1)$ is discrete and the co-volume function is proper. That is, the set of conjugacy classes of lattices of bounded volume is discrete and compact hence finite.

The first effective bound was given by Gromov \cite{Gromov} who showed that, for $n\ge 4$
$$
 \rho_n(v)\le v\exp(\exp(\exp(n+v))).
$$

The correct growth type was determined in my joint work with M. Burger, A. Lubotzky and S. Mozes:

\begin{thm}[\cite{BGLM}]
For $n\ge 4$,
$$
 \log \rho_n(v)\approx v\log v. 
$$
\end{thm}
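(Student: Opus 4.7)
The plan is to establish matching upper and lower bounds of the form
$$
c v \log v \;\le\; \log \rho_n(v) \;\le\; C v \log v
$$
for constants $0 < c < C$ depending only on $n \ge 4$; the two directions need rather different inputs.

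\textbf{Upper bound.} The strategy is to encode each complete finite-volume hyperbolic $n$-manifold $M$ of volume at most $v$ by a combinatorial datum of bounded local complexity, and to count such data. By the Kazhdan--Margulis thick-thin decomposition, fix a Margulis constant $\varepsilon = \varepsilon_n > 0$ and write $M = \thick{M} \cup \thin{M}$, where $\thin{M}$ is a disjoint union of cusp neighborhoods and Margulis tubes (each of bounded topological complexity), while $\thick{M}$ has injectivity radius at least $\varepsilon/2$. Pick a maximal $(\varepsilon/4)$-separated net $x_1,\dots,x_N \in \thick{M}$; disjointness of the $(\varepsilon/8)$-balls around the $x_i$ gives $N \le v/c_n$. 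Form the nerve of the cover by the $\varepsilon$-balls around these net points, decorating each vertex with the isometry type of the local configuration in $\HH{n}$. Volume comparison in $\HH{n}$ bounds the valence of this nerve by a constant depending only on $n$, so the number of such labeled nerves on $N$ vertices is at most $N^{O(N)} = \exp(O(v \log v))$. Attaching the thin components contributes only an extra $\exp(O(v))$ factor, since each component has bounded combinatorial complexity and only a bounded number of possible attachment points. A Mostow-type local rigidity argument in the ball model shows that $M$ is determined up to isometry by this datum.

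\textbf{Lower bound.} Fix two non-commensurable finite-volume arithmetic hyperbolic $n$-manifolds $M_1, M_2$ admitting isometric totally geodesic codimension-one submanifolds, as in the Gromov--Piatetski-Shapiro construction recalled above. Cutting them along these submanifolds yields a fixed finite collection of pieces with isometric totally geodesic boundary components, and one builds new hyperbolic manifolds by gluing many copies of these pieces along their boundaries in different combinatorial patterns (choosing at each interface which piece to attach, and also the boundary isometry used for gluing). With $N$ pieces one obtains a manifold of volume at most a constant times $N$, and the number of distinct gluing patterns is at least $N^{cN}$. A pigeonhole argument using commensurability invariants (signatures of quadratic forms up to scaling, as in item $(4)$ of the Gromov--Piatetski-Shapiro recipe above) together with a bound on the automorphisms of the underlying gluing graph shows that only a polynomially-in-$N$ fraction of patterns can yield mutually isometric manifolds, producing $\exp(c v \log v)$ distinct isometry classes of volume at most $v$.

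\textbf{Main obstacle.} The genuine difficulty is the upper bound: the naive count of simplicial complexes on $N$ vertices is doubly exponential, $2^{\Theta(N^2)}$, and obtaining the sharp $N^{O(N)}$ bound relies crucially on exploiting the uniformly bounded local volume growth of $\HH{n}$ to force bounded valence of the nerve; one must also track the continuous moduli of local isometric patterns and the cuspidal geometry carefully enough that the patching actually reconstructs $M$ uniquely. On the lower bound side, the subtlety is showing that super-exponentially many of the combinatorial gluings do produce pairwise non-isometric manifolds, which is precisely where the commensurability invariants of quadratic forms become essential.
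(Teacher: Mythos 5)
The survey you are reading only cites \cite{BGLM} and gives no proof of this theorem, so let me compare your sketch with the argument in that paper and flag the places where yours does not quite close.

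On the upper bound, your overall strategy (thick--thin decomposition, a bounded-valence net of the thick part, Mostow rigidity) is the right one, but two steps are gaps as written. First, you decorate each vertex of the nerve with ``the isometry type of the local configuration in $\HH{n}$,'' and then count ``labeled nerves'' as $N^{O(N)}$; but these labels live in a continuum, so this count is not finite, and you flag this as ``the genuine difficulty'' without resolving it. The clean route (the one taken in \cite{BGLM}) is to use \emph{no} continuous decorations at all: the nerve of the cover by $\varepsilon$-balls around a net is a finite simplicial complex of bounded valence, it is homotopy equivalent to the thick part by the nerve theorem (the balls and their intersections are convex hence contractible), and then one invokes global Mostow rigidity on $\pi_1$, not any ``local rigidity in the ball model.'' Second, your claim that ``attaching the thin components contributes only an extra $\exp(O(v))$ factor'' is exactly where the hypothesis $n\ge 4$ enters, and your phrasing obscures it: for $n=3$ the statement is false (hyperbolic Dehn surgery yields infinitely many manifolds of bounded volume with the same thick part, and indeed $\rho_3(v)=\aleph_0$). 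What one actually needs is that for $n\ge 4$ the inclusion $\thick{M}\hookrightarrow M$ is a $\pi_1$-isomorphism: a Margulis tube is $T\cong S^1\times D^{n-1}$ with boundary $S^1\times S^{n-2}$, and for $n\ge 4$ the sphere $S^{n-2}$ is simply connected, so $\partial T\hookrightarrow T$ is a $\pi_1$-isomorphism (for $n=3$ it is $\BZ^2\to\BZ$ with kernel, which is precisely the Dehn-filling degeneracy). So the nerve already captures $\pi_1(M)$ and Mostow finishes; there is nothing further to count for the thin parts.

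On the lower bound, your piece-gluing construction is valid but is genuinely a different route from \cite{BGLM}, and is closer to the later argument of \cite{ccc} for counting \emph{commensurability classes}. The original \cite{BGLM} lower bound is more elementary: fix one non-arithmetic Gromov--Piatetski-Shapiro manifold $M_0$ whose fundamental group $\Gamma$ is large (after passing to a finite-index subgroup it surjects onto a non-abelian free group, because the GPS lattice splits over a hypersurface subgroup); subgroup growth then gives $\ge k^{ck}$ subgroups of index $k$, hence that many degree-$k$ covers of volume $\le k\vol(M_0)$; and since $\Gamma$ is non-arithmetic, Margulis' commensurability criterion gives $[\mathrm{Comm}(\Gamma):\Gamma]<\infty$, so at most polynomially-in-$k$ many of these subgroups can be conjugate in $\mathrm{Isom}(\HH{n})$, yielding $\exp(c\,v\log v)$ pairwise non-isometric manifolds. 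This avoids having to prove that different gluing patterns give non-isometric manifolds, which in your approach requires the canonical-decomposition analysis (Lemma \ref{lemma:twononcommensurablearedisjoint} and the commensurability invariants of the quadratic forms). Your approach does prove the stronger bound on $C_n(v)$, but that is a different (and later) theorem.
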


By '$\approx$' I mean that the ratio between the two sides is bounded below and above by constants.

\begin{rem}
Although in \cite{BGLM} we proved the lower bound only for non-arithmetic manifolds, 
it can be shown that the same growth type applies if we restrict to each of the following four classes: compact-arithmetic, noncompact-arithmetic, compact-nonarithmetic or noncompact-nonarithmetic (see \cite{BGLS}).
\end{rem}

\begin{rem}
The first motivation for these effective estimates came from theoretical physics \cite{Ca1,Ca2}. 
\end{rem}
%%%%%%%%%%%

\subsection*{More refined counting}
Let us say that a complete hyperbolic manifold is minimal if it does not properly cover any other manifold. It is natural to consider the growth of the number of minimal hyperbolic manifolds, i.e. of
$$
 M_n(v):=~\#\{\text{minimal hyperbolic}~n\text{-manifolds of volume}\le v\},
$$
where again we consider manifolds up to isometry.

An even more refined quantity is the number $C_n(v)$ of hyperbolic $n$-manifolds of volume $\le v$, considered up to commensurability. Obviously:
$$
 C_n(v)\le M_n(v)\le \rho_n(v).
$$

Furthermore, denote by $C_n^c(v)$ the number of {\it compact} manifolds, and by $C_n^{nc}(v)$ the number of {\it non-compact} ones (of volume $\le v$, up to commensurability). Then 
$$
 C_n(v)=C_n^c(v)+C_n^{nc}(v).
$$

A first lower bound was given by J. Raimbault\footnote{Raimbault's proof is also probabilistic. The reason that he obtained only exponential lower bound while we obtained super-exponential is that he constructed manifolds moduled over circles while we used general regular graphs. Both proofs were inspired by the construction from \cite{7S,7B} of IRS in $\SO(n,1)$ which is not contained in any lattice.} \cite{Raimbault} who proved  $\log C_n^c(v)\ge \text{Const}\cdot v$. Jointly with A. Levit \cite{ccc} we determined the precise growth:

\begin{thm}[Gelander--Levit 2014, \cite{ccc}]
For $n\ge 4$ we have
$$
 \log C_n^c(v)\approx \log C_n^{nc}(v)\approx v\log v.
$$ 
\end{thm}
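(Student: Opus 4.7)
The upper bounds $\log C_n^c(v), \log C_n^{nc}(v) \lesssim v\log v$ are immediate from the BGLM estimate $\log \rho_n(v) \approx v\log v$ stated above, since $C_n^{c/nc}(v)\le \rho_n(v)$. So the real content is the matching lower bound: I would exhibit $\exp(\Omega(v\log v))$ pairwise non-commensurable complete hyperbolic $n$-manifolds of volume at most $v$, separately in the compact and non-compact cases.

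The plan is to carry out a ``graph of manifolds'' refinement of the Gromov--Piatetski-Shapiro cut-and-paste. Fix a pair of non-commensurable arithmetic hyperbolic $n$-orbifolds as in the last Example above (the $Q_i$'s over $\BQ$ for the non-compact setting, and the $R_i$'s over $\BQ(\sqrt 2)$ for the compact one), cut them along isometric totally geodesic codimension-one submanifolds, and after possibly further cuts collect a finite library of hyperbolic $n$-manifolds-with-boundary each of whose boundary components is isometric to one fixed totally geodesic hypersurface $\Sigma$. Given a $3$-regular graph $\Gamma$ on $N$ vertices, together with a labeling of vertices by library elements and of edges by gluing data, I would glue one block per vertex along the edges of $\Gamma$ to produce a complete hyperbolic manifold $M_\Gamma$ of volume proportional to $N$. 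Choosing $N$ proportional to $v$ gives a family of candidates of volume $\le v$.

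The number of isomorphism classes of $3$-regular graphs on $N$ vertices is $\exp(\Theta(N\log N))$, which is already of the right order. Hence the theorem will follow once one bounds the number of manifolds in our family within any single commensurability class by some $\mathrm{poly}(N)$.

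This last step is the main obstacle. Each $M_\Gamma$ carries a canonical collection of totally geodesic codimension-one submanifolds, namely the gluing walls, and by item (3) of the GPS recipe each complementary piece is Zariski-dense in $SO(n,1)$. Consequently, by item (2) any commensurability between $M_\Gamma$ and $M_{\Gamma'}$ must arise from arithmetic identifications of the pieces, and by item (4) the labels (the $Q_i$ or $R_i$ forms) are then pinned down up to scalars. A Bass--Serre style argument then converts such an identification into an isomorphism of the decorated graphs, modulo finite symmetries (block automorphisms, Galois twists of the gluings, and the bounded commensurator of each piece). Bounding this finite ambiguity by $\mathrm{poly}(N)$ leaves $\exp(\Omega(N\log N))$ commensurability classes, proving the lower bound in both the compact and non-compact settings simultaneously.
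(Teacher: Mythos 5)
Your high-level plan --- a graph-of-manifolds construction from a small library of arithmetic blocks, with non-commensurability of the output governed by the canonicity of the cutting walls --- is the same as the paper's. But two places are glossed over and in fact contain the real work.

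First, your invariant for distinguishing the blocks does not survive the case of even $n$. Item (4) of the GPS recipe (the discriminant valued in $k^*/(k^*)^2$) is a commensurability invariant only when $n+1$ is \emph{even}, i.e.\ $n$ odd, because only then is it preserved under scaling. For even $n$ the discriminant is useless, and \cite{ccc} has to switch to Legendre and Hilbert symbols, using a curated list of six forms $Q_p$ (non-compact) resp.\ $R_p$ (compact) indexed by primes for which a classical criterion of Gauss (whether $2$ is a quartic residue mod $p$, equivalently whether $p=x^2+64y^2$) certifies pairwise non-commensurability. This is also why the construction needs six building blocks rather than the two you carry over from the example. Your proposal as written only produces the odd-dimensional part of the lower bound.

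Second, the step you call the ``main obstacle'' --- that any commensuration of $M_\Gamma$ and $M_{\Gamma'}$ respects the graph decomposition, so that a per-class bound of $\mathrm{poly}(N)$ holds --- is exactly the hard point, and the ``Bass--Serre style argument'' is not an argument. A commensuration is only a diffeomorphism between finite covers $W\to M_\Gamma$, $W\to M_{\Gamma'}$, and one must first show that the decomposition of $M_\Gamma$ into arithmetic pieces lifts \emph{canonically} to $W$, i.e.\ that lifts of two pieces belonging to non-commensurable arithmetic blocks cannot overlap. This is precisely Lemma~\ref{lemma:twononcommensurablearedisjoint}, which rests on the geometric fact (Proposition~\ref{prop:proper_submanifold_has_finite_volume}) that a properly embedded totally geodesic submanifold of a finite-volume hyperbolic manifold has finite volume, combined with Borel density and Borel--Harish-Chandra. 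Your appeal to items (2) and (3) of the GPS recipe does not by itself give this: Zariski density tells you two arithmetic pieces that overlap on a set with interior are commensurable, but you still need the volume statement to conclude that pieces from \emph{non}-commensurable blocks can only meet in measure zero. Once canonicity of the decomposition is secured, the paper does not try to bound fiber sizes; instead it passes to the \emph{colored} graph and invokes Leighton's theorem (two finite colored graphs have a common finite cover iff their colored universal covering trees are isomorphic), then constructs $\exp(\Omega(N\log N))$ colored $4$-regular graphs with pairwise non-isomorphic colored universal covers by elementary asymptotic group theory. That route avoids having to control automorphism groups or commensurators of the $M_\Gamma$, which is what makes your $\mathrm{poly}(N)$ claim delicate. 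You should replace the Bass--Serre heuristic with a clean reduction to colored-graph commensurability plus Leighton.
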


Another number that one may wish to consider is that of torsion-free lattices of covolume $\le v$ considered up to quasi-isometries, denoted $QI_n(v)$. However, while all the cocompact lattices in $SO(n,1)$ are quasi-isometric to each other, a beautiful theorem of R. Schwartz \cite{Schwartz} says that two non-co-compact lattices in $SO(n,1)$ are quasi-isometric iff they are, up to conjugacy, commensurable. I.e.
$$
 QI_n(v)=C_n^{nc}(v)+1,
$$
whenever $v$ is at least the minimal volume of a compact hyperbolic $n$-manifold.

%%%%

\subsection*{Graphs of spaces}

Let me now explain how one constructs plenty of non-commensurable manifolds. The idea is to construct `random' manifolds modeled over regular graphs. 
Starting with two copies of non-commensurable manifolds with isometric boundaries consisting of $4$ components each, 
\begin{center}
\includegraphics[height=4cm]{./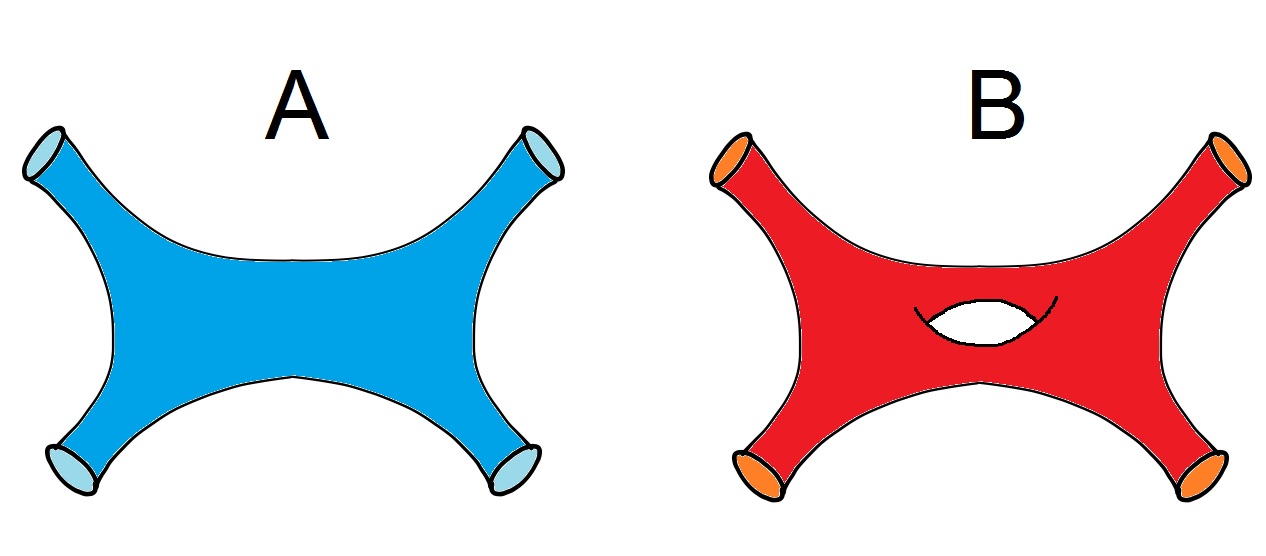}
\end{center} 

for every finite $4$-regular graph, e.g.

\begin{center}
\includegraphics[height=6cm]{./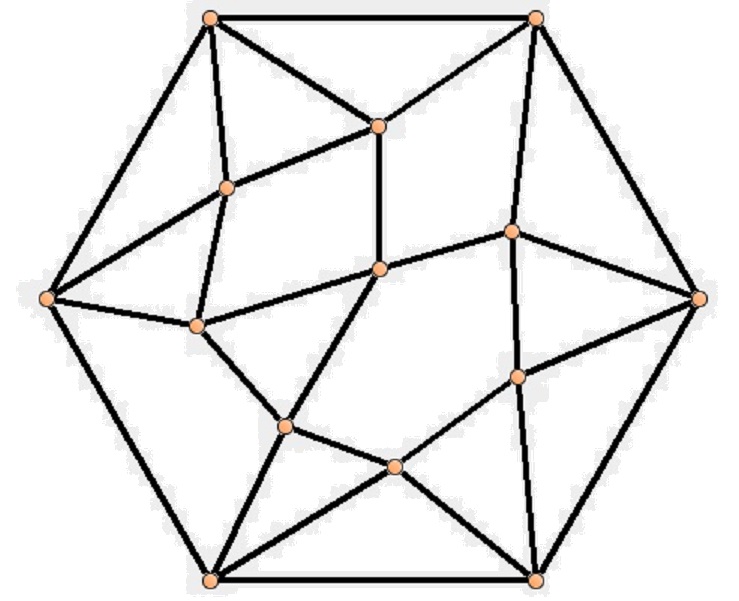}
\end{center}
 
we replace each vertex by a random copy of either $A$ or $B$ above

\begin{center}
\includegraphics[height=6cm]{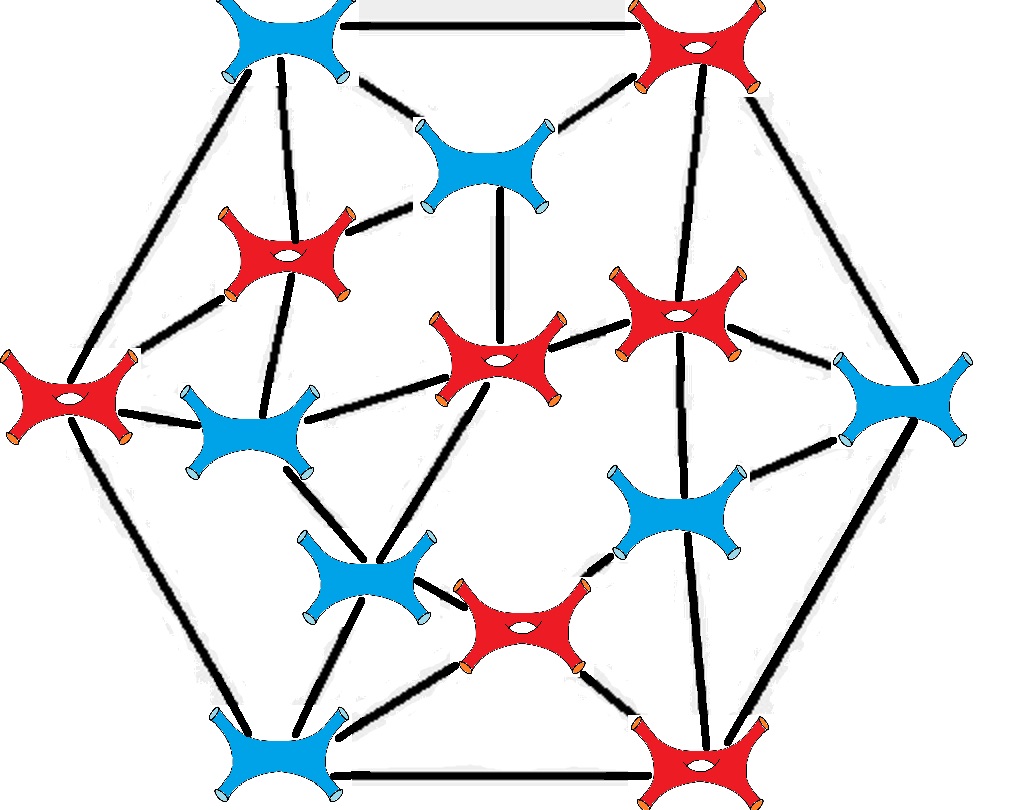}
\end{center} 
 and glue boundary components following the edges pattern.

By a simple counting argument, the logarithm of the number of $4$-regular graphs with $v$ vertices is $\approx v\log v$.\footnote{$4$-regular graphs can be thought of as (uncolored) Schrier graphs of $F_2$. The latter are in bijection with transitive actions on the set of $v$ elements, i.e. with maps $F_2\to \text{Sym}(v)$. Ignoring the transitivity and the coloring issues, there are $v!^2$ such maps.}
 
However, they are all commensurable. By Leighton's theorem \cite{Le}, two colored finite graphs are commensurable iff they share the same universal covering colored 
 tree. 
 
Using elementary asymptotic group theory, we are able to construct sufficiently many non-commensurable colored graphs. However, it is not clear that manifolds constructed over non-commensurable graphs cannot be commensurable as manifolds. Let me mention a geometric lemma that helps us dealing with this issue:

\begin{lemma}
\label{lemma:twononcommensurablearedisjoint}
Let $X_1$ and $X_2$ be two submanifolds with totally geodesic boundaries and finite volume inside two non-commensurable arithmetic hyperbolic $n$-manifolds. Assume either that $\partial X_1$ and $\partial X_2$ are compact and $n \ge 3$, or that $\partial X_1$ and $\partial X_2$ have finite volume and $n \ge 4$.

If $W$ is any complete hyperbolic manifold with two embedded submanifolds $U_1, U_2 \hookrightarrow W$ that admit finite isometric covers $p_i : U_i \to X_i$ for $i=1,2$, then the intersection $U_1 \cap U_2$ has an empty interior.
\end{lemma}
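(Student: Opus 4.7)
My plan is to argue by contradiction: assume $U_1\cap U_2$ has non-empty interior and deduce that the ambient arithmetic manifolds $M_1\supset X_1$ and $M_2\supset X_2$ are commensurable, contradicting the hypothesis.

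Lift to the universal cover $\tilde W=\mathbb{H}^n$, pick a basepoint $p$ in the interior of $U_1\cap U_2$ with a lift $\tilde p\in\mathbb{H}^n$, and let $\tilde U_i$ be the lift of $U_i$ containing $\tilde p$. Each $\tilde U_i$ is a closed convex subset of $\mathbb{H}^n$ bounded by totally geodesic hyperplanes, and $\pi_1(U_i)=\mathrm{Stab}_{\pi_1(W)}(\tilde U_i)$ acts on it with quotient $U_i$. The finite cover $p_i\colon U_i\to X_i$ identifies $\pi_1(U_i)$ with a finite-index subgroup of $\pi_1(X_i)$, so after conjugating the holonomy representation of $\pi_1(W)$ appropriately I may regard $\pi_1(U_i)\subseteq\Gamma_i'$, where $\Gamma_i'$ is a conjugate of the arithmetic lattice $\Gamma_i\le SO(n,1)$ defining $M_i$. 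Item~(3) of the Gromov--Piatetski-Shapiro recipe shows that $\pi_1(X_i)$, and hence $\pi_1(U_i)$, is Zariski dense in $SO(n,1)$.

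The crux of the proof is to show that the intersection $L:=\pi_1(U_1)\cap\pi_1(U_2)$, viewed as a subgroup of $SO(n,1)$, is itself Zariski dense. Geometrically, $L$ is the stabilizer in $\pi_1(W)$ of the convex region $Y:=\tilde U_1\cap\tilde U_2$, which has non-empty interior (containing a neighborhood of $\tilde p$), and the quotient $L\backslash Y$ is isometric to the connected component of $U_1\cap U_2$ containing $p$ -- in particular it has finite volume, inherited from $U_1$ and $U_2$. Granting Zariski density of $L$, the argument concludes at once: $L\subseteq\Gamma_1'\cap\Gamma_2'$ is Zariski dense in $SO(n,1)$, so item~(2) of the recipe forces $\Gamma_1'$ and $\Gamma_2'$ to be commensurable; conjugating back, $M_1$ and $M_2$ are commensurable, yielding the required contradiction.

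The hard part is establishing Zariski density of $L$: each $\pi_1(U_i)$ is Zariski dense individually by item~(3), but a priori the intersection of two Zariski dense subgroups of $SO(n,1)$ can be thin. The dimension restrictions are placed precisely to rule this out. Since $L$ acts on the convex region $Y$ with finite-volume quotient, $L$ is geometrically finite, and its limit set $\Lambda(L)\subset\partial\mathbb{H}^n$ controls its Zariski closure; one would argue that $\Lambda(L)$ cannot be contained in any proper round subsphere of $\partial\mathbb{H}^n$, so by a standard rigidity criterion for subgroups of $SO(n,1)$ one concludes Zariski density. In the compact-boundary case ($n\ge 3$) the group $L$ is convex cocompact and a direct limit-set argument works; in the cusped case one needs $n\ge 4$ so that horospherical cross-sections at the cusps retain enough dimension to propagate Zariski density through the cusps and prevent $\Lambda(L)$ from collapsing into a lower-dimensional subsphere.
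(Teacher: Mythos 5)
Your overall strategy---deducing a contradiction by showing that $\Gamma_1$ and $\Gamma_2$ have a common Zariski-dense subgroup and then invoking items~(1)--(3) of the Gromov--Piatetski-Shapiro recipe---is sensible and consistent with the tools the paper names (Borel density, Borel--Harish-Chandra). However, the argument as written has a genuine gap at the step you yourself flag as ``the hard part'': the Zariski density of $L=\pi_1(U_1)\cap\pi_1(U_2)$.

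The problem is that nothing in your setup forces $L$ to be large; a priori it could be finite or even trivial. You deduce that $L\backslash Y$ has finite volume because it is contained in $U_1$, which is correct, but this tells you nothing about the size of $L$: if the convex region $Y=\tilde U_1\cap\tilde U_2$ itself has finite volume, then $L\backslash Y$ has finite volume even when $L=\{1\}$, and the limit set $\Lambda(L)$ is then empty and is contained in every proper subsphere. So the ``standard rigidity criterion via limit sets'' you invoke cannot get off the ground, and the role you assign to the hypotheses $n\ge 3$ (compact boundary) or $n\ge 4$ (finite-volume boundary) --- ``propagating Zariski density through the cusps'' --- does not address this basic failure mode. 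You would first have to show that $L$ is infinite, non-elementary, and has no invariant totally geodesic proper subspace, none of which is established.

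The paper handles this differently: rather than working with the intersection of fundamental groups directly, it reduces the lemma to Proposition~\ref{prop:proper_submanifold_has_finite_volume}, the geometric fact that a properly embedded totally geodesic $k$-submanifold ($k>1$) of a complete finite-volume hyperbolic manifold must itself have finite volume. The idea is to look at the totally geodesic hypersurface pieces of $\partial U_2$ that lie in the interior of $U_1$; via $p_1$ these give properly embedded totally geodesic hypersurfaces inside $M_1$, which by the proposition are finite-volume and hence (by Borel density and Borel--Harish-Chandra) have arithmetic, Zariski-dense hyperplane stabilizers in $\Gamma_1$; and the same hyperplane, viewed through $p_2$ as part of $\partial X_2$, is related to the arithmetic of $\Gamma_2$. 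The dimension restrictions enter through that proposition and through the compatibility of the hypersurface arithmetic with the ambient one, not through a limit-set argument about $L$. In short, your approach would need an additional non-trivial input to control $L$, and the proposition the paper singles out is precisely the geometric input your sketch is missing.
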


Making use of the Borel density theorem (lattices are Zariski dense) and the Borel--Harish-Chandra theorem (see Item (1) in the recipe of the proof of GPS above) the proof of the last lemma is reduced to the following geometric fact which is of independent interest:

\begin{prop}
\label{prop:proper_submanifold_has_finite_volume}
Let $M$ be an $n$-dimensional complete finite volume hyperbolic manifold without boundary and let $N$ be a properly embedded totally geodesic $k$-dimensional sub-manifold with $1<k\le n$. Then $N$ has finite volume.
\end{prop}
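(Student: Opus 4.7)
My plan is to work in the universal cover. Realize $M=\Gamma\backslash\HH{n}$ for $\Gamma\le\mathrm{Isom}(\HH{n})$ a lattice, fix a lift $\tilde N_0\cong\HH{k}$ of $N$ in $\HH{n}$, and set $\Lambda:=\mathrm{Stab}_\Gamma(\tilde N_0)$, so that $N=\Lambda\backslash\tilde N_0$. It suffices to show that $\Lambda$ has finite covolume in $\mathrm{Isom}(\tilde N_0)$; equivalently, that $N$ is geometrically finite with limit set $L(\Lambda)=\partial\tilde N_0$. Fix a Margulis constant $\epsilon>0$ for $\HH{n}$, so $M_{\ge\epsilon}$ is compact and $M_{<\epsilon}$ is a finite disjoint union of cusp neighborhoods and Margulis tubes. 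Properness of the embedding immediately gives that $N\cap M_{\ge\epsilon}$ is compact.

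Next I would analyze $N$ inside each cusp of $M$ (the Margulis tube case is analogous and easier). Fix a cusp of $M$ at a parabolic fixed point $p\in\partial\HH{n}$, with stabilizer $\Gamma_p$ and horoball $B_p$, and let $S_p=\Gamma_p\backslash\partial B_p$ be the compact flat $(n-1)$-dimensional cross-section. The preimage of $N\cap S_p$ in $\partial B_p$ is the union, indexed by cosets $g\Lambda\in\Gamma/\Lambda$, of the sets $g\tilde N_0\cap\partial B_p$; each of these is a $(k-1)$-dimensional flat subspace $V_g$ if $p\in\partial(g\tilde N_0)$ and a bounded sphere otherwise. Since $N\hookrightarrow M$ is proper, $N\cap S_p$ is a closed $(k-1)$-submanifold of the compact $S_p$, hence compact with finitely many components. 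This immediately yields two crucial consequences: (i) only finitely many $\Gamma_p$-orbits of parallel lifts $g\tilde N_0$ contribute, and (ii) for each such lift the stabilizer $g\Lambda g^{-1}\cap\Gamma_p$ acts cocompactly on $V_g\cong\RR^{k-1}$. Using (ii) together with the hypothesis $k\ge 2$ (which makes $\int_0^\infty e^{-(k-1)t}\,dt$ converge), each parallel lift produces a standard cusp of $N$ over $p$ of finite $k$-volume, and by (i) there are only finitely many cusps of $N$ above each cusp of $M$.

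For the limit set I would argue by contradiction. If $q\in\partial\tilde N_0$ were a conical limit point of $\Gamma$ that is not in $L(\Lambda)$, pick a basepoint $o\in\tilde N_0$ and $\gamma_i\in\Gamma$ with $\gamma_i o\to q$ conically along a geodesic inside $\tilde N_0$. The nearest-point projections $p_i\in\tilde N_0$ then satisfy $d(\gamma_i o,p_i)\le C$, so their images in $M$ remain bounded; on the other hand their images in $N$ must escape every compact set, for otherwise one produces $\lambda_i\in\Lambda$ with $\lambda_i^{-1}\tilde y\to q$ for some $\tilde y\in\tilde N_0$, forcing $q\in L(\Lambda)$. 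This contradicts properness. Since $L(\Gamma)=\partial\HH{n}$ for the lattice $\Gamma$ and decomposes into conical limit points together with the $\Gamma$-orbits of the finitely many parabolic fixed points, combining with the cusp step yields $L(\Lambda)=\partial\tilde N_0$. Thus $N$ is geometrically finite with full limit set and only standard cusps, and hence has finite volume.

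The main obstacle is establishing cocompactness (ii) in the cusp step. A naive approach producing an escaping sequence on $V_g$ has to rule out that non-parabolic elements of $\Lambda$ could pull the sequence back to a bounded region of $N$; the detour via the closedness of the image of $V_g$ in the compact cross-section $S_p$ cleanly circumvents this and is the essential use of the hypothesis that the embedding is proper. The limit set argument and the finiteness of contributing lifts are then consequences of properness combined with the compactness of the horosphere cross-sections.
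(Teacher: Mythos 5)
The paper states this proposition without proof (it is imported from \cite{ccc}), so there is no internal argument to compare against; I will instead assess your proposal on its own terms. The overall strategy — set $\Lambda = \mathrm{Stab}_\Gamma(\tilde N_0)$, use the thick-thin decomposition of $M$, get compactness of $N\cap M_{\ge\epsilon}$ from properness, and then control $N$ inside each $M$-cusp via the compact horosphere cross-section $S_p$ — is sound and is almost certainly the intended geometric argument. In particular, your identification of the cross-section preimage as a union of $(k-1)$-flats (for lifts with $p\in\partial(g\tilde N_0)$) and $(k-1)$-spheres (otherwise), and the use of $k\ge 2$ only in the convergence of $\int_0^\infty e^{-(k-1)t}\,dt$, are exactly right, and you correctly flag cocompactness of the parabolic stabilizers on the flats $V_g$ as the crux.

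There are, however, two places where "immediately yields" oversells what has been shown, and a third point of logical organization. First, to say that $N\cap S_p$ is a closed $(k-1)$-\emph{submanifold} of $S_p$ you need $N$ to be transverse to the horosphere; a totally geodesic $\HH{k}$ can be tangent to a horosphere (at the top of a hemisphere), so you should perturb the horoball level generically (only countably many levels are bad). Second, and more substantively, knowing that the injective map $\Lambda_p\backslash V_g \to N\cap S_p$ has image contained in a compact set does \emph{not} by itself force $\Lambda_p\backslash V_g$ to be compact — a continuous injection from a non-compact manifold into a compact set is perfectly possible. What saves the argument is one of the following: (a) the family $\{g\tilde N_0\}$ is locally finite in $\HH{n}$ (this is automatic because $N$ is a \emph{closed embedded} submanifold of $M$, so its full preimage is a closed embedded submanifold of $\HH{n}$), from which one deduces both that $\pi(V_g)$ is closed in $S_p$ and that the injective local homeomorphism $\Lambda_p\backslash V_g \to N\cap S_p$ is open, so $\pi(V_g)$ is open and closed, i.e.\ a full compact component, and the map is a homeomorphism onto it; or (b) more directly, $\Lambda_p\backslash V_g$ is a closed subset of $N$ (again via local finiteness of $\Lambda\cdot V_g$ inside $\tilde N_0$), so the restriction of the proper map $N\hookrightarrow M$ to it is proper, and since all of $\Lambda_p\backslash V_g$ maps into the compact $S_p$, it is compact. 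You gesture at this with "the detour via closedness of the image," but as written (ii) is claimed as immediate when it in fact requires this extra step. Finally, the entire limit set paragraph is logically redundant: once you know $N\cap M_{\ge\epsilon}$, $N\cap(\text{tubes})$, and $N\cap(\text{each cusp})$ all have finite volume and there are finitely many thin components, you are done; you do not need to separately establish $L(\Lambda)=\partial\tilde N_0$ or geometric finiteness, and your stated equivalence "finite covolume $\Leftrightarrow$ geometrically finite with full limit set" is only true after one also knows the cusps are full-rank — which is exactly what the cusp step produces, making the framing somewhat circular. None of these gaps is fatal, but the cocompactness step should be spelled out along the lines of (a) or (b) above.
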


\subsection*{The building blocks}

Some technical issues lead us to use $6$ building blocks, instead of $2$. Moreover, we produce one admissible parcel of compact building blocks, and another one consisting of non-compact building blocks, establishing the desired lower bounds on $C_n^c(v)$ as well as on $C_n^{nc}(v)$.

The harder case is the even dimension as the discriminant is useless. Instead, we address the Legendre and the Hilbert symbols. In the non-compact case, we may then use
$$
 Q_p=px_1^2+x_2^2+\ldots+x_n^2-2x_{n+1}^2,~\text{with}~p=5,13,29,37,53,61.
$$
In the compact case we use
$$
 R_p=px_1^2+x_2^2+\ldots+x_n^2-\sqrt{2}x_{n+1}^2,~\text{with}~p=17,41,97,137,193,241.
$$
The proof that the associated manifolds are non-commensurable relies on the following:

\begin{thm}[Gauss]
For prime $p\equiv 1(mod~4)$ the equation $x^4=2$ has an integer solution modulo $p$ iff $p=x^2+64y^2$ for some $x,y\in\NN$.
\end{thm}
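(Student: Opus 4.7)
The plan is to prove this via quartic residues in the Gaussian integers $\BZ[i]$. Since $p\equiv 1\pmod 4$, Fermat's two-square theorem gives a factorisation $p=\pi\bar\pi$ in $\BZ[i]$, where $\pi=a+bi$ is a Gaussian prime with $a$ odd and $b$ even, so in particular $p=a^{2}+b^{2}$. After multiplying $\pi$ by a suitable unit I may assume $\pi$ is \emph{primary}, i.e.\ $\pi\equiv 1\pmod{(1+i)^{3}}$, which fixes the residues of $a$ and $b$ modulo $4$. The reduction map $\BZ[i]\to\BZ[i]/(\pi)$ identifies the residue field with $\Fp$, so an integer $n$ coprime to $p$ is a fourth power modulo $p$ if and only if the quartic residue symbol $\left(\tfrac{n}{\pi}\right)_{4}$ equals $1$.

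The key step is then to evaluate $\left(\tfrac{2}{\pi}\right)_{4}$. I would reduce the problem to the ramified prime $1+i$ via the factorisation $2=-i(1+i)^{2}$; multiplicativity of the symbol gives
\[
 \left(\tfrac{2}{\pi}\right)_{4} \;=\; \left(\tfrac{-i}{\pi}\right)_{4}\cdot \left(\tfrac{1+i}{\pi}\right)_{4}^{\,2}.
\]
The first factor is handled by the supplementary law for units and depends only on $a\pmod 4$, hence is fixed by the primary normalisation. For the second factor one applies Gauss's supplementary law for $1+i$, which expresses $\left(\tfrac{1+i}{\pi}\right)_{4}$ as an explicit power of $i$ depending only on $a$ and $b$ modulo $8$. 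After squaring, the $a$-dependence cancels against the unit factor and one is left with the clean criterion
\[
 \left(\tfrac{2}{\pi}\right)_{4} = 1 \iff b\equiv 0\pmod 8.
\]

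To finish, I would note that the representation $p=a^{2}+b^{2}$ with $a$ odd and $b$ even is essentially unique (up to signs and swapping), so $p=x^{2}+64y^{2}$ for some $x,y\in\NN$ is equivalent to $8\mid b$ in the chosen normalisation. Combined with the equivalence displayed above, this proves the theorem. The main obstacle I expect is the explicit evaluation of the supplementary symbol $\left(\tfrac{1+i}{\pi}\right)_{4}$: this is where the substance of Gauss's theorem really lives, and it requires a careful Gauss-lemma style counting argument in $\BZ[i]$ together with the primary normalisation (in modern treatments one derives it as a special case of quartic reciprocity for the ramified prime $1+i$, but in any elementary approach this step carries essentially all of the arithmetic content).
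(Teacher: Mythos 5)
The paper does not prove this theorem: it is cited as a classical result of Gauss and used as a black box in the argument that the building-block forms $R_p$ give pairwise non-commensurable manifolds. So there is no paper proof to compare against.

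Your sketch is the standard modern route to Gauss's biquadratic criterion, and the arithmetic is sound as far as it goes. Writing $p=a^2+b^2$ with $a$ odd and $b$ even, normalising $\pi=a+bi$ to be primary, and reducing $\left(\frac{2}{\pi}\right)_4$ via $2=-i(1+i)^2$ to the two supplementary symbols is exactly how one usually proves the statement; the conclusion $\left(\frac{2}{\pi}\right)_4=1\iff 8\mid b$ is correct, and the final translation to $p=x^2+64y^2$ via essential uniqueness of the two-square representation is right. The one thing to flag is what you already flag yourself: the evaluation of $\left(\frac{1+i}{\pi}\right)_4$ (and of the unit symbol) is asserted, not proved, and that is where essentially all of the number-theoretic content of Gauss's theorem sits. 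As a proof \emph{outline} this is fine and faithful to the standard treatment (Ireland--Rosen, Lemmermeyer, etc.); as a self-contained proof it is incomplete until one supplies the Gauss-lemma-style counting argument, or quartic reciprocity for the ramified prime, that pins down those supplementary symbols.
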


%%%%%%%%%%%%%%%
%%%%%%%%%%%%%%%
%%%%%%%%%%%%%%%

\section{Invariant Random Subgroups}\label{sec:IRS}

The results described in the previous section were established by considering the uniform measure on the finite set of $4$-regular graphs of a given size. This corresponds to a very primitive example of an IRS in $\SO(n,1)$. We will now consider general IRS, which we will define for every locally compact second countable group. For a more comprehensive exposition of IRS we refer to the lecture notes \cite{G} (or \cite{ICM}).

Consider a locally compact second countable group $G$. We denote by $\Sub(G)$ the Chabauty space of closed subgroups of $G$. The Chabauty topology on $\Sub(G)$ is metrizable and can be defined as follows. Let $d$ be a proper metric on $G$ which is compatible with the topology, and define the distance $\rho(H_1,H_2)$ between two closed subgroups $H_1,H_2$ by:
$$
 \rho(H_1,H_2):=\int_0^\infty \text{Hd}(H_1\cap B(1,r),H_2\cap B(1,r))e^{-r}dr,
$$
where $B(1,r)$ is the closed $r$-ball around the identity in $G$ and $\text{Hd}$ is the Hausdorff distance defined on compact sets 
$$
 \text{Hd}(A_1,A_2)=\max \{d(x,A_i): i=1,2~\text{and}~x\in A_{3-i}\}.
$$ 
Thus $H_1$ and $H_2$ are close to each other if their intersections with a large ball in $G$ are close in the Hausdorff distance. 

The space $\Sub(G)$ is always compact but in general it is rather complicated. However, it is a canonical $G$ space as $G$ acts on it by conjugation. 

\begin{defn}
An IRS is a $G$-invariant Borel regular probability measure on $\Sub(G)$. 
\end{defn}

Basic examples of IRS come from normal subgroups and lattices:
\begin{enumerate}
\item 
A Dirac IRS, $\delta_N$ corresponds to a closed normal subgroup $N\lhd G$. 
\item
Let $\Gamma\le G$ be a lattice and let $\mu$ be the $G$-invariant probability measure on $G/\Gamma$. Consider the map 
$ G/\Gamma\to\Sub(G),~g\Gamma\mapsto g\Gamma g^{-1}$ and set $\mu_\Gamma$ as the push-forward of $\mu$. Then $\mu_\Gamma$ is an IRS associated with the conjugacy class of $\Gamma$ and a $\mu_\Gamma$-random subgroup is a conjugate of $\gC$.
\end{enumerate}

For instance let $\Sigma$ be a closed hyperbolic surface and normalize its Riemannian measure. Every unit tangent vector yields an embedding of $\pi_1(\Sigma)$ in $PSL_2(\RR)$. Thus the probability measure on the unit tangent bundle corresponds to an IRS of Type (2) above.  

\begin{center}
\includegraphics[height=4cm]{./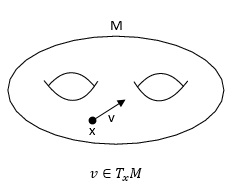}
\end{center}

Remarkably, the study of IRS plays an important role in the development of the modern theory of lattices and their asymptotic invariants. We shall give several illustrations for this phenomenon in subsequent sections. 

IRS are strongly related to p.m.p actions. Indeed, given a probability space $(X,m)$ and a measure preserving $G$-action $G\curvearrowright X$, it follows by Varadarajan compact model \cite{Vara} that the stabilizer of almost every point in $X$ is closed in $G$. Thus the push-forward of $m$ under the stabilizer map $x\mapsto G_x$ is an IRS in $G$. By the following theorem every IRS arises in this way:

\begin{thm}\label{thm:satbilizers}\cite[Theorem 2.6]{7A}
Let $G$ be a locally compact group and $\mu$ an IRS in $G$. Then there is a probability space $(X,m)$ and a measure preserving action $G\curvearrowright X$ such that $\mu$ is the push-forward of the stabilizer map $X\to\text{Sub}(G)$.
\end{thm}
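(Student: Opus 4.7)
The plan starts from the observation that the naive candidate almost works. On $X_0=\Sub(G)$ the conjugation action is pmp with respect to $\mu$ (this is exactly the definition of an IRS), and the stabilizer at $H$ is its normalizer $N_G(H)$. The obstruction is that $N_G(H)$ strictly contains $H$ unless $H$ is self-normalizing, so the stabilizer push-forward is in general not $\mu$ but a ``normalizer thickening'' of it. The entire task is therefore to kill the extra piece $N_G(H)/H$ in the stabilizer of a fibered point sitting over $H$.

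To do this I fiber over $\Sub(G)$: for each $H$ pick a probability space $(F_H,\lambda_H)$ carrying a \emph{free} pmp action of the locally compact Polish group $N_G(H)/H$ (for instance the Bernoulli $[0,1]^{N_G(H)/H}$-shift). Set
$$
X \;=\; \bigsqcup_{H\in\Sub(G)} \{H\}\times F_H,\qquad m\;=\;\int_{\Sub(G)}\lambda_H\, d\mu(H),
$$
and define the $G$-action so that $g$ maps the fiber over $H$ to the fiber over $gHg^{-1}$ by a measure-preserving isomorphism $\phi^g_H:F_H\to F_{gHg^{-1}}$ intertwining the conjugation isomorphism $n\mapsto gng^{-1}$ between $N_G(H)$ and $N_G(gHg^{-1})$. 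IRS-invariance of $\mu$ together with the invariance of each $\lambda_H$ under its respective quotient action then makes $m$ a $G$-invariant probability measure on $X$. A direct computation shows that the stabilizer of a typical point $(H,\xi)$ is exactly $H$: an element $g$ fixes $(H,\xi)$ iff $g\in N_G(H)$ and $gH\in N_G(H)/H$ fixes $\xi$, and freeness forces $g\in H$. Consequently the stabilizer map $X\to\Sub(G)$ pushes $m$ forward to $\mu$, as required.

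The main technical obstacle is assembling the family $\{(F_H,\lambda_H)\}_{H\in\Sub(G)}$ together with the cocycle $(g,H)\mapsto\phi^g_H$ in a globally Borel, $G$-equivariant, cocycle-consistent fashion. The standard workaround is to first apply Varadarajan's compact-model theorem \cite{Vara} to realize $(\Sub(G),\mu)$ as a Borel $G$-space inside a compact metric $G$-space, then to take a Borel section of the conjugation $G$-orbit decomposition on $\Sub(G)$, and on each orbit transport a fixed reference free action of $N_G(H_0)/H_0$ via the conjugation isomorphism. Standard measurable-selection results (Kuratowski--Ryll-Nardzewski, together with the ergodic decomposition of $\mu$) then produce the required Borel equivariant bundle, at which point the construction above delivers the sought-for pmp $G$-space $(X,m)$.
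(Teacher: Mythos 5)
Your overall strategy --- fiber over $\Sub(G)$ and ``soak up'' the excess stabilizer $N_G(H)/H$ with a fiberwise free pmp action --- is exactly the idea used in \cite{7S}. But both ingredients you propose for carrying it out break in the generality of the theorem. First, the fiber model: for a locally compact non-discrete quotient $Q=N_G(H)/H$, the Bernoulli space $[0,1]^{Q}$ with the product $\sigma$-algebra is not a standard probability space and the translation action is not a pmp action in the required (jointly Borel, measure-preserving on a Lebesgue space) sense; the standard free fiber model in this setting is a Poisson point process over $Q$ (or over $Q\times\mathbb{R}$ with a suitable $\sigma$-finite invariant measure), which is precisely the device the paper employs. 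Second, and more seriously, your assembly scheme --- ``take a Borel section of the conjugation $G$-orbit decomposition on $\Sub(G)$ and transport a reference free action along it'' --- presupposes that the orbit equivalence relation of $G\curvearrowright\Sub(G)$ is smooth. For a typical non-atomic ergodic IRS it is not (Glimm--Effros dichotomy), so no such Borel transversal exists, and Kuratowski--Ryll-Nardzewski does not rescue this: you cannot make a measurable choice of base point on each orbit. The invocation of Varadarajan's compact model is also a non-step here, since $\Sub(G)$ is already a compact metric $G$-space.

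The paper's proof sidesteps both obstructions by never selecting orbit representatives: it works with the intrinsic bundle $\text{Cos}_G\to\Sub(G)$ of cosets of closed subgroups, which is a genuinely Borel $G$-bundle (no section needed), endows $\text{Cos}_G\times\mathbb{R}$ with a $G$-invariant $\sigma$-finite measure (the extra $\mathbb{R}$ factor absorbs the Radon--Nikodym cocycle so that the fiber measures can be arranged to be invariant and infinite), and then replaces each fiber by a Poisson process, which is a standard free pmp action for arbitrary $\sigma$-finite non-atomic base measures. That yields a standard pmp $G$-space whose stabilizer at a generic point over $H$ is exactly $H$, with no recourse to orbit transversals or Bernoulli models. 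So you have correctly diagnosed the obstruction (normalizer versus subgroup) and the cure (free fibers), but your specific construction of the fibers and of the equivariant bundle would need to be replaced by the coset-bundle-plus-Poisson-process argument to actually work at the stated level of generality.
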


This was proved in \cite{AGV} for discrete groups and in \cite[Theorem 2.4]{7S} for general $G$. The first thing that comes to mind is to take the given $G$ action on $(\text{Sub}(G),\mu)$, but then the stabilizer of a point $H\in\text{Sub}(G)$ is the normalizer $N_G(H)$ of $H$ rather than $H$. To correct this one considers the larger space $\text{Cos}_G$ of all cosets of all closed subgroups, as a measurable $G$-bundle over $\text{Sub}(G)$. Defining an appropriate invariant measure on $\text{Cos}_G\times\mathbb{R}$ and replacing each fiber by a Poisson process on it, gives the desired probability space (see \cite[\S 2]{7S} for details).

\subsection*{IRS in semisimple groups}

Let us now restrict to the case where $G$ is a semisimple Lie group. Treating IRS as a generalization of lattices, a natural thing to do is to establish IRS analogs for theorems about lattices. An important example is the Borel density theorem.

\begin{thm}[Borel density theorem for IRS \cite{7A,GL}]\label{thm:BDT}
Let $G$ be a connected non-compact simple (center-free) Lie group. Let $\mu$ be an IRS on $G$ without atoms. Then a random subgroup is $\mu$-a.s. discrete and Zariski dense.  
 \end{thm}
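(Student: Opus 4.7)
\medskip
\noindent\textbf{Proof plan.} The strategy is to apply Furstenberg's measure rigidity lemma to two $G$-equivariant Borel maps from $\Sub(G)$ into Grassmannians of $\mathfrak{g}=\Lie(G)$, equipped with the adjoint $G$-action.

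For discreteness, consider
$$
 \Phi\colon\Sub(G)\to\bigsqcup_{k=0}^{\dim\mathfrak{g}}\Gr_k(\mathfrak{g}),\qquad H\mapsto \Lie(H^\circ),
$$
where $H^\circ$ is the identity component of $H$. This map is Borel and $G$-equivariant, so $\Phi_*\mu$ is an $\Ad(G)$-invariant probability measure on a real algebraic $G$-variety. Furstenberg's lemma then forces $\Phi_*\mu$ to be supported on the $\Ad(G)$-fixed points, i.e.\ on the ideals of $\mathfrak{g}$; by simplicity of $G$ the only such ideals are $0$ and $\mathfrak{g}$. The value $\Lie(H^\circ)=\mathfrak{g}$ forces $H^\circ=G$ (since $G$ is connected), hence $H=G$, which would be an atom of $\mu$ and is excluded by hypothesis. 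Consequently $\Lie(H^\circ)=0$ $\mu$-a.s., meaning $H^\circ=\{1\}$, i.e.\ $H$ is discrete $\mu$-a.s.

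The same argument applied to the Borel $G$-equivariant map $\Psi(H)=\Lie(\overline{H}^Z)$ (where $\overline{H}^Z$ denotes the Zariski closure) yields $\Lie(\overline{H}^Z)\in\{0,\mathfrak{g}\}$ $\mu$-a.s. The value $\mathfrak{g}$ forces $\overline{H}^Z\supseteq G^\circ=G$, so $H$ is Zariski dense, as required. The value $0$ forces $\overline{H}^Z$ to be $0$-dimensional, hence $H$ itself to be finite, and this case must be ruled out. The cardinality map $H\mapsto|H|$ is Borel and conjugation-invariant, so by ergodic decomposition I may assume $|H|$ is $\mu$-a.s.\ equal to some $n<\infty$. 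The case $n=1$ is excluded because $\mu$ has no atom at $\{1\}$. For $n\ge 2$, the normalizer $N_G(H)$ of any nontrivial finite $H\le G$ is virtually contained in the centralizer of some nontrivial element of $H$, and in a simple non-compact Lie group such a centralizer is a proper closed subgroup of infinite covolume; the $G$-orbit of $H$ in $\Sub(G)$ is then $G/N_G(H)$, which carries no finite $G$-invariant measure, contradicting the existence of $\mu$.

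The key analytic input is the Borel--Furstenberg measure rigidity statement that a $G$-invariant probability measure on an algebraic $G$-variety concentrates on the $G$-fixed set; this is what promotes conjugation invariance of $\mu$ into the sharp dichotomy $\Lie(H^\circ),\Lie(\overline{H}^Z)\in\{0,\mathfrak{g}\}$. The most delicate step is the final exclusion of positive $\mu$-mass on finite subgroups, where both simplicity and non-compactness of $G$ are essential to ensure that normalizers of nontrivial finite subgroups never have finite covolume.
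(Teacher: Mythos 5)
Your proof follows the paper's strategy: apply Furstenberg's lemma to the $G$-equivariant maps $H\mapsto\Lie(H)$ and $H\mapsto\Lie(\overline{H}^Z)$ into the Grassmannian of $\mathfrak{g}$, obtain the dichotomy $\{0,\mathfrak{g}\}$, and rule out the extremal values. The paper's sketch defers the final issue --- that an atom-free IRS cannot charge nontrivial finite subgroups --- to \cite[Lemma 2.10]{7A}, and you attempt to supply this step; that is the right instinct, but your argument for it has a gap.

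After conditioning on $|H|=n\ge 2$, you jump from ``each $G$-orbit $G/N_G(H)$ carries no finite invariant measure'' directly to ``contradicting the existence of $\mu$.'' This is a non sequitur: an ergodic invariant probability measure on the invariant Borel set $\{|H|=n\}$ need not be carried by a single $G$-orbit, so infinite covolume of the individual orbits does not by itself preclude such a measure --- one would also need the conjugation action on subgroups of order $n$ to be tame (orbits locally closed), which holds but requires an argument, e.g.\ via algebraicity of that action. (Your unproved assertion that $Z_G(h)$ has infinite covolume is correct: $h$ is elliptic, so $Z_G(h)$ contains a nontrivial torus of some maximal compact and in particular is neither discrete nor all of $G$, and then the Borel density argument for closed subgroups gives infinite covolume.) A route that avoids orbits entirely: push $\mu$ forward under $H\mapsto\frac{1}{|H|-1}\sum_{h\in H\setminus\{1\}}\delta_h$ and take barycentres to obtain a conjugation-invariant probability measure $\lambda$ on the nontrivial torsion elements of $G$; then push $\lambda$ forward under $h\mapsto Z_G(h)$ to produce an IRS concentrated on proper, positive-dimensional, Zariski-closed subgroups. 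This IRS has no atoms, since any atom of an IRS in a centre-free simple $G$ must be a normal subgroup and hence $\{1\}$ or $G$, both excluded here; that contradicts the discreteness half you have already established, making the argument self-strengthening.
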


\begin{proof}[Sketching a proof]
By the Von-Neumann--Cartan theorem, every closed subgroup of $G$ is a Lie group. 
Let $\text{Gr}(\text{Lie}(G))$ denote the Grassmanian variety of subspaces of the Lie algebra $\text{Lie}(G)$.
Consider the map $\psi:\Sub(G)\to\text{Gr}(\text{Lie}(G))$, $H\mapsto \text{Lie}(H)$. It is upper semi-continuous, hence measurable. Let $\nu=\psi_*(\mu)$. Then $\nu$ is $\text{Ad}(G)$-invariant and hence, by the Furstenberg lemma \cite{Fu}, is supported on $\{\text{Lie}(G),\{0\}\}$. Finally observe that $\psi^{-1}(\text{Lie}(G))=G$ and $\psi^{-1}(\{0\})$ is the set of discrete subgroups of $G$. Assuming that $G$ is not an atom of $\mu$ the result about discreteness follows. 

Arguing similarely with respect to the map $H\mapsto \text{Lie}(\overline{H}^Z)$ one obtains the result about Zariski density. For this part one has to show that an IRS in $G$ cannot be supported on finite groups (see \cite[Lemma 2.10]{7A}).
\end{proof}

We refer to \cite{7A,GL} for more details and more general versions including the case of semisimple groups over general local fields.  

Let us denote by $\Sub_d(G)$ the space of discrete subgroups of $G$. 
%Being a Lie group, $G$ has the NSS property (no small subgroups). That is, there is an identity neighborhood that contains no non-trivial subgroups. This implies that
 By Cartan's theorem, a closed subgroup of $G$ is a Lie group. Since the dimension is a semicontinuous function, the set
 $\Sub_d(G)$ of zero-dimensional subgroups is an open subset of the compact space $\Sub(G)$. We shall say that an IRS $\mu$ is discrete if $\mu(\Sub_d(G))=1$, and let $\text{IRS}_d(G)$ denote the space of discrete IRS of $G$.
Since $G$ is isolated in $\Sub(G)$ (see \cite{Za}, see \cite{G}) we obtain the following useful fact:

\begin{cor}\label{cor:compact}
Let $G$ be a connected non-compact simple Lie group. Then $\text{IRS}_d(G)$ is compact. 
\end{cor}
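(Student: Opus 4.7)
The plan is to exhibit $\text{IRS}_d(G)$ as a closed, hence compact, subset of the ambient space $\text{IRS}(G)$. For the ambient compactness: since the Chabauty space $\Sub(G)$ is compact, the space $P(\Sub(G))$ of Borel probability measures on it with the weak-$*$ topology is compact; moreover, conjugation-invariance is a closed condition in weak-$*$ (for each fixed $g \in G$ and continuous $f$, the equality $\int f\,d\mu = \int f \circ \mathrm{Ad}(g)\,d\mu$ is closed), so $\text{IRS}(G)$ is compact.

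To show $\text{IRS}_d(G)$ is closed in $\text{IRS}(G)$, let $\mu_n \in \text{IRS}_d(G)$ converge weakly-$*$ to some $\mu \in \text{IRS}(G)$. The subtlety is that $\Sub_d(G)$ is open but not closed in $\Sub(G)$, so Portmanteau only yields $\mu(\Sub_d(G)) \le \liminf_n \mu_n(\Sub_d(G)) = 1$, which is vacuous. To rule out a leakage of mass to positive-dimensional subgroups in the limit, I would proceed in two steps. First, invoking the isolation of $G$ in $\Sub(G)$ cited from \cite{Za}, the singleton $\{G\}$ is clopen, so $\mathbf{1}_{\{G\}}$ is continuous and $\mu(\{G\}) = \lim_n \mu_n(\{G\}) = 0$, since each $\mu_n$ assigns zero mass to $G$ (as $G \notin \Sub_d(G)$). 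Second, I would re-run the argument from the proof of Theorem~\ref{thm:BDT} on $\mu$: the pushforward of $\mu$ under $\psi : H \mapsto \mathrm{Lie}(H)$ is $\mathrm{Ad}(G)$-invariant on $\mathrm{Gr}(\mathrm{Lie}(G))$, hence by Furstenberg's lemma supported on the fixed set $\{\{0\}, \mathrm{Lie}(G)\}$. Pulling back, $\mu$ is supported on $\Sub_d(G) \cup \{G\}$, and combined with $\mu(\{G\}) = 0$ this yields $\mu(\Sub_d(G)) = 1$, i.e., $\mu \in \text{IRS}_d(G)$.

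The main obstacle is conceptual rather than technical: since $\Sub_d(G)$ fails to be closed, mass could a priori escape to non-discrete subgroups in the limit. Furstenberg's lemma collapses every such potential escape onto the single point $\{G\}$, and the isolation of $G$ in the Chabauty space then eliminates it. The remaining ingredients (compactness of $P(\Sub(G))$ and closedness of the invariance condition) are standard.
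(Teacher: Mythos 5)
Your proof is correct and follows exactly the route the paper indicates: the structural output of the Borel density argument (any IRS is supported on $\Sub_d(G)\cup\{G\}$ via the pushforward under $H\mapsto\mathrm{Lie}(H)$ and Furstenberg's lemma), combined with the Zassenhaus isolation of $G$ in $\Sub(G)$ which makes $\mathbf{1}_{\{G\}}$ continuous, shows $\text{IRS}_d(G)$ is closed in the compact space $\text{IRS}(G)$. The paper leaves this corollary as an immediate consequence of the preceding discussion, and your write-up supplies precisely the intended assembly of those ingredients.
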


\begin{rem}
For the simplicity of the exposition we supposed above that $G$ is simple rather than semisimple. However, both results can be extended to semisimple Lie groups without compact factors. 
We refer to \cite[Theorem 1.9]{GL} for the Borel density theorem (it is proved there for a wider class of semisimple analytic groups over local fields) and to \cite[Proposition 2.2]{WUD} for the analog of Corollary \ref{cor:compact} in that context.
\end{rem}

%%%%%%%%%%%%%%%
%%%%%%%%%%%%%%%
%%%%%%%%%%%%%%%

\section{Kazhdan Margulis theorem for IRS}\label{sec:KM}

Recall the celebrated theorem of Kazhdan and Margulis:

\begin{thm}\cite{KM}\label{thm:KM}
Let $G$ be a connected semisimple Lie group without compact factors. There is an identity neighborhood $U\subset G$ such that for every discrete subgroup $\Lambda\le G$ there is some $g\in G$ for which 
$$
 g\Lambda g^{-1}\cap U=\{1\}.
$$
\end{thm}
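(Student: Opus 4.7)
The plan is to argue by contradiction, combining Zassenhaus' neighborhood lemma with the Furstenberg-lemma step used in the sketch of Theorem~\ref{thm:BDT}. The guiding idea is: if every conjugate of some discrete $\Lambda$ meets an arbitrarily small neighborhood of $1$ nontrivially, then the associated IRS pushes forward to an $\Ad(G)$-invariant probability measure on $\Gr(\Lie(G))$ concentrated on proper nonzero nilpotent subalgebras, which is ruled out by semisimplicity.

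Suppose for contradiction that no such $U$ exists. Choose a decreasing sequence of symmetric relatively compact identity neighborhoods $U_n\downarrow\{1\}$ and discrete subgroups $\Lambda_n\le G$ with $g\Lambda_n g^{-1}\cap U_n\ne\{1\}$ for every $g\in G$. Associate to each $\Lambda_n$ an IRS $\mu_n\in\mathrm{IRS}_d(G)$ supported on its conjugacy orbit; when $\Lambda_n$ is a lattice, take $\mu_{\Lambda_n}$ as in the lattice example of Section~\ref{sec:IRS}, so that a $\mu_n$-random subgroup $H$ satisfies $H\cap U_n\ne\{1\}$ almost surely. Invoke now Zassenhaus' neighborhood lemma (in the refined form of Kazhdan--Margulis): there exists a fixed identity neighborhood $W\subset G$ such that for every discrete subgroup $\Lambda\le G$ the group $\langle \Lambda\cap W\rangle$ is contained in a unique connected nilpotent closed Lie subgroup $N_\Lambda\le G$. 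For $n$ large enough that $U_n\subseteq W$, we obtain $\mu_n$-almost surely a nontrivial connected nilpotent Lie subgroup $N_H\le G$.

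Push $\mu_n$ forward along the $G$-equivariant measurable map
\[
 \psi:\Sub_d(G)\longrightarrow\Gr(\Lie(G)),\qquad H\mapsto\Lie(N_H).
\]
The image $\psi_*\mu_n$ is an $\Ad(G)$-invariant probability measure on $\Gr(\Lie(G))$ that, for $n$ large, assigns positive mass to proper nonzero nilpotent subalgebras of $\Lie(G)$. By Furstenberg's lemma, applied exactly as in the sketch of Theorem~\ref{thm:BDT}, any such measure is supported on the $\Ad(G)$-fixed subspaces of $\Lie(G)$, i.e.\ on the ideals of $\Lie(G)$; for semisimple $G$ these are direct sums of simple factors, none of which is a proper nonzero nilpotent subalgebra. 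This is the desired contradiction.

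The hardest point is Step 1: producing an IRS $\mu_n$ when $\Lambda_n$ is not a lattice, since the conjugacy orbit of an arbitrary discrete subgroup need not carry any nontrivial $G$-invariant probability measure. One way around this is to first prove the IRS version of the theorem via the argument above (which already covers all lattices) and then deduce the general discrete case directly from the Zassenhaus structure; alternatively, one may invoke Theorem~\ref{thm:satbilizers} on a $G$-space in which $\Lambda_n$ appears as a stabilizer with positive probability.
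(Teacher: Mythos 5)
Your approach has a genuine and fatal gap: the map
\[
\psi:\Sub_d(G)\longrightarrow\Gr(\Lie(G)),\qquad H\mapsto\Lie(N_H),
\]
where $N_H$ is the nilpotent envelope of $\langle H\cap W\rangle$ given by the Zassenhaus lemma, is \emph{not} $G$-equivariant. The Zassenhaus neighborhood $W$ is a fixed compact identity neighborhood and is not conjugation-invariant (a semisimple Lie group has no small invariant neighborhoods). One has $(gHg^{-1})\cap W = g\bigl(H\cap g^{-1}Wg\bigr)g^{-1}$, and the right-hand side generates $g\langle H\cap W\rangle g^{-1}$ only if $\langle H\cap g^{-1}Wg\rangle = \langle H\cap W\rangle$, which fails in general. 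Consequently $N_{gHg^{-1}}\ne gN_Hg^{-1}$, the pushforward $\psi_*\mu_n$ need not be $\Ad(G)$-invariant, and Furstenberg's lemma simply does not apply. This cannot be patched by choosing $N_H$ more cleverly: any assignment $H\mapsto N_H$ that is genuinely $\Ad(G)$-equivariant cannot at the same time ``see'' which elements of $H$ are near the identity, because nearness to $1$ is not a conjugation-invariant notion for a discrete group --- and indeed, for a lattice $H$ the Zariski closure is all of $G$, whose maximal connected nilpotent normal subgroup is trivial.

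The paper's argument (Theorem~\ref{thm:KM-IRS}) is structurally different and avoids this issue entirely: it never applies Furstenberg directly to the discrete data. Instead it uses Furstenberg only \emph{indirectly}, through the Borel density theorem for IRS (Theorem~\ref{thm:BDT}), whose role is to establish that $\irs_d(G)$ is compact (Corollary~\ref{cor:compact}). With compactness in hand the proof is a soft open-cover argument: the sets $K_n=\{\gC:\gC\cap U_n=\{1\}\}$ are shown to be open in $\Sub(G)$ using that $G$ has no small subgroups; $\bigcup_n K_n$ exhausts $\Sub_d(G)$, so $\mathcal{K}_{n,\gep}=\{\mu:\mu(K_n)>1-\gep\}$ form an open cover of the compact space $\irs_d(G)$, yielding a single $m$ that works for all $\mu$ simultaneously. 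This is precisely the ``weak uniform discreteness'' conclusion, and restricting to $\mu_\gC$ for lattices $\gC$ recovers Kazhdan--Margulis.

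On the gap you flag at the end: you are correct that producing an IRS whose random subgroup is a conjugate of a non-lattice $\Lambda_n$ with positive probability is impossible in general --- e.g.\ by the Stuck--Zimmer theorem, for higher rank simple $G$ every non-atomic IRS is supported on lattices. Your second workaround (invoking Theorem~\ref{thm:satbilizers}) therefore cannot work: that theorem realizes a \emph{given} IRS as a stabilizer distribution, it does not realize a given discrete subgroup inside the support of some IRS. However, this particular limitation is shared with the paper's own approach; the paper explicitly remarks that the proof from \cite{WUD} ``does not apply for general discrete groups'' and only recovers the original statement for lattices. So this part of your concern is legitimate but does not distinguish your route from the paper's. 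The fatal difference is the non-equivariance of $\psi$ described above.
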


The equivalent geometric formulation of this theorem is:

\begin{thm}
Given a symmetric space of non-compact type $X$, there is $\epsilon>0$ such that for every complete $X$-orbifold $M=\Lambda\backslash X$ there is a point where the injectivity radius is at least $\epsilon$. In other words, the $\epsilon$-thick part of $M$ is non-empty.
\end{thm}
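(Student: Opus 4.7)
The plan is to prove an IRS-version of the statement via a compactness argument and then deduce Theorem~\ref{thm:KM}. For a closed $H\le G$ define the systole at the identity
\[
 s(H):=\inf\{d(1,h):h\in H\setminus\{1\}\},\qquad s(\{1\}):=\infty.
\]
One checks that $s$ is upper semicontinuous on $\Sub(G)$: if $H$ has a non-identity element in the open ball $B_c(1)$, then so does every Chabauty-close $H'$, so $\{s<c\}$ is open and hence $\{s\ge c\}$ is closed. The IRS-version I aim to establish is: there exists a uniform $\epsilon_0>0$ with
\[
 \mu(\{H:s(H)\ge\epsilon_0\})>0
\]
for every $\mu\in\text{IRS}_d(G)$ with $\mu\ne\delta_{\{1\}}$. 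This yields Theorem~\ref{thm:KM} at once for a lattice $\Lambda$ by applying it to the canonical IRS $\mu_\Lambda$ whose support is the conjugacy class of $\Lambda$; for a non-lattice discrete subgroup one can typically conjugate $\Lambda$ into any prescribed neighborhood of $1$ directly, or alternatively reduce to the IRS-version via a Chabauty-orbit-closure construction.

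The pointwise statement is immediate: fix $\mu\in\text{IRS}_d(G)$ with $\mu\ne\delta_{\{1\}}$; since $\mu$-a.s.\ $H$ is discrete, $s(H)>0$ almost surely, and monotone convergence gives $\mu(\{s\ge c\})\to 1$ as $c\searrow 0$, so some $\epsilon_\mu>0$ works. The whole content of the proof is upgrading this to a uniform $\epsilon_0$. Suppose for contradiction $\mu_n\in\text{IRS}_d(G)$ have $\mu_n(\{s\ge 1/n\})=0$; note none is $\delta_{\{1\}}$ since $s(\{1\})=\infty$. By Corollary~\ref{cor:compact}, $\text{IRS}_d(G)$ is compact, so extract a weak-$*$ limit $\mu\in\text{IRS}_d(G)$. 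For all but countably many $c>0$ the set $\{s\ge c\}$ is a $\mu$-continuity set, and for such $c$ the Portmanteau theorem gives
\[
 \mu(\{s\ge c\})=\lim_n\mu_n(\{s\ge c\})=0,
\]
since $\{s\ge c\}\subseteq\{s\ge 1/n\}$ has vanishing $\mu_n$-measure once $n\ge 1/c$. Letting such $c\searrow 0$ yields $\mu(\{s>0\})=0$, so $\mu$-almost every $H$ is non-discrete, contradicting $\mu\in\text{IRS}_d(G)$.

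The main obstacle I anticipate is justifying the continuity-set step. In Chabauty topology the interior of the closed set $\{s\ge c\}$ can be strictly smaller than $\{s>c\}$, because discrete subgroups with very short non-identity elements may accumulate on subgroups in which those short elements collapse into the identity, so a priori $\partial\{s\ge c\}$ might carry positive $\mu$-mass for uncountably many $c$. Here the real work of Corollary~\ref{cor:compact} comes in: since the weak-$*$ limit $\mu$ must remain in $\text{IRS}_d(G)$, no mass escapes to non-discrete subgroups, and a short additional argument (using the Borel density theorem for IRS, Theorem~\ref{thm:BDT}, to rule out degenerate supports) lets one choose cofinally many $c$ that are $\mu$-continuity values, closing the loop.
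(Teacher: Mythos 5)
Your overall strategy matches the paper's: prove an IRS version by exploiting the compactness of $\text{IRS}_d(G)$ (Corollary~\ref{cor:compact}) and then specialize to the IRS $\mu_\Gamma$ of a lattice. The implementation differs: you run a sequential-compactness argument and invoke Portmanteau at $\mu$-continuity values, whereas the paper works directly with the open sets $K_n=\{H:H\cap U_n=\{1\}\}$ and uses lower semicontinuity of $\mu\mapsto\mu(K_n)$ on open sets, obtaining an ascending open cover of the compact set $\text{IRS}_d(G)$. The paper's route sidesteps all continuity-set bookkeeping, which is exactly the place where your argument currently has a hole.

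The gap is the one you yourself flag, but the patch you propose does not close it. You need: for cofinally many $c>0$, $\{s\ge c\}$ is a $\mu$-continuity set. Upper semicontinuity of $s$ gives $\{s\ge c\}$ closed and $\{s<c\}$ open, but it gives \emph{nothing} about $\{s>c\}$, and without knowing $\{s>c\}$ is open you cannot conclude $\partial\{s\ge c\}\subseteq\{s=c\}$ and run the countability trick. This is not a consequence of the Borel density theorem, which controls the Zariski closure of the support and is silent about the Chabauty-boundary phenomenon you describe (short elements of $H_n$ escaping to the identity). The missing ingredient is the no-small-subgroups property of $G$, packaged in the paper as Lemma~\ref{K_n-open}: choosing a compact identity neighborhood $V\subset U$ containing no nontrivial subgroup and with $V^2\subset U$, one shows $H\cap U\ne\{1\}$ iff $H\cap(U\setminus V)\ne\emptyset$, and the latter is a closed condition since $U\setminus V$ is compact; hence $K=\{H:H\cap U=\{1\}\}$ is \emph{open}. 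Translated into your systole, this says $\{s>c\}$ is open, whence $\text{int}\{s\ge c\}\supseteq\{s>c\}$, so $\partial\{s\ge c\}\subseteq\{s=c\}$, and then indeed at most countably many $c$ fail to be $\mu$-continuity values. Without this, the scenario you worry about is real: if $h_n\to 1$ with $h_n\in H_n\setminus\{1\}$ and $\langle h_n\rangle$ Chabauty-converges, the limit is a nondiscrete subgroup of $\lim H_n$; ruling that out is precisely what the Zassenhaus/NSS argument buys you. Once you replace the vague appeal to Theorem~\ref{thm:BDT} by Lemma~\ref{K_n-open}, your proof goes through.

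One smaller point: your IRS statement (positive measure of $\{s\ge\epsilon_0\}$) is weaker than the paper's Theorem~\ref{thm:KM-IRS} (measure $>1-\epsilon$ for every $\epsilon$), though it does suffice to recover the geometric statement for lattices. And the reduction you sketch for non-lattice discrete $\Lambda$ is hand-waved; the paper likewise only deduces the lattice case from its IRS theorem and explicitly notes that the IRS proof does not cover arbitrary discrete subgroups.
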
 

The immediate remarkable consequence is that there is a positive lower bound on the volume of $X$-orbifolds, or equivalently, on the co-volume of lattices in $G$.

\begin{figure}[h]
    \centering
    \includegraphics[width=0.8\textwidth]{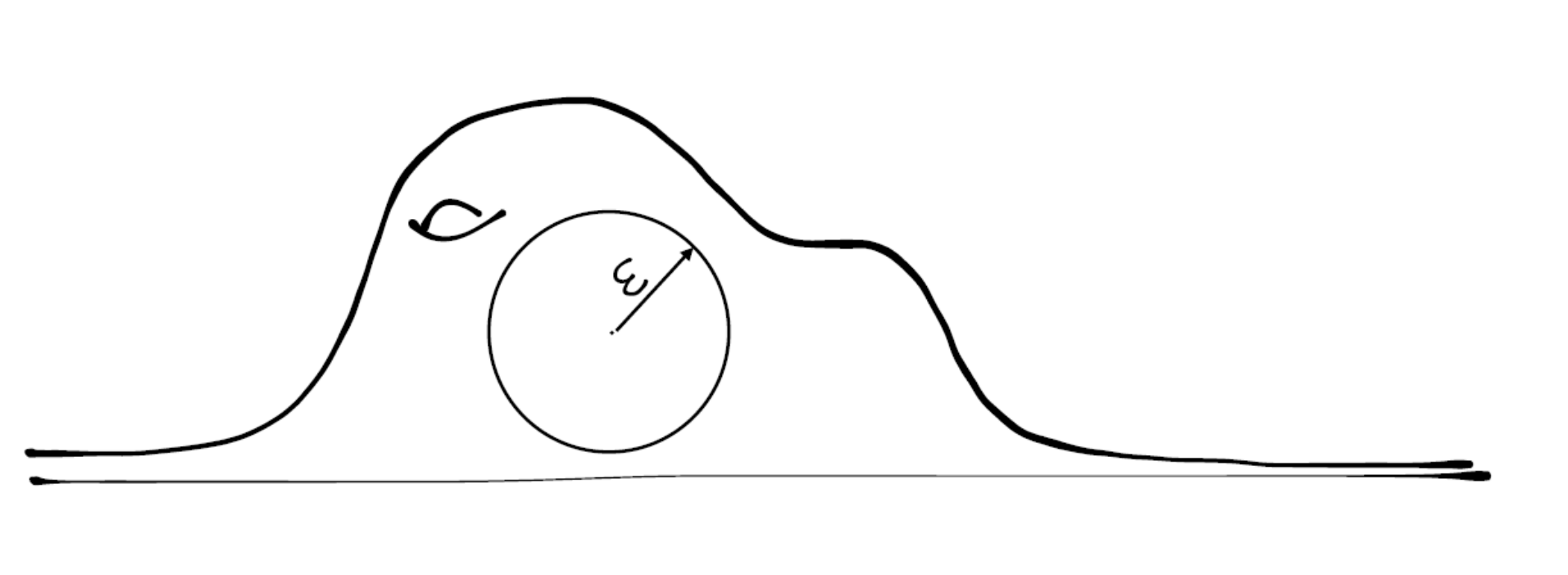}
    \caption{Every $X$-manifold has a thick part.}
    \label{fig:awesome_image}
\end{figure}

There are by now three different proofs of Theorem \ref{thm:KM}. The original proof \cite{KM}, a proof by Gromov \cite{BGS} which is more geometric and the proof from \cite{WUD} which relies on IRS.  Each of these proofs has some advantages on the others. For example, the original proof applies to all discrete subgroups (not just lattices) while the proof of Gromov applies to general Hadamard spaces (rather than symmetric spaces). The proof from \cite{WUD} does not apply for general discrete groups but it does apply for general IRS for which it gives a stronger statement than the original theorem, even when restricting to lattices. However, the main advantage of that proof is its simplicity. 
It happens occasionally in mathematics that a consideration of more general notions allows softer and less technical treatments. This depends however on formulating appropriate definitions. 

Recall that a family of subgroups $\mathcal{F}\subset\sub(G)$ is called {\it uniformly discrete} if there is an identity neighborhood $U\subset G$ such that $\gC\cap U=\{1\}$ for all $\gC\in \mathcal{F}$.

\begin{defn}
A family $\mathcal{F}\subset\irs_d(G)$ of invariant random subgroups is said to be {\it weakly uniformly discrete} if for every $\gep>0$ there is an identity neighbourhood $U_\gep\subset G$ such that
$$
 \mu (\{\gC\in\sub (G):\gC\cap U_\gep\ne\{1\}\})<\gep
$$
for every $\mu\in \mathcal{F}$.
\end{defn}

%To keep the presentation short, avoiding some technicality, we shall restrict ourself to the case where $G$ is simple.

\begin{thm}\label{thm:KM-IRS}
Let $G$ be a connected non-compact semisimple Lie group. Then $\irs_d(G)$ is weakly uniformly discrete.
\end{thm}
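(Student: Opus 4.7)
My plan is to argue by contradiction, exploiting the compactness of $\irs_d(G)$ from Corollary~\ref{cor:compact} together with the Portmanteau theorem and a Chabauty-semicontinuity property of discreteness at the identity.

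I will first suppose the theorem fails: there should then exist $\epsilon>0$, a sequence $U_n\downarrow\{1\}$ of identity neighborhoods, and IRS $\mu_n\in\irs_d(G)$ with $\mu_n(B_{U_n})\ge\epsilon$, where $B_U:=\{\Gamma\in\sub(G):\Gamma\cap U\ne\{1\}\}$. By Corollary~\ref{cor:compact} I can extract a subsequential weak-$*$ limit $\mu_n\to\mu$ with $\mu\in\irs_d(G)$. Each $B_U$ is open in the Chabauty topology, since for any fixed $\gamma\in\Gamma\cap U\setminus\{1\}$ a Chabauty-close $\Gamma'$ contains an element near $\gamma$, which remains in the open set $U$ and away from~$1$. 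Using the monotonicity $B_{U_m}\subseteq B_{U_n}$ for $m\ge n$, I will apply Portmanteau to the closed set $\overline{B_{U_n}}$ to obtain
$$
 \mu\bigl(\overline{B_{U_n}}\bigr)\ \ge\ \limsup_{m\to\infty}\mu_m\bigl(\overline{B_{U_n}}\bigr)\ \ge\ \limsup_{m\ge n}\mu_m(B_{U_m})\ \ge\ \epsilon
$$
for every $n$.

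The heart of the argument is the following Chabauty-semicontinuity claim: if $\Gamma$ is discrete, then for $n$ large enough (depending on $\Gamma$), $\Gamma\notin\overline{B_{U_n}}$. To prove it I will use discreteness to find $r>0$ with $\Gamma\cap B(1,2r)=\{1\}$; then for any $\Gamma'$ sufficiently Chabauty-close to $\Gamma$ one should have $\Gamma'\cap B(1,r)=\{1\}$, since otherwise a non-identity $\gamma\in\Gamma'\cap B(1,r)$ would generate a sequence of powers $\gamma^j$ spread through $B(1,2r)$ up to distance $\approx r$ from $1$, which is incompatible with the Hausdorff convergence of $\Gamma'\cap B(1,2r)$ to $\Gamma\cap B(1,2r)=\{1\}$. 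Choosing $U_n\subset B(1,r)$ then produces a Chabauty neighborhood of $\Gamma$ disjoint from $B_{U_n}$, so $\Gamma\notin\overline{B_{U_n}}$.

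Granted this claim, $\bigcap_n\overline{B_{U_n}}\subseteq\sub(G)\setminus\sub_d(G)$, which is $\mu$-null because $\mu\in\irs_d(G)$. Continuity of measure from above will then yield $\lim_n\mu(\overline{B_{U_n}})=0$, contradicting the uniform lower bound $\epsilon$. The main obstacle is the Chabauty-semicontinuity claim, which essentially says that the ``injectivity radius at the identity'' is lower-semicontinuous on $\sub_d(G)$; the remaining ingredients are standard weak-limit manipulations.
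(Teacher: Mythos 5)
Your proof is correct. It relies on exactly the same two structural ingredients as the paper's proof --- compactness of $\irs_d(G)$ (Corollary~\ref{cor:compact}) and an NSS-based Chabauty-continuity statement for the ``injectivity radius at the identity'' --- but it organizes them dually. The paper runs a direct open-cover argument: it shows the sets $K_n=\{\Gamma:\Gamma\cap U_n=\{1\}\}$ are open (using that a nontrivial small element of $\Gamma$ has a power landing in the compact annulus $U_n\setminus V$, for $V$ an NSS neighborhood), so the sets $\mathcal{K}_{n,\epsilon}=\{\mu:\mu(K_n)>1-\epsilon\}$ form an open ascending cover of the compact space $\irs_d(G)$, and a finite subcover gives a single $m$ that works uniformly. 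You instead argue by contradiction: extract a weak-$*$ limit $\mu\in\irs_d(G)$ from a putatively bad sequence $\mu_n$, push the mass bound through Portmanteau onto the nested closed sets $\overline{B_{U_n}}$, and then kill it by continuity from above, because your semicontinuity claim shows $\bigcap_n\overline{B_{U_n}}$ misses every discrete subgroup. Your semicontinuity claim is a localized form of the paper's Lemma~\ref{K_n-open} (indeed ``$\Gamma$ is eventually in the interior of $K_{U_n}$'' for discrete $\Gamma$ is exactly what follows from $K_n$ open together with $\bigcup_n K_n=\sub_d(G)$), and your power-spreading argument is the same NSS mechanism as in the paper's lemma. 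Two small points of hygiene: the openness of $B_U$ that you establish at the start is not actually used afterward (Portmanteau is applied to the closed sets $\overline{B_{U_n}}$, for which no openness of $B_U$ is needed); and in the semicontinuity claim you should choose $r$ small enough that, in addition to $\Gamma\cap B(1,2r)=\{1\}$, the ball $B(1,r)$ contains no nontrivial subgroup (this is where NSS enters), so that the power of a hypothetical $\gamma'\in\Gamma'\cap B(1,r)\setminus\{1\}$ really does land in the compact annulus $\overline{B(1,2r)}\setminus B(1,r)$ and yields the contradiction with Chabauty convergence. With those clarifications, the argument is complete; the paper's direct formulation is arguably cleaner, but your contradiction/Portmanteau version is equally valid and highlights the measure-theoretic mechanism nicely.
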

By restricting to the family $\mu_\gC$ of IRS associated to lattices $\gC\le G$ and picking for instance $\gep=1/2$ one recovers the original Kazhdan--Margulis theorem. Indeed, for every lattice $\gC$ there is $g\in G$ such that $g\gL g^{-1}\cap U_{\frac{1}{2}}=\{1\}$. Moreover, ``for at least $1/2$" of the choices of $g$ this will hold, i.e.  
$$
m(\{g\gC\in G/\gC: g\gC g^{-1}=\{1\}\})\ge 1/2
$$ 
where $m$ is the $G$-invariant probability measure on $G/\gC$.

\medskip
\noindent
{\it Proof of Theorem \ref{thm:KM-IRS}.}
Let $U_n, n\in\BN$ be a descending sequence of compact sets in $G$ which form a base of identity neighbourhoods, and set
$$
  K_n=\{\gC\in\sub_G:\gC\cap U_n=\{1 \}\}.
$$

Since $G$ has NSS (no small subgroups), i.e. there is an identity neighbourhood which contains no non-trivial subgroups, we have:

\begin{lemma}\label{K_n-open}
The sets $K_n$ are open in $\sub(G)$.
\end{lemma}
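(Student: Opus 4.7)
The plan is to show openness by exhibiting, around each $\Gamma\in K_n$, an explicit Chabauty-neighborhood contained in $K_n$. The Chabauty topology has a sub-basis consisting of sets of the form $\{H: H\cap C=\emptyset\}$ for $C\subset G$ compact, so the natural candidate neighborhood of $\Gamma$ will be of this form. The heart of the argument is combining such a neighborhood with the NSS hypothesis.

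Fix $\Gamma\in K_n$, and let $V\subset G$ be an identity neighborhood containing no non-trivial subgroup (NSS). Using continuity of multiplication and inversion at $1$, together with the fact that $1$ lies in the interior of the compact set $U_n$, I would choose a symmetric open identity neighborhood $W$ with
$$
 W=W^{-1},\qquad W\subset V,\qquad W\cdot W\subset U_n.
$$
Then $C:=U_n\setminus W$ is compact, and since $\Gamma\cap U_n=\{1\}\subset W$ we have $\Gamma\cap C=\emptyset$. Hence
$$
 \mathcal{O}:=\{H\in \Sub(G): H\cap C=\emptyset\}
$$
is an open neighborhood of $\Gamma$ in $\Sub(G)$.

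It remains to show $\mathcal{O}\subset K_n$. For $H\in\mathcal{O}$, $H\cap U_n\subset W$. I claim that $H\cap W$ is itself a subgroup: it contains $1$, it is symmetric because $W=W^{-1}$ and $H$ is a subgroup, and if $h_1,h_2\in H\cap W$ then $h_1h_2\in H\cap(W\cdot W)\subset H\cap U_n\subset W$. Since $H\cap W$ is a subgroup contained in $W\subset V$, the NSS property forces $H\cap W=\{1\}$, and therefore $H\cap U_n=\{1\}$, i.e. $H\in K_n$.

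There is no serious obstacle here; the only subtle point is that one cannot directly conclude $H\cap U_n=\{1\}$ from $H\cap U_n\subset W$ without using that intersections inside $W$ are closed under the group operations, which is why the condition $W\cdot W\subset U_n$ (not merely $W\subset U_n$) is built in at the start. This trick of upgrading "small intersection" to "trivial intersection" via NSS is the only non-formal ingredient in the proof.
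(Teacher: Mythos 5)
Your proof is correct and follows essentially the same route as the paper: choose an NSS neighborhood inside $U_n$ whose square lies in $U_n$, then observe that a subgroup meets $U_n$ non-trivially iff it meets the compact set $U_n$ minus that neighborhood, and use the standard Chabauty fact that $\{H:H\cap C=\emptyset\}$ is open for $C$ compact. Your version is slightly more explicit (building symmetry into $W$ so that $H\cap W$ is literally a subgroup), but the underlying mechanism is identical.
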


\begin{proof}
Fix $n$ and
let $V\subset U_n$ be an open identity neighbourhood which contains no non-trivial subgroups, such that ${V^2}\subset U_n$. It follows that a subgroup $\gC$ intersects $U_n$ non-trivially iff it intersects $U_n\setminus V$. Since $U_n\setminus V$ is compact, the lemma is proved.  
\end{proof}

In addition, observe that the ascending union $\bigcup_n K_n$ exhausts $\sub_d(G)$, the set of all discrete subgroups of $G$. Therefore we have:

\begin{claim}\label{clm}
For every $\mu\in\irs_d(G)$ and $\gep>0$ we have $\mu(K_n)>1-\gep$ for some $n$.\qed
\end{claim}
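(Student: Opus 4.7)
The plan is to deduce the claim as an instance of continuity of measure from below, once we verify that $(K_n)_{n \in \BN}$ is an ascending sequence of measurable sets whose union exhausts the support of any discrete IRS.

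First I would check that $K_n \subseteq K_{n+1}$. Since $(U_n)$ is a descending base of identity neighbourhoods, the condition $\gC \cap U_{n+1} = \{1\}$ is weaker than $\gC \cap U_n = \{1\}$, giving the desired monotonicity. Each $K_n$ is Borel (in fact open, by Lemma \ref{K_n-open}, though openness is not strictly needed here).

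Second I would identify the union. A closed subgroup $\gC \le G$ is discrete if and only if there exists some identity neighbourhood $W$ with $\gC \cap W = \{1\}$. Since $(U_n)$ is a base at the identity, this is equivalent to $\gC \cap U_n = \{1\}$ for some $n$. Therefore
\[
  \bigcup_{n \in \BN} K_n \;=\; \sub_d(G).
\]

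Finally, since $\mu \in \irs_d(G)$ we have $\mu(\sub_d(G)) = 1$. By continuity of probability measures along ascending unions,
\[
  \lim_{n \to \infty} \mu(K_n) \;=\; \mu\Bigl(\bigcup_{n} K_n\Bigr) \;=\; \mu(\sub_d(G)) \;=\; 1.
\]
Hence for any $\gep > 0$ one can choose $n$ large enough so that $\mu(K_n) > 1 - \gep$, which is exactly the claim.

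There is essentially no obstacle here: the statement is a soft measure-theoretic consequence of the preceding lemma together with the characterization of discrete subgroups via small identity neighbourhoods. The only subtlety worth flagging is the need to know that being discrete can be detected on \emph{some} member of the fixed base $(U_n)$, which is immediate from the fact that $(U_n)$ is a neighbourhood base at $1$.
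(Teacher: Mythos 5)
Your proof is correct and follows the paper's argument exactly: the paper observes that $(K_n)$ is an ascending sequence whose union exhausts $\sub_d(G)$, and the claim then follows immediately from $\mu(\sub_d(G))=1$ together with continuity of measure from below. Nothing is missing and nothing differs.
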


Let
$$
 \mathcal{K}_{n,\gep}:= \{\mu\in\irs_d(G):\mu(K_n)>1-\gep\}.
$$
Since $\sub(G)$ is metrizable, it follows from Lemma \ref{K_n-open} that $\mathcal{K}_{n,\gep}$ is open. 
By Claim \ref{clm}, for any given $\gep>0$, the sets $\mathcal{K}_{n,\gep},~n\in\BN$ form an ascending cover of $\irs_d(G)$. Since the latter is compact, we have $\irs_d(G)\subset \mathcal{K}_{m,\gep}$ for some $m=m(\gep)$. 
It follows that 
$$
 \mu\big(\{ \gC\in\sub(G):\gC~\text{intersects}~U_m~\text{trivially}\}\big)>1-\gep,
$$
for every $\mu\in\irs_d(G)$. This completes the proof of Theorem \ref{thm:KM-IRS}.
\qed

%Moreover it was the motivation for the later work \cite{GLM} which was the start of the study of SRS. 

%%%%%%%%%%%%%%%%%%%
%%%%%%%%%%%%%%%%%%%
%%%%%%%%%%%%%%%%%%%

\section{The Stuck--Zimmer theorem}\label{sec:SZ}

One of the most striking results about IRS is the Stuck--Zimmer theorem \cite{SZ} which was proved almost two decades before the study of IRS has become popular. 
Recall that a p.m.p action of a semisimple Lie group $G$ is {\it irreducible} if every factor of $G$ acts ergodically.

\begin{thm}\label{thm:SZ}
Let $G$ be a semisimple Lie group of rank at least two and with Kazhdan's property $(T)$. Then every irreducible p.m.p action of $G$ is essentially free or transitive. 
\end{thm}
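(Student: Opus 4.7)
The plan is to recast the statement as a theorem about invariant random subgroups and then exploit property $(T)$ together with higher-rank boundary rigidity. First I would invoke Theorem \ref{thm:satbilizers} to pass from the p.m.p action $G\curvearrowright (X,m)$ to the stabilizer IRS $\mu$, the push-forward of $x\mapsto G_x$. An essentially free action corresponds to $\mu=\gd_{\{1\}}$, while a transitive action on a probability space is of the form $G/\Lambda$ with $\Lambda$ a lattice, giving $\mu$ supported on a single conjugacy class of a lattice. Ergodicity of the $G$-action implies ergodicity of $\mu$ under conjugation. An atom of $\mu$ on a normal subgroup $N\lhd G$ is excluded by irreducibility: for semisimple $G$ a normal subgroup is a product of factors, and then those factors would act trivially rather than ergodically. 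So the task reduces to: every ergodic non-atomic IRS on $G$ is supported on a single conjugacy class of lattices.

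By the Borel density theorem for IRS (Theorem \ref{thm:BDT}), a non-atomic ergodic IRS $\mu$ is supported on discrete Zariski-dense subgroups $\Lambda\le G$, so I would aim to show that a $\mu$-random $\Lambda$ is almost surely a lattice. The key input is Zimmer's cocycle superrigidity, which applies because $G$ has property $(T)$ and rank at least two. Concretely, I would pick a spread-out admissible probability measure $m$ on $G$ with Poisson boundary $(B,\nu)$ (a quotient of a Furstenberg boundary $G/P$), and consider the orbit equivalence relation on $(X,m)$. Cocycle superrigidity, together with Margulis' factor theorem asserting that every measurable $G$-equivariant factor of $G/P$ is of the form $G/Q$ for a parabolic $Q\supseteq P$, forces the boundary data associated with the stabilizer equivalence class to be algebraic. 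Following Stuck--Zimmer, this yields a $G$-equivariant measurable map from $X$ (or an appropriate extension by $B$) into the space of parabolic subgroups of $G$, refining the map $x\mapsto \Lambda_x^{\text{Zar}}$. Because $\Lambda_x$ is Zariski dense this refinement must collapse, and one then extracts that the quotient measure on $\Lambda\backslash G$ is finite.

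The main obstacle, to my mind, is executing the boundary/cocycle step: producing from $\mu$ a measurable $G$-equivariant map with values in a $G$-variety and then invoking both Margulis' factor theorem and property $(T)$-cocycle superrigidity in the correct order to conclude finite covolume. This is precisely where higher rank and property $(T)$ become indispensable. Without property $(T)$ one has no superrigidity for the relevant cocycle and the argument breaks down, which is consistent with the Stuck--Zimmer conjecture for $\SL_2(\BR)\times\SL_2(\BR)$ remaining open; in rank one the factor theorem is vacuous and the rigidity fails completely, as illustrated in \S \ref{sec:rk-1-example}.
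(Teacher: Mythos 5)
The paper does not actually give a proof of Theorem~\ref{thm:SZ}; it states it as a known result, cites Stuck--Zimmer~\cite{SZ}, and remarks that the proof ``relies heavily on the Nevo-Zimmer intermediate factor theorem,'' deferring entirely to the literature. So there is no in-paper argument to compare yours against line by line, but I can still assess your sketch on its own terms and against what the paper tells us about the structure of the real proof.

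Your opening reduction is fine: pass to the stabilizer IRS via Theorem~\ref{thm:satbilizers}, identify essential freeness with $\mu=\gd_{\{1\}}$ and transitivity with $\mu=\mu_\Gamma$, discard atoms at proper normal subgroups by irreducibility, and invoke the IRS Borel density theorem (Theorem~\ref{thm:BDT}) to put yourself in the situation of a non-atomic ergodic IRS supported on discrete, Zariski-dense subgroups. That matches the standard set-up. The gap is in what comes next. You gesture at ``cocycle superrigidity'' plus ``Margulis' factor theorem for $G/P$,'' and then claim that ``one then extracts that the quotient measure on $\Lambda\backslash G$ is finite.'' That last step is exactly the theorem, and you have not supplied a mechanism for it. The actual engine is the Nevo--Zimmer intermediate factor theorem applied to the stationary joining of $(X,m)$ with the boundary $(G/P,\nu_P)$; the conclusion of that analysis, as the paper records in Theorem~\ref{thm:SZ,HT}, is that the stabilizers are \emph{co-amenable}, not that they are lattices. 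It is then a separate and crucial use of property $(T)$ that a discrete co-amenable subgroup must be a lattice: almost-invariant vectors in $L^2(G/\Lambda)$ plus $(T)$ give an invariant vector, hence a finite invariant measure on $G/\Lambda$. Your sketch attributes the role of $(T)$ to making ``cocycle superrigidity for the relevant cocycle'' work, which misses the point --- higher-rank cocycle superrigidity does not require a property-$(T)$ hypothesis beyond the rank, and the semisimple case genuinely needs $(T)$ only to close the co-amenability gap. Two concrete repairs are needed: replace the vague ``refining the map $x\mapsto\Lambda_x^{\mathrm{Zar}}$'' (which is trivially $G$ by Borel density, so there is nothing to refine) with a correctly stated application of the Nevo--Zimmer IFT producing a factor map from the joining onto some $G/Q$; and insert, as the final step, the co-amenability-to-lattice implication, which is where property $(T)$ actually enters.
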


As follows from the Borel density theorem\footnote{Recall that the Borel density theorem says that if $G/H$ carries an invariant probability measure, then $H$ is Zariski dense. This imples that the connected component $H^\circ$ is normal in $G$ and hence a product of simple factors. When the action is non-trivial and irreducible, this implies that $H^\circ=\{1\}$.} every non-trivial transitive irreducible p.m.p action of a semisimple Lie group $G$ is of the form $G\act G/\Gamma$ where $\Gamma\le G$ is an irreducible lattice. Thus, in view of Theorem \ref{thm:satbilizers}, Theorem \ref{thm:SZ} can be reformulated as follows:

\begin{thm}\label{thm:SZ2}
For $G$ as in Theorem \ref{thm:SZ}, every non-trivial irreducible IRS is of the form $\mu_\Gamma$ where $\Gamma\le G$ is an irreducible lattice.
\end{thm}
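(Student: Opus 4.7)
The plan is to translate the statement into a problem in measure-preserving dynamics via Theorem~\ref{thm:satbilizers}. Given a non-trivial irreducible IRS $\mu$, realize it as the stabilizer push-forward of a p.m.p.\ action $G\curvearrowright (X,m)$ in which every simple factor of $G$ acts ergodically. The aim is to show that $m$-almost every stabilizer $G_x$ is a lattice in $G$; once this is known, the $G$-ergodicity of the action on $\sub(G)$ (which follows from irreducibility) forces $\mu$ to be concentrated on a single conjugacy class, yielding $\mu=\mu_\Gamma$ for an irreducible lattice $\Gamma$.

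I would first dispense with atoms. An atom of $\mu$ corresponds to a closed subgroup $H\le G$ whose $G$-conjugacy class is finite, so $N_G(H)$ has finite index in $G$ and, by connectedness, equals $G$; hence $H$ is normal. Combined with the discreteness guaranteed by Theorem~\ref{thm:BDT} in its semisimple form, $H$ must be central, and in the semisimple setting this forces $H=\{1\}$, which is ruled out by non-triviality of $\mu$. Thus $\mu$ is atomless, and Theorem~\ref{thm:BDT} gives that $G_x$ is $m$-a.s.\ discrete and Zariski dense in $G$.

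The heart of the argument is the Nevo--Zimmer projective factor theorem, which is where both $\rank(G)\ge 2$ and property $(T)$ enter crucially. Let $P\le G$ be a minimal parabolic and let $\nu_P$ be the $K$-invariant probability on $G/P$. By the Howe--Moore theorem combined with irreducibility, $P$ acts ergodically on $(X,m)$, so the diagonal $G$-action on $(X\times G/P, m\times\nu_P)$ is ergodic. Nevo--Zimmer asserts that any measurable $G$-factor of this product lying strictly between $X$ and $X\times G/P$ is of the form $X\times G/Q$ for some parabolic $P\le Q\lneq G$. Assuming for contradiction that on a positive-measure set $G_x$ is a discrete Zariski-dense subgroup of infinite covolume, I would exploit the stabilizer map $x\mapsto G_x$ together with the $G_x$-orbit structure on $G/P$ to extract a genuinely intermediate $G$-factor; applying Nevo--Zimmer, the factor must be of the form $X\times G/Q$ for a proper parabolic $Q$, which forces $G_x$ to be essentially contained in a $G$-conjugate of $Q$. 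Zariski density of $G_x$ then forces $Q=G$, the desired contradiction.

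The main obstacle, and the step in which all the hard work lies, is the extraction of the intermediate $G$-factor from the failure of $G_x$ to be a lattice: the construction must be measurable, genuinely $G$-equivariant, and certifiably strictly intermediate, and it is precisely here that the infinite-covolume hypothesis has to be converted into nontrivial orbit-combinatorial data on $G/P$. Once $G_x$ is known to be a lattice $m$-almost surely, $G$-ergodicity of the action on $\sub(G)$ upgrades this to a single conjugacy class, and irreducibility of the action passes to irreducibility of the lattice, completing the argument.
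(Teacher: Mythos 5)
Your proposal sets out to reprove the Stuck--Zimmer theorem from scratch, whereas the paper's intended argument is much shorter: Theorem~\ref{thm:SZ2} is presented as a \emph{reformulation} of Theorem~\ref{thm:SZ}, which is treated as a known black box. The paper's derivation is simply this: given a non-trivial irreducible IRS $\mu$, use Theorem~\ref{thm:satbilizers} to realize $\mu$ as the stabilizer push-forward of an irreducible p.m.p.\ action $G\curvearrowright(X,m)$. By Theorem~\ref{thm:SZ}, this action is essentially free or essentially transitive. Essential freeness would give $\mu=\delta_{\{1\}}$, contradicting non-triviality, so the action is transitive. By the Borel density theorem, a non-trivial transitive irreducible p.m.p.\ action of $G$ is $G\curvearrowright G/\Gamma$ for an irreducible lattice $\Gamma\le G$, and pushing forward the stabilizer map from $G/\Gamma$ yields exactly $\mu_\Gamma$. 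No appeal to Nevo--Zimmer, atoms, Zariski density or ergodicity on $\Sub(G)$ is needed.

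The gap in your argument is therefore exactly where you flag it. You invoke the Nevo--Zimmer projective factor theorem and then assert, without any construction, that the failure of $G_x$ to be a lattice ``extracts'' a strictly intermediate $G$-factor of $X\times G/P$. That extraction is precisely the entire content of Stuck and Zimmer's original paper, and nothing in your sketch indicates how to produce such a factor measurably, $G$-equivariantly, and in a way that certifies strict intermediateness. Leaving this step open does not merely postpone a technical lemma --- it omits the main theorem. Since the paper's hypothesis is that Theorem~\ref{thm:SZ} is available, you should use it rather than attempt to rederive it. There are also two smaller soft spots: your atom analysis only rules out finite conjugacy classes, whereas ``non-trivial'' in the statement means $\mu\neq\delta_{\{1\}}$ (and implicitly $\mu\neq\delta_G$), so the atom discussion is beside the point once one cites Theorem~\ref{thm:SZ}; and the final step, passing from ``$G_x$ is a.s.\ a lattice'' to ``$\mu$ is concentrated on a single conjugacy class'' via ergodicity on $\Sub(G)$, needs a separate argument (e.g., local rigidity or Chabauty compactness) that you do not supply, whereas the route through transitivity of the p.m.p.\ action gives the conjugacy class for free.
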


The Stuck--Zimmer theorem is an ergodic counterpart of the celebrated normal subgroup theorem of Margulis. In order to see why it imply the NST one can argue as follows. Let $G$ be as in Theorem \ref{thm:SZ}, let $\gC\le G$ be an irreducible lattice and let $N\lhd \gC$ be a non-trivial normal subgroup of $\gC$. Consider the map 
$$
 G/\gC\to \Sub(G),~g\gC\mapsto gNg^{-1},
$$
and let $\mu$ be the push-forward of the $G$-invariant probability measure on $G/\gC$. Then $\mu$ is a non-trivial irreducible IRS. Hence by Theorem \ref{thm:SZ2} it is supported on lattices in $G$. It follows that $N$ is a lattice in $G$ and hence of finite index in $\gC$.

Like the NST of Margulis, the Stuck--Zimmer theorem is a consequence of the tension between amenability and property $(T)$. However, while Margulis was able to prove the NST regardless of property $(T)$ \cite{Ma1}, the ergodic counterpart (the Stuck--Zimmer Theorem) is still unknown under that generality. 

\begin{conj}[The Stuck--Zimmer conjecture]\label{conj:SZ}
The analog of Theorem \ref{thm:SZ} holds regardless of property $(T)$. That is every irreducible p.m.p action of a semisimple Lie group of rank at least $2$ is essentially free or transitive. 
\end{conj}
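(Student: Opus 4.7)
The Stuck--Zimmer conjecture is famously open, as the text notes, so any plan is necessarily speculative; I describe the strategy I would pursue given the SRS and spectral-gap machinery developed in the later sections of the paper. First, by Theorem \ref{thm:satbilizers} I would replace the p.m.p.\ action formulation with its IRS counterpart and, using the Borel density theorem for IRS (Theorem \ref{thm:BDT}), reduce to showing the following: if $\mu$ is a non-atomic irreducible IRS in a higher-rank semisimple Lie group $G$ without compact factors (dropping the property $(T)$ hypothesis), then $\mu$-a.e.\ subgroup is a lattice. Irreducibility together with Zariski density forces the $\mu$-random subgroup $\gL$ to project densely into every simple factor, so the remaining question is purely about covolume.

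The central idea is to replace the use of property $(T)$ in the Stuck--Zimmer proof by the SRS technology of \cite{FG} and \cite{GLM}, together with the product spectral gap of \S\ref{sec:spectral-gap}. Observe that any IRS is in particular a $\mu_G$-SRS for the canonical measure $\mu_G$ of \cite{GLM}; in the language of Cesaro limits $\nu_\infty=\mu$ trivially. Building on the method used to prove Theorem \ref{thm:confined}, I would try to establish a stiffness theorem of the following form: \emph{any Zariski dense discrete $\mu_G$-SRS in a semisimple Lie group $G$ of rank $\ge 2$, with no compact factors and regardless of property $(T)$, is supported on lattices}. Granting such a theorem, we apply it to $\mu$ and deduce that $\mu$ is supported on lattices; an additional Chabauty-compactness argument using Corollary \ref{cor:compact} and ergodicity then pins down the support on a single $G$-conjugacy class, matching the statement of Theorem \ref{thm:SZ2}.

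The main obstacle is precisely the stiffness statement I have assumed: extending it from the higher-rank simple setting of Theorem \ref{thm:confined} to semisimple products without property $(T)$, the crucial testing ground being $G=\SL_2(\BR)\times\SL_2(\BR)$. In the simple case, the proof of stiffness exploits a spectral gap coming from the internal higher-rank structure of a single factor; in the product case neither factor supplies such a gap on its own. This is exactly the role of the new product spectral gap of Bader--Gelander--Levit sketched in \S\ref{sec:spectral-gap}, but at present that gap is deployed only for the confinement problem in the presence of an ambient lattice, and upgrading it to a free-standing stiffness statement for non-confined IRS — where no ambient lattice is given, and where one must rule out a $\mu_G$-random walk on $\sub_d(G)$ drifting into the non-lattice discrete strata — is the decisive technical hurdle. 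A natural intermediate milestone would be to verify the conjecture for IRS that are already known to be contained in some irreducible lattice of $G$, where the new spectral gap applies directly.
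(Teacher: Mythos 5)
The statement is a well-known open conjecture; the paper does not (and cannot) offer a proof, and you are right to say your plan is necessarily speculative. The central weakness of your roadmap is that the hypothesized ``stiffness theorem'' --- that any Zariski-dense discrete $\mu_G$-SRS in a higher-rank semisimple $G$ without compact factors is supported on lattices --- is at least as strong as the conjecture itself: specializing to the invariant case it literally restates what you set out to prove. Noting that an IRS is trivially a $\mu_G$-SRS buys nothing, because invariance already contains stationarity; the difficulty was never in getting into the SRS framework. There is also a circularity danger: the FG machinery you lean on (Theorem~\ref{thm:confined}, the proof scheme of Theorem~\ref{thm:FG-main}) uses the Stuck--Zimmer rigidity theorem as its third and final step, so you cannot import those results wholesale into an argument whose goal is to prove Stuck--Zimmer. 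Only the discreteness (Theorem~\ref{thm:dSRS}) and stiffness (Theorems~\ref{thm:stiffness} and \ref{thm:stationary-decomposition}) pieces, which rest on Nevo--Zimmer rather than Stuck--Zimmer, can be safely reused, and for the invariant case they give nothing new.

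You also miss the cleaner reduction that the paper itself spells out. By Theorem~\ref{thm:SZ,HT} (Stuck--Zimmer together with Hartman--Tamuz), every non-trivial irreducible IRS in a higher-rank semisimple $G$ is already known, without any property $(T)$ assumption, to be supported on co-amenable subgroups. Combined with the Borel density theorem for IRS (Theorem~\ref{thm:BDT}), the conjecture therefore reduces to the subsequent conjecture stated in \S\ref{sec:SZ}: a discrete co-amenable subgroup of a higher-rank $G$ that projects densely to every proper factor is a lattice. That is the genuine target, and the spectral gap for products from \S\ref{sec:spectral-gap} is a plausible tool precisely because co-amenability supplies the asymptotically invariant sequence $f_n$ that the argument in \cite{BGL} runs on. Your final paragraph gestures at the right bottleneck --- that the \cite{BGL} gap is currently deployed only with an ambient lattice giving access to an honest finite invariant measure --- but you should state the reduction to the co-amenable case explicitly rather than burying the entire conjecture inside an unproved ``stiffness'' claim.
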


We remark that if at least one of the factors of $G$ has property $(T)$ then the result holds, as was confirmed by Hartman and Tamuz \cite{HT}. 
But without this assumption, the conjecture is wide open. The first case to consider is $G=\SL_2(\RR)\times\SL_2(\RR)$. What Harman and Tamuz realized is that the argument of Stuck and Zimmer implies in general that in higher rank every non-trivial irreducible IRS is supported on co-amenable subgroups. 

\begin{defn}
Let $G$ be a locally compact group. A discrete subgroup $\Lambda\le G$ is {\it co-amenable} if the Hilbert space $L^2(G/\Lambda)$ admits $G$ asymptotically invariant vectors. 
\end{defn}

\begin{thm}[\cite{SZ,HT}]\label{thm:SZ,HT}
Let $G$ be a semisimple Lie group of real rank at least $2$. Then every non-trivial irreducible IRS in $G$ is supported on co-ameanable subgroups.
\end{thm}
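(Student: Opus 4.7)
The plan is to replay the Stuck--Zimmer proof, stopping exactly when co-amenability of stabilizers is established and resisting the temptation to use property $(T)$ to push any further. The observation of Hartman--Tamuz is that property $(T)$ is only invoked in the \emph{final} part of Stuck--Zimmer, in order to convert the a.s.\ co-amenability of stabilizers into the much stronger statement that they are lattices; the preceding portion of the argument already delivers co-amenability in the rank $\ge 2$ regime. Concretely, I first invoke Theorem \ref{thm:satbilizers} to realize $\mu$ as the push-forward of the stabilizer map of a p.m.p.\ irreducible action $G\act(X,m)$. Irreducibility of $\mu$ says that each simple factor of $G$ acts ergodically on $X$, while non-triviality of $\mu$ says the action is not essentially free. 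The Borel density theorem for IRS (Theorem \ref{thm:BDT}) guarantees that $\mu$-almost surely the stabilizer is discrete and Zariski dense.

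Next, I fix an admissible probability measure $\mu_G$ on $G$ with Furstenberg--Poisson boundary $(B,\nu)$ and consider the stationary product action $G\act(X\times B,\,m\times\nu)$. Because $G$ has real rank at least $2$, a Nevo--Zimmer-type structure theorem for higher rank stationary actions applies and provides a measurable $G$-equivariant factor map from $(X\times B,\,m\times\nu)$ onto a homogeneous boundary $G/Q$ for some parabolic $Q\le G$. Following Stuck--Zimmer, one extracts from this factor map, together with the Poisson boundary structure, a sequence of functions on $X\times B$ that is asymptotically $G$-invariant when averaged against the stationary measure. Integrating out the $B$-coordinate and disintegrating over the Chabauty-valued stabilizer map $X\to\sub(G)$ then produces, for $\mu$-a.e.\ $\Lambda$, a sequence of asymptotically $G$-invariant unit vectors in $L^2(G/\Lambda)$, which by definition is co-amenability of $\Lambda$.

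The main obstacle lies precisely in that last step: ensuring that the asymptotic $G$-invariance survives both the integration over $B$ and the disintegration over $\sub(G)$, and that it does so for $\mu$-a.e.\ fibre rather than merely in an averaged sense. The classical Stuck--Zimmer argument sidesteps this delicacy by using property $(T)$ to convert almost-invariance on $L^2(X\times B)$ into a genuine $G$-invariant vector, which in turn forces $Q=G$ and, combined with Borel density, pins the action to a transitive one on some $G/\Gamma$ with $\Gamma$ a lattice. Here one must instead keep the almost-invariance intact, select the approximating sequence measurably in $\Lambda$, and bound its deviation from $G$-invariance uniformly across the disintegration. This measurable selection and uniform fibrewise control, together with a careful use of the amenability of the parabolic quotient $G/Q$, is the technical heart of the Hartman--Tamuz refinement and is the step most likely to require care beyond routine adaptation of the Stuck--Zimmer framework.
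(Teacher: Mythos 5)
The paper does not actually prove Theorem \ref{thm:SZ,HT}: it states the result, attributes it to Stuck--Zimmer \cite{SZ} and Hartman--Tamuz \cite{HT}, and offers only the one-sentence gloss that Hartman and Tamuz ``realized that the argument of Stuck and Zimmer implies in general that in higher rank every non-trivial irreducible IRS is supported on co-amenable subgroups.'' So there is no in-paper proof to compare your attempt against. Your top-level reading of the Hartman--Tamuz observation is accurate -- property $(T)$ enters Stuck--Zimmer only at the end, to upgrade co-amenability of stabilizers to finite co-volume, and rank $\ge 2$ already forces co-amenability -- and your opening steps (Theorem \ref{thm:satbilizers} to realize $\mu$ as the stabilizer push-forward, Theorem \ref{thm:BDT} for discreteness and Zariski density, passing to the joint stationary system $X\times B$) are fine.

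The genuine gap is in the middle. You invoke ``a Nevo--Zimmer-type structure theorem'' to ``provide a factor map from $(X\times B, m\times\nu)$ onto $G/Q$,'' but $X\times B$ already projects onto $B=G/P$, so the mere existence of a parabolic factor carries no information. What Stuck--Zimmer actually use -- and what the paper itself flags as the heavy tool, in the remark immediately following the Stuck--Zimmer conjecture -- is the Nevo--Zimmer \emph{intermediate} factor theorem \cite{NZ4}: any $G$-factor of $(X\times G/P,m\times\nu_P)$ sitting above $(X,m)$ must be of the form $(X\times G/Q,m\times\nu_Q)$ for a parabolic $Q\supseteq P$. The relevant intermediate factor is constructed from the fiberwise action of the stabilizers $\Gamma_x$ on $G/P$ (quotienting by the orbit equivalence relation they generate), and the dichotomy $Q=G$ versus $Q\subsetneq G$ is what interacts with the structure of the IRS and, through the amenability of the parabolic $Q$, yields the co-amenability of $\Gamma_x$. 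Your sketch never identifies this intermediate factor, so the step ``extract a sequence of functions on $X\times B$ that is asymptotically $G$-invariant'' is unsupported: the difficulty is not merely, as you say, in surviving the disintegration to the fibers $L^2(G/\Lambda)$, but one step earlier, in producing any almost-invariant data at all. A correct proof would have to make the intermediate-factor mechanism explicit rather than substitute a generic ``factor onto $G/Q$'' for it.
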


Thus, the following general conjecture implies the Stuck--Zimmer conjecture \ref{conj:SZ}:

\begin{conj}
Let $G$ be a semisimple Lie group without compact factors and rank at least two. Let $\Lambda\le G$ be a discrete co-amenable subgroup which projects densely to every proper factor of $G$. Then $\Lambda$ is a lattice in $G$.
\end{conj}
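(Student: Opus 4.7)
The plan is to convert $\Lambda$ into an irreducible IRS via co-amenability and then attempt to invoke the higher-rank rigidity of Theorem \ref{thm:SZ,HT}. Co-amenability provides a sequence of asymptotically $G$-invariant unit vectors $\xi_n \in L^2(G/\Lambda)$; the probability measures $\eta_n := |\xi_n|^2 m_{G/\Lambda}$ are then asymptotically $G$-invariant. Pushing them forward via the stabilizer map $g\Lambda \mapsto g\Lambda g^{-1}$ produces a sequence of asymptotically invariant probability measures on $\sub(G)$, each supported on the conjugacy class of $\Lambda$. Any weak-$*$ limit $\mu$ is then a genuine IRS, and since all the approximants live in $\sub_d(G)$, one checks using Corollary \ref{cor:compact} that $\mu \in \irs_d(G)$, provided we can rule out atoms at $\{1\}$ and at $G$.

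The next step is to verify that $\mu$ satisfies the hypotheses of Theorem \ref{thm:SZ,HT}. Non-triviality of $\mu$ (no atom at $\{1\}$ or $G$) should follow from the discreteness and non-triviality of $\Lambda$: quantitatively, Theorem \ref{thm:KM-IRS} forces a uniform lower bound, along the sequence $\eta_n$, on the mass of subgroups whose intersection with a fixed identity neighborhood is trivial, and symmetric considerations keep mass away from $\delta_G$. Irreducibility of $\mu$, in the sense of ergodicity of each factor $G_i \curvearrowright (\sub(G), \mu)$, should follow from the hypothesis that $\Lambda$ projects densely to every proper factor: a $G_i$-invariant Borel decomposition of $\sub(G) \pmod \mu$ would descend to a non-trivial decomposition incompatible with dense projection of $\mu$-typical subgroups. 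By Theorem \ref{thm:BDT} the IRS $\mu$ is then supported on discrete Zariski dense subgroups, and by Theorem \ref{thm:SZ,HT} on co-amenable ones.

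To conclude that $\Lambda$ itself is a lattice, one must upgrade ``co-amenable IRS'' to ``lattice IRS'', which is precisely the content of the Stuck--Zimmer conjecture \ref{conj:SZ} and hence the essential obstacle. In the presence of property $(T)$ on some factor this step is classical, but in full generality --- the case $G = \SL_2(\BR) \times \SL_2(\BR)$ being emblematic --- an external input is needed. The plausible route, following the program sketched in Section \ref{sec:spectral-gap}, is to establish a spectral gap for products that rules out the persistence of asymptotically invariant vectors in $L^2(G/\Lambda)$ unless $\Lambda$ has finite covolume. Once $\mu$ is known to sit on lattices, a $\mu$-typical lattice $\Gamma$ arises as a weak-$*$ limit of conjugates of $\Lambda$, and the dense-projection hypothesis on $\Lambda$ together with $\Lambda \subseteq \Gamma$ (up to conjugation) should force $\Lambda$ to be of finite covolume as well. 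The hard part is precisely this replacement for property $(T)$: the IRS formalism organises the problem, but the decisive input must come from outside it.
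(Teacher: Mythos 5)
The statement you are attempting to prove is stated in the paper as an \emph{open conjecture}, not a theorem: the text introduces it precisely as a condition that would \emph{imply} the Stuck--Zimmer Conjecture \ref{conj:SZ} via Theorem \ref{thm:SZ,HT}, and then reports only partial progress (for confined subgroups) in \S\ref{sec:spectral-gap}. So there is no proof in the paper against which to compare, and your proposal, whatever its merits as a research sketch, does not and cannot close the gap.

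Two concrete problems with the plan as written. First, it is circular. After constructing an IRS $\mu$ from co-amenability of $\Lambda$ and invoking Theorem \ref{thm:SZ,HT} to see that $\mu$ is supported on co-amenable subgroups, you are exactly back at the starting hypothesis --- and you note yourself that the needed upgrade from ``co-amenable'' to ``lattice'' is essentially the Stuck--Zimmer Conjecture, which this conjecture is designed to imply. Appealing to ``a spectral gap for products'' as the external input does not help here, because the spectral gap theorems the paper actually proves (Theorems \ref{theorem:getting spectral gap} and \ref{theorem:getting spectral gap - analytic groups}, and the results \ref{thm:BGL1}, \ref{thm:BGL2} built on them) carry a crucial extra hypothesis that you do not have. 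Second, and relatedly, the dense-projection hypothesis on $\Lambda$ does not propagate through the construction: your IRS $\mu$ is supported on Chabauty limits of conjugates $g_n\Lambda g_n^{-1}$, and such limits may be trivial, may fail to project densely, or may even lie entirely in a proper factor. Nothing in ``$\Lambda$ projects densely to every proper factor'' rules this out, so the irreducibility of $\mu$ --- which you need to apply Theorem \ref{thm:SZ,HT} at all --- is not established. This is exactly why \cite{BGL} works with the notions of \emph{strongly} and \emph{irreducibly confined} subgroups, conditions formulated directly in terms of conjugate limits so that they survive the passage to $\mu$; ``co-amenable with dense projections'' gives no such control, and the conjecture remains open.
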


In \S \ref{sec:spectral-gap} below I will report about a recent progress in this direction.

\begin{rem}
Let us recall that the Stuck--Zimmer theorem relies heavily on the Nevo-Zimmer intermediate factor theorem which was actually established later \cite{NZ4}. Later on Arie Levit extended the Nevo--Zimmer and the Stuck--Zimmer theorem for semisimple analytic groups over non-archimedean local fields \cite{Levit}.

\end{rem}

%%%%%%%%%%%%%%%%%%%
%%%%%%%%%%%%%%%%%%%
%%%%%%%%%%%%%%%%%%%

\section{An exotic IRS in rank one}\label{sec:rk-1-example}

In the lack of Margulis' normal subgroup theorem, there are IRS supported on non-lattices. Indeed, if $G$ has a lattice with an infinite index normal subgroup $N\lhd\gC$, arguing as in the previous page, one obtains an ergodic p.m.p. space for which almost any stabilizer  is a conjugate of $N$. 

We shall now give a more interesting example, in the lack of rigidity:

\begin{exam}[An exotic IRS in $\PSL_2(\BR)$, \cite{7S}]

Let $A,B$ be two copies of a surface with 2 open discs removed equipped with distinguishable hyperbolic metrics, such that all the 4 boundary components are geodesic circles of the same length.

\begin{center}
\includegraphics[height=4cm]{./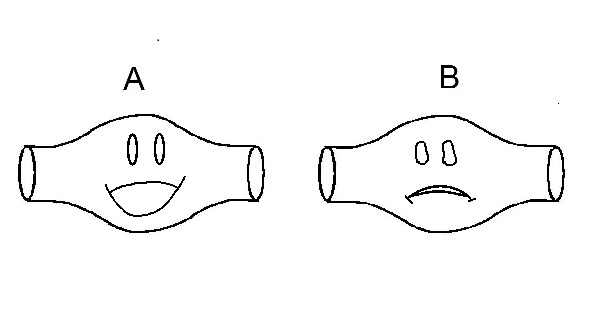}
\end{center}

%{Exotic IRS in $SO(n,1)$}
 
%Consider the space $\{A,B\}^\ZZ$ with the Bernoulli measure $(\frac{1}{2},{1\over 2})^\ZZ$. Any element $\alpha\in\{A,B\}^\ZZ$ is a two sided infinite sequence of $A$'s and $B$'s and we can glue copies of $A,B$ `along a bi-infinite line' following this sequence. This produces a random surface $M^\alpha$. 

Now consider the space $\{A,B\}^\BZ$ with the Bernoulli measure $(\frac{1}{2},{1\over 2})^\BZ$. Any element $\ga$ in this space is a two sided infinite sequence of $A$'s and $B$'s and we can glue copies of $A,B$ `along a bi-infinite line' following this sequence. This produces a random surface $M^\ga$. Choosing a probability measure on the unit tangent bundle of $A$ (resp. of $B$) we define an IRS in $\PSL_2(\BR)$ as follows. First choose $M^\ga$ randomly, next choose a point and a unit tangent vector in the copy of $A$ or $B$ which lies at the place $M^\ga_0$ (above $0$ in the sequence $\ga$), then take the fundamental group of $M^\ga$ according to the chosen point and direction. 
 
\begin{center}
\includegraphics[height=2cm]{./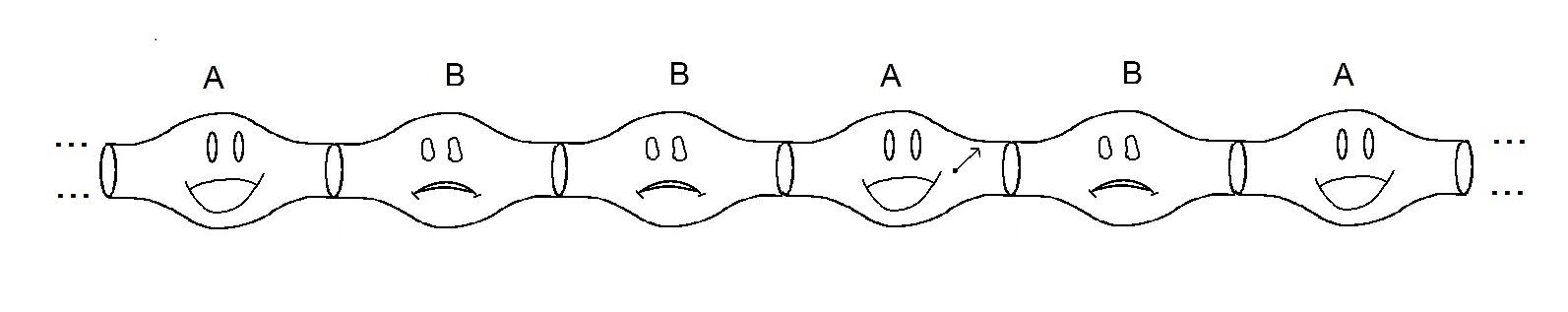}
\end{center} 

It can be shown that almost surely the corresponding group is not contained in a lattice in $\PSL_2(\BR)$.
Analog constructions can be made in $\SO(n,1)$ for all $n$'s (see \cite[Section 13]{7S}).
We refer to \cite{7B} for more examples of exotic IRS in rank one.

\end{exam}

%%%%%%%%%%%%%%%%%%%
%%%%%%%%%%%%%%%%%%%
%%%%%%%%%%%%%%%%%%%

\section{The Benjamini--Schramm, the weak-* and the IRS topologies}\label{sec:BS}

\subsection*{The Gromov--Hausdorff topology}

Given a compact metric space $X$, the Hausdorff distance $\text{Hd}_X(A,B)$ between two closed subsets is defined as
$$
 \text{Hd}_X(A,B):=\inf\{\gep: N_\gep(A)\supset B~\text{and}~N_\gep(B)\supset A\},
$$
where $N_\gep(A)$ is the $\gep$-neighborhood of $A$. The space $2^X$ of closed subsets of $X$ equipped with the Hausdorff metric, is compact.

Given two compact metric spaces $X,Y$, the Gromov distance $\text{Gd}(X,Y)$ is defined as
$$
 \text{Gd}(X,Y):=\inf_Z\{\text{Hd}_Z(i(X),j(Y))\},
$$ 
where the infimum is taken
over all compact metric spaces $Z$ admitting isometric copies $i(X),j(Y)$ of $X,Y$ respectively. 
If $(X,p),(Y,q)$ are pointed compact metric spaces, i.e. ones with a chosen point, we define the Gromov distance 
$$
 \text{Gd}((X,p),(Y,q)):=\inf_Z\{\text{Hd}_Z(i(X),j(Y))+d_Z(i(p),j(q))\}.
$$ 

The Gromov--Hausdorff distance between two pointed proper (not necessarily bounded) metric spaces $(X,p),(Y,q)$ can be defined as
$$
 \text{GHd}((X,p),(Y,q)):=\sum_{n\in\BN} {1\over 2^n}\text{Gd}((B_X(n),p),(B_Y(n),q)),
$$ 
where $B_X(n)$ (resp. $B_Y(n)$) is the ball of radius $n$ around $p$ in $X$ (resp. around $q$ in $Y$).

\subsection*{The Benjamini--Schramm topology}

Let $\mathcal{M}$ be the space of all (isometry classes of) pointed proper metric spaces equipped with the Gromov--Hausdorff topology. This is a huge space and for many applications, it is enough to consider compact subspaces of it obtained by bounding the geometry. That is, let $f(\gep,r)$ be an integer-valued function defined on $(0,1)\times\BR^{>0}$, and let $\mathcal{M}_f$ consist of those spaces for which $\forall \gep,r$, the $\gep$-entropy\footnote{The maximal cardinality of an $\gep$-discrete set.} of the $r$-ball 
$B_X(r,p)$ around the special point is bounded by $f(\gep,r)$, i.e. no $f(\gep,r)+1$ points in $B_X(r,p)$ form an $\gep$-discrete set. Then $\mathcal{M}_f$ is a compact subspace of $\mathcal{M}$.   

In many situations, one prefers to consider some variants of $\mathcal{M}$ which carry more information about the spaces. 
For instance when considering graphs, it may be useful to add colors and orientations to the edges. The Gromov--Hausdorff distance defined on these objects should take into account the coloring and orientation.
%For instance when considering graphs, it may be useful to add colors and directions to the edges, and the distance between rooted coloured graphs remembers the coloring. 
Another example is smooth Riemannian manifolds, in which case it is better to consider framed manifolds, i.e. manifold with a chosen point and a chosen frame at the tangent space at that point. In that case, one replaces the Gromov--Hausdorff topology with the ones determined by $(\gep,r)$ relations (see \cite[Section 3]{7S} for details), which remembers also the directions from the special point.

We define the {\it Benjamini--Schramm space} $\mathcal{BS}=\text{Prob}(\mathcal{M})$ to be the space of all Borel probability measures on $\mathcal{M}$ equipped with the weak-$*$ topology. Given $f$ as above, we set $\mathcal{BS}_f:=\text{Prob}(\mathcal{M}_f)$. Note that $\mathcal{BS}_f$ is compact.

The name of the space is chosen to hint that this is the same topology induced by `local convergence', introduced by Benjamini and Schramm in \cite{BS}, when restricting to measures on rooted graphs. Recall that a sequence of random rooted bounded degree graphs converges to a limiting distribution iff for every $n$ the statistics of the $n$ ball around the root (i.e. the probability vector corresponding to the finitely many possibilities for $n$-balls) converges to the limit. 

The case of general proper metric spaces can be described similarly. A sequence $\mu_n\in\mathcal{BS}_f$ converges to a limit $\mu$ iff for any compact pointed `test-space' $M\in\mathcal{M}$, any $r$ and arbitrarily small\footnote{This doesn't mean that it happens for all $\gep$.} $\gep>0$, the $\mu_n$ probability that the $r$ ball around the special point is `$\gep$-close' to $M$ tends to the $\mu$-probability of the same event.

\begin{exam}
An example of a point in $\mathcal{BS}$ is a measured metric space, i.e. a metric space with a Borel probability measure. 
A particular case is a finite volume Riemannian manifold --- in which case we scale the Riemannian measure to be one, and then randomly choose a point (and a frame).
\end{exam}

Thus a finite volume locally symmetric space $M=\gC\backslash G/K$ produces both a point in the Benjamini--Schramm space and an IRS in $G$. This is a special case of a general analogy that we will now describe. Given a symmetric space $X$, let us denote by $\mathcal{M}(X)$ the space of all pointed (or framed) complete Riemannian orbifolds whose universal cover is $X$, and by
$\mathcal{BS}(X)=\text{Prob}(\mathcal{M}(X))$ the corresponding subspace of the Benjamini--Schramm space. 

Let $G$ be a semisimple Lie group without compact factors, with maximal compact subgroup $K\le G$ and an associated Riemannian symmetric space $X=G/K$. There is a natural map 
$$
 \{\text{discrete subgroups of }~G\}\to \mathcal{M}(X),~\gC\mapsto \gC\backslash X.
$$  
It can be shown that this map is continuous, hence inducing a continuous map
$$
 \irs_d(G)\to \mathcal{BS}(X).
$$
It can be shown that the later map is one-to-one, and since $\irs_d(G)$ is compact, it is
a homeomorphism to its image (see \cite[Corollary 3.4]{7S}). One can also characterize the image of this map which is the space of probability measures on $\mathcal{M}(X)$ which are invariant under the geodesic flow (see \cite{AB}).

\section{A result of the 7 Samurai}\label{sec:7s}

%Suppose now that $G$ is a non-compact simple Lie group of real rank at least $2$. 
The following result from \cite{7S,7A} can be interpreted as `large higher rank manifolds are almost everywhere fat':

\begin{thm}\label{thm:7-main}
Let $X$ be an irreducible symmetric space of rank at least $2$.
Let $M_n=\gC_n\backslash X$ be a sequence of finite volume $X$-orbifolds with $\vol(M_n)\to\infty$. Then $M_n\to X$ in the Benjamini--Schramm topology.
\end{thm}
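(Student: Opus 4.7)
The plan is to transfer the convergence $M_n \to X$ in $\mathcal{BS}(X)$ into weak-$*$ convergence of the associated IRS sequence $(\mu_{\Gamma_n})$ to the Dirac measure $\delta_{\{1\}}$ on the trivial subgroup, using the homeomorphism $\irs_d(G) \to \mathcal{BS}(X)$ recalled in Section \ref{sec:BS} (note that $\delta_{\{1\}}$ corresponds to the Benjamini--Schramm point $X \in \mathcal{BS}(X)$ itself). Since $\irs_d(G)$ is compact by Corollary \ref{cor:compact}, it suffices to verify that every weak-$*$ accumulation point $\mu$ of the sequence $(\mu_{\Gamma_n})$ coincides with $\delta_{\{1\}}$. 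So I pass to a subsequence with $\mu_{\Gamma_n} \to \mu$.

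An irreducible symmetric space of rank $\ge 2$ is of the form $G/K$ for a simple Lie group $G$ of real rank $\ge 2$, and every such $G$ enjoys Kazhdan's property $(T)$. I apply the Stuck--Zimmer rigidity (Theorem \ref{thm:SZ2}) to the ergodic decomposition of $\mu$: every ergodic component is either $\delta_{\{1\}}$ or of the form $\mu_\Lambda$ for some lattice $\Lambda \le G$. (The Dirac component $\delta_G$ is excluded because $G$ is isolated in $\sub(G)$, hence does not belong to $\sub_d(G)$; non-trivial normal Dirac components are excluded since $G$ is simple.) Therefore
\[
 \mu \;=\; c_0 \, \delta_{\{1\}} \;+\; \sum_i c_i \, \mu_{\Lambda_i}, \qquad c_0 + \sum_i c_i = 1, \quad c_0, c_i \ge 0.
\]

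The main obstacle is to rule out the lattice components. Suppose for contradiction $c_1 > 0$ and set $\Lambda := \Lambda_1$. The key input is Chabauty local rigidity in higher rank: combining Wang's finiteness theorem with Margulis--Mostow--Prasad rigidity, the lattices in $G$ of covolume at most $V$ fall into finitely many conjugacy classes and each such conjugacy class is Chabauty-isolated from the others. Moreover, by counting lattice points in large balls (so that covolume varies continuously on Chabauty-convergent sequences of lattices), a sequence of lattices whose covolumes tend to infinity cannot Chabauty-accumulate on any finite-covolume lattice. Accordingly, I choose a continuous test function $\phi: \sub_d(G) \to [0,1]$ supported in a small Chabauty neighborhood $U$ of the orbit $G \cdot \Lambda \subset \sub_d(G)$, identically $1$ on $G \cdot \Lambda$, vanishing near $\{1\}$ and near each orbit $G \cdot \Lambda_i$ for $i \ne 1$, and with $U$ chosen small enough that the only lattices contained in $U$ are conjugates of $\Lambda$. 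By weak-$*$ convergence, $\int \phi \, d\mu_{\Gamma_n} \to \int \phi \, d\mu \ge c_1 > 0$; but since $\vol(M_n) \to \infty$, no conjugate of $\Gamma_n$ can lie in $U$ once $n$ is large, so $\int \phi \, d\mu_{\Gamma_n} = 0$ eventually, a contradiction. Hence every $c_i$ vanishes, $\mu = \delta_{\{1\}}$, and the theorem follows. The hardest part is precisely this final rigidity/volume step, isolating a Chabauty neighborhood of the $\Lambda$-orbit that is simultaneously thick enough for $\mu$ to charge it and thin enough that no conjugate of a high-covolume $\Gamma_n$ can reach it.
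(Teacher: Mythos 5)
Your overall strategy is sound and genuinely differs from the paper's. The paper first invokes the Glasner--Weiss theorem (property $(T)$ forces a weak-$*$ limit of ergodic measures to be ergodic) to conclude that the limit IRS $\mu$ is itself ergodic, and then applies Stuck--Zimmer to the single ergodic measure $\mu$. You instead apply Stuck--Zimmer to the ergodic \emph{components} of $\mu$ and rule out the lattice components one at a time. This bypasses Glasner--Weiss entirely, at the (mild) price of needing the decomposition $\mu = c_0\delta_{\{1\}} + \sum_i c_i\mu_{\Lambda_i}$ to be a countable sum; that is fine because higher-rank simple groups have only countably many conjugacy classes of lattices. Your final step corresponds closely to the paper's ``alternative proof'' via local rigidity rather than to the Cheeger-constant argument.

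There is, however, a genuine gap in the justification of the last step. You assert that ``by counting lattice points in large balls\ (so that covolume varies continuously on Chabauty-convergent sequences of lattices), a sequence of lattices whose covolumes tend to infinity cannot Chabauty-accumulate on any finite-covolume lattice.'' Lattice-point counting in a fixed ball only yields \emph{lower} semi-continuity of covolume under Chabauty convergence -- i.e.\ the covolume of the limit is at most the $\liminf$ -- which is the wrong direction for your purposes and does not by itself exclude a sparse sequence converging to a small-covolume lattice. What actually closes the argument in higher rank is the combination of finite presentability with Weil/Margulis local rigidity, which you name but do not invoke: if conjugates $\Gamma_{n}^{g_n}\to\Lambda$ in Chabauty with $\Lambda$ a lattice, then for large $n$ the nearest elements of $\Gamma_n^{g_n}$ to the generators of $\Lambda$ already satisfy the finitely many defining relations (since $\Gamma_n^{g_n}$ meets a small identity neighbourhood only in $1$), producing a homomorphism $\Lambda\to G$ close to the inclusion; by local rigidity it is an inner conjugate, so $\Gamma_n^{g_n}$ contains a conjugate of $\Lambda$ and hence $\mathrm{vol}(G/\Gamma_n)\le\mathrm{vol}(G/\Lambda)$, the needed contradiction. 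Once you replace the lattice-counting justification by this, the proof is correct. A secondary simplification worth making: take the test function $\phi$ supported in a small Chabauty ball around the \emph{point} $\Lambda$ rather than in a neighborhood of the full orbit $G\cdot\Lambda$. Since $c_1>0$ places $\Lambda$ in the support of $\mu$, one gets $\int\phi\,d\mu>0$ while $\int\phi\,d\mu_{\Gamma_n}=0$ once no conjugate of $\Gamma_n$ enters the ball; this avoids the delicate business of controlling Urysohn functions near a non-compact, non-closed orbit.
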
 

This means that for any $r$ and $\gep$ there is $V(r,\gep)$ such that if $M$ is an $X$-manifold of volume $v\ge V(r,\gep)$ then $\frac{\vol(M_{\ge r})}{v}\ge 1-\gep$, where $M_{\ge r}$ denotes the $r$-thick part of $M$, i.e. the set of points in $M$ where the injectivity radius is at least $r$.

\begin{figure}[h]
    \centering
    \includegraphics[width=0.8\textwidth]{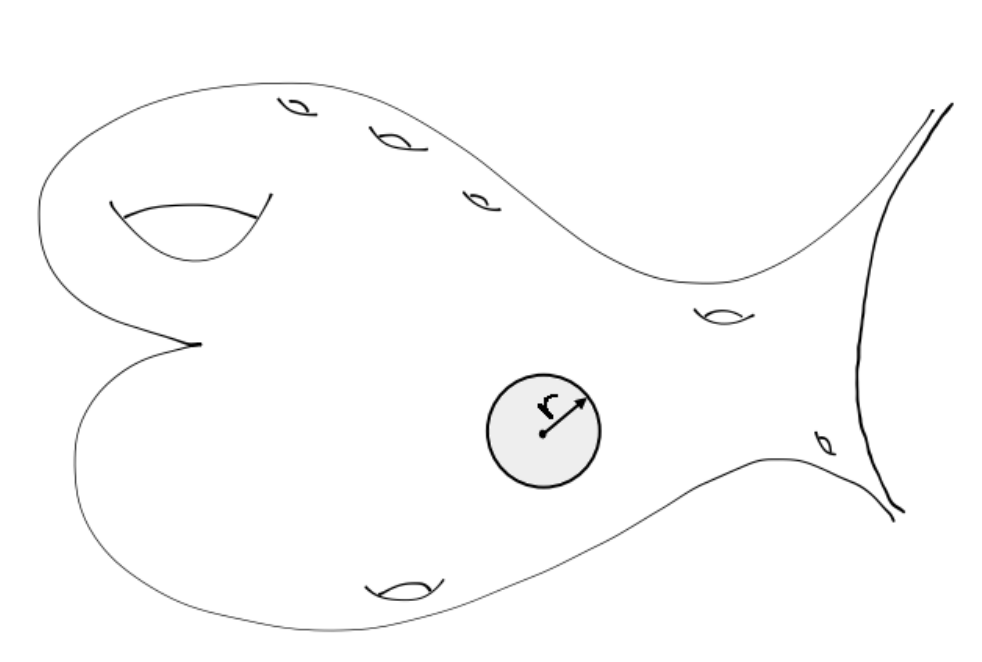}
    \caption{A large volume manifold is almost everywhere fat.}
    \label{whale}
\end{figure}

Using the dictionary from the previous section we may reformulate Theorem \ref{thm:7-main} in the language of IRS:

\begin{thm}[\cite{7S,7A}]\label{thm:main-IRS}
Let $G$ be a simple Lie group of rank at least $2$.
Let $\gC_n\le G$ be a sequence of lattices with $\vol(G/\gC_n)\to\infty$ and denote by $\mu_n$ the corresponding IRS. Then $\mu_n\to \gd_{\{1\}}$.
\end{thm}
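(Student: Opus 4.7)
The plan is to pass to a subsequential weak-$*$ limit of $(\mu_n)$, apply the Stuck--Zimmer theorem to identify its non-trivial part as a mixture of lattice IRS, and then derive a contradiction with $\vol(G/\gC_n)\to\infty$.

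First I would invoke Corollary~\ref{cor:compact}: since $\text{IRS}_d(G)$ is compact in the weak-$*$ topology, it suffices to show that every subsequential limit of $(\mu_n)$ equals $\gd_{\{1\}}$. Fix such a limit $\mu_{n_k}\to\mu$. One checks that $\mu\in\text{IRS}_d(G)$: the point $\{G\}$ is isolated in $\sub(G)$ and $\mu_n(\{G\})=0$, so the Portmanteau theorem applied to the open singleton $\{G\}$ yields $\mu(\{G\})=0$.

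Next, I would decompose $\mu = a\gd_{\{1\}} + (1-a)\mu'$ with $\mu'(\{\{1\}\})=0$ and aim to show $a=1$. Suppose for contradiction that $a<1$, so $\mu'$ is a non-trivial IRS. By the Borel density theorem for IRS (Theorem~\ref{thm:BDT}), $\mu'$-almost every subgroup is discrete and Zariski dense. Since $G$ is simple of real rank at least two, it enjoys Kazhdan's property~(T), so the Stuck--Zimmer theorem in the form of Theorem~\ref{thm:SZ2} tells us that every ergodic component of $\mu'$ is of the form $\mu_\gL$ for some lattice $\gL\le G$. Hence $\mu' = \int \mu_\gL\, d\pi(\gL)$ for a probability measure $\pi$ on the space of lattice conjugacy classes.

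The hard part will be extracting a contradiction from $\vol(G/\gC_n)\to\infty$. I would choose $V>0$ with $\pi(\{[\gL]:\vol(G/\gL)\le V\})>0$; by Wang's finiteness theorem only finitely many conjugacy classes of lattices have co-volume at most $V$, so $\pi$ has an atom at some class $[\gL_0]$, and consequently $\mu$ gives positive mass to the $G$-orbit of $\gL_0$ in $\sub(G)$. Weak-$*$ convergence $\mu_{n_k}\to\mu$ then forces a uniformly positive proportion of each $\mu_{n_k}$ to live near this orbit in the Chabauty topology. Combined with a Mahler/Kazhdan--Margulis-type compactness argument (lattices of bounded co-volume form a compact subset of $\sub(G)$ modulo conjugation), this would force $\gC_{n_k}$ itself, up to conjugation, to lie in a bounded region of $\sub(G)$ --- in particular with bounded co-volume --- contradicting $\vol(G/\gC_n)\to\infty$. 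Hence $a=1$ and $\mu=\gd_{\{1\}}$, completing the argument.
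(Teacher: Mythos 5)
Your proposal gets the high-level structure right (pass to a limit, invoke Stuck--Zimmer, extract an atom via Wang's finiteness), and the ergodic-decomposition route you take --- decomposing $\mu = a\gd_{\{1\}}+(1-a)\mu'$ and applying Stuck--Zimmer to the ergodic components of $\mu'$ --- is a legitimate alternative to the paper's shortcut, which uses Glasner--Weiss (Theorem~\ref{thm:GW}: property (T) closes the set of ergodic invariant measures) to conclude that the limit $\mu$ is itself ergodic and hence of the form $\gd_G$, $\gd_{\{1\}}$ or $\mu_\gC$ directly, with no decomposition needed. Up to and including the extraction of an atom $[\gL_0]$ of $\pi$, your argument is sound.

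The gap is in the final step. You assert that, because $\mu$ charges the $G$-orbit of $\gL_0$ and $\mu_{n_k}\to\mu$ weak-$*$, a ``Mahler/Kazhdan--Margulis-type compactness argument'' forces the $\gC_{n_k}$ (up to conjugation) to lie in a bounded part of $\sub(G)$ and hence to have bounded covolume. This does not follow. Weak-$*$ convergence gives you conjugates $g_k\gC_{n_k}g_k^{-1}$ that are Chabauty-close to $\gL_0$, but Chabauty proximity to a fixed lattice does not, by itself, bound covolume; a priori a lattice of enormous covolume could shadow $\gL_0$ inside a large ball while differing far out. Wang's theorem (compactness of bounded-covolume lattices mod conjugation) runs in the wrong direction for what you want. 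The paper supplies the missing ingredient in two alternative ways: (i) a geometric argument via the Cheeger constant --- property (T) gives a uniform lower bound $C$ on Cheeger constants of finite-volume $X$-orbifolds, BS-convergence $M_n\to M=\gL_0\backslash X$ produces cuts in $M_n$ of small boundary-to-volume ratio on \emph{both} sides once $\vol(M_n)$ is large, violating the bound; or (ii) local rigidity of $\gL_0$ (Selberg--Weil--Margulis), which upgrades the Chabauty approximation to an actual inclusion of a conjugate of $\gL_0$ into $\gC_{n_k}$, so that $\vol(G/\gC_{n_k})\le\vol(G/\gL_0)$, contradicting $\vol(G/\gC_{n_k})\to\infty$. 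Your sketch needs one of these; as written, the contradiction is not established.
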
 

The proof makes use of the equivalence between the two formulations. 
%The main ingredients in the proof are the Stuck--Zimmer rigidity theorem \ref{thm:SZ} and Kazhdan's property (T). Along the proof Property (T) is used in at two different places (actually three places as it is also used in the proof of the Stuck--Zimmer theorem \cite{SZ}). 

Recall that by Kazhdan's theorem, $G$ has property (T). This implies that a limit of ergodic measures is ergodic:

\begin{thm}[\cite{GW}]\label{thm:GW}
Let $G$ be a group with property (T) acting by homeomorphisms on a compact Hausdorff space $X$. Then the set of ergodic $G$-invariant probability Borel measures on $X$ is $w^*$-closed.
\end{thm}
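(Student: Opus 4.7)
The plan is to characterize ergodicity via the Koopman representation and use the uniform Kazhdan constant from property $(T)$ to transfer the spectral gap to weak-$*$ limits. Recall that a $G$-invariant Borel probability measure $\mu$ on $X$ is ergodic if and only if the Koopman representation $\pi_\mu$ of $G$ on $L^2_0(X,\mu) := L^2(X,\mu)\ominus\BC\mathbf{1}$ has no non-zero $G$-invariant vectors. By property $(T)$, fix a Kazhdan pair $(Q,\kappa)$: a compact set $Q\subset G$ and $\kappa>0$ such that for any unitary representation $\pi$ of $G$ without non-zero invariant vectors and any vector $v$,
$$
 \max_{q\in Q}\|\pi(q)v-v\|\ge\kappa\|v\|.
$$

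Now assume $\mu_n\to\mu$ in $w^*$ with each $\mu_n$ ergodic, so $\pi_{\mu_n}$ has no invariant vectors in $L^2_0(X,\mu_n)$. For any $\phi\in C(X)$, applying the Kazhdan inequality to $\phi-\int\phi\,d\mu_n\in L^2_0(X,\mu_n)$ yields
$$
 \max_{q\in Q}\int_X|\phi(q^{-1}x)-\phi(x)|^2\,d\mu_n(x) \ge \kappa^2\left(\int_X|\phi|^2\,d\mu_n-\Big|\int_X\phi\,d\mu_n\Big|^2\right).
$$
Both sides are $w^*$-continuous functions of the measure: the right side is a polynomial in integrals of continuous functions, and the left side is continuous because the functional $(q,\nu)\mapsto\int_X|\phi(q^{-1}x)-\phi(x)|^2\,d\nu(x)$ is jointly continuous on the compact product $Q\times\mathrm{Prob}(X)$ (by continuity of the $G$-action together with compactness of $Q$), so its maximum over $q\in Q$ is $w^*$-continuous in $\nu$. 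Passing to $n\to\infty$, the same inequality holds for $\mu$ and every $\phi\in C(X)$.

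A standard density argument, approximating arbitrary $L^2(X,\mu)$-vectors by continuous functions and using the strong continuity of $\pi_\mu$, extends the inequality to all $\phi\in L^2(X,\mu)$. Consequently any non-zero $G$-invariant vector in $L^2_0(X,\mu)$ would make the left-hand side vanish while the right-hand side remains strictly positive, a contradiction. Therefore $\mu$ is ergodic, and the set of ergodic invariant probability measures is $w^*$-closed.

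The main obstacle is the interchange of the maximum over the compact Kazhdan set $Q$ with the $w^*$-limit; it is handled precisely by the joint continuity of the functional on $Q\times\mathrm{Prob}(X)$ noted above. Beyond property $(T)$ and the standard Koopman formalism, no new ingredients enter the proof.
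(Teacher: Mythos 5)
Your proof is correct and follows essentially the same line of reasoning as the paper's sketch, just packaged contrapositively: the paper starts from a non-ergodic limit $\mu$, approximates an invariant vector in $L^2_0(\mu)$ by a continuous function, transfers it to $L^2_0(\mu_n)$ as an almost-invariant unit vector, and invokes property $(T)$ to produce an invariant vector there; you instead assume each $\mu_n$ ergodic, express property $(T)$ as a uniform Kazhdan inequality for continuous test functions, observe that both sides are $w^*$-continuous in the measure, and pass to the limit. Both arguments hinge on the same two ingredients --- approximating $L^2$ vectors by continuous functions so that integrals vary $w^*$-continuously, and using property $(T)$ to turn ``no invariant vectors'' into a quantitative spectral-gap estimate --- so this is a reformulation rather than a genuinely different route. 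Your density step at the end could be stated more carefully (you only need to rule out nonzero invariant vectors, and for an invariant $v$ approximated in $L^2(\mu)$ by $\phi_k - \int\phi_k\,d\mu$ with $\phi_k\in C(X)$, the bound $\max_{q\in Q}\|\pi(q)\phi_k^0-\phi_k^0\|\le 2\|\phi_k^0-v\|$ gives the contradiction directly), but the substance is right.
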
   

The idea behind Theorem \ref{thm:GW} is that if $\mu_n$ are probability measures converging to a limit $\mu$ and $\mu$ is not ergodic, then there is an $L^2(\mu)$ function on $X$ which is $G$-invariant, orthogonal to the constants and with norm $1$. Approximating it one gets 
continuous function $f$ on $X$ which, as a function in $L_2(\mu)$ is almost $G$-invariant, orthogonal to the constants, and with norm $1$. Thus for large $n$ we have that $f$ is almost invariant in $L_2(\mu_n)$, almost orthogonal to the constants, and with norm almost $1$. Since $G$ has property (T) it follows that there is an invariant $L_2(\mu_n)$ function close to $f$, so $\mu_n$ cannot be ergodic.  

\medskip
\noindent
{\it Sketch of proof of Theorem \ref{thm:main-IRS}.}
Let $M_n=\gC_n\backslash X$ be $X$-orbifolds of finite volume with $\vol(M_n)\to\infty$ as in \ref{thm:7-main}, let $\mu_n$ be the corresponding IRS and let $\mu$ be a weak-$^*$ limit of $\mu_n$. Our aim is to show that $\mu=\gd_{\{1\}}$. Up to replacing $\mu_n$ by a subnet, we may suppose that $\mu_n\to \mu$. By Theorem \ref{thm:GW} we know that $\mu$ is ergodic. However, in view of Theorem \ref{thm:SZ2},
the only ergodic IRS on $G$ are $\gd_G,\gd_{\{1\}}$ and $\mu_\gC$ for $\gC\le G$ a lattice. 

Thus, in order to prove Theorem \ref{thm:main-IRS} (equivalently \ref{thm:7-main}) we have to exclude the cases $\mu=\gd_G$ and $\mu=\mu_\gC$. The case $\mu=\gd_G$ is impossible since $G$ is an isolated point in $\text{Sub}(G)$ (see \cite{Za} or \cite{G}). Let us now suppose that $\mu=\mu_\gC$ for some lattice $\gC\le G$ and aim towards a contradiction. For this, we will adopt the formulation of Theorem \ref{thm:7-main} again. Thus we suppose that $M_n\to M=\gC \backslash X$ in the Benjamini--Schramm topology.

Recall that Property (T) of $G$ implies that there is a lower bound $C>0$ for the Cheeger constant of all finite volume $X$-orbifolds. For our purposes, the Cheeger constant of an orbifold $M$ can be defined as the infimum
$$
 C=\inf_S \frac{\vol (N_1(S))}{\min \{\vol(M'),\vol(M'')\}},
$$
where $S$ is any subset which disconnects the manifold to two sides $M',M''$ and $N_1(S)$ is its $1$-neighbourhood.\footnote{The sides $M',M''$ are not assumed to be connected but any path from $M'$ to $M''$ must cross $S$.}

Since $M=\gC\backslash X$ has finite volume, for an arbitrary point $p\in M$ if we pick $R$ large enough then
$$
 \vol(B_M(p,R-1))>(1-C)\vol(M),
$$
where $B_M(p,R-1)$, that is the $(R-1)$-ball around $p$ in $M$. 

In particular, taking 
$S=\{x\in M:d(x,p)=R\}$ and $M'=B_M(p,R)$ we have that:
$$
  \frac{\vol(N_1(S))}{\vol(M')}\le\frac{\vol(N_1(S))}{\vol(B_M(p,R-1))}\le\frac{\vol(B_{M}(p,R+1)\setminus B_{M}(p,R-1))}{\vol(B_{M}(p,R-1))}<C.
$$

This in itself does not contradict property (T) since the complement $M''=M\setminus M'$ could (and actually has to) be very small (or even empty if $M$ is compact). 

Now since $M_n$ converges to $M$ in the BS-topology, it follows that for $n$ sufficiently large, there must be a point $p_n\in M_n$, whose $R+1$ neighborhood $B_{M_n}(p_n,R+1)$ is arbitrarily close (in the Gromov--Housdorff sense) to $B_M(p,R+1)$. 
Setting 
$$
 S_n=\{x\in M_n:d(x,p_n)=R\},~M_n':=\{x\in M_n:d(x,p_n)<R\}
$$ 
we get that
$$
 \frac{\vol(N_1(S_n))}{\vol(M_n')}<C.
$$ 
Bearing in mind that $\vol(M_n)\to\infty$, we get that for large $n$, the complement $M_n'':=M_n\setminus M_n'$ has arbitrarily large volume, and in particular $\vol(M_n')\le\vol(M_n'')$.
Now, this contradicts the assumption that $C$ is the Cheeger constant of $X$.
See Figure \ref{fig:Cheeger}.
\qed

%%%
%%%
%%%

\begin{figure}[h]\label{fig:Cheeger}
    \centering
    \includegraphics[width=0.6\textwidth]{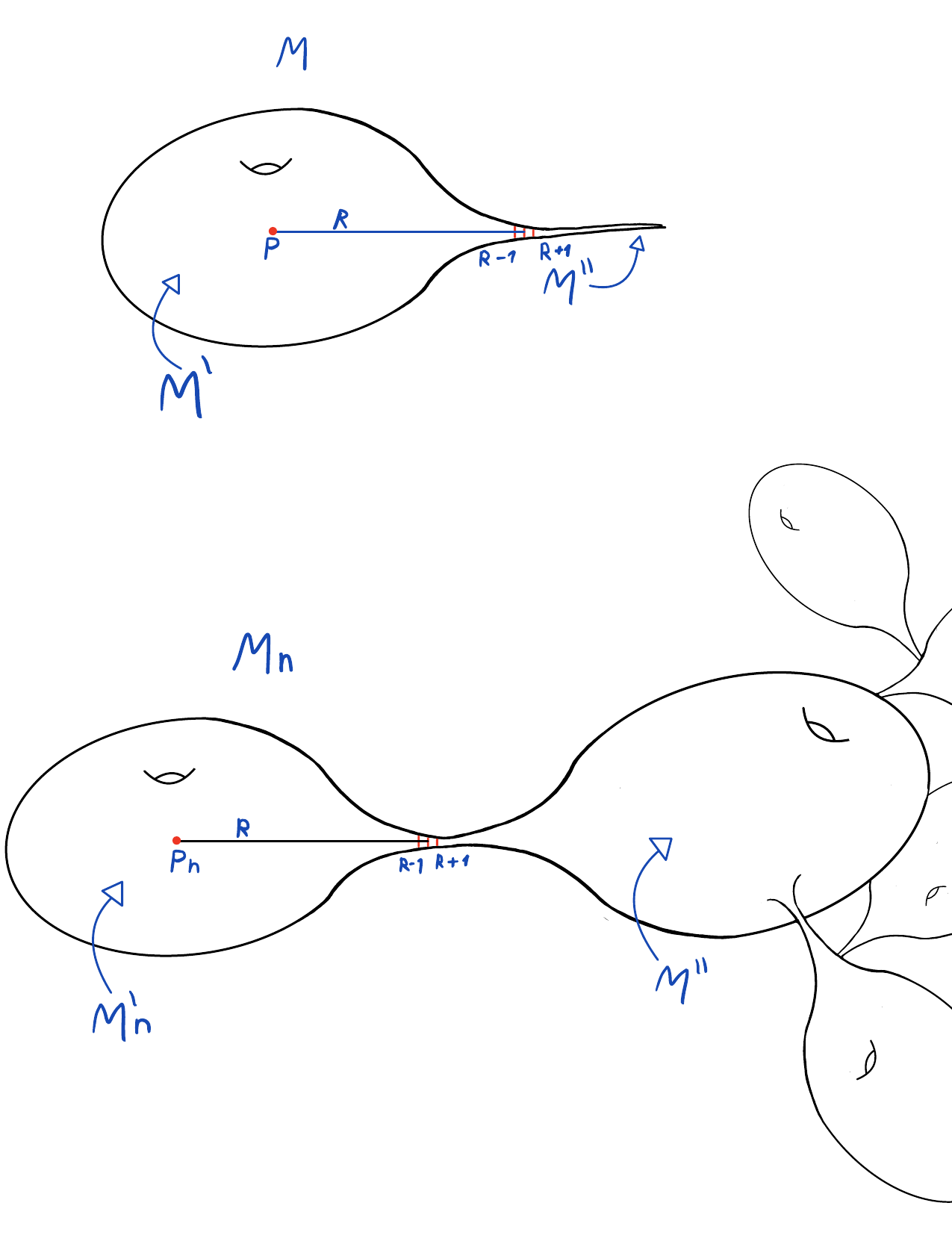}
    \caption{The Cheeger constant of $M_n$ is too small.}
    \label{whale}
\end{figure}

%%%
%%%
%%%

\vspace{1cm}
\noindent
{\it An alternative proof.}
Instead of relying on Kazhdan's property (T) one could make use of local rigidity. If $\mu_n\to \mu$ where $\mu$ is the IRS corresponding to some lattice $\gC$ then there must be points in the support of $\mu_n$ which converge to points in the support of $\mu$. However, $\mu_n$ is supported on the conjugacy class of $\gC_n$ while $\mu$ is supported on the conjugacy class of $\gC$. It follows that there must be conjugates of $\gC_n$ which converges to $\gC$ in the Chabauty topology. However, by Margulis' super-rigidity theorem
$\gC$ is locally rigid and hence also Chabauty locally rigid. It follows that for large $n$, $\gC_n$ must contain a conjugate of $\gC$. This however implies that $\vol(G/\gC_n)\le\vol(G/\gC)$ in contrast to the assumption that $\vol(G/\gC_n)\to \infty$. This approach was taken in \cite{GL} where the work \cite{7A} has been generalized to semisimple groups over non-archimedean local fields. We refer to \cite{GL} for more details.
\qed

\medskip

As explained in the `alternative proof' above, the argument using the Cheeger constant could be replaced by local rigidity. However, property (T) is crucial in two other parts of the proof. Indeed, the proof relies on Theorem \ref{thm:GW} as well as the Stuck--Zimmer theorem \ref{thm:SZ} (which in turn relies on property (T)).

\begin{rem}
In \cite{7S,7A} Theorem \ref{thm:7-main} was proved in the greater generality of irreducible lattices in higher rank semisimple groups with property (T). In fact it is enough to suppose that $G$ is higher rank and at least one of the factors has property (T). In \cite{Levit1} A. Levit proved the analog theorem for higher rank groups regardless of property (T) but assuming that the lattices $\gC_n$ are congruence (instead of Kazhdan's property (T) he made use of Clozel's property $\tau$, see\cite{Cl}). If Serre's conjecture is true, then every higher rank irreducible lattice is congruence. 
%In \cite{GL} the analog of Theorem \ref{thm:7-main} was proved for groups over general local fields. 
\end{rem}

\begin{rem}
Obviously the analog of Theorem \ref{thm:7-main} is false for $G=\SO(n,1)$. For example, one can start with a compact hyperbolic manifold $M=\gC\backslash \mathcal{H}^n$ and take a descending sequence of finite index normal subgroups $N_i\lhd\gC$ such that the infinite intersection $\cap N_i$ is non-trivial and consider the covering manifolds $M_i=N_i\backslash  \mathcal{H}^n$. Also, the examples constructed in \S \ref{sec:CCC} have uniformly bounded injectivity radius. However, maybe for some `more rigid'  rank one groups (for instance, this is interesting for $\text{Sp}(n,1)$) the analog of Theorem \ref{thm:7-main} holds for some mysterious reason.   
\end{rem}

\begin{figure}[h]
    \centering
    \includegraphics[width=0.6\textwidth]{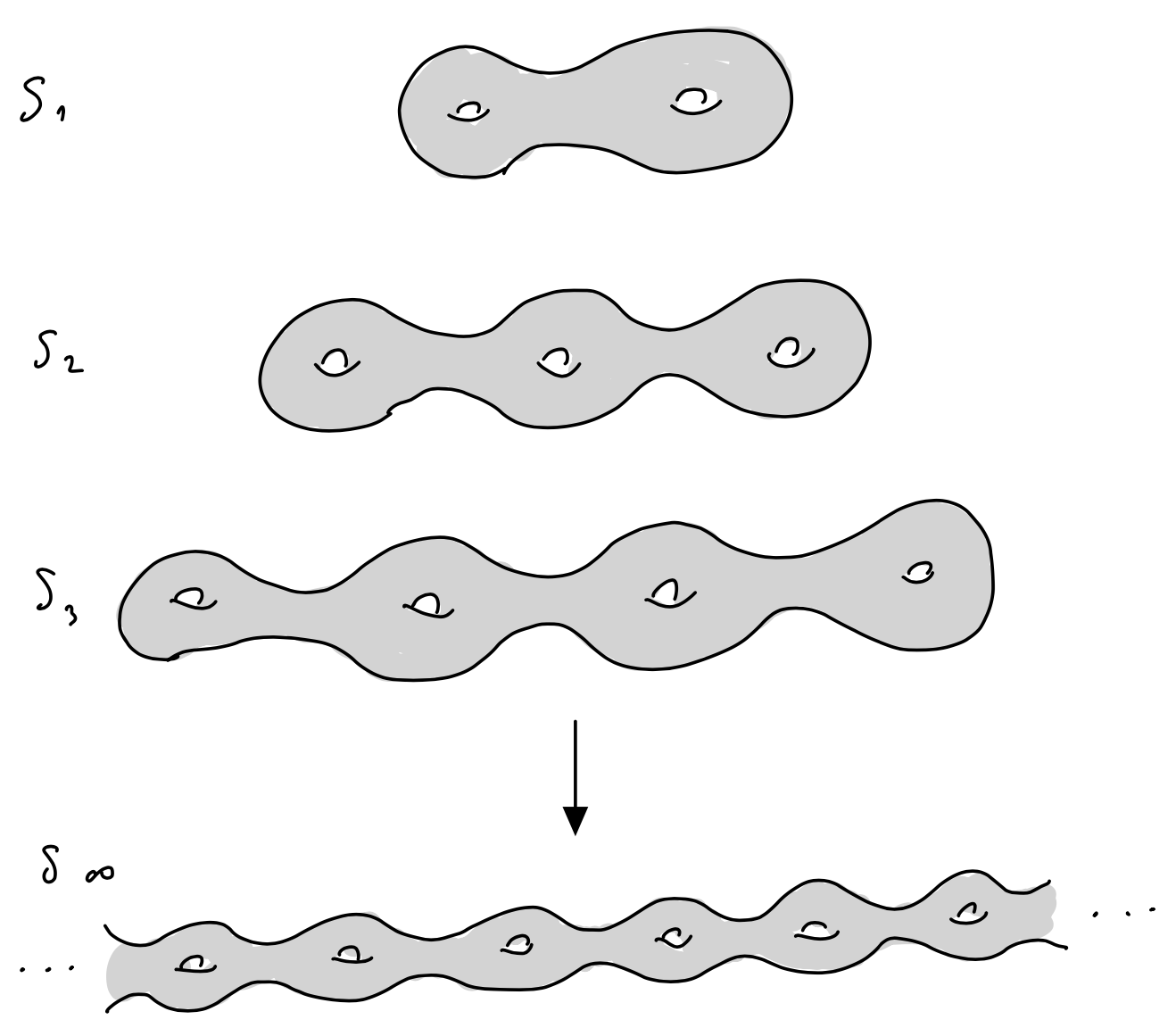}
    \caption{Surfaces with bounded injectivity radius.}
    \label{whale}
\end{figure}

%%%%%%%%%%%%%%%%%%%
%%%%%%%%%%%%%%%%%%%
%%%%%%%%%%%%%%%%%%%

\section{Convergence of normalized Betti numbers}\label{sec:Betti}

The most famous application of Theorem \ref{thm:7-main} is the convergence of normalized Betti numbers. This was first proved in \cite{7-note} for sequences of compact manifolds with injectivity radius bounded below (i.e. uniformly discrete sequences of uniform lattices) and in \cite{ABBG} for general sequences of manifolds. For simplicity we shall restrict to the special case considered in \cite{7-note}.

Suppose that $G$ is a simple Lie group with $\mathbb{R}$-rank at least $2$ and associated symmetric space $X = G/K $.
Let $X^*$ denote the compact dual of $X$ and recall that the Killing form on $\text{Lie}(G)$ induces a Riammanian structure and a volume form on $X^*$ as well as on $X$.
We denote $\beta _{k} (X)$ the $k^{th}$ $L^{2}$-Betti number of $X$ (see also the analytic definition below).
Then
$$
\beta _{k} (X)= \left\{
\begin{array}{ll}
0, & k\neq \frac{1}{2}\dim X \\
\frac{\chi (X^{\ast })}{\mathrm{vol}(X^{\ast })}, & k=\frac{1}{2}\dim X.%
\end{array} \right.
$$
Note also that $\chi (X^{\ast })=0$ unless $\mathrm{rank}_{\mathbb{C}}(G)=%
\mathrm{rank}_{\mathbb{C}}(K)$.
For an $X$-manifold $M$  we denote by $b_{k}(M)$ the $k^{th}$ Betti number of $M$.

\begin{theorem}
\label{thm:Betti} Let $(M_{n})$ be a sequence of closed $X$-manifolds with
injectivity radius uniformly bounded away from $0$, and $\mathrm{vol}(M_{n})\rightarrow
\infty $. Then:
$$\lim_{n\rightarrow \infty }\frac{b_{k}(M_{n})}{\mathrm{vol}(M_{n})}=\beta
_{k}(X)
$$
for $0\leq k\leq \dim (X)$.
\end{theorem}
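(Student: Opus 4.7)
The plan is to give an analytic proof using the heat kernel. By Hodge theory and the McKean--Singer formula, for any closed Riemannian manifold $M$ and every $t>0$,
$$
\mathrm{Tr}\bigl(e^{-t\Delta_k^M}\bigr)=\int_M\mathrm{tr}\,K_k^M(t,x,x)\,d\vol(x),
$$
and this quantity decreases monotonically to $b_k(M)$ as $t\to\infty$, where $K_k^M$ denotes the heat kernel on $k$-forms. On $X=G/K$ the $L^2$-analogue reads $\beta_k(X)=\lim_{t\to\infty}\mathrm{tr}\,K_k^X(t,\tilde x,\tilde x)$, which by $G$-homogeneity is independent of $\tilde x$ and coincides with the $L^2$-Betti number appearing in the theorem. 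The task is therefore to interchange $\lim_t$ and $\lim_n$ inside the normalized heat trace $\vol(M_n)^{-1}\int_{M_n}\mathrm{tr}\,K_k^{M_n}(t,x,x)\,d\vol(x)$.

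First I would fix $t>0$ and compare the heat traces on $M_n$ and on $X$ pointwise. Standard off-diagonal Gaussian bounds on a nonpositively curved symmetric space yield
$$
\bigl|\mathrm{tr}\,K_k^{M_n}(t,x,x)-\mathrm{tr}\,K_k^X(t,\tilde x,\tilde x)\bigr|\le C(t)\,e^{-c\,r(x)^2/t},
$$
where $r(x)$ is the injectivity radius of $M_n$ at $x$ and $\tilde x\in X$ is any lift. The uniform positive lower bound on injectivity radii keeps the integrand bounded on all of $M_n$, while Theorem \ref{thm:7-main} guarantees that for every $R$ the $\vol$-fraction of $M_n$ with $r(x)<R$ tends to $0$. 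Dominated convergence then gives
$$
\lim_{n\to\infty}\frac{1}{\vol(M_n)}\int_{M_n}\mathrm{tr}\,K_k^{M_n}(t,x,x)\,d\vol(x)=\mathrm{tr}\,K_k^X(t,\tilde x,\tilde x)\qquad\text{for every fixed }t>0.
$$
Combined with the monotone inequality $b_k(M)\le\mathrm{Tr}(e^{-t\Delta_k^M})$, sending $n\to\infty$ first and then $t\to\infty$ already delivers the upper bound $\limsup_n b_k(M_n)/\vol(M_n)\le\beta_k(X)$.

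The matching lower bound is the main obstacle, and it is where the higher-rank hypothesis enters a second time. The natural framework is Lück's spectral density function: let $F_n$ be the normalized spectral measure of $\Delta_k^{M_n}$ so that $F_n(\{0\})=b_k(M_n)/\vol(M_n)$, and let $F_X$ be the Plancherel spectral density of $\Delta_k^X$ on $L^2$-forms, so that $F_X(\{0\})=\beta_k(X)$. The previous paragraph implies $\int e^{-t\lambda}\,dF_n\to\int e^{-t\lambda}\,dF_X$ for every $t>0$, which forces $dF_n\to dF_X$ vaguely. The hard part is to upgrade vague convergence to convergence of the atom at $0$, i.e.\ to rule out a sequence of small positive eigenvalues of $\Delta_k^{M_n}$ accumulating at $0$. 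In higher rank the $L^2$-spectrum of $\Delta_k^X$ has a gap above $0$ by Kostant / Borel--Garland style vanishing theorems for the $L^2$-cohomology of symmetric spaces, and this gap transfers to $\Delta_k^{M_n}$ with a positive constant $\delta>0$ uniform in $n$, using the uniform injectivity-radius lower bound together with Kazhdan's property $(T)$ (the same uniformity mechanism as in the proof of Theorem \ref{thm:7-main}). With such a uniform gap in hand, $F_n$ and $F_X$ are supported on $\{0\}\cup[\delta,\infty)$, and the elementary bound
$$
\int_{[\delta,\infty)}e^{-t\lambda}\,dF_n(\lambda)\le e^{-t\delta/2}\int e^{-t\lambda/2}\,dF_n(\lambda),
$$
whose right-hand side is bounded uniformly in $n$, forces $F_n(\{0\})\to F_X(\{0\})$ and completes the proof.
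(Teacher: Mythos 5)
Your upper-bound argument follows the paper's proof almost verbatim (heat kernel, Gaussian off-diagonal bounds, the Benjamini--Schramm convergence from Theorem \ref{thm:7-main} together with the uniform lower bound on injectivity radius to control the thin-part contribution, monotonicity of the heat trace in $t$). Where you part ways is in the lower bound, and that is where there is a genuine gap.

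You assert that the spectral gap above $0$ for $\Delta_k^{(2)}$ on $X$ ``transfers to $\Delta_k^{M_n}$ with a positive constant $\delta>0$ uniform in $n$, using the uniform injectivity-radius lower bound together with Kazhdan's property $(T)$.'' This step is not established, and neither of the two ingredients you invoke supplies it. The injectivity-radius lower bound is a local hypothesis and controls the pointwise comparison of heat kernels (Corollary \ref{lem:hk}); it carries no information about the low-lying global spectrum of $\Delta_k^{M_n}$. Property $(T)$ gives a uniform spectral gap only for the representation on $L^2_0(\Gamma_n\backslash G)$, i.e.\ for $\Delta_0$ on functions. For $k>0$ the small eigenvalues of $\Delta_k^{M_n}$ are governed by which cohomological (Vogan--Zuckerman) representations occur in the automorphic spectrum and how close nearby non-cohomological representations can come; ruling out a sequence of positive eigenvalues of $\Delta_k^{M_n}$ accumulating at $0$ uniformly over all lattices $\Gamma_n$ amounts to an isolation statement for cohomological representations in the automorphic dual. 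That is a substantial input (in the spirit of the Burger--Sarnak/Arthur picture, or Bergeron--Clozel-type results), not a consequence of property $(T)$ plus a thick-part bound, and your proposal neither proves it nor cites it in a form that would close the argument.

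The paper avoids this issue entirely. Having shown $\limsup_n b_k(M_n)/\vol(M_n)\le\beta_k(X)$ for every $k$, it uses the vanishing $\beta_k(X)=0$ for $k\ne\tfrac12\dim X$ to conclude that all those ratios tend to $0$, and then the Chern--Gauss--Bonnet identity $\chi(M_n)=\chi^{(2)}(X)\,\vol(M_n)/\vol(X)$ (equivalently, the equality of the ordinary and $L^2$ Euler characteristics) to force the single remaining ratio $b_{d/2}(M_n)/\vol(M_n)$ to converge to $\beta_{d/2}(X)$. This sidesteps any need for a uniform spectral gap on $k$-forms. If you want to salvage the spectral-density route you would have to justify the uniform gap at the level of automorphic representation theory; otherwise, the Euler-characteristic trick is the clean and elementary way to finish.
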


%%%%%%%%%%%%%%%%%%%%%%%%%%%%%%%%%%%%%%%%%%%%%

%%%%%%%%%%%%%%%%%%%%%%%%%%%%%%%%%%%%%%%%%%%%%

\subsection*{The Laplacian and heat kernel on differential forms}

%\subsection{The heat kernel}

%As before, let $X = G/K $ be the symmetric space associated to a simple Lie group.  

Denote by $e^{-t \Delta_k^{(2)}} (x,y)$ ($x,y\in X$) the heat kernel on $L^2$ $k$-forms. The corresponding bounded integral operator in $\mathrm{End}(\Omega^k_{(2)}(X))$ defined by 
$$
 (e^{-t\Delta_k^{(2)}} f) (x) = \int_{X} e^{-t\Delta_k^{(2)}} (x,y) f(y) \, dy, \ \ \forall f \in  \Omega^k_{(2)}(X)
$$
is the fundamental solution of the heat equation (cf. \cite{BarbaschMoscovici}).

A standard result from local index theory (see e.g. \cite[Lemma 3.8]{BV}) implies:

\begin{lemma}\label{cor:heat}
Let $m >0$. There exists a positive constant $c_1=c_1(G,m)$ such that
$$|| e^{-t \Delta_k^{(2)}} (x,y) || \leq c_1 t^{-d/2} e^{- d (x, y)^2 / 5t } ,  \quad  0 < t \leq m.$$
\end{lemma}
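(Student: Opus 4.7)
The plan is to reduce this form-valued off-diagonal bound to a scalar heat-kernel estimate via a Weitzenb\"ock--Kato argument. I would begin with the Bochner--Weitzenb\"ock decomposition $\Delta_k^{(2)} = \nabla^*\nabla + R_k$, where $R_k$ is a self-adjoint bundle endomorphism of $\Lambda^k T^*X$ built from the Riemann curvature tensor of $X$. Since $X = G/K$ is a homogeneous Riemannian manifold, the curvature tensor is $G$-invariant and hence pointwise bounded, so $\|R_k\|_\infty \le C_0$ for some constant $C_0 = C_0(G,k)$.

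For a smooth compactly supported $k$-form $\omega$, set $\omega_t := e^{-t\Delta_k^{(2)}}\omega$. Kato's inequality applied to the Weitzenb\"ock formula above yields, in the distributional sense,
$$
 (\partial_t + \Delta_0)\,|\omega_t| \le C_0\, |\omega_t|,
$$
where $\Delta_0$ is the scalar Laplacian on $X$. A parabolic maximum principle (equivalently, a Feynman--Kac comparison with the Schr\"odinger semigroup $e^{-t(\Delta_0 - C_0)}$) then gives the pointwise domination $|\omega_t|(x) \le e^{C_0 t}\,\bigl(e^{-t\Delta_0}|\omega|\bigr)(x)$. Localizing $\omega$ near $y$ and passing to kernels yields
$$
 \|e^{-t\Delta_k^{(2)}}(x,y)\| \le e^{C_0 t}\, p_t(x,y),
$$
where $p_t(\cdot,\cdot)$ is the scalar heat kernel on $X$.

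It remains to invoke a standard Gaussian upper bound for the scalar heat kernel on a complete Riemannian manifold with Ricci curvature bounded below; on the symmetric space $X$ the Ricci tensor is $G$-invariant and uniformly bounded. For any $\eta > 0$ there is a constant $c(\eta,m,X)$ (see e.g.\ Cheeger--Gromov--Taylor or Davies--Li--Yau) such that
$$
 p_t(x,y) \le c(\eta,m,X)\, t^{-d/2}\exp\!\left(-\frac{d(x,y)^2}{(4+\eta)t}\right), \quad 0 < t \le m.
$$
Choosing $\eta < 1$ so that $4+\eta < 5$ and using $e^{C_0 t}\le e^{C_0 m}$ absorbs the curvature correction into a single constant $c_1 = c_1(G,m)$ and delivers the stated estimate. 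The main technical obstacle is the rigorous justification of Kato's inequality for the a priori non-smooth function $|\omega_t|$ and the attendant maximum-principle comparison on the non-compact space $X$; once this pointwise domination is secured, the Gaussian estimate is classical, and the deliberate loss in the exponent from the sharp $1/4$ to $1/5$ provides precisely the slack needed to absorb both the factor $e^{C_0 t}$ and any Davies-type perturbation in the scalar bound.
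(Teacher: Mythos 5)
Your argument is correct, but it takes a genuinely different route from the one the paper relies on. The paper does not prove the lemma; it refers to \cite[Lemma 3.8]{BV}, where the estimate comes from local index theory: the Seeley--de~Witt parametrix for the form-valued heat kernel near the diagonal, glued to off-diagonal Gaussian decay obtained via finite propagation speed (in the style of Cheeger--Gromov--Taylor). You instead reduce the form-valued bound to a scalar one via Hess--Schrader--Uhlenbrock/Kato domination: from $\Delta_k^{(2)} = \nabla^*\nabla + R_k$ with $R_k \geq -C_0$ (automatic by $G$-invariance of the Weitzenb\"ock endomorphism), you obtain $\| e^{-t\Delta_k^{(2)}}(x,y)\| \leq e^{C_0 t}\,p_t(x,y)$, and then you apply the Li--Yau/Davies Gaussian upper bound for the scalar heat kernel on a manifold with Ricci bounded below. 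Both routes are standard; yours is arguably more elementary (no parametrix construction, no asymptotic expansion), whereas the parametrix approach yields the full small-time expansion with explicit coefficients, which \cite{BV} needs elsewhere but which is superfluous here. The reduction to scalar bounds is clean on $X=G/K$: homogeneity gives $V(x,\sqrt t)\gtrsim t^{d/2}$ uniformly in $x$ for $t\le m$, so the $V(x,\sqrt t)^{-1}$ in the Li--Yau bound collapses to $c\,t^{-d/2}$. One small correction to your closing remark: the exponent slack from $1/4$ to $1/5$ is not what absorbs the factor $e^{C_0 t}$ --- that factor is bounded by $e^{C_0 m}$ and goes into $c_1$, as you yourself noted a sentence earlier; the exponent slack is needed only to pay for the $(4+\eta)$ denominator coming from the Davies trick in the scalar Gaussian estimate. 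Choosing $\eta\le 1$ gives $4+\eta\le 5$, so the stated bound follows.
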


Now let $M=\Gamma\backslash X$ be a compact  $X$-manifold. Let $\Delta_k$ be the Laplacian on differentiable $k$-forms on
$M$. It is a symmetric, positive definite, elliptic operator with a pure point spectrum.
Write $e^{-t\Delta_k} (x,y)$ ($x,y \in M$) for the heat kernel on $k$-forms on $M$, then for each positive $t$ we have:
\begin{equation} \label{eq:heatkernels}
e^{-t\Delta_k} (x,y) = \sum_{\gamma \in \Gamma} (\gamma_{\tilde y})^* e^{-t\Delta_k^{(2)}} (\widetilde{x} , \gamma \widetilde{y}),
\end{equation}
where $\widetilde{x}, \widetilde{y}$ are lifts of $x,y$ to $X$ and by $(\gamma_y)^*$,
we mean pullback by the map $(x,y) \mapsto (\tilde x, \gamma \tilde y)$.
The sum converges absolutely and uniformly for $\tilde{x}, \tilde{y}$ in compacta; this follows from Lemma \ref{cor:heat}, together with the following estimate:
%\begin{lem} \label{gauss}
%There exist a constant $c_2=c_2(G)$ such that
\begin {equation}\label {eq:numtranslates}
 \left|\{ \gamma \in \Gamma \; : \; d(\tilde x,\gamma \tilde y) \leq r \} \right| \leq c_2 e ^ {c_2 r} \,\mathrm{InjRad}_M(x)^{-d} .
\end {equation}
Here, $c_2=c_2(G)$ is some positive constant and $d=\dim(X)$.

%%%%%%%%%%%%%%%%

%\subsection{Approximating $L^2$-invariants}

%Set notations as in the preceding section.

\subsection*{($L^2$-)Betti numbers}

\label {bettinumbers}
The trace of the heat kernel $e^{-t \Delta_k^{(2)}}(x,x)$ on the diagonal
is independent of $x \in X$, being $G$-invariant. We denote it by $$\mathrm{Tr} \,e^{-t \Delta_k^{(2)}}:=\mathrm {tr} \, e^{-t \Delta_k^{(2)}}(x,x).$$
The $L_2$-Betti numbers can be defined as:
$$\beta_k (X) :=  \lim_{t \rightarrow \infty} \mathrm{Tr} e^{-t \Delta_k^{(2)}} .$$
By Hodge's theory, the usual Betti numbers of $M$ satisfies
$$
 b_k (M) = \lim_{t \rightarrow \infty} \int_M \mathrm{tr} \ e^{-t \Delta_k}(x,x)dx.
 $$
(Intuitively, the solutions of the heat equations converge to solutions of the Laplace equations when the time goes to infinity --- the temperature stabilizes with time.)

%\vspace {2mm}

The following is a consequence of Lemma \ref {cor:heat} and Equations (\ref {eq:heatkernels}) and (\ref {eq:numtranslates}).

\begin{cor} \label{lem:hk}
Let $m >0$ be a real number. There exists a constant $c = c(m,G)$ such that for any $x \in M$ and $t\in (0,m]$,
$$
 \left| \mathrm{tr} \ e^{-t\Delta_k } (x,x) - \mathrm{Tr} e^{-t \Delta_k^{(2)}} \right| \leq c \cdot \mathrm{InjRad}_M(x)^{-d}.
$$
\end{cor}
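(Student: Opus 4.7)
The plan is to expand $\mathrm{tr}\,e^{-t\Delta_k}(x,x)$ using the pre-trace formula (\ref{eq:heatkernels}) at $y=x$, isolate the $\gamma=1$ term, and estimate the rest geometrically. Lifting $x$ to $\tilde x\in X$ and writing $d_\gamma:=d(\tilde x,\gamma\tilde x)$, the $\gamma=1$ contribution is $e^{-t\Delta_k^{(2)}}(\tilde x,\tilde x)$, whose pointwise trace equals $\mathrm{Tr}\,e^{-t\Delta_k^{(2)}}$ by the $G$-invariance of the $L^2$ heat kernel. Since each $(\gamma_{\tilde x})^*$ is a linear isometry on $k$-forms, the trace of every remaining summand is bounded by $\binom{d}{k}$ times its operator norm, and Lemma \ref{cor:heat} bounds this norm by $c_1 t^{-d/2}e^{-d_\gamma^2/5t}$. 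Thus
\[
  \left|\mathrm{tr}\,e^{-t\Delta_k}(x,x)-\mathrm{Tr}\,e^{-t\Delta_k^{(2)}}\right|\le \binom{d}{k}c_1\, t^{-d/2}\sum_{\gamma\ne 1}e^{-d_\gamma^2/5t}.
\]

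The remaining goal is to show that $\sum_{\gamma\ne 1}e^{-d_\gamma^2/5t}\le C(m,G)\,t^{d/2}\rho^{-d}$ for $t\le m$, where $\rho:=\mathrm{InjRad}_M(x)$; combined with the prefactor $t^{-d/2}$ this yields the claimed $c\rho^{-d}$. My approach is the geometric packing principle underlying (\ref{eq:numtranslates}): the balls $B(\gamma\tilde x,\rho)$ are pairwise disjoint in $X$, and each has Riemannian volume at least $c\rho^d$ for a constant $c=c(G)>0$ (we may assume $\rho$ is bounded above, since otherwise the desired inequality follows from general elliptic bounds). For $y\in B(\gamma\tilde x,\rho)$ the inequality $d(\tilde x,y)\le d_\gamma+\rho\le\tfrac{3}{2}d_\gamma$, available since $d_\gamma\ge 2\rho$, gives $e^{-d_\gamma^2/5t}\le e^{-4 d(\tilde x,y)^2/45t}$. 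Multiplying by $c\rho^d\le \mathrm{vol}\,B(\gamma\tilde x,\rho)$, integrating over the disjoint balls, and extending the integral to all of $X$ yields
\[
  c\rho^d\sum_{\gamma\ne 1}e^{-d_\gamma^2/5t}\le \int_X e^{-4 d(\tilde x,y)^2/45t}\,dy\le C(m,G)\,t^{d/2},
\]
the last being a standard Gaussian integral on $X$ (polynomial volume growth near $\tilde x$ supplies the factor $t^{d/2}$, and the exponential volume growth at infinity is killed by the Gaussian tail; if needed, (\ref{eq:numtranslates}) can be used to quantify the tail).

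The hard part is pinning down the correct $\rho^{-d}$ dependence rather than $\rho^{-2d}$. A direct Abel summation of (\ref{eq:numtranslates}) against $e^{-r^2/5t}$ gives, after completing the square in $-r^2/5t+c_2 r$, the bound $\sum_{\gamma\ne 1}e^{-d_\gamma^2/5t}\le C(m)\rho^{-d}e^{-2\rho^2/5t}$; multiplying by $t^{-d/2}$ yields only $\rho^{-2d}$ after optimization in $t$, since $\sup_{t>0} t^{-d/2}e^{-2\rho^2/5t}\sim\rho^{-d}$. The culprit is that the crude estimate $|\{\gamma:d_\gamma\le r\}|\le c_2 e^{c_2 r}\rho^{-d}$ vastly overcounts at the critical scale $r\sim\sqrt{t}$, where the Euclidean-like density $\sim(r/\rho)^d$ is what actually holds. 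The disjoint-ball packing argument is precisely what extracts this sharper short-range density and recovers the extra factor of $t^{d/2}$ needed to balance the $t^{-d/2}$ in front.
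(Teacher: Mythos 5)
Your proof is correct, and it is essentially the standard argument for this estimate. The key structure — isolating the $\gamma=1$ term via (\ref{eq:heatkernels}), using $G$-invariance to recognize it as $\mathrm{Tr}\,e^{-t\Delta_k^{(2)}}$, bounding the rest by Lemma \ref{cor:heat}, and then controlling $t^{-d/2}\sum_{\gamma\ne 1}e^{-d_\gamma^2/5t}$ — is the right decomposition. Your observation that a direct Abel summation of (\ref{eq:numtranslates}) only yields $\rho^{-2d}$, and that one must go back to the disjoint-ball packing that \emph{underlies} (\ref{eq:numtranslates}) to recover the correct exponent $\rho^{-d}$, is exactly the point: the crude count $c_2e^{c_2 r}\rho^{-d}$ vastly overestimates $|\{\gamma:d_\gamma\le r\}|$ at the critical scale $r\sim\rho\sim\sqrt t$, where the true density is Euclidean $\sim(r/\rho)^d$, i.e.\ $O(1)$. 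The paper's phrasing ``is a consequence of \ldots\ (\ref{eq:numtranslates})'' is best read as referring to the packing geometry from which (\ref{eq:numtranslates}) itself is derived, rather than to the inequality as stated. Two minor remarks: the escape clause about $\rho$ bounded above is not needed — in the Cartan--Hadamard space $X$ one has $\mathrm{vol}\,B(p,\rho)\ge\omega_d\rho^d$ for \emph{all} $\rho$ by nonpositive curvature — and the boundary term from Abel summation is $e^{-4\rho^2/5t}$ rather than $e^{-2\rho^2/5t}$ (which does not change the $\rho^{-2d}$ conclusion after optimizing in $t$).
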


\medskip
\noindent
{\it Proof of Theorem \ref{thm:Betti}.}\label{par:hyp}
Let now $M_n=\Gamma_n\backslash X$ be as in the statement of \ref{thm:Betti}. 
By the result of the previous section (Theorem \ref{thm:7-main}), for every $r>0$ we have
$$
 \lim_{n\to\infty}\frac{\vol ((M_n)_{<r})}{\vol(M_n)}=0,
$$
where $M_{<r}:=\{x\in M \ | \ \mathrm{InjRad}_M(x)<r\}$ is the $r $-thin part of $M $.
Since the injectivity radius is uniformly bounded away from $0$
it follows that
$$
 \frac{1}{\mathrm{vol} (M_n)} \int_{(M_n)_{< r}} \mathrm{InjRad}_{M_n}(x)^{- d} dx \rightarrow 0
$$ 
for every $r>0$.
Hence by Corrolary \ref{lem:hk}:

$$\frac{1}{\vol(M_n)} \int_{M_n} \mathrm{tr} \ e^{-t\Delta_k^{M_n} } (x,x) \, dx \, \longrightarrow \, \mathrm{Tr} e^{-t \Delta_k^{(2)}}$$
uniformly for $t$ on compact subintervals of $(0, \infty)$. Since each term in the limit above is decreasing as a function of $t$, we deduce that

$$
\limsup_{n \rightarrow \infty} \frac{b_k (M_n)}{\mathrm{vol} (M_n)} \leq \beta_k (X).
$$
Finally, using that the usual Euler characteristic is equal to its $L^2$ analog and that $\Delta_k^{(2)}$ has
zero kernel if $k \neq \frac12 \dim X$ we derive Theorem \ref{thm:Betti}.\qed

\begin{rem}
The assumption that $G$ is simple can be replaced by semisimple with property (T) again. 
In \cite{ABBG} the analog of Theorem \ref{thm:Betti} was proved in a much greater generality, removing the assumption that the injectivity radius is bounded below and also allowing the manifolds to be non-compact. This theorem was also extended to the non-Archimedean case in \cite{GL}. In \cite{7A} (as well as \cite{GL}) results concerning the asymptotic of much more general analytic invariants have been established relying on the geometric Theorem \ref{thm:7-main}.
\end{rem}

%%%%%%%%%%%%%%%%%%%
%%%%%%%%%%%%%%%%%%%
%%%%%%%%%%%%%%%%%%%

\section{Stationary random subgroups}\label{sec:SRS}

The notion of invariant random subgroups has proven very effective to the study of certain types of problems. However, the restriction to `invariant' measures limits the scope of problems that one can investigate, in particular as the groups we are studying are non-amenable. For example, the rigidity theorem of Stuck and Zimmer says that for higher rank semisimple Lie groups (with property $(T)$) all IRS comes from lattices. In particular one cannot hope to study thin subgroups or general discrete subgroups which are non-lattices using the theory of IRS. Also for simple groups of rank one, it is known that there are discrete subgroups that cannot be in the support of an IRS. On the other hand, {\it every} discrete subgroup is associated with some stationary random subgroups (hereafter SRS) as we shall explain below.  

\subsection*{First properties}

Let $G$ be a connected semisimple Lie group. Let $\mu$ be a Borel regular probability measure on $G$ whose support generates a Zariski dense subgroup. Let $X$ be a compact $G$-space. A probability measure $\nu\in\text{Prob}(X)$ is called $\mu$-stationary if 
$\mu*\nu=\nu$ where 
$$
 \mu*\nu:=\int g_*\nu d\mu(g).
$$
We let $(B,\nu)$ denote the Poisson boundary of $G$ associated to $\mu$. Then $\nu$ is $\mu$-stationary and has the following universal property. 
Recall that on a compact convex space every probability measure has a unique barycenter. In particular, this applies to probability measures on the  
compact convex space $\text{Prob}(X)$.   

\begin{thm}[Furstenberg]\label{thm:Furstenberg}
Let $X$ be a $G$-space. A probability measure $\nu_0$ on $X$ is $\mu$-stationary iff it is the barycenter of the measure $f_*\nu$ for some $G$-equivariant measurable map $f:B\to\text{Prob}(X)$. 
Moreover, the map $f$ is unique.
\end{thm}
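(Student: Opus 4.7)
The plan is to follow Furstenberg's classical martingale argument. I would treat the two implications separately: the easy direction shows that any $G$-equivariant measurable $f:B\to\text{Prob}(X)$ yields a $\mu$-stationary barycenter, while the harder direction constructs $f$ from $\nu_0$ as an almost sure weak-$*$ limit of pushforward measures along a $\mu$-random walk trajectory.

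For the easy direction, I would set $\nu_0 := \mathrm{bar}(f_*\nu)$ and use the fact that the barycenter map $\mathrm{bar}:\text{Prob}(\text{Prob}(X))\to\text{Prob}(X)$ is affine and $G$-equivariant to compute
$$
\mu*\nu_0 \;=\; \mathrm{bar}(\mu*f_*\nu) \;=\; \mathrm{bar}(f_*(\mu*\nu)) \;=\; \mathrm{bar}(f_*\nu) \;=\; \nu_0,
$$
invoking $G$-equivariance of $f$ in the second equality and the $\mu$-stationarity of $\nu$ on the Poisson boundary in the third.

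For the forward direction, I would let $\nu_0$ be $\mu$-stationary and work on the trajectory space $(\Omega,\mathbb{P})=(G^{\mathbb{N}},\mu^{\otimes\mathbb{N}})$ with the natural filtration $\mathcal{F}_n = \sigma(g_1,\ldots,g_n)$. For $\omega=(g_1,g_2,\ldots)$ define
$$
M_n(\omega) := (g_1 g_2 \cdots g_n)_*\nu_0 \in \text{Prob}(X).
$$
Pairing against any continuous $\varphi\in C(X)$ and exploiting $\mu*\nu_0=\nu_0$ exhibits $n\mapsto \int\varphi\,dM_n$ as a uniformly bounded real-valued martingale. Applying the martingale convergence theorem to a countable dense subset of $C(X)$ produces an almost sure weak-$*$ limit $M_\infty(\omega)\in\text{Prob}(X)$. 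This $M_\infty$ is tail-measurable and covariant under the shift, so by the universal property of the Poisson boundary it factors through an essentially unique $G$-equivariant measurable map $f:B\to\text{Prob}(X)$. Taking expectations then yields $\nu_0 = \mathbb{E}[M_0] = \mathbb{E}[M_\infty] = \mathrm{bar}(f_*\nu)$. Uniqueness of $f$ is automatic: any alternative $f'$ with $\nu_0=\mathrm{bar}(f'_*\nu)$ must, by the same martingale argument, coincide with $M_\infty$ almost surely on trajectories, hence $\nu$-almost everywhere on $B$.

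The main obstacle I anticipate is the passage from the trajectory-level limit $M_\infty:\Omega\to\text{Prob}(X)$ to a genuine boundary map $f:B\to\text{Prob}(X)$: one must verify that $M_\infty$ is measurable with respect to the invariant $\sigma$-algebra defining $B$ and then invoke the universal property of $B$ for values in the compact convex $G$-space $\text{Prob}(X)$. This step relies on Furstenberg's identification of $B$ as the maximal $\mu$-boundary together with a Choquet-type representation of measures on $\text{Prob}(X)$, and it is where I would expect the main technical work to go.
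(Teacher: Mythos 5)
The paper states Theorem~\ref{thm:Furstenberg} as a classical result attributed to Furstenberg and gives no proof of it, so there is nothing in the paper to compare your argument against. Taken on its own, your outline is the standard martingale proof and is essentially correct. The easy direction computation is fine (it uses that $\mathrm{bar}$ is affine and $G$-equivariant, then $G$-equivariance of $f$, then $\mu*\nu=\nu$). For the hard direction, since $M_n=(g_1\cdots g_n)_*\nu_0$ depends only on the partial product $z_n=g_1\cdots g_n$, the almost-sure limit $M_\infty$ is measurable with respect to $\bigcap_m\sigma(z_m,z_{m+1},\ldots)$, which is precisely the exit $\sigma$-algebra defining the Poisson boundary $B$; this gives the factorization $M_\infty=f\circ\mathrm{bnd}$, and $G$-equivariance of $f$ comes from the cocycle relation for walks started at $g$ rather than $e$. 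You are right that this descent, plus equivariance, is where the real technical work lies.

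One small point worth tightening is the uniqueness clause at the end: as written, ``by the same martingale argument'' is a bit circular, since that argument was used to construct $f$, not to test an arbitrary candidate $f'$. The clean way to say it: for any $G$-equivariant $f'$ with $\mathrm{bar}(f'_*\nu)=\nu_0$, one has
$$
\mathbb{E}\bigl[f'\circ\mathrm{bnd}\;\big|\;\mathcal F_n\bigr]
= \int_B f'(\xi)\,d\bigl((z_n)_*\nu\bigr)(\xi)
= (z_n)_*\,\mathrm{bar}(f'_*\nu)
= (z_n)_*\nu_0 = M_n,
$$
where the first equality uses that the conditional law of $\mathrm{bnd}$ given $\mathcal F_n$ is $(z_n)_*\nu$ and the second uses $G$-equivariance of $f'$. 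Martingale convergence then forces $f'\circ\mathrm{bnd}=M_\infty$ almost surely, hence $f'=f$ $\nu$-a.e. This is the conclusion you reach, but it makes explicit why equivariance of $f'$ is what pins it down. Also note that the hypothesis implicitly in force here (from the surrounding text) is that $X$ is compact metrizable, which you tacitly use when invoking a countable dense subset of $C(X)$ and weak-$*$ compactness of $\mathrm{Prob}(X)$; it is worth saying so.
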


One useful observation that follows from this theorem is that if $\mu_1,\mu_2$ are two probability measures on $G$ which share the same Poisson boundary $(B,\nu)$ then a measure $\nu_0$ on a $G$-space $X$ is $\mu_1$ stationary iff it is $\mu_2$ stationary. 

Let $K$ be a fixed maximal compact subgroup of $G$. Below we will only consider measures $\mu$ which are bi-$K$-invariant. For such measures the Poisson boundary is $(G/P, \nu)$ where $P$ is a minimal parabolic subgroup and $\nu$ is the unique $K$-invariant measure on $G/P$ (the uniqueness is due to the fact that $K$ acts transitively on $G/P$ which is a consequence of the Iwasawa decomposition) . By the observation above we may vary the measure $\mu$ within this class (according to our needs) without affecting the property of being stationary.

\subsection*{A special measure}

Let $G$ be a connected semisimple Lie group without compact factors.
In \cite{GLM} we have constructed a specific measure $\mu_G$ which is quite effective in various situations. 
The measure $M_G$ has the form
$$
 \eta_K*\delta_s*\eta_K
$$
where $\eta_K$ is the Haar measure of the compact group $K$ and $s$ is a certain semisimple element (see \cite[Sections 6,8]{GLM} for details).

\subsection* {The discreteness radius}
%Consider the symmetric space $X=G/K$ associated with $G$. Let $x_0\in X$ be the fixed point of $K$. Let $d_X$ be the Riemannian metric on $X$.
%We endow $G$ with a metric 
%\[
% d(g,h)=\max \{d_X(h^{-1}g\cdot y,y):~y\in B_X(x_0,1)\}.
%\]
%The ball $B_G(r):=B_G(r,1)$ consists of those elements $g$ whose displacement evaluated at the points of the unit ball around $x_0$ in $X$ is 
%at most $r$. 

Fix a norm $\|\cdot\|$ on the Lie algebra $\text{Lie}(G)$ such that $\text{exp}:\text{Lie}(G)\to G$ restricted to the unit ball $\mathrm{B}(1)=\{X\in\text{Lie}(G):\| X\|<1\}$ is a well defined diffeomorphism. For $r\le 1$ denote $\mathrm{B}(r)=\{X\in\text{Lie}(G):\| X\|<r\}$.
For a discrete group $\gL\subset G$ set
$$
 \mathcal{I}(\gL)=\sup\{r\le 1: \exp \mathrm{B}(r)\cap \Gamma=\{1\}\}.
$$ 
We call $\mathcal{I}(\Gamma)$ the discreteness radius of $\gL$.

%Note that $\mathcal{I}(\Gamma)/2$
%equals to the maximal injectivity radius centred somewhere in the closed unit ball around the projection $\pi(x_0)$ of $x_0$ to $\Gamma\bs X$. It follows that 
%$$
% \text{InjRad}_M\pi(x_0)\le\mathcal{I}(\Gamma)/2\le \text{InjRad}_M\pi(x_0)+1.
%$$

\subsection*{The Margulis function on the space of discrete subgroups of $G$}

%We denote by $\sub(G)$ the space of closed subgroups of $G$ equipped with the Chabauty topology and by $\sub_d(G)$ the subset of discrete subgroups of $G$. Since $G$ has no small subgroups, $\sub_d(G)$ is open (see \cite[Lemma 1.1]{KM-IRS}).
%We will say that a measure $\nu$ on $\sub(G)$ is supported on the set of discrete subgroups if $\nu(\sub_d(G))=1$. 
%A stationary measure supported on $\sub_d(G)$ will be called a {\it discrete stationary random subgroup}.

A fundamental result established in \cite{GLM} is that there is a positive constant $\gd=\gd(G)$ such that 
$$
 u(\gC):=\mathcal{I}(\gC)^{-\gd}
$$
satisfies Inequality (\ref{eq-Marg}) below, that is, it is a Margulis function on the space $\sub_d(G)$ of discrete subgroups of $G$ with respect to $\mu_G$. 

\begin{thm}[\cite{GLM}, Theorem 1.5]\label{thm:eq-Marg}
There exist $0<c<1, b> 0$ such that, for every discrete subgroup $\gC\le G$,
\begin{equation}\label{eq-Marg}
 \int_G u(\gC^g) d\mu_G(g)\leq c u(\gC) +b. 
\end{equation} 
\end{thm}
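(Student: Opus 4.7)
The plan is to establish the integral estimate by analysing the pullback of short Lie algebra vectors under $\mathrm{Ad}(g^{-1})$, exploiting the specific factorisation $\mu_G = \eta_K \ast \delta_s \ast \eta_K$.

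First I would reduce the problem to the Lie algebra. For a discrete subgroup $\Gamma$, set $S(\Gamma) = \{X \in \mathrm{B}(1) : \exp X \in \Gamma\setminus\{1\}\}$. Then $\mathcal{I}(\Gamma)$ equals $\min_{X\in S(\Gamma)} \|X\|$ (or $1$ if $S(\Gamma)$ is empty), so $u(\Gamma) = \max\bigl(1,\sup_{X \in S(\Gamma)} \|X\|^{-\delta}\bigr)$. Since $g^{-1}(\exp X)g = \exp(\mathrm{Ad}(g^{-1})X)$, conjugation transforms short vectors by $\mathrm{Ad}(g^{-1})$, retaining those whose images remain in $\mathrm{B}(1)$. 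Consequently, the task reduces to bounding averages of the form $\int_G \|\mathrm{Ad}(g^{-1})X\|^{-\delta}\, d\mu_G(g)$ for vectors $X$ arising as short vectors of $\Gamma$.

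The heart of the argument is the pointwise contraction estimate: there exist $\delta>0$ and $c_0<1$ such that for every nonzero $X \in \mathrm{Lie}(G)$,
$$
\int_G \|\mathrm{Ad}(g^{-1})X\|^{-\delta}\, d\mu_G(g) \le c_0 \|X\|^{-\delta}.
$$
Factoring $g = k_1 s k_2$ with $k_1, k_2$ Haar-distributed on $K$, and using that $\mathrm{Ad}(K)$ acts isometrically for a $K$-invariant renormalisation of $\|\cdot\|$ (so that the $k_2$ average is harmless), this reduces to
$$
\int_K \|\mathrm{Ad}(s^{-1}k^{-1})X\|^{-\delta}\, dk \le c_0 \|X\|^{-\delta}.
$$
Decomposing $\mathrm{Lie}(G)_{\mathbb{C}}$ into weight spaces of $\mathrm{Ad}(s)$, the element $s$ is chosen sufficiently regular so that $\mathrm{Ad}(s^{-1})$ expands on a large subspace; the $K$-average regularises the singularity of the integrand near the contracting directions. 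For $\delta$ small enough relative to the codimension of those directions, the integral converges and is strictly less than $\|X\|^{-\delta}$.

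To lift this pointwise estimate to $u(\Gamma)$, I would apply it to the minimising vector $X \in S(\Gamma)$, while controlling the possibility that the vector realising $\mathcal{I}(\Gamma^g)$ differs from $\mathrm{Ad}(g^{-1})X$. Here the Kazhdan--Margulis / Zassenhaus lemma enters: all vectors in $S(\Gamma)$ span a nilpotent subalgebra of bounded complexity, so one can apply the pointwise estimate to each element of a bounded generating set and sum. Vectors whose $\mathrm{Ad}(g^{-1})$-images exit $\mathrm{B}(1)$ drop out of $S(\Gamma^g)$ and contribute at most $1$ to $u(\Gamma^g)$, a contribution absorbed by the additive constant $b$ in \eqref{eq-Marg}. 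The main obstacle is the integrability balance in the pointwise estimate: the $K$-measure of a tubular neighbourhood around the contracting weights of $\mathrm{Ad}(s^{-1})$ scales as a fixed power of its radius, forcing $\delta$ strictly below this codimension, while $\delta$ must simultaneously be large enough (and $s$ contracting enough) that the expanding contribution dominates the contracting one after averaging. Engineering this balance, together with the explicit choice of $s$ ensuring both requirements are met at once, is the technical core of \cite{GLM}.
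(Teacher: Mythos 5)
Your overall strategy matches the approach of \cite{GLM}: reduce the discreteness radius to a question about norms of Lie algebra vectors, establish a pointwise $K$-averaged contraction estimate for $\mathrm{Ad}(s^{-1})$, and then lift to $u(\Gamma^g)$ via the nilpotent structure of short elements. The factorisation of $\mu_G = \eta_K*\delta_s*\eta_K$, the bi-$K$-invariance reduction to a single $K$-average, and the role of the weight-space decomposition of $\mathrm{Ad}(s)$ are all correctly identified. So the skeleton is right; the two places where the sketch is materially too loose are the following.

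First, the ``integrability balance'' framing of the pointwise estimate is slightly misleading. Since $\mu_G$ is compactly supported and $\mathrm{Ad}(s^{-1}k)$ is invertible for every $k\in K$, the integrand $\|\mathrm{Ad}(s^{-1}k)X\|^{-\delta}$ is bounded on the unit sphere, so $\int_K \|\mathrm{Ad}(s^{-1}k)X\|^{-\delta}\,dk$ converges for \emph{every} $\delta>0$; there is no codimension threshold for convergence. The real difficulty, which you do touch on, is making the integral strictly less than $\|X\|^{-\delta}$ uniformly in direction: as $s$ is pushed deeper into the Weyl chamber the expanding weights help, but the minimum of $\|\mathrm{Ad}(s^{-1}k)X\|$ over $k$ decays as well, so the contribution of the contracting tube --- small in $K$-measure but of large integrand --- must be controlled. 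The resolution in \cite{GLM} is exactly a quantitative comparison of these two effects, but it is a bound of the form ``small set times bounded blow-up, dominated by large set times uniform expansion,'' not a convergence threshold in $\delta$.

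Second, and more seriously, the lifting step --- passing from the pointwise estimate on a single vector to the inequality for $u(\Gamma^g)$, where $u(\Gamma^g)$ is a supremum over \emph{all} short vectors of $\Gamma^g$ --- is the genuinely hard part, and ``apply the pointwise estimate to each element of a bounded generating set and sum'' does not directly work. Even inside a Zassenhaus neighbourhood, the number of elements of $\Gamma$ in a fixed ball is not uniformly bounded: in the abelian model $\Gamma = \varepsilon\,\BZ$ inside $\BR$, the ball of radius $1$ contains $\sim 1/\varepsilon$ elements, and the count blows up as $\mathcal{I}(\Gamma)\to 0$, which is precisely the regime where the multiplicative part $c\,u(\Gamma)$ must do the work. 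What is uniformly bounded is not the cardinality but the \emph{rank} of the lattice of short logarithms inside the associated nilpotent Lie algebra. The Eskin--Margulis-style device that \cite{GLM} uses here is a reduced basis / successive-minima argument: after conjugation by $g$, the new shortest vector is controlled by the images of $\le \dim G$ carefully chosen ``primitive'' vectors, not by a generating set of the nilpotent subgroup. Without this reduction, summing the pointwise estimate over all of $\Gamma\cap\exp\mathrm{B}(R)$ (with $R$ determined by $\mathrm{supp}\,\mu_G$) produces an unbounded multiplicative constant, and the Margulis inequality $\int u(\Gamma^g)\,d\mu_G \le c\,u(\Gamma)+b$ does not follow. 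You gesture at the Zassenhaus structure, which is the right ingredient, but the specific mechanism --- a bounded number of reduced vectors controlling the supremum, rather than a bounded generating set --- is what carries the proof and is worth making explicit.
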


The constants $\gd, c$ and $b$ are computed explicitly in \cite{GLM} in order to prove certain effective results and in particular a quantitative version of the Kazhdan--Margulis theorem. We will not make use of the explicit values here.

\subsection*{Effective weak uniform discreteness of SRS}

Motivated by \cite{WUD} and the original paper of Kazhdan and Margulis \cite{KM} we have established in \cite{GLM} the following result which is a strong effective version of the Kazhdan--Margulis theorem. It is obviously stronger than the result from \cite{WUD} as it is effective and applies to all SRS rather than IRS. Moreover, since it applies to all SRS it implies the Kazhdan--Margulis theorem for all discrete subgroups.

 %Fix a left-invariant  Riemannian metric on the Lie group $G$ and denote by $\mathrm{B}_r$   the $r$-ball at the identity element of   $G$  with respect to this  metric.  
\begin{theorem}
\label{thm:main theorem}
There are   constants $\beta, \rho, \delta   > 0$   such that for every  
%$\mu$-stationary probability $G$-space $(Z,\nu)$ with $\nu$-almost everywhere discrete stabilizers $G_z$ satisfies 
discrete stationary random subgroups $\nu$ on $G$
\begin{equation}
\label{eq:main equation}
 \nu(\{\gL \: : \:  \gL \cap \mathrm{B}(r) \neq \{1   \}) \le  \beta r^\delta \quad \forall r < \rho.
 \end{equation}
\end{theorem}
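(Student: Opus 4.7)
The plan is to combine the Margulis-function inequality of Theorem \ref{thm:eq-Marg} with the stationarity of $\nu$ and Markov's inequality. First I would observe that the set we wish to estimate is a superlevel set of the Margulis function $u(\Lambda):=\mathcal{I}(\Lambda)^{-\delta}$: directly from the definition of $\mathcal{I}$, one has $\{\Lambda:\Lambda\cap\mathrm{B}(r)\ne\{1\}\}=\{\Lambda:\mathcal{I}(\Lambda)<r\}=\{\Lambda:u(\Lambda)>r^{-\delta}\}$. Markov's inequality thus reduces the theorem to an $L^1$ bound:
$$
 \nu\bigl(\{\Lambda:\Lambda\cap\mathrm{B}(r)\ne\{1\}\}\bigr)\le r^\delta\int u\,d\nu,
$$
so that once $\int u\,d\nu\le b/(1-c)$ is known, one may take $\beta=b/(1-c)$, $\rho=1$, and $\delta$ the exponent furnished by Theorem \ref{thm:eq-Marg}.

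The core of the argument is therefore the $L^1$ bound. For any nonnegative measurable $f$ on $\Sub(G)$, the stationarity identity $\nu=\mu_G*\nu$ together with Fubini yields $\int f\,d\nu=\int\mu_G*f\,d\nu$, and hence by induction $\int f\,d\nu=\int\mu_G^{*n}*f\,d\nu$ for every $n\ge 1$. Applying this naively with $f=u$ and substituting Theorem \ref{thm:eq-Marg} is useless if $\int u\,d\nu$ happens to be infinite, since then the inequality reads $\infty\le c\cdot\infty+b$. To circumvent this, I would first truncate at height $M$ by setting $u_M:=\min(u,M)$. Iterating Theorem \ref{thm:eq-Marg} gives
$$
 \mu_G^{*n}*u\le c^n u+b(1+c+\cdots+c^{n-1})\le c^n u+\frac{b}{1-c},
$$
so $\mu_G^{*n}*u_M\le\mu_G^{*n}*u\le c^n u+b/(1-c)$ and, trivially, $\mu_G^{*n}*u_M\le M$.

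Because $\nu$ is a discrete SRS, the function $u$ is finite $\nu$-almost everywhere, hence $c^n u\to 0$ pointwise $\nu$-a.e., and consequently $\limsup_{n\to\infty}\mu_G^{*n}*u_M\le b/(1-c)$ $\nu$-a.e. As the functions $\mu_G^{*n}*u_M$ are uniformly dominated by the $\nu$-integrable constant $M$, the reverse Fatou lemma yields
$$
 \int u_M\,d\nu=\limsup_{n\to\infty}\int\mu_G^{*n}*u_M\,d\nu\le\int\limsup_{n\to\infty}\mu_G^{*n}*u_M\,d\nu\le\frac{b}{1-c},
$$
where the first equality uses that $\int\mu_G^{*n}*u_M\,d\nu=\int u_M\,d\nu$ for every $n$ by the stationarity identity above. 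Monotone convergence as $M\to\infty$ promotes this to $\int u\,d\nu\le b/(1-c)$, completing the reduction.

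The main obstacle, and the reason the argument is not a one-liner, is the a priori possible infinity of $\int u\,d\nu$; without the truncation one only arrives at the vacuous tautology $\infty\le c\cdot\infty+b$. The truncation/iteration/reverse-Fatou scheme is the standard device for extracting an $L^1$ bound from a contracting Markov operator, and it succeeds here precisely because Theorem \ref{thm:eq-Marg} furnishes a genuine pointwise contraction (with $c<1$) rather than a weaker averaged statement.
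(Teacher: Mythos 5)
Your argument is correct. The paper itself does not reproduce a proof of this theorem (it merely quotes it from \cite{GLM}), but your Markov/Foster--Lyapunov scheme is precisely what the Margulis-function inequality of Theorem \ref{thm:eq-Marg} is designed to deliver: identify the event as the superlevel set $\{u\ge r^{-\delta}\}$, apply Markov's inequality, and extract the uniform $L^1$ bound $\int u\,d\nu\le b/(1-c)$ from stationarity. The genuinely nontrivial point, which you correctly isolate, is the a priori possible infinitude of $\int u\,d\nu$, and the truncation-plus-reverse-Fatou device handles it cleanly; it is worth noting that the paper's proof of the nearby Theorem \ref{thm:dSRS} dodges this issue differently, by initially assuming $\nu$ compactly supported so that $A=\int u\,d\nu<\infty$ from the outset --- that reduction is available there because one may pass to restrictions of the starting measure of the Cesaro averages, but it is not available here since the bound must hold uniformly over arbitrary discrete SRS, which is exactly what your truncation buys you. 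One small point: the identity $\{\Lambda:\Lambda\cap\mathrm{B}(r)\ne\{1\}\}=\{\Lambda:\mathcal{I}(\Lambda)<r\}$ uses that $\mathrm{B}(r)$ is open (so membership forces strict inequality) and that $r\le 1$; choosing $\rho=1$ as you do takes care of the latter.
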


As a straightforward consequence, we obtain:

\begin{cor}
The space of discrete stationary random subgroups $\text{SRS}_d(G)$ is compact.
\end{cor}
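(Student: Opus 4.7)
The plan is to leverage Theorem \ref{thm:main theorem} to show that $\text{SRS}_d(G)$ is a weak-$*$ closed subset of the compact space $\text{Prob}(\text{Sub}(G))$. Since $\text{Sub}(G)$ is compact and metrizable, $\text{Prob}(\text{Sub}(G))$ is compact in the weak-$*$ topology. Given any sequence $\{\nu_n\} \subset \text{SRS}_d(G)$ I would pass to a weak-$*$ convergent subsequence $\nu_n \to \nu$ and verify that the limit $\nu$ is both $\mu_G$-stationary and supported on $\text{Sub}_d(G)$.

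Stationarity is the routine part. For any $f \in C(\text{Sub}(G))$ the function $F(\Lambda) := \int_G f(g \Lambda g^{-1}) \, d\mu_G(g)$ is continuous on $\text{Sub}(G)$, because for each fixed $g$ the map $\Lambda \mapsto g\Lambda g^{-1}$ is Chabauty-continuous and $f$ is bounded, so dominated convergence applies. Weak convergence $\nu_n \to \nu$ then yields $\int f \, d(\mu_G * \nu_n) = \int F \, d\nu_n \to \int F \, d\nu = \int f \, d(\mu_G * \nu)$, and the identity $\mu_G * \nu_n = \nu_n$ passes to the limit to give $\mu_G * \nu = \nu$.

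The core step is preserving discreteness of the support. For each $r > 0$ define
$$
 W_r := \{\Lambda \in \text{Sub}(G) : \Lambda \cap (\exp \mathrm{B}(r) \setminus \{1\}) \neq \emptyset\}.
$$
This set is open in the Chabauty topology, since $\exp \mathrm{B}(r) \setminus \{1\}$ is open in $G$ and the condition that a closed subgroup meets a given open set defines a subbasic open set of $\text{Sub}(G)$. Theorem \ref{thm:main theorem} gives $\nu_n(W_r) \le \beta r^\delta$ for every $n$ and every $r < \rho$. By the portmanteau inequality for open sets,
$$
 \nu(W_r) \le \liminf_n \nu_n(W_r) \le \beta r^\delta.
$$

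Finally, $\bigcap_{r > 0} W_r$ is exactly the set of closed subgroups for which $1$ is an accumulation point, i.e., the complement $\text{Sub}(G) \setminus \text{Sub}_d(G)$. Since $W_r$ decreases as $r \to 0^+$, continuity of measure from above yields $\nu(\text{Sub}(G) \setminus \text{Sub}_d(G)) = \lim_{r \to 0^+} \nu(W_r) = 0$, so $\nu$ is supported on $\text{Sub}_d(G)$. The one subtle point, and the only place that requires care, is formulating the obstruction set $W_r$ using the \emph{open} set $\exp \mathrm{B}(r) \setminus \{1\}$ rather than a compact identity neighborhood: this choice makes $W_r$ open, so that portmanteau transfers the uniform estimate from Theorem \ref{thm:main theorem} to $\nu$ in the direction needed. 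A closed formulation (as used in the proof of Theorem \ref{thm:KM-IRS}) would give portmanteau in the wrong direction and fail to close the argument; here Theorem \ref{thm:main theorem} does the heavy lifting that compactness of $\text{IRS}_d(G)$ provided in that earlier argument.
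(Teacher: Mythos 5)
Your proof is correct and is exactly the "straightforward consequence" the paper has in mind: closedness of $\text{SRS}_d(G)$ inside the compact space $\mathrm{Prob}(\Sub(G))$, with stationarity passing to the limit by continuity of the convolution operator and discreteness preserved via the uniform estimate of Theorem~\ref{thm:main theorem} applied to the open sets $W_r$ and the portmanteau inequality. Your remark contrasting the open formulation needed here with the compact formulation used in the proof of Theorem~\ref{thm:KM-IRS} correctly identifies the one point requiring care.
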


\subsection*{Random walks on the space of discrete subgroups}

%\subsection*{Stationary limit are discrete}

The main result of this section is that any stationary limit of a measure supported on discrete subgroups of $G$ is almost surely discrete. 
This is a key ingredient in the proofs of the results of \S \ref{sec:confined} (in particular Theorem \ref{thm:FG1} and Theorem \ref{thm:FG-main}). 
%We will prove a more general result that concerns with all measures supported on the set of discrete subgroups. 

\begin{thm}\label{thm:dSRS}
Let $\nu$ be a probability measure on $\sub_d(G)$. Let 
$$
 \nu_n=\frac{1}{n}\sum_{i=0}^{n-1}\mu_G^{(n)}*\nu
$$ 
and let $\nu_\infty$ be a weak-* limit of $\nu_n$. Then $\nu_\infty$ is supported on the set of discrete subgroups of $G$, that is, $\nu_\infty$ is a discrete stationary random subgroup. 

\end{thm}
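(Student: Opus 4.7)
The plan is to exploit the Margulis function $u(\Lambda) := \mathcal{I}(\Lambda)^{-\delta}$ from Theorem \ref{thm:eq-Marg}, extended to all of $\sub(G)$ by setting $u(\Lambda) = +\infty$ on non-discrete subgroups (where $\mathcal{I}(\Lambda)=0$). Under this extension $\{u=+\infty\}$ is exactly $\sub(G)\setminus\sub_d(G)$, so it suffices to show that $\nu_\infty(\{u=+\infty\})=0$. The argument is of Foster--Lyapunov type: iterate the drift $Pu\le cu+b$, truncate to handle the possible non-integrability of $u$ against $\nu$, and then pass to the weak-$*$ limit by semicontinuity.

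First I would check that $u$ is lower semicontinuous on the compact metric space $\sub(G)$: for every $r>0$ the set $\{\Lambda:\Lambda\cap(\exp \mathrm{B}(r)\setminus\{1\})\ne\emptyset\}$ is open in the Chabauty topology, and every super-level set $\{u>a\}$ is precisely of this form. Consequently each truncation $u_R:=\min(u,R)$ is a bounded LSC function on $\sub(G)$. Writing $Pf(\Gamma):=\int f(\Gamma^g)\,d\mu_G(g)$, a direct induction from the drift inequality gives $P^n u\le c^n u+\tfrac{b}{1-c}$ for every $n$. Now fix $\varepsilon>0$ and, using that $u$ is $\nu$-a.s.\ finite, pick $R_0$ with $\nu(\{u>R_0\})<\varepsilon$, so that the decomposition $\nu=\nu'+\nu''$ with $\nu':=\nu|_{\{u\le R_0\}}$ satisfies $\int u\,d\nu'\le R_0$ and $\nu''(\sub(G))<\varepsilon$. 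Applying the iterated drift to $\nu'$ and Cesaro averaging, while bounding $u_R\le u$ on the $\nu'$-piece and $u_R\le R$ on the $\nu''$-piece, I obtain for every $n$
$$
 \int u_R \, d\nu_n \;\le\; \frac{R_0}{n(1-c)} + \frac{b}{1-c} + R\varepsilon.
$$

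Passing to a subnet $\nu_{n_k}\to\nu_\infty$ and invoking the Portmanteau theorem for the bounded LSC function $u_R$ gives
$$
 \int u_R\,d\nu_\infty \;\le\; \liminf_k \int u_R\,d\nu_{n_k} \;\le\; \frac{b}{1-c}+R\varepsilon.
$$
Since $u_R\equiv R$ on $\{u\ge R\}$, Chebyshev yields $R\cdot\nu_\infty(\{u\ge R\})\le\int u_R\,d\nu_\infty$, whence $\nu_\infty(\{u\ge R\})\le\tfrac{b}{R(1-c)}+\varepsilon$. Letting $R\to\infty$ along the decreasing family $\{u\ge R\}\searrow\{u=+\infty\}$ gives $\nu_\infty(\{u=+\infty\})\le\varepsilon$, and letting $\varepsilon\to 0$ by enlarging $R_0$ concludes $\nu_\infty(\{u=+\infty\})=0$, i.e.\ $\nu_\infty$ is supported on $\sub_d(G)$.

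The main obstacle is that $u$ need not be $\nu$-integrable even though it is $\nu$-a.s.\ finite, so one cannot feed $\nu$ itself into the drift inequality in a single step; the two-stage truncation --- first $\nu=\nu'+\nu''$, then $u_R$ in place of $u$ --- is precisely what circumvents this. Once this is set up, the semicontinuity/Portmanteau/Chebyshev combination is routine, and the argument depends on nothing beyond the Margulis function estimate of Theorem \ref{thm:eq-Marg} and compactness of $\sub(G)$.
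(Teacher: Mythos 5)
Your argument is correct and follows essentially the same route as the paper: extract integrability of the Margulis function by a truncation of $\nu$, iterate the drift inequality (\ref{eq-Marg}), apply Chebyshev, and pass to the weak-$*$ limit via semicontinuity. The paper compresses your explicit decomposition $\nu=\nu'+\nu''$ into the phrase ``by restricting to compact subsets of $\sub_d(G)$ we may assume $A=\int u\,d\nu<\infty$,'' and it applies Chebyshev to $\nu_n$ first and then invokes openness of $\{\mathcal{I}<\varepsilon\}$ to pass to $\nu_\infty$, whereas you truncate $u$ to $u_R$ and invoke Portmanteau for the bounded LSC function $u_R$ before applying Chebyshev to $\nu_\infty$; these are interchangeable technical routes to the same estimate, and your version spells out the details the paper leaves implicit.
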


The proof is reminiscent of \cite{EM}. 
%Using the \emph{Margulis function} $u$ we will show that typical trajectory of a random walks spends most of the time outside of the thin part of $G/\Gamma$. This will quickly imply Theorem \ref{thm:dSRS}. 

\begin{proof} 
Recall the discreteness function $\mathcal{I}$ and the Margulis function $u$ defined in the previous page.
By restricting to compact subsets of $\sub_d(G)$, we may allow ourselves to suppose that $\nu$ is compactly supported and that $A:=\int u(\Lambda)d\nu(\Lambda)$ is finite. 

To prove Theorem \ref{thm:dSRS} we need to show that 
%\begin{equation}\label{eq-Discrete1}
%  \lim_{\gep\to 0}\int_{G} 1_{\{\mathcal{I}(\Lambda)<\gep\}}(\Lambda)d\nu_\infty(\Lambda)= 0.
%\end{equation}
\begin{equation}\label{eq-Discrete1}
  \lim_{\gep\to 0}\nu_\infty({\{\Lambda:\mathcal{I}(\Lambda)<\gep\}})= 0.
\end{equation}
%For the proof of the Theorem we may replace $\mu$ be a finite self-convolution. Indeed any weak limit for $\mu$ is a convex combination of weak limits for $\mu^n$. Because of that, it wont hurt to assume that the condition (\ref{eq-Marg}) holds for $n_0=1$. 
%Let $X_n$ be the $n$-step of the random walk on $G$ induced by $\mu$. 
Inequality (\ref{eq-Marg}) implies that 
$$
 \int u(\Lambda)d(\mu_G*\nu)(\Lambda)=\int u(\Lambda^g)d\mu_G(g)d\nu(\Lambda)\le cA+b.
$$
Furthermore, iterating Condition (\ref{eq-Marg}) and summing the resulting geometric series we get
\[ 
 %\int u(\Lambda)d\nu_n(\Lambda)=
 \int u(\Lambda)d(\mu_G^{(n)}*\nu)(\Lambda)< c^n A+C,
\] 
with $C:=b/(1-c)$, uniformly for all $n$. Set $M=A+C$.
Then, for every $\gt>0$ and $n\geq 1$ we have
%\[ 
% \int_G 1_{\{u(\gC^g)\geq M\gt^{-1}\}}d\mu^n(g)< \gt.
%\]
\[ 
 \nu_n(\{\Lambda:u(\Lambda)\geq M\gt^{-1}\})< \gt.
\]
%The function $u(\Lambda)$ tends to $\infty$ as $\mathcal{I}(\Lambda)\to 0$, so there exists $\gep>0$ such that $\mathcal{I}(\Lambda)\leq \gep$ implies $u(\Lambda)>M\gt^{-1}$. We get 
Setting $\gep=(\gt/M)^\frac{1}{\gd}$ we get
\[ 
 \nu_n(\{\Lambda:\mathcal{I}(\Lambda)\le\gep\})<\gt.
\]
Taking $n\to\infty$ gives 
$\nu_\infty(\{\Lambda:\mathcal{I}(\Lambda)\le\gep\})<\gt$,
and letting $\gt \to 0$ we get (\ref{eq-Discrete1}).
\end{proof}

%%%%%%%%%%%%%%%%%%%
%%%%%%%%%%%%%%%%%%%
%%%%%%%%%%%%%%%%%%%

\section{Confined subgroups have finite co-volume}\label{sec:confined}

Let $G$ be a Lie group. A discrete subgroup $\gL\le G$ is called {\it confined} if there is a compact set $C\subset G$ such that $\forall g\in G,~\gL^g\cap C\setminus\{1\}\ne \emptyset$.

\subsection*{Margulis' conjecture}
In this section, we will explain the proof of the following result from \cite{FG} which confirmed a conjecture of Margulis. 

\begin{thm}\label{thm:FG1}
Let $G$ be connected simple Lie group of rank at least $2$. Let $\gL\le G$ be a discrete subgroup. Then $\gL$ is confined iff it is a lattice in $G$.
\end{thm}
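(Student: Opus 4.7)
The easy direction uses a volume pigeonhole: if $\Lambda\le G$ is a lattice of covolume $V$, choose $R$ with $\vol(B(1,R))>V$; then for every $h\in G$ the projection of $B(1,R)$ into $G/\Lambda^h$ is not injective, producing $b_1,b_2\in B(1,R)$ with $\gamma:=b_2^{-1}b_1\in\Lambda^h\setminus\{1\}$ of (left-invariant) distance at most $2R$ from $1$. Hence $\Lambda$ is confined by $C=\overline{B(1,2R)}$.

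For the converse, suppose $\Lambda$ is discrete and confined by a compact set $C\subset G$. The plan is to cook up an SRS from $\Lambda$ via the random walk driven by $\mu_G$, invoke the stiffness theorem of \cite{FG} to force it to be supported on lattices, and then transfer the conclusion back to $\Lambda$ by Chabauty local rigidity. Setting
\[
 \nu_n=\frac{1}{n}\sum_{i=0}^{n-1}\mu_G^{(i)}*\delta_\Lambda
\]
and taking $\nu_\infty$ to be a weak-$*$ subsequential limit, a standard telescoping shows $\nu_\infty$ is $\mu_G$-stationary, and by Theorem~\ref{thm:dSRS} it is supported on $\sub_d(G)$. The crucial non-triviality point is that $\nu_\infty\ne\delta_{\{1\}}$. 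Indeed, confinement gives $\Delta\cap C\ne\{1\}$ for every $\Delta$ in the support of $\nu_n$, while iterating the Margulis-function estimate of Theorem~\ref{thm:eq-Marg} and applying Markov yields $\nu_n(\{\mathcal{I}(\Delta)<r\})\le Ar^\delta$ for a constant $A=A(\Lambda)$ uniform in $n$. Splitting: a $\nu_n$-typical $\Delta$ meets the closed set $C\setminus\exp\mathrm{B}(r)$ non-trivially with probability $\ge 1-Ar^\delta$, and this passes to the limit by Portmanteau; for small $r$ this forces $\nu_\infty$ to give positive mass to non-trivial discrete subgroups. Now invoke the stiffness theorem of \cite{FG} to upgrade $\nu_\infty$ to an IRS; since $G$ is simple of rank $\ge 2$ and hence has property~$(T)$, the Stuck--Zimmer rigidity (Theorem~\ref{thm:SZ2}) forces the non-trivial part of $\nu_\infty$ to be supported on the conjugacy class of some lattice $\Delta\le G$. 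Since $\nu_n$ is supported on conjugates of $\Lambda$, there is a sequence $g_n\in G$ with $\Lambda^{g_n}\to\Delta'$ in Chabauty for some conjugate $\Delta'$ of $\Delta$; by Margulis' super-rigidity $\Delta'$ is Chabauty-locally rigid, so for large $n$ the group $\Lambda^{g_n}$ contains a conjugate of $\Delta'$, exactly as in the alternative proof of Theorem~\ref{thm:main-IRS}. Conjugating back, $\Lambda$ contains a lattice and is therefore a lattice.

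The principal obstacle is the stiffness input from \cite{FG}, namely upgrading $\mu_G$-stationarity to $G$-invariance on $\sub_d(G)$; its proof requires the specific structure of $\mu_G$ from \cite{GLM} and a substantially more delicate exploitation of the Margulis function $u$ than the soft use made above to establish non-triviality. The remaining pieces --- the volume pigeonhole, the Markov--Portmanteau non-triviality step, Stuck--Zimmer, and Chabauty local rigidity --- are either classical or have already been set up in the preceding sections.
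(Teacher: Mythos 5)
Your proposal is correct and follows essentially the same path as the paper: Cesaro averages of the $\mu_G$-walk, discreteness of the limit via Theorem~\ref{thm:dSRS}, stiffness (Theorem~\ref{thm:stiffness}) to upgrade to an IRS, Stuck--Zimmer to force lattice support, and Chabauty local rigidity to pull the lattice conclusion back to $\Lambda$. The only organizational difference is that the paper proves the stronger contrapositive statement (Theorem~\ref{thm:FG-main}: $\nu_n\to\delta_{\langle 1\rangle}$ whenever $\Lambda$ has infinite covolume) and lets Theorem~\ref{thm:FG1} follow implicitly, whereas you argue the ``confined $\Rightarrow$ lattice'' direction head-on and make the Markov--Portmanteau non-triviality step explicit; this step is exactly what the paper would need (and leaves to the reader) to deduce Theorem~\ref{thm:FG1} from Theorem~\ref{thm:FG-main}.
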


Let $X=G/K$ be the associated symmetric space.
Geometrically, the theorem says that if  $M=\gL\backslash X$ is an $X$-orbifold with bounded injectivity radius then it must have finite volume.
In different words, the theorem says that if $\gL\le G$ is a discrete subgroup of infinite co-volume, then it admits a sequence of conjugates $\gL^{g_n}$ which converges to the trivial group. The following statement (from \cite{FG}) is much stronger:

\begin{thm}\label{thm:FG-main}
Let $\mu=\mu_G$ be the probability measure on $G$ constructed in \cite{GLM}. 
Let $\gL\le G$ be a discrete subgroup of infinite co-volume.
Then
$$
 \frac{1}{n}\sum_{i=1}^n \mu^{(i)}*\gd_\gL\to\gd_{\langle 1\rangle}.
$$
\end{thm}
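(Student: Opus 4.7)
The plan is to analyze weak-$*$ subsequential limits $\nu_\infty$ of the Cesaro averages $\nu_n:=\tfrac{1}{n}\sum_{i=1}^n\mu^{(i)}*\gd_\gL$ and to show that every such limit must equal $\gd_{\{1\}}$; since probability measures on $\sub(G)$ are weak-$*$ compact, uniqueness of the subsequential limit will give convergence of the full Cesaro sequence. The standard telescoping identity
$$
 \mu*\nu_n-\nu_n=\tfrac{1}{n}\big(\mu^{(n+1)}*\gd_\gL-\mu*\gd_\gL\big)
$$
has total variation at most $2/n$, so any weak-$*$ subsequential limit $\nu_\infty$ is automatically $\mu$-stationary.

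By Theorem \ref{thm:dSRS}, $\nu_\infty$ is supported on discrete subgroups, i.e. is a discrete $\mu$-SRS. The next key input is the stiffness result for discrete SRS announced earlier in the paper (from \cite{FG}), which, in the higher-rank simple setting, upgrades any such stationary measure to a $G$-invariant one, i.e. to an IRS. Ergodically decomposing this IRS yields ergodic discrete IRS, and the Stuck--Zimmer theorem (Theorem \ref{thm:SZ2}) forces each ergodic component to be one of $\gd_{\{1\}}$, $\gd_G$, or $\mu_\gC$ for some lattice $\gC\le G$. The component $\gd_G$ is instantly ruled out because $G$ is not discrete, so the entire task reduces to excluding the possibility that some $\mu_\gC$ with $\gC$ a lattice contributes to the decomposition.

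This is where the infinite co-volume hypothesis on $\gL$ comes in. Every $\mu^{(i)}*\gd_\gL$ is supported on the $G$-conjugacy class $\{\gL^g:g\in G\}$, all of whose members share the infinite co-volume of $\gL$; hence $\mathrm{supp}(\nu_\infty)\subseteq\overline{\{\gL^g:g\in G\}}$ in the Chabauty topology. If some lattice $\gC$ lay in this closure, then there would exist $g_n\in G$ with $\gL^{g_n}\to\gC$ Chabauty. But Margulis super-rigidity for irreducible higher-rank lattices supplies a Chabauty local rigidity: a sufficiently small Chabauty neighbourhood of $\gC$ consists entirely of discrete subgroups which contain a conjugate of a finite-index subgroup of $\gC$, and in particular have finite co-volume. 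This contradicts the infinite co-volume of $\gL^{g_n}$, so no lattice belongs to the closure and no $\mu_\gC$ appears in the ergodic decomposition. Consequently $\nu_\infty=\gd_{\{1\}}$, finishing the proof.

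The step I anticipate to carry the real weight is the stiffness statement that promotes a discrete $\mu$-SRS to an IRS --- it is precisely what licenses the use of Stuck--Zimmer on an object that is a priori only stationary rather than invariant, and its proof must genuinely use that $\mu=\mu_G$ is the carefully tuned measure from \cite{GLM}. Once stiffness is granted, the local-rigidity exclusion of lattice components is essentially classical and the rest of the argument is soft.
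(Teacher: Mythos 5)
Your proposal follows exactly the same path as the paper's proof: pass to a weak-$*$ subsequential Cesaro limit $\nu_\infty$, invoke Theorem \ref{thm:dSRS} for discreteness, the stiffness Theorem \ref{thm:stiffness} to upgrade to an IRS, the Stuck--Zimmer theorem to reduce to excluding lattices from $\overline{\gL^G}$, and classical local rigidity (via finite presentation and an identity neighborhood disjoint from $\gC$) to derive a contradiction with infinite co-volume. The only cosmetic differences are that you make the telescoping/stationarity step and the ergodic decomposition explicit, and your phrasing of ``Chabauty local rigidity'' is slightly loose (a small Chabauty neighbourhood of $\gC$ does not consist only of discrete subgroups --- the correct statement, which is what the paper proves, is that any \emph{discrete} subgroup sufficiently Chabauty-close to $\gC$ must contain a conjugate of $\gC$); neither affects the substance.
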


Geometrically, the theorem says that every random walk on a locally $X$-manifold of infinite volume spends most of the time in the $r$-thick part for every $r>0$. 

Let $G$ be as above and $\Lambda\le G$ a discrete subgroup of infinite co-volume. Set 
$$
 \nu_n:=\frac{1}{n}\sum_{i=1}^n \mu^{(i)}*\gd_\gL.
$$ 
The aim is to prove that $\nu_n\to\gd_{\langle 1\rangle}$.
Since $\text{Prob}(\sub(G))$ (with the weak-$*$ topology) is compact, it is enough to prove that $\gd_{\langle 1\rangle}$ is the only limit of $\nu_n$. Thus let $\nu_\infty$ be a week-$*$ limit of $\nu_n$, and we shall argue to show that $\nu_\infty=\gd_{\langle 1\rangle}$. In view of Theorem \ref{thm:dSRS} $\nu_\infty$ is a discrete SRS, i.e. a $\nu_\infty$-random subgroup is almost surely discrete.

\begin{figure}[h]
    \centering
     \includegraphics[width=0.35\textwidth]{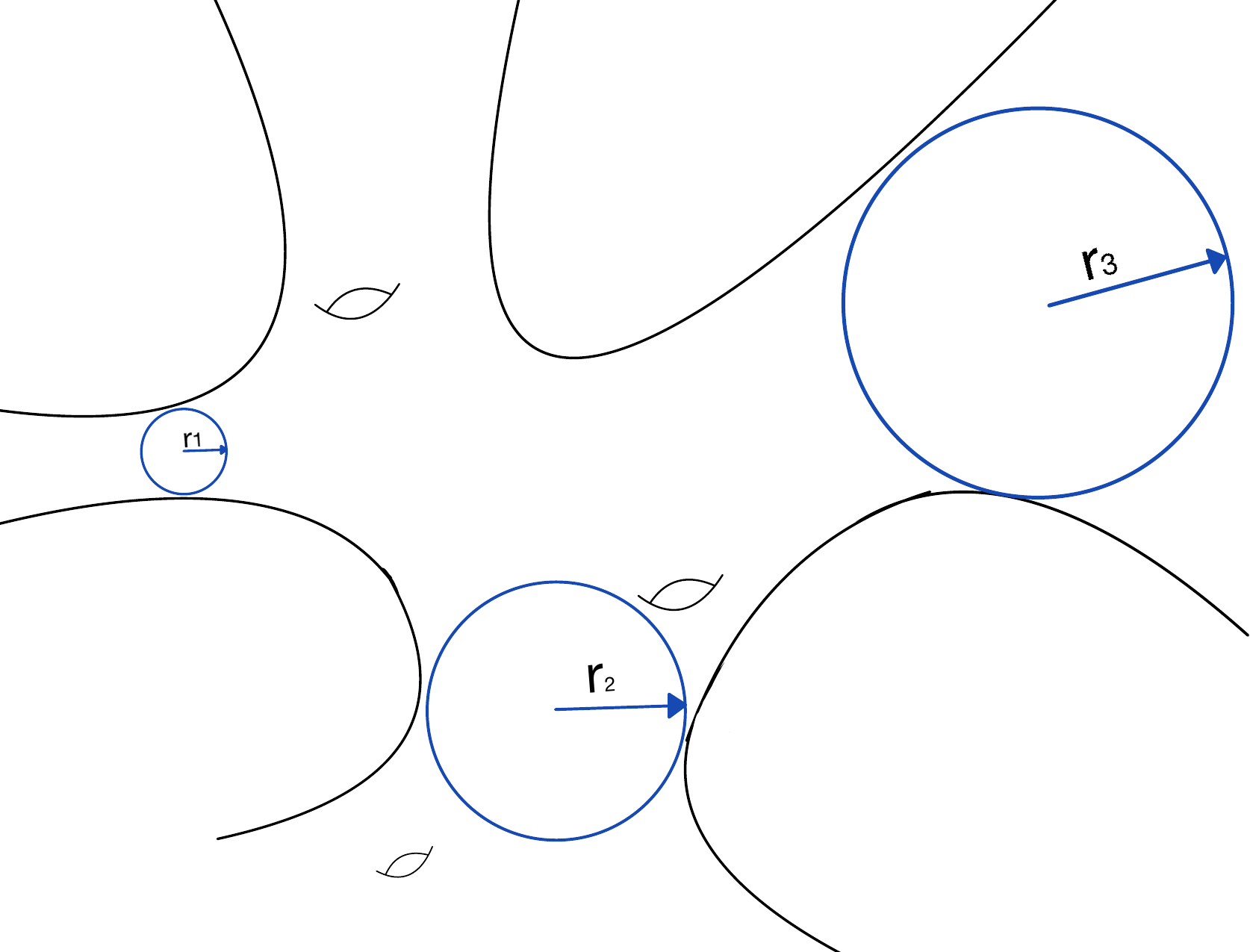}
    \caption{An infinite volume manifold contains contractible balls of any radius.}
    \label{whale}
\end{figure}

\newpage

\subsection*{Discrete stationary random subgroups are invariant.}
We shall start by proving the following stiffness result which is of independent interest:

\begin{thm}[Stiffness]\label{thm:stiffness}
A discrete SRS on $\sub(G)$ is an IRS.
\end{thm}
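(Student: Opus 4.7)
By the Furstenberg correspondence (Theorem \ref{thm:Furstenberg}), since $(G/P,\nu_P)$ is the Poisson boundary of $(G,\mu_G)$, a $\mu_G$-stationary $\nu$ on $\sub_d(G)$ is the barycenter of a unique $G$-equivariant measurable boundary map
$$
\phi:(G/P,\nu_P)\to\text{Prob}(\sub_d(G)),\qquad \nu=\int_{G/P}\phi(b)\,d\nu_P(b).
$$
The theorem reduces to the claim that $\phi$ is essentially constant: if $\phi\equiv\nu_0$, then $G$-equivariance combined with the transitivity of $G$ on $G/P$ gives $g_*\nu_0=\nu_0$ for every $g\in G$, so $\nu=\nu_0$ is an IRS. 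As a preliminary I would extend the Borel density Theorem \ref{thm:BDT} to the SRS setting, by applying Furstenberg's lemma to the push-forward of $\nu$ under the semicontinuous Zariski-closure map $\gL\mapsto\Lie(\overline{\gL}^Z)$ into $\Gr(\Lie(G))$, together with the compactness of the space of discrete SRS. This ensures that for $\nu_P$-a.e.\ $b$, the measure $\phi(b)$ is supported on Zariski dense discrete subgroups.

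The core step is to prove the essential constancy of $\phi$. I would realize $\phi$ dynamically: for i.i.d.\ $g_n\sim\mu_G$, the products $g_1\cdots g_n$ converge in direction to a random boundary point $b\in G/P$, and by the martingale convergence theorem applied to conditional expectations along the Furstenberg boundary, $(g_1\cdots g_n)_*\nu$ converges weakly, almost surely, to $\phi(b)$. Since by the Furstenberg--Kesten theorem the Cartan projection of $g_1\cdots g_n$ drifts linearly into the positive Weyl chamber, $\Ad(g_1\cdots g_n)^{-1}$ contracts the opposite horospheric direction $\mathfrak{n}^-(b)$ exponentially. Hence the short elements of a $\phi(b)$-random subgroup must concentrate inside $\exp(\mathfrak{n}^-(b))$. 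The Margulis-function recurrence of Theorem \ref{thm:eq-Marg} and the quantitative discreteness bound of Theorem \ref{thm:main theorem} ensure that these short-element estimates pass uniformly to the boundary limit.

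The main obstacle is translating this directional concentration of short elements into genuine constancy of $\phi$. Here the rank at least two assumption is essential: the bi-$K$-invariance of $\mu_G$ forces $\nu$ to be $K$-invariant, hence $\phi$ is $K$-equivariant, so $\phi(kb)=k_*\phi(b)$. If $\phi$ were genuinely non-constant, then for two boundary points lying in distinct Weyl chambers (which exist only in higher rank) one would obtain simultaneous concentration of short elements of $\phi(b)$ in incompatible horospheric subalgebras, contradicting the Zariski density established in the preliminary step. Carrying out this contradiction rigorously -- pairing boundary points via $K$-symmetry and extracting an explicit incompatibility from the concentration -- is where I expect most of the difficulty to lie. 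Once constancy of $\phi$ is established, $\phi(b)=\nu$ is manifestly $G$-invariant, completing the proof.
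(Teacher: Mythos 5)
Your opening move --- expressing the stationary measure as the barycenter of a Furstenberg boundary map $\phi\colon G/P\to\mathrm{Prob}(\sub(G))$, then trying to show $\phi$ is essentially constant --- matches the paper's. But from there the two proofs diverge, and there is a genuine gap in yours at precisely the place you flag.

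The paper does not attempt to prove constancy of $\phi$ directly from contraction dynamics. Instead it invokes the Nevo--Zimmer structure theorem (Theorem \ref{thm-NZ}): for a higher-rank simple $G$, a $\mu$-stationary measure is either $G$-invariant or admits a $G$-equivariant measure-preserving projection to some $G/Q$ with $Q$ a proper parabolic. This is where $\mathrm{rank}(G)\ge 2$ enters. In the non-invariant alternative, the boundary map is forced to satisfy a very strong structural constraint: for $\nu_P$-a.e.\ $gP$, the fibre measure $\phi(gP)$ is a $gPg^{-1}$-invariant \emph{discrete} random subgroup of the parabolic $gQg^{-1}$. The paper then kills this case by a purely Lie-theoretic argument (Proposition \ref{prop:P,Q}): a discrete $P$-invariant random subgroup of a parabolic $Q$ is trivial, which follows from a contraction argument inside $A_L\ltimes N$ (Lemma \ref{lem-AIRS}) together with the fact that the intersection of all Levi subgroups of $Q$ is trivial (Lemma \ref{lem-LeviInt}). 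So the contraction idea you propose does appear in the paper, but only \emph{after} the Nevo--Zimmer reduction has already pinned the fibres into a parabolic and given them $P$-invariance; that extra structure is what makes the contraction argument close.

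Your version lacks this reduction, and the resulting gap is real, not cosmetic. From martingale convergence and the Cartan drift you can indeed say that short elements of a $\phi(b)$-random subgroup tend to lie in a horospherical direction attached to $b$. But this is far from constancy of $\phi$: a Zariski-dense discrete subgroup can perfectly well have all of its short elements in a fixed unipotent and its Zariski density carried by long elements, so ``directional concentration of short elements'' contradicts nothing by itself. Your final paragraph --- pairing boundary points across distinct Weyl chambers via $K$-symmetry to produce ``simultaneous'' incompatible concentration --- is not an argument: the concentrations happen for \emph{different} fibre measures $\phi(b_1)$, $\phi(b_2)$, so there is no single object that carries both constraints unless you have already shown $\phi$ essentially constant, which is the conclusion. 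Moreover, the place where rank $\ge 2$ enters your sketch (existence of several Weyl chambers) is not where it actually does any work; in the paper it enters as the hypothesis of Nevo--Zimmer, and the theorem genuinely fails in rank one (see the remark citing \cite[Example 6.8]{FG}), so any correct proof must use higher rank in an essential, not decorative, way. In short: the missing ingredient is the Nevo--Zimmer dichotomy (or an equally strong structure theorem for stationary measures); without it the contraction heuristics do not assemble into a proof.
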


Fix a minimal parabolic subgroup $P\le G$ and recall that the Poisson boundary of $(G,\mu)$ is $(G/P,\nu_P)$ where $\nu_P$ is the unique $K$-invariant probability measure on $G/P$. Let $\nu_0$ be an SRS. In view of Furstenberg's theorem (see \ref{thm:Furstenberg}) $\nu_0$ is the barycentre of a $G$-equivariant measurable map $\phi:G/P\to \text{Prob}(\sub(G))$, that is:
$$
 \nu_0=\int_{G/P}\phi (gP)d\nu_P.
$$
Moreover, if $\nu_0$ is a discrete SRS, i.e. if $\nu_0(\sub_d(G))=1$ then for $\nu_P$-almost every $gP\in G/P$, $\phi(gP)$ is a discrete random subgroup as well. 

Theorem \ref{thm:stiffness} is a consequence of the following celebrated theorem of Nevo and Zimmer \cite{NZ1,NZ3}:

\begin{thm}[Nevo--Zimmer]\label{thm-NZ}
Let $G$ be a higher rank simple Lie group. Let $\mu$ be a smooth probability measure on $G$ and let $(X,\nu)$ be a probability $\mu$-stationary action of $G$. Then either
$\nu$ is $G$-invariant, or there exists a proper parabolic subgroup $Q\subset G$ and a measure preserving $G$-equivariant map\footnote{ By measure preserving we only mean that $\nu(\pi^{-1}(A))=\mu_Q(A)$ for every Borel set $A\subset G/Q$. In particular, we do not require that it is a relative measure preserving extension.} $\pi\colon (X,\nu)\to (G/Q,\mu_Q),$ where $\mu_Q$ is the unique $\mu$-stationary measure on $G/Q$.
\end{thm}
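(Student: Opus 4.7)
The plan is to leverage Furstenberg's disintegration theory together with the structural rigidity of higher-rank semisimple groups. By Theorem \ref{thm:Furstenberg}, since $(G/P,\nu_P)$ is the Poisson boundary of $(G,\mu)$, the stationary measure $\nu$ decomposes as $\nu=\int_{G/P}\phi(gP)\,d\nu_P(gP)$ for a unique $G$-equivariant measurable map $\phi\colon G/P\to\text{Prob}(X)$, and $\nu$ is $G$-invariant precisely when $\phi$ is essentially constant. The task thus reduces to the following dichotomy: either $\phi$ is essentially constant, or it factors through a proper flag variety $G/Q$ with $P\subseteq Q\subsetneq G$.

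The core of the argument is to analyze the $G$-equivariant factors of $(X,\nu)$ using the action of the maximal $\mathbb{R}$-split torus $A\subset P$. In the higher-rank case the positive Weyl chamber $A^+$ has dimension at least two, providing several independent contracting directions on the unipotent radical of $P$. Combining the Howe--Moore mixing theorem with a martingale convergence argument along $A^+$-orbits, one decomposes the Furstenberg entropy of $(X,\nu)$ as a sum of contributions indexed by the simple roots. Vanishing of the contribution in the direction of a simple root $\alpha$ should force $\phi$ to factor through the corresponding maximal parabolic $P_\alpha$, producing a $G$-equivariant map $\pi\colon X\to G/P_\alpha$. The identification $\pi_*\nu=\mu_{P_\alpha}$, together with the uniqueness of the $\mu$-stationary probability on the flag variety, follows from bi-$K$-invariance of $\mu$ together with transitivity of $K$ on $G/P_\alpha$ (Iwasawa decomposition). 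If, on the other hand, no contribution vanishes, a Mautner-type synthesis argument combined with Howe--Moore should force $\nu$ itself to be $G$-invariant.

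The main obstacle is executing the entropy decomposition rigorously. One must produce a precise formula for the Furstenberg entropy of $\phi$ in terms of contributions from the root subgroups generating the unipotent radical of $P$, and verify that vanishing of each such contribution corresponds geometrically to a factorization through an intermediate flag variety. The higher-rank hypothesis is essential throughout: it is what guarantees the existence of intermediate parabolics $P\subsetneq Q\subsetneq G$ in which to host the desired factor. In rank one no such $Q$ exists and the dichotomy collapses, which is precisely why the rank-one analog fails as illustrated by the exotic IRS of \S \ref{sec:rk-1-example}.
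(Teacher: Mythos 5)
The paper does not prove this theorem; it quotes it from \cite{NZ1,NZ3}, so there is no in-paper argument against which to compare your sketch. Assessing it against the published Nevo--Zimmer proofs instead:

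Your opening framing --- disintegrating $\nu$ via Theorem~\ref{thm:Furstenberg} into a boundary map $\phi\colon G/P\to\text{Prob}(X)$, and observing that $G$-invariance of $\nu$ is equivalent to $\phi$ being essentially constant --- is correct and is indeed where the Nevo--Zimmer argument starts. Likewise the emphasis on the existence of several independent contracting directions in $A^+$ in higher rank, and on the consequent availability of intermediate parabolics $P\subsetneq Q\subsetneq G$, is the right structural explanation for the theorem and for why it fails in rank one.

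There are two substantive problems with the sketch beyond that point. First, the entropy-decomposition you propose as the engine of the proof is in fact a \emph{consequence} of the structure theorem, not an independent route to it: the Furstenberg entropy rigidity result (Nevo--Zimmer 2000) is deduced \emph{from} the projective factor theorem, whereas the published proof of the projective factor theorem itself proceeds via ergodic-theoretic and cocycle machinery. Concretely, Nevo--Zimmer analyze $(X,\nu)$ as a $P$-space, exploit the Mautner phenomenon and the contracting dynamics of $A^+$ on the unipotent radical to control the conditional measures $\phi(gP)$, and then extract the map $\pi\colon X\to G/Q$ from the algebraic hull of an associated cocycle, relying on Zimmer's cocycle-reduction theory. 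Your sketch replaces this measure-theoretic core with an entropy formula whose existence and decomposition you would still have to establish.

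Second, the dichotomy logic in your sketch is reversed. You claim that vanishing of the contribution along a simple root $\alpha$ forces a factorization through $P_\alpha$, and that if \emph{no} contribution vanishes then $\nu$ is $G$-invariant. But the Furstenberg entropy of a stationary action is zero precisely when the stationary measure is invariant; thus invariance corresponds to \emph{all} contributions vanishing, while \emph{positive} contributions are exactly what produce the nontrivial projective factor. The way the set of positive contributions determines $Q$ is the content of the entropy rigidity theorem, but as noted that result is downstream of the one you are trying to prove, so this route is circular as written.
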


Our specific measure $\mu=\mu_G$ is not smooth. However, in view of the remark after Theorem \ref{thm:Furstenberg}, since there exists some smooth bi-$K$-invariant probability measure on $G$, Theorem \ref{thm-NZ} applies also for $\mu_G$.

Arguing by way of contradiction, suppose that $\nu_0$ is a non-invariant discrete SRS. It follows from Theorem \ref{thm-NZ} that there is a parabolic subgroup $Q$ containing $P$ such that for $\nu_P$ almost all $gP\in G/P$ the measure $\phi(gP)$ is a $gPg^{-1}$-invariant discrete random subgroup of $gQg^{-1}$. The contradiction, thus follows from the following:

\begin{prop}\label{prop:P,Q}
Let $P\le Q$ be a parabolic subgroup of $G$. The only $P$-invariant discrete random subgroup of $Q$ is $\gd_{\langle 1\rangle}$.
\end{prop}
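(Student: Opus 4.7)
The plan is to use the Iwasawa decomposition $P=MAN$ and progressively exploit each factor to constrain the support of $\nu$.

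\textit{Step 1 (contracting $N$).} First I would pick $a\in A$ in the interior of the negative Weyl chamber, so that $\mathrm{Ad}(a)$ strictly contracts every positive root space; hence $a^{k}na^{-k}\to 1$ as $k\to\infty$ for every $n\in N$. For any compact $B\subset N\setminus\{1\}$, the $a$-invariance of $\nu$ gives
\[
\nu(\{\Lambda:\Lambda\cap B\ne\emptyset\})=\nu(\{\Lambda:\Lambda\cap a^{k}Ba^{-k}\ne\emptyset\})\quad\text{for every }k.
\]
As $k\to\infty$ the sets $a^{k}Ba^{-k}$ shrink to $\{1\}$, and since any discrete $\Lambda$ isolates the identity in some neighborhood, the indicator on the right tends pointwise to $0$; bounded convergence then forces the left-hand side to vanish. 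Exhausting $N\setminus\{1\}$ by countably many such compacta yields $\Lambda\cap N=\{1\}$ for $\nu$-a.e.\ $\Lambda$.

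\textit{Step 2 ($N$ centralizes $\Lambda$).} After Step 1, a $\nu$-typical $\Lambda$ injects via $\pi\colon Q\to L=Q/N_Q$ into the Levi, and is encoded by a section $\sigma\colon\pi(\Lambda)\to N_Q$. A direct computation in the Langlands decomposition $Q=L\ltimes N_Q$ shows that conjugation by $n\in N$ leaves $\pi(\Lambda)$ unchanged but alters $\sigma$ by a non-trivial cocycle depending affinely on $n$, so that — whenever $\Lambda\ne\{1\}$ — the $N$-orbit of $\Lambda$ in $\sub_d(Q)$ is non-compact and escapes to $\{1\}$ in Chabauty as $n\to\infty$. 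Combined with $N$-invariance of $\nu$ and the absence of lattices in the proper parabolic $Q$, one concludes that every $N$-ergodic component of $\nu$ is supported on $N$-fixed subgroups. By connectedness of $N$ and discreteness of $\Lambda$, the inner $N$-action on $\Lambda$ is then trivial, so $\Lambda\subset Z_G(N)$.

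\textit{Step 3 (conclusion and main obstacle).} For a semisimple center-free $G$, one has $Z_G(N)=Z(N)\subset N$. Hence $\Lambda\subset N$, and together with Step 1 this forces $\Lambda=\{1\}$ $\nu$-almost surely, i.e.\ $\nu=\delta_{\langle 1\rangle}$. The delicate part of the proof is Step 2: one must rule out non-trivial ergodic $N$-invariant probability measures on $\sub_d(Q)$. That such measures do exist when $Q=G$ — lattice IRS of $G$ being $N$-invariant — highlights the essential role of $Q$ being a \emph{proper} parabolic, which is what prevents $\Lambda$ from acquiring a lattice-in-$N$ normalizer or from being a lattice of $Q$ itself. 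Making this step rigorous will likely require a careful analysis of the $N$-dynamics on the space of sections $\sigma$ and a verification that $\sigma\equiv 0$ is the only $N$-invariant point.
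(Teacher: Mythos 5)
Your Step~1 is in the same spirit as the paper's key lemma (contracting the nilpotent part by an element of a split torus inside $P$), but it arrives at a strictly weaker conclusion than what the paper needs and obtains. The paper's Lemma~\ref{lem-AIRS} shows that a discrete $A_L$-invariant random subgroup of $Q$ is a.s.\ \emph{contained in the Levi factor $L$}, not merely that it meets the unipotent radical trivially. The crucial choice there is to take $a$ in the center $A_L$ of the Levi $L$ of $Q$ (rather than a generic regular element of the negative Weyl chamber of the minimal parabolic, as you do): because $a$ centralizes $L$, conjugation by $a^{-k}$ leaves the $L$-component of any element $\ell n\in\Lambda$ (with $\ell\in L$, $n\in N_Q$) unchanged while shrinking the $N_Q$-component, and discreteness then forces $n=1$. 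Your $a$ contracts only elements already lying inside $N$, yielding $\Lambda\cap N=\{1\}$; an element with both a nontrivial $L$-part and a nontrivial $N_Q$-part is not touched by your argument, because the $L$-part may be dilated (indeed generically is) by $\text{Ad}(a^{-k})$.

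Your Steps~2 and~3 then have to make up for that deficit, and as you yourself note Step~2 is not a proof. The assertion that when $N$ does not centralize $\Lambda$ the $N$-orbit of $\Lambda$ ``escapes to $\{1\}$'' in the Chabauty topology is unsubstantiated, and is not obviously true: $\sub(Q)$ is compact, $N$ is amenable, and the closure of any orbit supports an $N$-invariant probability measure, so the issue is precisely whether such a measure can be nondegenerate; nothing in the sketched cocycle computation rules this out. There is also some blurring between the unipotent radical $N$ of $P$ and the unipotent radical $N_Q$ of $Q$: $\pi(\Lambda)$ is stable only under $N_Q$-conjugation, not under all of $N$, so the affine-cocycle picture is not quite as stated. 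Finally, the identification $Z_G(N)=Z(N)\subset N$ used in Step~3 is believable for the maximal unipotent $N$ in an adjoint group, but it never has to be invoked in the paper's proof.

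The paper instead closes the argument with a short Lie-theoretic step: once $\Lambda\subset L$ a.s., the $N_Q$-invariance of $\nu$ (which holds because $N_Q\subset P$) propagates this to $\Lambda\subset L^n$ a.s.\ for every $n\in N_Q$, and since $N_Q$ acts transitively on the set of Levi subgroups of $Q$, Lemma~\ref{lem-LeviInt} (the intersection of all Levi subgroups of $Q$ is trivial) finishes the proof. If you replace your $a$ by an element of $A_L$, track the $L$-component explicitly, and then appeal to the transitivity of $N_Q$ on Levi subgroups together with $\bigcap_L L=\{1\}$, you recover the paper's argument and bypass the speculative Step~2 entirely.
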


\begin{lemma}\label{lem-AIRS} Let $Q$ be a parabolic subgroup of $G$. Let $Q=LN$ be a Levi decomposition of $Q$ and let $A_L$ be the center of $L$. Then, any discrete $A_L$-invariant random subgroup of $Q$ is contained in $L$ almost surely. 
\end{lemma}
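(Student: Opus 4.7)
The plan is to exploit the fact that $A_L$ contains an element $a$ which strictly contracts $N$: the eigenvalues of $\mathrm{Ad}(a)$ on $\mathrm{Lie}(N)$ have absolute value $<1$, so $a^k n a^{-k}\to 1$ uniformly on compact subsets of $N$. Such an $a$ exists because the split Langlands $A$-factor of $Q$ sits inside $Z(L)=A_L$ and acts on $\mathfrak n$ with only positive roots. Since $a$ is central in $L$, conjugation by $a$ leaves the $L$-component of any $q=\ell n\in Q$ untouched and only contracts the $N$-component: $a^k q a^{-k}=\ell\,(a^k n a^{-k})$. The whole argument will use only the $a$-invariance of $\nu_0$.

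Since $Q\setminus L$ is open and $\sigma$-compact, it suffices to prove that for every compact $K\subset Q\setminus L$,
$$
 \nu_0(\{\gL\in\sub_d(Q):\gL\cap K\ne\emptyset\})=0.
$$
Fix such a $K$ and let $K_L\subset L$, $K_N\subset N$ be its images under the factor maps of the decomposition $Q=LN$; note that $1\notin K_N$ because $K\cap L=\emptyset$. The contraction bounds $\bigcup_{k\ge 0} a^k K_N a^{-k}$ inside a compact subset of $N$, so $\bigcup_{k\ge 0} a^k K a^{-k}$ is contained in a compact subset $K'$ of $Q$.

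For a fixed discrete $\gL\le Q$, the intersection $\gL\cap K'$ is finite, and I claim that $\gL\cap a^k K a^{-k}=\emptyset$ for all sufficiently large $k$. Indeed, for $\lambda=\ell n\in\gL\cap K'$, the case $n=1$ gives $\lambda\in L$ and so $\lambda\notin a^k K a^{-k}$ for any $k$ (using $a^k K a^{-k}\cap L=\emptyset$); the case $n\ne 1$ makes the condition $\lambda\in a^k K a^{-k}$ equivalent to $a^{-k} n a^k\in K_N$, which fails for large $k$ because $a^{-k}$ expands $N$ while $K_N$ is bounded. Finiteness of $\gL\cap K'$ yields a single $k_0(\gL)$ beyond which no element of $\gL\cap K'$ lies in $a^k K a^{-k}$.

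Consequently the indicators $\mathbf 1_{\{\gL\cap a^k K a^{-k}\ne\emptyset\}}$ tend pointwise to $0$, so dominated convergence gives $\nu_0(\{\gL:\gL\cap a^k K a^{-k}\ne\emptyset\})\to 0$; but by $a$-invariance of $\nu_0$ this probability equals $\nu_0(\{\gL:\gL\cap K\ne\emptyset\})$ for every $k$, forcing the latter to vanish. A countable exhaustion of $Q\setminus L$ by compact sets then yields the lemma. The only nontrivial input is the existence of the contracting $a\in A_L$, which is a standard structural property of Levi subgroups of real parabolics; everything else is a soft dynamical argument.
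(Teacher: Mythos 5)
Your proof is correct and rests on exactly the same mechanism as the paper's: choose a contracting element $a\in A_L$, exploit the fact that conjugation by $a$ fixes the $L$-component and contracts the $N$-component, and combine $a$-invariance of the measure with discreteness of the random subgroup to force the unipotent part of the intersection to vanish. The only difference is in packaging: the paper argues by contradiction, finding a bounded neighborhood $V_2=W\cdot U_2$ of $1$ and a smaller $V_1=W\cdot U_1$ so that the event of hitting $V_2\setminus L$ has measure strictly larger than the event of hitting $V_1\setminus L$, then contracts $V_2$ into $V_1$ by $a^{-n_0}$ to get a strict drop in measure; you instead exhaust $Q\setminus L$ by compacta $K$, observe that the indicators of hitting $a^kKa^{-k}$ decrease pointwise to $0$, and conclude by dominated convergence plus invariance. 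Both are soft; neither buys more generality than the other.
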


\begin{proof}

Let $a\in A_L$ be an element such that the norm of the restriction of $\text{Ad}(a^{-1})$ to the Lie algebra of $N$ is less than $1$. It follows that for any two identity neighbourhoods $U_1,U_2\subset N$ in $N$ such that $U_2$ is bounded there is $n_0\in\BN$ such that $\text{Ad}(a^{-n_0})(U_2)\subset U_1$.

Let $\lambda$ be a discrete $A_L$-invariant random subgroup of $Q$. Suppose by way of contradiction that $\lambda (\{\gC\subset L\})<1$. Then there is a bounded identity neighborhood $V_2$ in $Q$ such that 
$\lambda(\gO)>0$ where $\gO:=\{\gC:\gC\cap V_2\setminus L\ne\emptyset\}$.
Furthermore, we may take $V_2$ to be of the form $V_2=W\cdot U_2$ where $W\subset L$ and $U_2\subset N$ and suppose that they are preserved by $a^{-1}$ (e.g. we can suppose that $\log U_2$ is a norm ball in the Lie algebra of $N$). Since $\lambda$ is discrete we can chose a small identity neighborhood $U_1\subset N$ so that setting $V_1=W\cdot U_1$ we have 
$$
 %\lambda(\{\gC:\gC\cap (V_2\setminus L)\ne\emptyset,~\text{and}~:\gC\cap (V_1\setminus L)=\emptyset \}:=\gep>0.
 \gep:=\lambda(\{\gC\in\gO:\gC\cap (V_1\setminus L)=\emptyset \})>0.
$$
Choosing $n_0$ as above we get that $\lambda (\gO^{a^{n_0}})\le\lambda(\gO)-\gep$ in contrast with the assumption that $\lambda$ is $A_L$-invariant.
\end{proof}

In view of Lemma \ref{lem-AIRS}, Proposition \ref{prop:P,Q} follows from the following Lie theoretic result (see \cite[Lemma 6.3]{FG}):

\begin{lemma}\label{lem-LeviInt}
%Let $G$ be a centre-free complex semisimple Lie group, let $Q$ be a proper parabolic subgroup. 
The intersection of all Levi subgroups of $Q$ is trivial.
\end{lemma}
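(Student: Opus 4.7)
The plan is to reduce the lemma to the statement that the centralizer $C_L(N)$ of the unipotent radical in a fixed Levi $L$ is contained in the center $Z(G)$, and then to invoke the (implicit) assumption that $G$ is adjoint; the whole argument of \S\ref{sec:confined} descends to $G/Z(G)$ since $Z(G)$ is finite and the ambient statements are insensitive to it, so this last step is harmless.

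First I would observe that any two Levi subgroups of $Q$ are $N$-conjugate, so that $\bigcap_{L'} L' = \bigcap_{n \in N} nLn^{-1}$. For $x$ in this intersection the case $n=1$ gives $x \in L$; write $x = \ell$. For arbitrary $n \in N$, in the Levi decomposition $Q = LN$ we have
$$n^{-1}\ell n \;=\; \ell\cdot\bigl(\ell^{-1}n^{-1}\ell\bigr)\cdot n,$$
and since $L$ normalizes $N$, the factor $\ell^{-1}n^{-1}\ell$ lies in $N$. By uniqueness of the $LN$-decomposition, $n^{-1}\ell n$ lies in $L$ if and only if its $N$-part $\bigl(\ell^{-1}n^{-1}\ell\bigr)\cdot n$ equals $1$, which rearranges to $\ell n = n\ell$. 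Hence the intersection equals $C_L(N)$.

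The main step is to prove $C_L(N) \subseteq Z(G)$. Let $g \in C_L(N)$, so $\mathrm{Ad}(g)$ is the identity on $\mathfrak{n}$. Because $g \in L$, its adjoint action preserves $\mathfrak{n}^-$ (as $L$ normalizes $N^-$) and of course preserves the Killing form $B$. Since $B$ restricts to a non-degenerate pairing $\mathfrak{n} \times \mathfrak{n}^- \to \mathbb{R}$, for any $X \in \mathfrak{n}$, $Y \in \mathfrak{n}^-$,
$$B(X, \mathrm{Ad}(g) Y) \;=\; B(\mathrm{Ad}(g^{-1})X, Y) \;=\; B(X,Y),$$
forcing $\mathrm{Ad}(g)|_{\mathfrak{n}^-} = \mathrm{id}$. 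When $G$ is simple and $Q$ is a proper parabolic, $\mathfrak{n}$ and $\mathfrak{n}^-$ together generate $\mathfrak{g}$ as a Lie algebra, so $\mathrm{Ad}(g) = \mathrm{id}$ on $\mathfrak{g}$, i.e., $g \in Z(G)$; in the adjoint setting, $Z(G) = \{1\}$, so $C_L(N) = \{1\}$.

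The main obstacle is the generation statement in the last step. One recovers all of $\mathfrak{h}$ via the brackets $[\mathfrak{g}_\alpha, \mathfrak{g}_{-\alpha}]= \mathbb{R} H_\alpha$ for $\alpha \in \Phi(N)$, provided that $\Phi(N)$ spans $\mathfrak{h}^*$; one then recovers each remaining root space $\mathfrak{g}_\gamma$ with $\gamma \in \Phi(L)$ by a short root-string argument using the connectedness of the Dynkin diagram of $\mathfrak{g}$: every simple root of $L$ is adjacent to some simple root in $\Delta\setminus\Delta_L$, and an induction along chains of such adjacencies inside $\Delta_L$ produces all of $\mathfrak{l}$ inside the Lie subalgebra generated by $\mathfrak{n}$ and $\mathfrak{n}^-$. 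This combinatorial step is where the simplicity of $G$ enters essentially; the rest of the proof is just the bookkeeping in Steps~1--2.
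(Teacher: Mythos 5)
Your proof is correct, and since the paper does not reproduce an argument for this lemma (it simply cites \cite[Lemma 6.3]{FG}), your write-up stands as a self-contained alternative. The reduction of $\bigcap_{n\in N} nLn^{-1}$ to $C_L(N)$ via uniqueness of the $LN$-decomposition, the passage from ``$g$ centralizes $N$'' to $\mathrm{Ad}(g)|_{\mathfrak{n}}=\mathrm{id}$ (using connectedness of $N$), the Killing-form duality forcing $\mathrm{Ad}(g)|_{\mathfrak{n}^-}=\mathrm{id}$, and the appeal to adjointness (which is indeed a standing hypothesis in the paper, stated at the start of the Introduction) are all sound.

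The one place worth tightening is the final generation step. Rather than chasing root strings through the Dynkin diagram and asserting that $\Phi(N)$ spans (which you do not quite prove, and which requires some care in the real restricted-root setting where root spaces can have dimension greater than one), it is cleaner to argue as follows: let $\mathfrak{s}$ be the Lie subalgebra of $\mathfrak{g}$ generated by $\mathfrak{n}\cup\mathfrak{n}^-$. Since $\mathrm{ad}(\mathfrak{l})$ preserves both $\mathfrak{n}$ and $\mathfrak{n}^-$, and a derivation preserving a generating set preserves the subalgebra it generates, $\mathfrak{l}$ normalizes $\mathfrak{s}$; so do $\mathfrak{n},\mathfrak{n}^-\subset\mathfrak{s}$, hence all of $\mathfrak{g}=\mathfrak{n}^-\oplus\mathfrak{l}\oplus\mathfrak{n}$ does. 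Thus $\mathfrak{s}$ is a nonzero ideal, and simplicity forces $\mathfrak{s}=\mathfrak{g}$. Since the fixed-point set of $\mathrm{Ad}(g)$ is a Lie subalgebra containing $\mathfrak{n}\cup\mathfrak{n}^-$, it equals $\mathfrak{g}$, giving $g\in\ker\mathrm{Ad}=Z(G)=\{1\}$. This substitution avoids the combinatorial step entirely and is insensitive to the distinction between absolute and restricted roots.
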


The contradiction is now achieved, i.e. assuming that $\nu_0$ is non-invariant we deduce that $\phi(gP)=\gd_{\langle 1\rangle}$ and hence $\nu_0=\int_{G/P}\gd_{\langle 1\rangle}d\nu_P=\gd_{\langle 1\rangle}$ which is absurd. 
This completes the proof of Theorem \ref{thm:stiffness}. \qed 

\begin{rem}
We remark that the analog of stiffness Theorem \ref{thm:stiffness} does not hold for rank one groups. We refer to \cite[Example 6.8]{FG} for an example of a discrete SRS in $\SL_2(\BR)$ which is not an IRS. Relying on \cite{Osin} one can construct examples of that nature in any Gromov hyperbolic group and hence in 
any rank one simple Lie group.
\end{rem}

\subsection*{The proof of Theorem \ref{thm:FG-main}.}

Recall that we fixed a discrete subgroup of infinite co-volume  $\gL\le G$ and denoted $\nu_n:=\frac{1}{n}\sum_{i=1}^n \mu^{(i)}*\gd_\gL$ and $\nu_\infty$ an arbitrary accumulation point of $\nu_n$. We will prove Theorem \ref{thm:FG-main} by showing that $\nu_\infty$ must be $\gd_{\langle 1\rangle}$. Since $\nu_\infty$ is stationary, it follows from Theorem \ref{thm:stiffness} that it is invariant, i.e. an IRS. Moreover, in view of Theorem \ref{thm:dSRS}, $\nu_{\infty}$ must be discrete. By the Stuck--Zimmer theorem (Theorem \ref{thm:SZ}) any non-trivial subgroup in the support of $\nu_\infty$ must be a lattice. Therefore in order to prove that $\nu_{\infty}=\gd_{\langle 1\rangle}$ it is enough to show that there are no lattices in its support. Recall that the $\nu_n$ are supported on the conjugacy class of $\gL$. Therefore $\nu_\infty$ is supported on the closure of $\gL^G$. It is therefore enough to show that the closure of the conjugacy class $\overline{\gL^G}$ cannot contain lattices. In order to complete the proof we shall refer to the classical local rigidity theorem.

\paragraph{Local rigidity.} Let $\gC$ be a finitely generated group. Consider the deformation space $\text{Hom}(\gC,G)$ of homeomorphisms from $\gC$ to $G$ with the point-wise convergence topology. A map $\psi\in \text{Hom}(\gC,G)$ is called locally rigid if there is a neighborhood of $\psi$ which consists of conjugates. That is if $\phi\in \text{Hom}(\gC,G)$ is sufficiently close to $\psi$ then there is $g\in G$ such that $\phi=\psi^g$. A subgroup $\gC$ of $G$ is called {\it locally rigid} if the inclusion $i:\gC\hookrightarrow G$ is locally rigid.
The classical local rigidity theorem due to Selbeg, Weil, and Margulis, says the following:

\begin{thm}
Let $G$ be a non-compact simple Lie group not locally isomorphic to $\SL_2(\BR)$ or $\SL_2(\BC)$. Then every lattice in $G$ is locally rigid. 
\end{thm}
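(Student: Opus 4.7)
The plan is to split the argument according to $\text{rank}(G)$ and invoke in each regime the classical toolkit that makes the inclusion $i:\gC\hookrightarrow G$ an isolated point of $\text{Hom}(\gC,G)/G$.

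If $\text{rank}(G)\ge 2$, I would derive local rigidity directly from Margulis' super-rigidity theorem. Given $\varphi\in\text{Hom}(\gC,G)$ sufficiently close to $i$ in the pointwise topology, Zariski density of $\varphi(\gC)$ persists -- it is an open condition on the images of a finite generating set, and $i(\gC)$ is Zariski dense by Borel -- so super-rigidity produces a continuous homomorphism $\Phi:G\to G$ with $\Phi|_\gC=\varphi$. Since $G$ is simple and $\Phi$ is close to the identity, $\Phi$ is non-trivial, hence surjective, and therefore an automorphism; since $G$ is adjoint and $\Phi$ lies near the identity of $\text{Aut}(G)$, it is inner, $\Phi=\text{Ad}(g)$ with $g$ near $1$. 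Hence $\varphi=i^g$. For non-uniform $\gC$ one has to account separately for parabolic elements, using that unipotents go to unipotents under a representation close to the inclusion.

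If $\text{rank}(G)=1$ and $G$ is not locally isomorphic to $\SL_2(\BR)$ or $\SL_2(\BC)$, I would pass to Weil's cohomological model: the Zariski tangent space to $\text{Hom}(\gC,G)$ at $i$ is the cocycle space $Z^1(\gC,\text{Ad}\circ i)$, and the tangent space to the $G$-conjugation orbit of $i$ is the coboundary space $B^1$. Local rigidity is therefore equivalent to the vanishing $H^1(\gC,\text{Ad}\circ i)=0$. For cocompact $\gC$ I would invoke the Calabi--Vesentini--Weil--Matsushima vanishing theorems: Hodge theory identifies $H^1(\gC,\text{Ad}\circ i)$ with harmonic $\text{Ad}$-valued $1$-forms on $\gC\backslash X$, and a Bochner--Weitzenbock identity combined with the curvature estimates of the remaining rank one symmetric spaces (real hyperbolic of dimension $\ge 4$, complex hyperbolic of dimension $\ge 2$, quaternionic hyperbolic, or the Cayley plane) forces such harmonic forms to vanish. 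For non-uniform lattices I would appeal to the Garland--Raghunathan refinement that controls the cohomology contribution at the cusps.

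The main obstacle is the rank one case. The higher-rank step is essentially formal once super-rigidity is in hand, but the cohomological vanishing rests on delicate pointwise curvature-pinching inequalities that become sharp precisely for the symmetric spaces $\mathbb{H}^2_\mathbb{R}$ and $\mathbb{H}^3_\mathbb{R}$ associated to $\SL_2(\BR)$ and $\SL_2(\BC)$, where Teichm\"uller theory and Thurston's hyperbolic Dehn surgery / quasi-Fuchsian deformations produce positive dimensional families of nearby non-conjugate embeddings -- so the exclusion in the statement is sharp.
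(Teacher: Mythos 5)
The paper does not prove this statement at all; it is quoted in \S \ref{sec:confined} as ``the classical local rigidity theorem due to Selberg, Weil, and Margulis'' and used as a black box in the proof of Theorem \ref{thm:FG-main} (and again in \S \ref{sec:spectral-gap}). So there is no proof in the paper to compare against. Your sketch is, however, an accurate summary of how the classical result is established, and the two ingredients you name --- Margulis super-rigidity for the reduction in higher rank, and Weil's cohomological criterion $H^1(\gC,\mathrm{Ad}\circ i)=0$ together with the Calabi--Vesentini--Weil--Matsushima vanishing for rank one (plus Garland--Raghunathan in the non-uniform case) --- are exactly the standard toolkit. Your closing remark that the exclusion of $\SL_2(\BR)$ and $\SL_2(\BC)$ is sharp, with Teichm\"uller deformations and hyperbolic Dehn surgery as witnesses, is also correct and matches the reason the paper's hypothesis $\mathrm{rank}(G)\ge 2$ puts it safely inside the scope of the theorem.

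Two small caveats. First, the super-rigidity route you use for $\mathrm{rank}(G)\ge 2$ is historically anachronistic: Weil's cohomological vanishing already covers cocompact lattices in higher rank, and Garland--Raghunathan handles the non-uniform higher rank cases; super-rigidity came later. This is not an error, just a different (and perfectly legitimate) chain of implications. Second, your remark that for non-uniform $\gC$ in higher rank ``one has to account separately for parabolic elements'' is not really needed: Margulis super-rigidity is stated for arbitrary irreducible lattices, uniform or not, so once you are in the super-rigidity framework the non-uniform case requires no extra step. Where cusp contributions genuinely matter is in the rank one cohomological argument, which you do address correctly via Garland--Raghunathan. For the purpose this theorem serves in the paper --- guaranteeing that a lattice $\gC$ which is a Chabauty limit of conjugates $\gL^{g_n}$ must in fact be contained in some $\gL^{g_n}$ --- any of these proofs suffices.
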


We can now proceed with the proof of Theorem \ref{thm:FG-main}.
Suppose by way of contradiction that there is a sequence of conjugates $\gL^{g_n}$ which converges to a lattice $\gC\le G$. Since $\gC$ is discrete there is a compact identity neighborhood $U\subset G$ with $U\cap \gC=\{1\}$. Recall that $\gC$, being a lattice is finitely presented (see \cite{HV,crells}). Let $\langle \gS:R\rangle$ be a finite presentation of $\gC$. For $n$ large enough we have both that
\begin{itemize}
\item $\gL^{g_n}\cap U=\{1\}$, and
\item if $\gs_n\in\gL^{g_n}$ is a nearest element to $\gs$ for any $\gs\in \gS$ then for every $w\in R$, the evaluation of $w$ on $\gs_n,~\gs\in\gS$ falls in $U$.
\end{itemize}
It follows that all the relations $w\in R$ are satisfied on the set $\{\gs_n:\gs\in \gS\}$ and hence the assignment $\gs\mapsto \gs_n$ a homomorphism of $\gC$ which is close to the inclusion. By local rigidity, we deduce that $\gL^{g_n}$ contains a conjugate of $\gC$. It follows that $\gL$ is a lattice, in contrast to the assumption. This completes the proof of  Theorem \ref{thm:FG-main}. \qed

\paragraph{Summary of the proof:} Theorem \ref{thm:FG-main} is proven in three steps:
\begin{enumerate}
\item Discreteness --- Theorem \ref{thm:dSRS}.
\item Stiffness --- Theorem \ref{thm:stiffness}.
\item Rigidity --- the use of Stuck--Zimmer rigidity theorem and local rigidity, as described above.
\end{enumerate}

\subsection*{The case of semisimple groups}

In \cite{FG} several analogs of Theorem \ref{thm:FG-main} were proved also for general semisimple groups. This relied on the following decomposition theorems for discrete IRS and SRS (see \cite[Theorem 4.1, Corollary 4.4 and Theorem 6.5]{FG}):

\begin{thm}[\cite{FG}, Theorem 4.1 and Corollary 4.4]\label{thm:decomposition}
Let $G=G_1\times\cdots\times G_n$ be a connected center-free semisimple Lie group without compact factors and with simple factors $G_i,~i=1,\ldots,n$.
Let $\nu$ be an ergodic discrete invariant random subgroup in $G$. Then $G$ decomposes to a product of semisimple factors $G=H_1\times\ldots\times H_k$ with $1\le k\le n$, such that almost surely the projection of a random subgroup to each $H_i$ is discrete while the projection of the intersection with $H_i$ to each proper factor of $H_i$ is dense. 
\end{thm}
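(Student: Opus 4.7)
The plan is to construct the partition as a maximal refinement of $\{1,\ldots,n\}$ into subsets whose projections of $\Lambda$ are almost surely discrete, and then to derive the density condition from maximality. For $S \subset \{1,\ldots,n\}$, write $G_S := \prod_{i \in S} G_i$ and let $\pi_S : G \to G_S$ denote the natural projection; then $(\pi_S)_*\nu$ is a $G_S$-invariant random subgroup of $G_S$. The event $\{\pi_S(\Lambda) \text{ is discrete in } G_S\}$ is $G$-invariant (discreteness is preserved under conjugation), so by ergodicity of $\nu$ it has measure in $\{0,1\}$. Let $\mathcal{D}$ denote the collection of $S$ for which this event has full measure; we have $\{1,\ldots,n\} \in \mathcal{D}$ because $\nu$ itself is discrete. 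The key closure property of $\mathcal{D}$ is stability under finite unions: the map $\pi_{S \cup T}(\Lambda) \to \pi_S(\Lambda) \times \pi_T(\Lambda)$ given by $\gamma \mapsto (\pi_S(\gamma), \pi_T(\gamma))$ is an injective group homomorphism into a discrete group, so $\pi_{S \cup T}(\Lambda)$ is discrete.

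I then select a finest partition $\{S_1,\ldots,S_k\}$ of $\{1,\ldots,n\}$ whose parts all lie in $\mathcal{D}$; this is possible since the trivial partition $\{\{1,\ldots,n\}\}$ is a candidate and the index set is finite. Setting $H_i := G_{S_i}$, the discreteness condition is automatic. For the density condition I argue by contradiction: suppose that for some block $S_i$ and proper subset $T \subsetneq S_i$, the projection $\pi_T(\Lambda \cap H_i)$ is almost surely non-dense in $G_T$. Since the law of $\pi_T(\Lambda \cap H_i)$ is a $G_T$-invariant probability measure on $\text{Sub}(G_T)$, applying the Furstenberg lemma (as in the proof of Theorem \ref{thm:BDT}) to the induced distribution on the Grassmannian of Lie subalgebras of $\text{Lie}(G_T)$, one deduces that almost surely the Lie algebra of $\overline{\pi_T(\Lambda \cap H_i)}^\circ$ is an $\text{Ad}(G_T)$-invariant ideal of $\text{Lie}(G_T)$, hence equal to $\text{Lie}(G_{T_0})$ for some $T_0 \subsetneq T$ (rendered constant almost surely by ergodicity). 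In particular, $\pi_{T \setminus T_0}(\Lambda \cap H_i)$ is almost surely a discrete subgroup of $G_{T \setminus T_0}$.

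The concluding step, which I expect to be the main obstacle, is to convert this into a genuine refinement of the partition and thus contradict maximality. The natural splitting is $S_i = (T \setminus T_0) \sqcup (S_i \setminus (T \setminus T_0))$, but the refinement requires the discreteness of $\pi_{T \setminus T_0}(\Lambda)$ (and of the complementary projection) rather than merely of $\pi_{T \setminus T_0}(\Lambda \cap H_i)$. I would bridge this gap with a preparatory splitting lemma asserting that for the blocks of the finest $\mathcal{D}$-partition one has $\Lambda \cap H_i = \pi_{H_i}(\Lambda)$ almost surely, so that $\Lambda$ decomposes as an almost direct product $\prod_i(\Lambda \cap H_i)$; this would reduce the refinement question for $\Lambda$ to questions about $\Lambda \cap H_i$, which are then immediate from the Lie-algebraic analysis above. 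The proof of the splitting lemma is the most delicate ergodic-theoretic step: one must show that $G$-invariance together with maximality of the $\mathcal{D}$-partition is incompatible with any nontrivial coupling between blocks, essentially because such coupling would itself constitute a canonical structure refining the partition in a way forbidden by ergodicity.
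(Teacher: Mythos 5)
Your first two stages are sound: the class $\mathcal{D}$ of subsets $S$ with $\pi_S(\Lambda)$ a.s. discrete is indeed closed under unions (the map $G_{S\cup T}\to G_S\times G_T$ is a closed embedding, so discreteness of the two projections forces discreteness of $\pi_{S\cup T}(\Lambda)$), ergodicity makes membership in $\mathcal{D}$ a zero–one event, and a minimal partition into $\mathcal{D}$-blocks exists by finiteness. The Borel-density step inside a block is also the right tool: if $\pi_T(\Lambda\cap H_i)$ is a.s. non-dense, the $\text{Ad}(G_T)$-invariance of the law of $\text{Lie}\bigl(\overline{\pi_T(\Lambda\cap H_i)}\bigr)$ on the Grassmannian, together with Furstenberg's lemma and ergodicity, pins its identity component down to a fixed factor $G_{T_0}\subsetneq G_T$, whence $\pi_{T\setminus T_0}(\Lambda\cap H_i)$ is a.s. discrete.

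The gap is exactly where you flag it, and the proposed ``splitting lemma'' is in fact \emph{false}. Take $G=G_1\times G_2\times G_3$, let $\Lambda_{12}$ be an irreducible lattice in $G_1\times G_2$ and $\Lambda_3$ a lattice in $G_3$, choose surjections $\phi_{12}\colon\Lambda_{12}\to Q$ and $\phi_3\colon\Lambda_3\to Q$ onto a nontrivial finite group, and set
$$
\Lambda=\{(\gamma_{12},\gamma_3)\in\Lambda_{12}\times\Lambda_3:\phi_{12}(\gamma_{12})=\phi_3(\gamma_3)\}.
$$
Then $\Lambda$ is a lattice in $G$, $\nu=\mu_\Lambda$ is an ergodic discrete IRS, and the finest $\mathcal{D}$-partition is $\{\,\{1,2\},\{3\}\,\}$ (since $\pi_{12}(\Lambda)=\Lambda_{12}$ and $\pi_3(\Lambda)=\Lambda_3$ are discrete while $\pi_1,\pi_2,\pi_{13},\pi_{23}$ are not). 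But $\Lambda\cap(G_1\times G_2)=\ker\phi_{12}$, which is a \emph{proper} finite-index subgroup of $\pi_{12}(\Lambda)=\Lambda_{12}$; equality fails, and $\Lambda$ is not the direct product $\prod_i(\Lambda\cap H_i)$, only commensurable to it. The theorem itself is still true for this example ($\ker\phi_{12}$ projects densely to $G_1$ and $G_2$), but your bridge from ``$\pi_{T\setminus T_0}(\Lambda\cap H_i)$ discrete'' to ``$\pi_{T\setminus T_0}(\Lambda)$ discrete'' collapses. Moreover, even if one salvaged a commensurability statement, the refinement you want requires discreteness of \emph{both} $\pi_{T\setminus T_0}(\Lambda)$ and $\pi_{S_i\setminus(T\setminus T_0)}(\Lambda)$, and your Lie-algebraic analysis only addresses the first block; as the graph $\{(\gamma,\phi(\gamma))\}$ of a dense-image homomorphism from a lattice shows, a discrete subgroup of a product can have one projection discrete and the complementary one dense. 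So the concluding step needs a genuinely different mechanism (for instance, exploiting the normality of $\Lambda\cap H_i$ in $\pi_{H_i}(\Lambda)$, or applying the Borel density theorem directly to the $H_i$-IRS $\Lambda\mapsto\Lambda\cap H_i$), not the splitting lemma as stated.
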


\begin{thm}[\cite{FG}, Theorem 6.5]\label{thm:stationary-decomposition}
Let $G$ be a connected centre-free semisimple Lie group without compact factors and $\nu$ a discrete $\mu$-stationary random subgroup of $G$. Then $G$ decomposes to a product of three semisimple factors $G=G_\mathcal{I}\times G_\mathcal{H}\times G_\mathcal{T}$ such that
\begin{enumerate}
\item $\nu$ projects to an IRS in $G_\mathcal{I}$ for which all the irreducible factors are of rank at least $2$.
\item $G_\mathcal{H}$ is a product of rank one factors and $\nu$ projects discretely to every factor of $G_\mathcal{H}$.
\item $\nu$ projects trivially to $G_\mathcal{T}$.
\end{enumerate}
Furthermore, the intersection of a random subgroup with every simple factor of $G_\mathcal{H}$ as well as with every irreducible factor of $G_\mathcal{I}$ is  almost surely Zariski dense in that factor.
\end{thm}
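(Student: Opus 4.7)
The plan is to build the decomposition $G=G_\mathcal{I}\times G_\mathcal{H}\times G_\mathcal{T}$ in stages, combining the stiffness analysis of Theorem \ref{thm:stiffness} with the IRS decomposition Theorem \ref{thm:decomposition}. All projections below are pushforwards on $\sub(\,\cdot\,)$ via the map $\Lambda\mapsto\overline{\pi(\Lambda)}$ induced by the quotient of $G$ by a product of simple factors. I would first extract the trivial block: for each simple factor $G_i$ of $G$, let $\nu_i$ denote the pushforward of $\nu$ to $\sub(G_i)$ and let $G_\mathcal{T}$ be the product of those $G_i$ with $\nu_i=\gd_{\{1\}}$. Working on the complementary product $G'$, write $G'=G'_{hr}\times G'_{r1}$, separating higher-rank simple factors from rank one ones.

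The central step is to upgrade $\nu_{hr}$, the pushforward of $\nu$ to $\sub(G'_{hr})$, from a discrete SRS to a discrete IRS. This is an iterated version of Theorem \ref{thm:stiffness}: if $\nu_{hr}$ were not $G'_{hr}$-invariant, then Theorem \ref{thm-NZ}, applied factor by factor, would produce a non-trivial $G'_{hr}$-equivariant measurable map from the Poisson boundary of $G'_{hr}$ to $\text{Prob}(\sub(G'_{hr}))$ factoring through some $G'_{hr}/Q$ with $Q\subset G'_{hr}$ a proper parabolic. Lemmas \ref{lem-AIRS} and \ref{lem-LeviInt} would then force a positive-measure set of random subgroups in the support to project trivially into at least one simple factor of $G'_{hr}$, contradicting the construction of $G'$. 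Feeding the resulting IRS into Theorem \ref{thm:decomposition} decomposes $G'_{hr}$ along its irreducible IRS blocks; setting $G_\mathcal{I}$ equal to their product gives the first piece of the decomposition, with Zariski density of the intersections with the irreducible factors furnished by the \emph{furthermore} clause of Theorem \ref{thm:decomposition}.

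Setting $G_\mathcal{H}:=G'_{r1}$, the final task is to verify that $\nu$ projects discretely to every simple factor of $G_\mathcal{H}$ and that the intersection with each such factor is almost surely Zariski dense. Discreteness in each factor follows from Theorem \ref{thm:dSRS} applied to the pushforward random walk on that factor (the projection of $\mu_G$ remains bi-$K$-invariant, so the Margulis function estimate still applies). Zariski density follows from a Borel density theorem for SRS in the spirit of Theorem \ref{thm:BDT}: the projection $\nu_i$ to a rank one factor has no atom at $\{1\}$ by construction of $G_\mathcal{T}$, and the Lie-algebra map $H\mapsto\text{Lie}(\overline{H}^Z)$ is measurable, so an application of the Furstenberg lemma (after pushing forward to the Grassmannian of $\text{Lie}(G_i)$) forces almost sure Zariski density.

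The main technical difficulty I anticipate is the invariance step for $\nu_{hr}$. The Nevo--Zimmer theorem is stated for a single simple higher-rank group, so adapting it to a product requires an induction peeling off one higher-rank simple factor at a time, and at each stage one must verify that the parabolic alternative of Theorem \ref{thm-NZ} is incompatible with discreteness via Lemmas \ref{lem-AIRS} and \ref{lem-LeviInt}. The delicate point is that the $A_L$-invariance alternative, which forces a positive-measure set of random subgroups into a Levi component, must be propagated across all simple higher-rank factors rather than absorbed by the rank-one block. Once this is controlled, the remainder is a clean application of Theorem \ref{thm:decomposition} together with the tools already developed in the stiffness part of the paper.
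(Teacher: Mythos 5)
Your plan of pre-sorting the nontrivial simple factors of $G'$ into a higher-rank block $G'_{hr}$ and a rank-one block $G'_{r1}$, and then declaring $G_\mathcal{H}:=G'_{r1}$, does not produce the decomposition the theorem asserts, and there is a concrete counterexample. Take $G=\SL_2(\BR)\times\SL_2(\BR)$ and let $\nu=\mu_\Gamma$ for an irreducible lattice $\Gamma\le G$. Every simple factor is rank one, so in your scheme $G'_{hr}$ is trivial, $G_\mathcal{I}=\{1\}$, and $G_\mathcal{H}=G$. But then condition (2) would require $\nu$ to project discretely to each copy of $\SL_2(\BR)$, which fails: the projection of an irreducible lattice to a factor is dense, so the pushforward is supported on $\SL_2(\BR)$ itself. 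The theorem handles this case with $G_\mathcal{I}=G$ (a single irreducible IRS factor of rank $2$) and $G_\mathcal{H}=\{1\}$. The point is that the labels $\mathcal{I}$, $\mathcal{H}$, $\mathcal{T}$ are assigned to the \emph{irreducible blocks} of the random subgroup (in the sense of Theorem \ref{thm:decomposition} and its SRS analogue), not to the simple factors of $G$, and an irreducible block of rank $\ge 2$ may well be a product of several rank-one simple groups. Your ``iterated Nevo--Zimmer'' step on $G'_{hr}$ alone therefore misses exactly the cases in which rank-one simple factors combine into an irreducible higher-rank subproduct, which is also the regime where $G$ lacks property~$(T)$ and where the stiffness phenomenon is most delicate.

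A second, independent gap is the claim that discreteness of the projection to a simple factor ``follows from Theorem \ref{thm:dSRS} applied to the pushforward random walk on that factor.'' Theorem \ref{thm:dSRS} asserts that Cesàro limits of a random walk \emph{started at a measure supported on discrete subgroups} remain discrete. The pushforward $\nu_i$ of $\nu$ to $\sub(G_i)$ via $\Lambda\mapsto\overline{\pi_i(\Lambda)}$ is stationary but need not be supported on discrete subgroups of $G_i$: the closure of the projection of a discrete subgroup of $G$ to a factor is in general not discrete (again, irreducible lattices in a product give the extreme failure). Discreteness of the projection is precisely the nontrivial content of condition (2), and in the paper it is extracted from the decomposition itself rather than assumed as an input to Theorem \ref{thm:dSRS}. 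As a result, the verification of the $G_\mathcal{H}$ conditions is circular as written. The correct route is to first run the stiffness/Nevo--Zimmer analysis on the full complement of $G_\mathcal{T}$ (using a semisimple version of Nevo--Zimmer) to identify the maximal invariant part and its irreducible blocks, take $G_\mathcal{I}$ to be the union of the blocks with rank $\ge 2$, and then establish discreteness and Zariski density of the projections to the remaining rank-one blocks as part of that same decomposition argument.
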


By the irreducible factors of $G_\mathcal{I}$ we mean the irreducible factors associated with the decomposition of IRS as in Theorem \ref{thm:decomposition}. The subscripts $\mathcal{I}, \mathcal{H},\mathcal{T}$ stands for invariant, hyperbolic and trivial (respectively). 

Theorem \ref{thm:stationary-decomposition} allows the extension of Theorem \ref{thm:FG-main} to semisimple groups under certain conditions. The simplest case is when all the factors of $G$ are of rank at least $2$:

\begin{thm}
Let $G=G_1\times\cdots\times G_n$ be a connected center-free semisimple Lie group such that each $G_i$ is simple of rank at least $2$.
Let $\Lambda$ be a discrete subgroup. Then $\Lambda$ is confined if and only if there is a nontrivial semisimple factor $H\lhd G$ such that $\Lambda\cap H$ is a lattice in $H$. If $\Lambda$ is not confined then 
 $\frac{1}{n}\sum_{i=0}^{n-1}\mu_G^{(i)}*\gd_{\gL}\to\gd_{\{1\}}$.
\end{thm}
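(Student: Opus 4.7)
The plan is to adapt the three-step scheme of Theorem \ref{thm:FG-main} --- discreteness, stiffness, rigidity --- to the semisimple setting, using the decomposition Theorems \ref{thm:decomposition} and \ref{thm:stationary-decomposition} together with Stuck--Zimmer applied to the intermediate factors (this is legitimate because every simple factor of $G$ has rank at least $2$, hence property (T)). The easy direction is immediate: if $H\lhd G$ is a nontrivial semisimple factor and $\Lambda\cap H$ is a lattice in $H$, writing $G=H\times H'$ gives $\Lambda^{g}\cap H=(\Lambda\cap H)^{h}$ for $g=(h,h')\in G$, and since every lattice is confined in its ambient semisimple group (its quotient orbifold has bounded injectivity radius), a compact $C_H\subset H\setminus\{1\}$ meeting every $(\Lambda\cap H)^{h}$ nontrivially also confines $\Lambda$ inside $G$.

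For the converse I analyze an accumulation point $\nu_\infty$ of $\nu_n=\frac{1}{n}\sum_{i=0}^{n-1}\mu_G^{(i)}*\gd_\Lambda$. If $\Lambda$ is confined by $C\subset G\setminus\{1\}$, then each $\nu_n$ is supported on the closed set $\{\Gamma\in\sub(G):\Gamma\cap C\ne\emptyset\}$, forcing $\nu_\infty\ne\gd_{\{1\}}$. By Theorem \ref{thm:dSRS}, $\nu_\infty$ is discrete; by Theorem \ref{thm:stationary-decomposition}, the rank $\ge 2$ hypothesis on every simple factor eliminates the hyperbolic component $G_\mathcal{H}$, so $\nu_\infty$ is in fact an IRS supported on subgroups of a nontrivial normal semisimple factor $G_\mathcal{I}\lhd G$. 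Passing to a nontrivial ergodic component and invoking Theorem \ref{thm:decomposition}, one obtains a refinement $G_\mathcal{I}=H_1\times\cdots\times H_k$ in which the random subgroup projects discretely to each $H_j$ and intersects $H_j$ in an irreducible IRS. The semisimple Stuck--Zimmer theorem forces each of these intersections to be either trivial or almost surely a lattice in $H_j$; the nontriviality of the ergodic component selects an index $j_0$ for which the intersection with $H_{j_0}$ is almost surely a lattice. Set $H:=H_{j_0}$, which is a product of simple factors of $G$ and hence a normal factor of $G$.

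To transfer this back to $\Lambda$ itself, choose $g_n\in G$ with $\Lambda^{g_n}\to\Gamma^{*}$ in Chabauty for a generic $\Gamma^{*}$ in this ergodic component, so that $\Gamma^{*}\cap H$ is a lattice in $H$. Repeating the local rigidity step of Theorem \ref{thm:FG-main}, a finite generating set and relations of $\Gamma^{*}\cap H$ lift to approximations in $\Lambda^{g_n}$, and Chabauty convergence to a discrete limit supplies a fixed identity neighborhood meeting $\Lambda^{g_n}$ only in $\{1\}$ for large $n$, so the relations hold on the nose. Combining classical local rigidity of $\Gamma^{*}\cap H$ inside $H$ with rigidity of the trivial homomorphism into the complementary factor $H'$ (via Margulis superrigidity, or property (T) of the lattice together with the no-small-subgroup property of $G$), the approximating homomorphism becomes a $G$-conjugate of the inclusion $\Gamma^{*}\cap H\hookrightarrow G$; normality of $H$ places the conjugate back inside $H$, so $\Lambda^{g_n}\cap H$ contains a lattice in $H$, and pulling back by $g_n^{-1}$ together with normality again shows that $\Lambda\cap H$ itself is a lattice in $H$. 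If on the other hand $\Lambda$ is not confined, then no such $H$ can exist, so every accumulation of $\nu_n$ must equal $\gd_{\{1\}}$, and compactness of $\text{Prob}(\sub(G))$ yields $\nu_n\to\gd_{\{1\}}$.

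The hard part will be the transverse rigidity just used: the classical local rigidity of $\Gamma^{*}\cap H$ inside $H$ only controls deformations tangent to $H$, and one must separately suppress infinitesimal deformations in the $H'$-direction --- precisely where the rank $\ge 2$ hypothesis is essential, via superrigidity or property (T) of the lattice combined with the NSS property of $G$. The discreteness and stiffness steps, by contrast, transfer from the simple case with essentially no modification once Theorems \ref{thm:decomposition} and \ref{thm:stationary-decomposition} are in hand.
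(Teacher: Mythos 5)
Your proposal is correct and matches the approach the paper indicates: the paper states this theorem without proof (referring to \cite[Section 9]{FG}), but your three-step scheme --- discreteness via Theorem \ref{thm:dSRS}, the stationary decomposition Theorem \ref{thm:stationary-decomposition} (with $G_{\mathcal{H}}$ absent since every factor has rank $\ge 2$) combined with Stuck--Zimmer to locate a factor $H$ in which the generic conjugate limit meets a lattice, then local rigidity to pull this back to $\Lambda$ --- is exactly the natural extension of the proof of Theorem \ref{thm:FG-main}. The ``transverse rigidity'' you correctly flag as the new issue is handled by the standard $H^1$ computation: with $\mathfrak{g}=\mathfrak{h}\oplus\mathfrak{h}'$ and $\mathfrak{h}'$ a \emph{trivial} $(\Gamma^*\cap H)$-module, one has $H^1(\Gamma^*\cap H,\mathfrak{h})=0$ by Weil--Garland--Raghunathan and $H^1(\Gamma^*\cap H,\mathfrak{h}')\cong\text{Hom}(\Gamma^*\cap H,\BR)^{\dim\mathfrak{h}'}=0$ because a higher-rank lattice has finite abelianization (property (T)); so the inclusion $\Gamma^*\cap H\hookrightarrow G$ is locally rigid as a point of $\text{Hom}(\Gamma^*\cap H,G)$, not merely of $\text{Hom}(\Gamma^*\cap H,H)$, and normality of $H$ then places the $G$-conjugate back inside $H$ as you say.
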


We refer to \cite[Section 9]{FG} for additional generalizations of Theorem \ref{thm:FG-main} which allows also rank one factors. 
We remark that Theorem \ref{thm:stationary-decomposition} plays also an important role in the results described in the next section.

\section{Spectral gap for products and a strong version of the NST}\label{sec:spectral-gap}

\subsection*{Margulis' normal subgroup theorem}

Recall the celebrated normal subgroup theorem of Margulis (see \cite{Ma} and \cite{Ma1}):

\begin{thm}\label{thm:NST}
Let $G$ be a center-free semisimple Lie group without compact factors with $\text{rank}(G)\ge 2$, and let $\gC\le G$ be an irreducible lattice. Then every non-trivial normal subgroup $N\lhd \gC$ is of finite index. 
\end{thm}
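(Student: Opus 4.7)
The most natural route, in light of the IRS technology reviewed in \S\ref{sec:IRS} and \S\ref{sec:SZ}, is to reduce the normal subgroup theorem to Stuck--Zimmer. Given a non-trivial normal subgroup $N \lhd \Gamma$, I would associate to $N$ an invariant random subgroup of $G$ as follows. Because $N$ is normal in $\Gamma$, the map
\[
 \Phi \colon G/\Gamma \longrightarrow \Sub(G), \qquad g\Gamma \longmapsto gNg^{-1},
\]
is well-defined, continuous, and $G$-equivariant for the conjugation action of $G$ on $\Sub(G)$. Pushing forward the $G$-invariant probability measure $m$ on $G/\Gamma$ yields an IRS $\mu_N := \Phi_* m$. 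The left $G$-action on $(G/\Gamma, m)$ is ergodic, and every non-trivial factor of $G$ acts ergodically on it because $\Gamma$ is irreducible; hence $\mu_N$ is an ergodic irreducible IRS. Moreover $\mu_N \ne \delta_G$ since $N$ is discrete in $G$, and $\mu_N \ne \delta_{\{1\}}$ since $N \ne \{1\}$.

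Assuming Kazhdan's property $(T)$, Theorem \ref{thm:SZ2} now forces $\mu_N = \mu_\Lambda$ for some irreducible lattice $\Lambda \le G$. But by construction a $\mu_N$-typical subgroup is a $G$-conjugate of $N$, so $N$ itself must be a lattice in $G$. The inclusion $N \le \Gamma$ of two lattices in $G$ then gives $[\Gamma:N] = \vol(G/N)/\vol(G/\Gamma) < \infty$, proving the theorem.

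The main obstacle is that Theorem \ref{thm:NST} carries no property $(T)$ hypothesis: the assumption $\mathrm{rank}(G)\ge 2$ permits groups such as $G = \SL_2(\BR)\times\SL_2(\BR)$, for which $(T)$ fails and Stuck--Zimmer is a conjecture. In that generality Theorem \ref{thm:SZ,HT} only yields that $\mu_N$ is supported on co-amenable discrete subgroups, which is not enough to conclude. Upgrading co-amenability to ``lattice'' without invoking $(T)$ is precisely the content of the programme described in \S\ref{sec:spectral-gap}; this is the hard part. An alternative, classical route is Margulis's original argument, which bypasses IRS altogether and instead combines the amenability of $\Gamma/N$ coming from the action on the boundary $G/P$ with a Kazhdan-type spectral estimate on $\Gamma$ to force $\Gamma/N$ to be finite. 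Both approaches require genuine input beyond the IRS and SRS machinery already in place, which is precisely why the theorem is recalled here as motivation for the new spectral-gap result rather than as a corollary of the earlier sections.
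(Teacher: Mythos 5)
Your analysis is correct and mirrors the paper's own treatment: the paper does not prove Theorem~\ref{thm:NST} but recalls it as Margulis's classical result (citing \cite{Ma,Ma1}), and the IRS reduction you give via $g\Gamma\mapsto gNg^{-1}$ followed by Stuck--Zimmer is exactly the argument appearing in \S\ref{sec:SZ}, which --- as you correctly observe --- requires property $(T)$. Your diagnosis of the gap in the non-$(T)$ case and your pointer to either Margulis's original argument in \cite{Ma1} or the new spectral-gap machinery of \cite{BGL} as the genuine content needed to close it is precisely what the paper says, so there is nothing to add.
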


The impact of this theorem on the field is enormous. The proof is quite remarkable, both in the general idea and in the details. Margulis deduces the finiteness of $\gC/N$ from the tension between two analytic properties that cannot get along with each other for infinite groups, namely amenability and property $(T)$.  

Recall that a group $H$ is amenable if the regular representation on $L^2(H)$ admits asymptotically invariant vectors, while property $(T)$ is a spectral gap with respect to all unitary representations and in particular the regular representation. Therefore if $H$ is both amenable and has $(T)$ then $L^2(H)$ must admit the constant non-zero functions which implies that $H$ is compact, or finite if it is also discrete.

In case the semisimple group $G$ has property $(T)$ one gets for free that also $\gC$ and $\gC/N$ have $(T)$. Hence in that case Margulis' proof boils down to showing that $\gC/N$ is amenable. The case where $G$ does not have property $(T)$, e.g. for $G=\PSL_2(\BR)\times\PSL_2(\BR)$ requires additional (and again quite remarkable) work and is carried out in \cite{Ma1}.

There are several works that generalize and improve the normal subgroup theorem. For instance Thereom \ref{thm:SZ} can be regarded as a measure-theoretic counterpart of the NST, while Theorem \ref{thm:FG1} can be regarded as a geometric variant of the NST and both are much stronger than the NST when they hold. However these works, as well as all other published generalizations of the NST for classical semisimple groups relied on Kazhdan's property (T) and do not cover the general higher rank case.
%\footnote{The Bader--Shalom theorem \cite{Shalom,BaSha} establishes the NST for quite general irreducible lattices in general locally compact groups regardless of property $(T)$. This broaden the scope of the theorem. But all the strengthening versions established for classical groups relied on property $(T)$.}

\subsection*{Higher rank irreducibly confined subgroups are lattices}

I will now review a very recent
joint work with Arie Levit and Uri Bader \cite{BGL}.
We have managed to prove the following:

\begin{thm}[\cite{BGL}]\label{thm:BGL1}
Let $G$ and $\gC$ be as in Theorem \ref{thm:NST}. Then every confined subgroup of $\gC$ is of finite index.
\end{thm}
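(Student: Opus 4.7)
My plan mirrors the three-step scheme used for Theorem \ref{thm:FG-main} in \S\ref{sec:confined}---discreteness, stiffness/decomposition, rigidity---while replacing the appeal to property $(T)$ by the new spectral gap for products.

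First I would form the Cesaro averages $\nu_n := \frac{1}{n}\sum_{i=1}^n \mu_G^{(i)} * \delta_\Lambda$ and let $\nu_\infty$ be a weak-$*$ accumulation point. By Theorem \ref{thm:dSRS}, $\nu_\infty$ is a discrete stationary random subgroup. The confinement hypothesis gives a compact $C \subset G$ with $\Lambda^g \cap C \neq \{1\}$ for every $g \in G$; since the set $\{H \in \sub(G) : H \cap C \neq \{1\}\}$ is closed, $\nu_\infty$ is concentrated on subgroups meeting $C$ nontrivially, and in particular $\nu_\infty \neq \delta_{\langle 1\rangle}$. The strategy is then to show that $\nu_\infty$ is supported on lattices in $G$.

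Next I would apply the decomposition Theorem \ref{thm:stationary-decomposition} to write $G = G_\mathcal{I} \times G_\mathcal{H} \times G_\mathcal{T}$. A careful use of the irreducibility of $\Gamma$, combined with the fact that the support of $\nu_\infty$ lies in the Chabauty closure of the conjugacy class $\Lambda^G \subset \Gamma^G$, should force the decomposition to be trivial, namely $G = G_\mathcal{I}$; in particular every simple factor of $G$ has rank $\ge 2$ and $\nu_\infty$ is actually an IRS. If some factor of $G$ possessed property $(T)$ one could now conclude from Stuck--Zimmer (Theorem \ref{thm:SZ}) that $\nu_\infty$ is supported on lattices. In the general case this is where the new spectral gap for products enters: one tracks not only the Cesaro averages but also the individual, non-stationary measures $\mu_G^{(n)} * \delta_\Lambda$, and uses a uniform spectral estimate on products of rank-one (and higher) factors to show that the probability that the random subgroup fails to have finite covolume decays, a property which then passes to the limit $\nu_\infty$.

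Once $\nu_\infty$ is supported on lattices, the proof concludes as in the rigidity step of Theorem \ref{thm:FG-main}: since $\nu_\infty$ is nontrivial and supported on the closure of $\Lambda^G$, there exist $g_n \in G$ and a lattice $\Delta \le G$ with $\Lambda^{g_n} \to \Delta$ in the Chabauty topology. Selberg--Weil--Margulis local rigidity---applicable because $G$ has rank $\ge 2$---implies that for large $n$ the subgroup $\Lambda^{g_n}$ contains a conjugate of $\Delta$; hence $\Lambda^{g_n}$, and thus $\Lambda$ itself, is a lattice in $G$. Being a lattice contained in $\Gamma$, $\Lambda$ must have finite index in $\Gamma$. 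The main obstacle is the middle step: establishing the spectral gap for products that substitutes for property $(T)$, via the analysis of non-stationary random subgroups---this is the genuinely new ingredient compared with \cite{FG}, and everything else is an adaptation of Theorems \ref{thm:dSRS}, \ref{thm:stiffness}, \ref{thm:stationary-decomposition} and classical local rigidity to the irreducible-lattice setting.
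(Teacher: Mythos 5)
Your scaffolding (Cesaro averages, Theorem \ref{thm:dSRS} giving a discrete SRS, the decomposition Theorem \ref{thm:stationary-decomposition} forcing $G = G_{\mathcal I}$, and local rigidity at the end) matches the paper, but the crux is misidentified. You write that the spectral gap for products enters by tracking the individual measures $\mu_G^{(n)} * \delta_\Lambda$ and showing ``the probability that the random subgroup fails to have finite covolume decays''. This is not a step that exists, and it is not clear how one would carry it out: the spectral gap theorem (Theorem \ref{theorem:getting spectral gap}) is a statement about the Koopman representation $L^2_0(X,m)$, and it is not used to estimate the mass that $\nu_n$ assigns to non-lattices.

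The two ingredients you are missing are the following. First, once you know $\nu_\infty$ is a nontrivial irreducible IRS on $G$, Theorem \ref{thm:SZ,HT} (Stuck--Zimmer as sharpened by Hartman--Tamuz) already applies \emph{without} property $(T)$ in higher rank, and it gives that a $\nu_\infty$-random subgroup is almost surely \emph{co-amenable} in $G$. Since $\nu_\infty$ lives on the Chabauty closure of $\Lambda^G$, this produces a co-amenable conjugate limit $\Gamma'$ of $\Lambda$. So the reduction from the general case to the co-amenable case has nothing to do with the new spectral gap; it comes for free from \cite{SZ,HT}. Second, the genuinely new work---and the place where the spectral gap for products enters---is the proof that a co-amenable irreducibly confined subgroup is a lattice. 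There, one takes an asymptotically $G$-invariant unit sequence $f_n \in L^2(G/\Lambda)$, modifies it (following the spirit of Margulis \cite{Ma1}) to a sequence $h_n$ which is asymptotically $G_1$-invariant but \emph{uniformly} non-$G_2$-invariant, views $h_n^2$ as probability densities on $G/\Lambda$, pushes forward via $g\Lambda \mapsto g\Lambda g^{-1}$, and then contradicts an approximate version of Lemma \ref{lem:G1dense->G2-Invartant}. The ``non-stationary random subgroups'' referred to in the introduction are these pushforwards $\mathrm{Stab}_*(h_n^2\cdot m)$, not the Cesaro iterates you consider. Your concluding rigidity step is correct once these two pieces are in place: a co-amenable conjugate limit must be a lattice, so local rigidity of $\Gamma'$ forces $\Lambda$ to contain a conjugate of $\Gamma'$, hence $\Lambda$ is itself a lattice, and a lattice contained in $\Gamma$ is of finite index.
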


In other words, infinite index subgroups of higher rank irreducible lattices are unconfined. That is, if $|\gC:\gL|=\infty$ then there is a sequence $\gc_n\in\gC$ such that $\gL^{\gc_n}\to\langle 1\rangle$. This property is much stronger than being non-normal (assuming $\gL$ is nontrivial). Indeed, normal subgroups are the fixed points for the action $\gC\act\sub(\gC)$.

When $G$ (or at least one of its simple factors) has property $(T)$, Theorem \ref{thm:BGL1} follows from the much stronger results proved in \cite{FG} (see \S \ref{sec:confined}).
The novelty in our new work is that it applies regardless of property $(T)$. 

As in \cite{FG}, we also obtained a general result for discrete groups which are not assumed a priory to be contained in a lattice. For this result we need however to impose some irreducibility condition which passes to limits of conjugates:

\begin{defn}
Let $G$ be a semisimple Lie group. A discrete subgroup of $G$ is called {\it strongly confined} if it has no discrete conjugate limit which is contained in a proper factor of $G$.
A discrete subgroup is called {\it irreducibly confined} if it is strongly confined and its intersection with any proper factor of $G$ is not confined.
\end{defn}

\begin{thm}[\cite{BGL}]\label{thm:BGL2}
Let $G$ be a connected center-free semisimple Lie group without compact factors and with $\text{rank}(G)\ge 2$. A discrete subgroup $\gL\le G$ is irreducibly confined in $G$ iff it is an irreducible lattice in $G$.
\end{thm}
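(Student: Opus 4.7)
The plan is to follow the three-step strategy (discreteness, stiffness, rigidity) that proved Theorem~\ref{thm:FG-main} in \S\ref{sec:confined}, while replacing every appeal to Kazhdan's property $(T)$ by the new spectral gap for products announced in this section. The reverse implication, that an irreducible lattice is irreducibly confined, is the standard one: confinement holds because every conjugate of a lattice $\Gamma$ contains a short element---in the cocompact case, writing $g=d\gamma$ with $d$ in a compact fundamental set, one sees that $g\Gamma g^{-1}=d\Gamma d^{-1}$ meets a fixed compact set $C\subset G\setminus\{1\}$ of the form $D\gamma_0 D^{-1}$ for some nontrivial $\gamma_0\in \Gamma$; in the non-cocompact case the small unipotents supplied by the cusps do the same job. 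Strong confinement then follows from Selberg--Weil--Margulis local rigidity together with Wang's finiteness in higher rank (any Chabauty limit of conjugates of $\Gamma$ is itself a conjugate of $\Gamma$, hence cannot lie in a proper factor by Borel density). Finally the intersection of an irreducible lattice with any proper factor is finite by Margulis' normal subgroup theorem applied to the kernel of the projection to the complementary factor, and a finite subgroup is easily seen to be non-confined.

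For the main direction, assume $\Lambda$ is irreducibly confined and let $\nu_\infty$ be a weak-$*$ accumulation point of the Cesaro averages $\nu_n=\frac{1}{n}\sum_{i=1}^n \mu_G^{(i)}*\delta_\Lambda$. Fix a compact set $C\subset G\setminus\{1\}$ witnessing the confinement of $\Lambda$; then $\{\Gamma:\Gamma\cap C\ne\emptyset\}$ is closed in $\sub(G)$ and has full $\nu_n$-measure for every $n$, hence has full $\nu_\infty$-measure, so $\nu_\infty$ is supported on nontrivial subgroups. By Theorem~\ref{thm:dSRS} it is moreover a discrete SRS. Applying the decomposition Theorem~\ref{thm:stationary-decomposition} yields $G=G_\mathcal{I}\times G_\mathcal{H}\times G_\mathcal{T}$. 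Strong confinement of $\Lambda$ forces $G_\mathcal{T}=\{1\}$, for otherwise $\nu_\infty$-almost every subgroup would be a nontrivial discrete Chabauty limit of conjugates of $\Lambda$ contained in the proper factor $G_\mathcal{I}\times G_\mathcal{H}$. The parallel use of the hypothesis that $\Lambda\cap H$ is not confined for any proper factor $H$, combined with the Zariski-density clause of Theorem~\ref{thm:stationary-decomposition} (any nontrivial rank-one component of $G_\mathcal{H}$ would contribute a confined subgroup inside $\Lambda$ via the projection), rules out $G_\mathcal{H}\ne\{1\}$. Hence $G=G_\mathcal{I}$ and $\nu_\infty$ is a nontrivial discrete IRS in a semisimple group all of whose irreducible factors have rank at least $2$.

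The heart of the argument is now to prove a Stuck--Zimmer-type rigidity for such a $\nu_\infty$ \emph{without} invoking property $(T)$, and this is where the new spectral gap for products enters: combined with the decomposition Theorem~\ref{thm:decomposition} for discrete IRS, it forces $\nu_\infty$ to be supported on irreducible lattices. Once this is in place the argument concludes exactly as in \S\ref{sec:confined}: some subsequence $\Lambda^{g_n}$ converges in Chabauty topology to an irreducible lattice $\Gamma\le G$, and Selberg--Weil--Margulis local rigidity together with the finite presentation of $\Gamma$ forces $\Lambda^{g_n}$ to contain a conjugate of $\Gamma$ for large $n$; since $\Gamma$ is a lattice, so is $\Lambda$, and $\Lambda$ is irreducible because it is irreducibly confined. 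The main obstacle, as foreshadowed in the introduction to \S\ref{sec:spectral-gap}, is establishing the spectral gap for products---presumably via the non-stationary random subgroups alluded to there---that allows Nevo--Zimmer/Stuck--Zimmer-style stiffness and ergodic rigidity to go through for higher rank semisimple groups lacking property $(T)$; granting this input, the remainder is an adaptation of the framework developed in \cite{FG}.
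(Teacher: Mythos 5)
Your reduction to the stationary limit $\nu_\infty$ with $G=G_{\mathcal{I}}$ via Theorems~\ref{thm:dSRS} and~\ref{thm:stationary-decomposition}, as well as the final Chabauty/local-rigidity step, match the paper's strategy. The reverse direction (irreducible lattice $\Rightarrow$ irreducibly confined) is also fine in outline. But there is a genuine gap in the main direction, precisely at the sentence where you claim that the spectral gap for products ``combined with the decomposition Theorem~\ref{thm:decomposition} for discrete IRS\ldots forces $\nu_\infty$ to be supported on irreducible lattices.'' As stated, that is exactly a Stuck--Zimmer-type rigidity statement for IRS in the absence of property $(T)$, i.e.\ Conjecture~\ref{conj:SZ}, which remains open and which the paper does \emph{not} prove. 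The spectral gap theorem alone does not give it.

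The actual architecture of the proof is different and has a step you are missing: a reduction to co-amenable subgroups. The paper first proves that an irreducibly confined subgroup $\Lambda$ that is additionally \emph{co-amenable} is a lattice. There, the spectral gap theorem (Theorems~\ref{theorem:getting spectral gap} and~\ref{theorem:getting spectral gap - analytic groups}) is applied directly to the Koopman representation $L^2(G/\Lambda,m)$ with $m$ the infinite invariant measure, not to an IRS: starting from an asymptotically $G$-invariant sequence $f_n\in L^2(G/\Lambda)$ supplied by co-amenability, one modifies it (\`a la Margulis~\cite{Ma1}) to a sequence $h_n$ that is asymptotically $G_1$-invariant yet uniformly non-$G_2$-invariant, and one pushes $|h_n|^2\cdot m$ to $\sub(G)$ via $g\Lambda\mapsto g\Lambda g^{-1}$, obtaining the random subgroups that are ``not even stationary'' alluded to in the introduction. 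The third hypothesis of Theorem~\ref{theorem:getting spectral gap - analytic groups} is verified using irreducible confinement; the conclusion contradicts the non-invariance of $h_n$, hence $\Lambda$ is a lattice. Only \emph{then} is the general case handled: the Cesaro averages and the decomposition Theorem~\ref{thm:stationary-decomposition} reduce to $G=G_\mathcal{I}$ as you say, after which one invokes Theorem~\ref{thm:SZ,HT} (Stuck--Zimmer as refined by Hartman--Tamuz, valid without property $(T)$) to conclude only that the limit IRS is supported on \emph{co-amenable} subgroups. This gives a co-amenable Chabauty limit $\Gamma$ of conjugates of $\Lambda$, to which the already-established co-amenable case applies, and local rigidity finishes. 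Your proposal skips both the co-amenable case and the appeal to Theorem~\ref{thm:SZ,HT}, and so it silently assumes a stronger IRS rigidity than is available.
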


The paper \cite{BGL} is self contained, hence can also serve as an alternative to \cite{Ma1} for a proof of the NST in this most complicated case.\footnote{Note that the proof of the NST in the lack of property (T) is not fully contained in \cite{Ma}.}

\subsection*{Hints about the proof}

Recall the following lemma \cite[Lemma 7.2]{FG}:

\begin{lemma}\label{lem:G1dense->G2-Invartant} 
Let $G=G_1\times G_2$ where $G_1,G_2$ are locally compact second countable groups. Let $X$ be a compact, second countable $G$-space with $G_1$-invariant probability measure $\nu$ such that ${\rm Stab}_G (x)$ is discrete and has a dense projection onto $G_2$ for $\nu$-almost every $x\in X$. Then $\nu$ is $G$-invariant. 
\end{lemma}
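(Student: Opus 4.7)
My approach is to fix an arbitrary $g_2 \in G_2$ and prove $(g_2)_*\nu = \nu$, by exploiting the density of $\pi_2(\mathrm{Stab}_G(x))$ in $G_2$ together with the $G_1$-invariance of $\nu$. The first step is an orbit-inclusion statement: for $\nu$-a.e.\ $x \in X$, one has $g_2 \cdot x \in \overline{G_1 \cdot x}$. Indeed, pick $(a_n, b_n) \in \mathrm{Stab}_G(x)$ with $b_n \to g_2$; the identity $(a_n, b_n)\cdot x = x$ rewrites as $b_n \cdot x = a_n^{-1} \cdot x \in G_1 \cdot x$, and continuity of the $G$-action gives $g_2 \cdot x = \lim_n b_n \cdot x \in \overline{G_1 \cdot x}$. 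As an immediate consequence, for every $f \in C(X)^{G_1}$, the function $f$ is constant on $G_1$-orbits and hence, by continuity, on their closures, so $f(g_2\cdot x) = f(x)$ for $\nu$-a.e.\ $x$. Integrating, $(g_2)_*\nu$ and $\nu$ agree on every continuous $G_1$-invariant function.

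The plan is then to promote this to the full equality $(g_2)_*\nu = \nu$ via the $G_1$-ergodic decomposition $\nu = \int_Y \nu_y\,d\rho(y)$. Since $G_2$ commutes with $G_1$, the transformation $x \mapsto g_2 \cdot x$ permutes the $G_1$-ergodic components and descends to a measurable map $g_2^Y$ on $Y$. Combining the orbit inclusion with the standard fact that $\overline{G_1\cdot x} = \mathrm{supp}(\nu_y)$ for $\nu_y$-a.e.\ $x$, we see that $g_2$ sends $\mathrm{supp}(\nu_y)$ into itself for $\rho$-a.e.\ $y$, so $g_2^Y$ preserves the partition of $X$ into supports of $G_1$-ergodic components.

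The final and main obstacle is to upgrade preservation of this partition to the equality $(g_2)_*\nu_y = \nu_y$ for $\rho$-a.e.\ $y$, since a priori two distinct $G_1$-ergodic probability measures can share the same support. To close the gap I expect to revisit the stabilizer hypothesis and use a measurable selection theorem to produce $a_n : X \to G_1$ with $a_n(x)^{-1}\cdot x \to g_2 \cdot x$ pointwise $\nu$-a.e. Applied to an arbitrary $f \in C(X)$, dominated convergence yields $\int f(g_2\cdot x)\,d\nu(x) = \lim_n \int f(a_n(x)^{-1}\cdot x)\,d\nu(x)$, and a Fubini-type argument restricted to each ergodic component, exploiting the $G_1$-invariance of every $\nu_y$, should force this to equal $\int f\,d\nu$, completing the proof.
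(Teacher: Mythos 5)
Your preliminary reductions are sound: the orbit-closure inclusion $g_2\cdot x\in\overline{G_1\cdot x}$, the agreement of $(g_2)_*\nu$ with $\nu$ on continuous $G_1$-invariant test functions, and the inclusion $\mathrm{supp}((g_2)_*\nu_y)\subset\mathrm{supp}(\nu_y)$ for almost every $G_1$-ergodic component. You also correctly identify the real obstruction: a closed $G_1$-invariant set can carry many distinct $G_1$-ergodic probability measures, so these observations do not pin down $(g_2)_*\nu_y$. However, the proposed repair does not close that gap. The claimed identity $\lim_n\int f(a_n(x)^{-1}\cdot x)\,d\nu=\int f\,d\nu$ would require $G_1$-invariance of $\nu$ to survive a \emph{point-dependent} translation by $a_n(x)\in G_1$; but invariance only gives $\int f(g^{-1}x)\,d\nu=\int f\,d\nu$ for a fixed $g$. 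When the translating element varies measurably with $x$, the push-forward of $\nu_y$ under $x\mapsto a_n(x)^{-1}\cdot x$ has no reason to equal $\nu_y$; even with a $G_1$-equivariant selection $a_n$ (so that the push-forward is again $G_1$-invariant) it may well be a different $G_1$-ergodic measure living on the same support --- which is exactly the ambiguity you set out to resolve. No Fubini argument removes the dependence of $a_n$ on $x$.

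The missing ingredient is a \emph{pointwise ergodic} statement, not bare invariance. After reducing to the $G_1$-ergodic case, the paper fixes a smooth symmetric generating probability measure $\eta$ on $G_1$ and applies Kakutani's random-walk ergodic theorem to the entire conjugated family $\eta^{g_1}$, $g_1\in G_1$: for each fixed $g_1$, $\nu$-a.e.\ $x$ is $\eta^{g_1}$-generic, i.e.\ $\frac1n\sum_{i<n}(\eta^{g_1})^{*i}*\delta_x\to\nu$. A Fubini argument produces a conull set of $x$ that are simultaneously $\eta^{g_1}$-generic for Haar-a.e.\ $g_1$; taking such an $x$ with $\Gamma=\mathrm{Stab}_G(x)$ discrete and projecting densely to $G_2$, and using that $\Gamma$ is countable, one fixes a single $g_1$ that is generic for all $\gamma_1g_1$ with $(\gamma_1,\gamma_2)\in\Gamma$. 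The algebraic identity $\eta^{\gamma g_1}=\eta^{\gamma_1g_1}$ together with $\gamma\cdot x=x$ then yields $\gamma_*\nu=\nu$ for every $\gamma\in\Gamma$, whence $\mathrm{Stab}_G(\nu)\supset\overline{\Gamma G_1}=G$. It is the genericity of a single base point under a whole family of random walks --- a far stronger input than $G_1$-invariance of $\nu$ --- that converts stabilizer information at $x$ into invariance of the measure, and this is precisely what your sketch is missing.
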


\begin{proof}
In view of the ergodic decomposition of probability measure preserving actions, the lemma clearly reduces to the ergodic case, so let us assume that $\nu$ is $G_1$-ergodic. Using the large stabilizers of the action and Kakutani's ergodic theorem for random walks we will prove that such a measure must be also $G_2$-invariant.

Choose a smooth symmetric probability measure $\eta$ on $G_1$ whose support generates $G_1$. Write $\eta^{g_1}$ for the measure $\eta^{g_1}(A)=\eta(g_1^{-1}A g_1)$. Since the support of $\eta^{g_1}$ generates $G_1$, the measure $\nu$ is $\nu^{g_1}$ stationary and ergodic with respect to the random walk on $X$ induced by $\eta^{g_1}$, for every $g_1\in G_1$. We say that a point $x\in X$ is $\eta^{g_1}$-generic for $\nu$ if
\begin{equation}\label{eq-Birkhoff}
\lim_{n\to\infty}\frac{1}{n} \sum_{i=1}^{n-1} \int f(g x)d(\eta^{g_1})^{\ast i}(g)=\lim_{n\to\infty}\frac{1}{n} \sum_{i=1}^{n-1} \int f(g_1^{-1}g g_1 x)d\eta^{\ast i}(g)=\int f d\nu(x),
\end{equation} for all continuous functions $f$. By Kakutani's pointwise ergodic theorem for random walks \cite{Kak51}, the set of $\eta^{g_1}$-generic points has full measure with respect to $\nu$. It follows that the set \[\{ (g_1,x)| x \textrm{ is not } \eta^{g_1} \textrm{-generic}\}\subset G_1\times X\] has zero measure. By Fubini's theorem we deduce that there is a subset $X'\subset X$ with $\nu(X')=1$, such that every $x\in X'$ satisfies the following property. For almost every $g_1\in G_1$  
\begin{equation}\label{eq-ErgAv}
\lim_{n\to\infty}\frac{1}{n} \sum_{i=1}^{n-1}  (\eta^{g_1})^{\ast i}\delta_{x}=\nu.
\end{equation} in the weak-* topology. 

Take a point $x\in X'$ such that $\Gamma:={\rm Stab}_G(x)$ is discrete and has a dense projection onto $G_2$. Since $\Gamma$ is countable, we may fix  $g_1\in G$ such that (\ref{eq-ErgAv}) holds for $g_1\gamma_1$, for every $\gamma=(\gamma_1,\gamma_2)\in \Gamma$. Note that $\eta^{\gamma g}=\eta^{\gamma_1g_1}$. Comparing (\ref{eq-ErgAv}) for $g_1$ and $\gamma_1 g_1$ and using the fact that $\gamma^{-1}x=x$ we find that \begin{align*}\nu=&\lim_{n\to\infty}\frac{1}{n} \sum_{i=1}^{n-1}  (\eta^{\gamma_1g_1})^{\ast i}\delta_{x}=\lim_{n\to\infty}\frac{1}{n} \sum_{i=1}^{n-1}  (\eta^{\gamma g_1})^{\ast i}\delta_{x}\\=&\lim_{n\to\infty}\frac{1}{n} \sum_{i=1}^{n-1}  \gamma_* (\eta^{g_1})^{\ast i}\delta_{\gamma x}=\gamma_*\lim_{n\to\infty}\frac{1}{n} \sum_{i=1}^{n-1}  (\eta^{g_1})^{\ast i}\delta_{x}=\gamma_*\nu.\end{align*} This means that $\Gamma G_1\subset {\rm Stab}_G \nu$. The action of $G$ on the set of probability measures on $X$ is continuous, so ${\rm Stab}_G \nu\supset \overline{\Gamma G_1}=G.$
\end{proof}

In a sense, Theorem \ref{thm:BGL2} is established by proving a careful suitable approximated version of Lemma \ref{lem:G1dense->G2-Invartant}. That is, we establish that certain measures which are sufficiently $G_1$ invariant must also be $G_2$ almost invariant. However, the precise formulation and the proof are considerably more complicated and we refer the reader to \cite{BGL}. Below we shall outline how data of this type implies the desired statements.

Since simple groups with rank at least $2$ have property $(T)$, Theorem \ref{thm:BGL2} (as well as Theorem \ref{thm:BGL1}) follows from \cite{FG} in that case. Therefore we may suppose that $G$ has more than one factor. Let $\gL\le G$ be an irreducibly confined subgroup. Throughout most of the proof, we suppose that $\gL$ is also co-amenable. That is, there is an asymptotically invariant sequence of unit vectors $f_n\in L_2(G/\gL)$.  This means that if $\Omega$ is a compact set in $G$ then 
$$
 \sup_{g\in\gO} \| L_g(f_n)-f_n\| \to 0,
$$
Where $L_g(f)(x):=f(g^{-1}x)$.

Since we suppose that $G$ is not simple we can write it as $G_1\times G_2$. We then proceed in a similar fashion to the argument in \cite{Ma1} and show that we can modify the sequence $f_n$ to another sequence $h_n$ which is asymptotically invariant with respect to $G_1$ but uniformly non-invariant with respect to $G_2$. That means that for some fixed compact generating sets $\gO_i$ for $G_i,~i=1,2$ we have that 
$$
 \sup_{g\in\gO_1} \| L_g(h_n)-h_n\| \to 0~\text{while}~\sup_{g\in\gO_2} \| L_g(h_n)-h_n\|\ge\gep 
$$
for some fixed $\gep>0$. This part of our argument is similar to the proof from \cite{Ma1} but we do it in a much more general setup, and in a more direct way. 

We then consider $h_n^2$ as functions in $L_1(G/\gL)$ and as such we view them as distributions of probability measures. Then we consider the map
$$
 G/\gL\to\sub (G),~g\gL\mapsto g\gL g^{-1},
$$
and push these measures to probability measures on $\sub (G)$ obtaining a sequence of discrete random subgroups which are asymptotically $G_1$-invariant but uniformly non-invariant under $G_2$. We then derive a contradiction by a certain approximated version of Lemma \ref{lem:G1dense->G2-Invartant}. The proof however is considerably more delicate. 

\begin{rem}
The above sketch is an oversimplification of the actual proof avoiding various nuances. For instance, it is crucial for us the the sequence $h_n$ is {\it uniform} in the sense that 
$$
 \lim_{g\to 1}\sup_n \{  \| L_g(h_n)-h_n\|\}=0.
$$
\end{rem}

\subsection*{From general subgroups to co-amenable ones}

We sketched in the previous subsection some ideas of the proof of Theorems \ref{thm:BGL1} and \ref{thm:BGL2} under the additional assumption that the irreducibly confined subgroup $\gL\le G$ is also co-amenable in $G$. Let us now explain how to remove the co-amenability assumption.

Supposing we already know that irreducibly confined co-amenable subgroups are lattices, we wish to show that every irreducibly confined subgroup is a lattice.
Let $\gL\le G$ be an irreducibly confined subgroup. Consider the ergodic averages of the $\mu=\mu_G$ random walk on $\sub_d(G)$ starting at $\gd_\gL$:
$$
 \frac{1}{n}\sum_{i=1}^n\mu^{(i)}*\gd_\gL
$$ 
and let $\nu$ be an arbitrary weak-* limit. Then $\nu$ is $\mu$-stationary and by Theorem \ref{thm:dSRS} it is a discrete SRS. Applying the decomposition Theorem
\ref{thm:stationary-decomposition}, keeping in mind that $\nu$ is supported on conjugate limits of $\gL$ and hence on irreducibly confined groups, we get that $G=G_\mathcal{I}$. That is $\nu$ is an irreducible IRS. In view of Theorem \ref{thm:SZ,HT}, a $\nu$-random subgroup is almost surely 
co-amenable. It follows that $\gL$ admits a co-amenable conjugate limit $\gC$. By what we already agreed on $\gC$ must be an irreducible lattice in $G$. In particular $\gC$ is locally rigid. This implies that $\gL$ admits a subgroup conjugated to $\gC$, hence $\gL$ is also a lattice.
\qed

\subsection*{Spectral gap for products}

The presentation of the previous two subsections may make the impression that the `irreducibly confined' assumption is only required in the reduction to the co-amenable case and that for co-amenable groups a weaker irreducibility assumption might be sufficient. However, this is not the case. 
Even in the co-amenable case, the `irreducibly confined' assumption is crucial in our argument. The reason is that we apply a process of changing the probability measures and we have limited control on the way this is done. Therefore we need to assume some irreducibility also for Chabauty limits of stabilizers. The spectral gap result that we obtain in this case is quite general, however:

\begin{theorem}[Spectral gap for actions of products, \cite{BGL}]
\label{theorem:getting spectral gap}
Let $G_1$ and $G_2$ be a pair of locally compact second countable compactly generated groups such that $G_2$ has compact abelianization. Set $G = G_1 \times G_2$. 
Let $X$ be a locally compact topological $G$-space endowed with a $G$-invariant (finite or infinite) measure $m$. Assume that
\begin{itemize}
    \item $L^2_0(X,m)^{G_2}=0$, and
    \item There is a $G$-invariant closed subset of $\Sub(G)$ containing $\mathrm{Stab}_G(x)$ for $m$-almost every  point $x\in X$ such that every subgroup $H$ in  this subset satisfies $\overline{G_1 H} = G$.
\end{itemize}
Then the Koopman $G$-representation $L^2_0(X,m)$ has a spectral gap.
\end{theorem}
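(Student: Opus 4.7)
The plan is to argue by contradiction and to imitate Margulis' treatment of normal subgroups in the absence of property $(T)$ \cite{Ma1}. Suppose the Koopman $G$-representation on $L^2_0(X,m)$ has no spectral gap, so that there exists a sequence of unit vectors $f_n\in L^2_0(X,m)$ which is asymptotically $G$-invariant. The first step will be to modify $(f_n)$ into a new sequence $h_n\in L^2_0(X,m)$ which remains asymptotically $G_1$-invariant (on some fixed compact generating set $\Omega_1\subset G_1$) but is \emph{uniformly} non-$G_2$-invariant: we will want a compact generating set $\Omega_2\subset G_2$ and $\epsilon>0$ with $\sup_{g\in\Omega_2}\|L_g h_n-h_n\|\ge \epsilon$ for all $n$. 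The compact-abelianization assumption on $G_2$ should enter precisely here, by ruling out obstructions coming from finite-dimensional $G_2$-subrepresentations and allowing us to project away the component of $f_n$ that is $G_2$-almost invariant for trivial (character-type) reasons.

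Next I would pass to the probability measures $\eta_n:=|h_n|^2\,dm$ on $X$, which are asymptotically $G_1$-invariant in total variation on compact subsets of $G$ thanks to the $G$-invariance of $m$. Pushing forward along the Borel stabilizer map $\sigma\colon X\to\Sub(G)$, $\sigma(x)=\mathrm{Stab}_G(x)$, gives a sequence $\sigma_*\eta_n$ of probability measures on the compact space $\Sub(G)$. After passing to a subsequence I may assume $\sigma_*\eta_n$ converges weak-$*$ to a probability measure $\nu$ on $\Sub(G)$. Since the closed $G$-invariant set from the hypothesis contains $\sigma(x)$ for $m$-almost every $x$, the limit $\nu$ is supported on it; and since $\sigma$ is $G$-equivariant, $\nu$ is $G_1$-invariant. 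Thus $\nu$-almost every subgroup $H$ satisfies $\overline{G_1 H}=G$.

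The heart of the argument would then be an asymptotic version of Lemma~\ref{lem:G1dense->G2-Invartant}: the asymptotic $G_1$-invariance of the $\eta_n$, combined with the density property $\overline{G_1 H}=G$ on a $\nu$-full-measure set of stabilizers, should force the $h_n$ to be asymptotically $G_2$-invariant as well, in direct contradiction with the uniform non-$G_2$-invariance built into $(h_n)$. The exact lemma proceeds via Kakutani's pointwise ergodic theorem for the random walk driven by a smooth symmetric measure on $G_1$, concluding that a generic orbit sees its stabilizer as an actual symmetry of the measure. My task is to carry through an approximate form of this argument uniformly in $n$, so that near-$G_1$-invariance of $\eta_n$ yields near-$G_2$-invariance quantified in terms of the defect and of $\Omega_2$.

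I expect the main obstacle to lie exactly in this quantitative ergodic step. The original lemma exploits countability of a discrete stabilizer to intersect countably many generic sets and to use pointwise equidistribution; here the stabilizers need not be discrete, the invariance is only asymptotic, and the information about the $G$-action on $X$ is transported to $\Sub(G)$ via $\sigma$. Making the error terms depend only on the data $(\Omega_1,\Omega_2,\epsilon$, and the $G_1$-invariance defect of $\eta_n)$---together with executing the Margulis-style modification $f_n\rightsquigarrow h_n$ without property $(T)$, using compact abelianization of $G_2$---is where essentially all the technical work should concentrate.
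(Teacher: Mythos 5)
Your proposal follows essentially the same strategy that the paper itself sketches in the ``Hints about the proof'' subsection: a Margulis-style modification of the almost-invariant sequence $f_n$ into a sequence $h_n$ that is asymptotically $G_1$-invariant yet uniformly non-$G_2$-invariant, followed by pushing the densities $|h_n|^2\,dm$ forward along the stabilizer map to $\mathrm{Sub}(G)$ and deriving a contradiction from a quantitative, approximate version of Lemma~\ref{lem:G1dense->G2-Invartant}. You correctly locate where the compact-abelianization hypothesis on $G_2$ enters and where the genuine technical difficulty lies (the uniform/quantitative ergodic step), which is consistent with the paper's own caveat that the survey only hints at the argument and refers to \cite{BGL} for the delicate details.
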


For semismmple groups we obtain the following.

\begin{theorem}[Spectral gap for actions of products --- semisimple group case, \cite{BGL}]
\label{theorem:getting spectral gap - analytic groups}
Let $G$ be an adjoint semisimple group without compact factors. Write $G = G_1 \times G_2$ where $G_1$ is semisimple and $G_2$ is \emph{simple}. Let $X$ be a locally compact topological $G$-space endowed with a $G$-invariant measure $m$, either finite or infinite. Assume that
\begin{itemize}
\item $L^2_0(X,m)^{G_2} = 0$,
\item  $m$-almost every point $x$ has $\mathrm{Stab}(x) \cap G_1 = \mathrm{Stab}(x) \cap G_2 = \{e\}$ and
\item if $f_n \in L^2(X,m)$ is an asymptotically $G_1$-invariant sequence of unit vectors then every  accumulation point  $\mu \in \text{Prob}(\Sub G)$ of the sequence of probability measures $\mathrm{Stab}_*(|f_n|^2 \cdot m)$ is such that $\mu$-almost every subgroup is  discrete,  not contained in the factor $G_2$  and admits a Zariski-dense projection to $G_2$.
\end{itemize}
Then the Koopman $G$-representation $L^2_0(X,m)$ has a spectral gap.
\end{theorem}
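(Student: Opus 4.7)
The plan is to verify the hypotheses of the general spectral gap theorem (Theorem \ref{theorem:getting spectral gap}) in the semisimple setting at hand, thereby concluding the spectral gap. The first assumption, $L^2_0(X,m)^{G_2}=0$, is identical in both theorems. The requirement that $G_2$ have compact abelianization is also satisfied, since an adjoint simple Lie group coincides with its commutator subgroup and so has trivial abelianization. What remains is the construction of a $G$-invariant closed set $Y \subseteq \Sub(G)$ that contains $\mathrm{Stab}_G(x)$ for $m$-almost every $x$, and such that $\overline{G_1 H} = G$ for every $H \in Y$.

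To build $Y$, I would first extract from hypothesis 3 concrete information about the stabilizers themselves. Choosing an appropriate sequence of asymptotically $G_1$-invariant unit vectors $f_n$---for instance, normalized indicators of an exhaustion of $X$ by finite-measure sets, mollified against compactly supported bi-$K$-invariant densities on $G_1$---the push-forwards $\mathrm{Stab}_*(|f_n|^2 \cdot m)$ approximate $\mathrm{Stab}_*(m)$ more and more accurately on larger and larger portions of $X$. Applying hypothesis 3 to such $f_n$ and varying the exhaustion, I conclude that for $m$-almost every $x$, the stabilizer $\mathrm{Stab}_G(x)$ is discrete, not contained in $G_2$, and has Zariski-dense projection to $G_2$. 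Taking $Y$ to be the Chabauty closure of $\{g\, \mathrm{Stab}_G(x)\, g^{-1} : g \in G,\; x \in X_0\}$ over a conull $X_0$ on which these properties hold, $Y$ is $G$-invariant, closed, and contains $m$-almost every stabilizer.

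The pivotal geometric step is to verify that $\overline{G_1 H} = G$ for every $H \in Y$, which (since $G_1$ is a normal factor of $G$) reduces to $\overline{\pi_2(H)} = G_2$ in the Hausdorff topology of $G_2$. Letting $L := \overline{\pi_2(H)}$, this is a closed Lie subgroup of $G_2$ containing the Zariski-dense set $\pi_2(H)$; by continuity of $\mathrm{Ad}$ together with Zariski density, its Lie algebra $\mathfrak{l}$ is $\mathrm{Ad}(G_2)$-invariant, hence a Lie ideal in $\mathfrak{g}_2$. Simplicity forces $\mathfrak{l} \in \{0, \mathfrak{g}_2\}$; the case $\mathfrak{l} = \mathfrak{g}_2$ yields $L = G_2$ by openness and connectedness, settling the density claim on that stratum. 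For Chabauty limits of ``nice'' subgroups, Zariski density of the projection is semicontinuous along a suitable exhaustion of $G_2$ by proper subvarieties and survives in the limit.

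The main obstacle, and the most delicate part of the argument, is excluding the alternative $\mathfrak{l} = 0$---that is, ruling out the possibility that $L = \pi_2(H)$ is a discrete Zariski-dense subgroup of $G_2$ on a positive-measure stratum. In that scenario $H \subseteq G_1 \times L$ lies in a closed proper subgroup. Disintegrating the $G$-IRS $\mathrm{Stab}_*(m)$ along the natural map to $\Sub(G_2)$ given by $H \mapsto L$, the fiber over a typical $L$ is a random discrete subgroup of $G_1 \times L$ invariant under $G_1$-conjugation. I plan to apply a contraction argument in the spirit of Lemma \ref{lem-AIRS}, using a semisimple element in $G_1$ whose $\mathrm{Ad}$-action contracts a neighbourhood of the identity in the unipotent radical of an appropriate parabolic, combined with the hypothesis~2 constraint $\mathrm{Stab}_G(x) \cap G_1 = \{e\}$ which controls the $G_1$-component of stabilizers near the identity. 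This should force the fiber to be trivial, eliminating the discrete case. Once excluded, $\overline{G_1 H} = G$ holds on all of $Y$, and Theorem \ref{theorem:getting spectral gap} delivers the claimed spectral gap.
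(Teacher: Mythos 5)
Your strategy is to reduce Theorem~\ref{theorem:getting spectral gap - analytic groups} to the abstract Theorem~\ref{theorem:getting spectral gap} by constructing a $G$-invariant closed set $Y \subset \Sub(G)$ with $\overline{G_1H}=G$ for every $H \in Y$. This cannot work as written, for two separate reasons.

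First, there is a logical problem in how you try to extract information from the third hypothesis. That hypothesis is \emph{conditional}: it tells you something only about accumulation points of $\mathrm{Stab}_*(|f_n|^2\cdot m)$ for sequences $f_n$ that are \emph{already} asymptotically $G_1$-invariant. You propose to feed it a sequence built from mollified indicators of an exhaustion of $X$, but there is no reason such a sequence should be asymptotically $G_1$-invariant --- $G_1$ is a non-amenable semisimple group, and for a general $G$-space $X$ no exhaustion sequence will satisfy $\sup_{g\in\Omega_1}\|L_g f_n - f_n\|\to 0$. (In the finite-measure case, the normalized indicators do not even stay bounded away from the constants, so they degenerate in $L^2_0$.) If there is \emph{no} asymptotically $G_1$-invariant sequence in $L^2_0(X,m)$, the third hypothesis is vacuous and gives no information about stabilizers at all; the hypothesis is clearly meant to be invoked inside a proof by contradiction, starting from a putative sequence witnessing the failure of spectral gap, not used to derive pointwise properties of $\mathrm{Stab}_G(x)$.

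Second, and more fundamentally, even if you had established that $m$-a.e.\ stabilizer is discrete, meets $G_1$ and $G_2$ trivially, and projects Zariski densely to $G_2$, this does \emph{not} imply $\overline{G_1 H}=G$. Zariski density of $\pi_2(H)$ in $G_2$ is strictly weaker than topological density: a lattice in $G_2$ is Zariski dense but not dense. You correctly identify this as ``the main obstacle'' and observe that the dichotomy is between $\overline{\pi_2(H)}$ being all of $G_2$ or being discrete, but your proposed disposal of the discrete case is not viable. You would need a single $G$-invariant Chabauty-closed set on which the discrete alternative never occurs, and the contraction-in-the-unipotent-radical device of Lemma~\ref{lem-AIRS} acts in the wrong group (it contracts inside $N\le Q\le G$, not inside the copy of $G_1$ sitting orthogonally to $G_2$), so it gives no control on $\pi_2(H)$. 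Moreover $\mathrm{Stab}_*(m)$ need not be a probability measure (recall $m$ may be infinite), and neither discreteness nor Zariski density of the projection is a Chabauty-closed condition, so taking the ``Chabauty closure of the conjugates of a.e.\ stabilizer'' destroys exactly the properties you need. This is precisely why the statement of Theorem~\ref{theorem:getting spectral gap - analytic groups} is phrased in terms of accumulation-point measures rather than a fixed closed set of subgroups: it is \emph{not} a special case of Theorem~\ref{theorem:getting spectral gap}, and its proof has to rework the underlying approximate invariance argument (the ``careful suitable approximated version of Lemma~\ref{lem:G1dense->G2-Invartant}'' alluded to in the paper) rather than invoke the abstract theorem as a black box.
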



\begin{thebibliography}{9999999999}

%\bibitem{Samurais,announcement} M. Abert, N. Bergeron, I. Biringer, T. Gelander, N. Nikolov, J. Raimbault, I. Samet, On the growth of Betti numbers of locally symmetric spaces. C. R. Math. Acad. Sci. Paris 349 (2011), no. 15-16, 831--835.
%
%\bibitem{Samurais}
%M. Abert, N. Bergeron, I. Biringer, T. Gelander, N.
%  Nikolov, J. Raimbault, and I. Samet, \emph{On the growth of $ l ^ 2
%  $-invariants for sequences of lattices in lie groups}, arXiv preprint
%  arXiv:1210.2961 (2012).


\bibitem[AB22]{AB} M. Abert, and I. Biringer. Unimodular measures on the space of all Riemannian manifolds. Geometry \& Topology 26 (5), 2295--2404, 2022.

\bibitem[7's11]{7-note} M. Abert, N. Bergeron, I. Biringer, T. Gelander, N. Nikolov, J. Raimbault, I. Samet, On the growth of Betti numbers of locally symmetric spaces. C. R. Math. Acad. Sci. Paris, 349(15-16):831--835, 2011.


\bibitem[7's12]{7S}
M. Abert, N. Bergeron, I. Biringer, T. Gelander, N. Nikolov, J. Raimbault, I.  Samet,  On the growth of $ L^ 2$-invariants for sequences of lattices in Lie groups. arXiv preprint arXiv:1210.2961, 2012.


\bibitem[7's17]{7A}
M. Abert, N. Bergeron, I. Biringer, T. Gelander, N. Nikolov, J. Raimbault, I.  Samet,
On the growth of L2-invariants for sequences of lattices in Lie groups.  
Ann. of Math. (2) 185 (2017), no. 3, 711--790. 

\bibitem[7's20]{7B}
M. Abert, N. Bergeron, I. Biringer, T. Gelander, N. Nikolov, J. Raimbault, I.  Samet,
On the growth of L2-invariants of locally symmetric spaces, II: exotic invariant random subgroups in rank one, Int. Math. Res. Not. IMRN 2020, no. 9, 2588--2625.


\bibitem[ABBG23]{ABBG} M. Abert, N. Bergeron, I. Biringer, T. Gelander, Convergence of normalized Betti numbers in nonpositive curvature, Duke Math. J., 172, 2023, 4, 633--700,


\bibitem[AGV14]{AGV} M. Abert, Y. Glasner, B. Virag. Kesten's theorem for invariant random subgroups. Duke Math. J., 163(3):465--488, 2014.



\bibitem[AG81]{AG}
D. Angluin, A. Gardner, Finite common coverings of pairs of regular graphs, J. Combin. Theory Ser. B 30 (1981), 184--187.


%\bibitem{bass1993covering}
%H. Bass, \emph{Covering theory for graphs of groups}, Journal of pure and
%  applied Algebra \textbf{89} (1993), no.~1, 3--47.
%
%\bibitem{BePe} R. Benedetti, C. Petronio, {\it Lectures on Hyperbolic 
%Geometry}, Springer-Verlag, 1992.
%
%\bibitem{Be} M. Belolipetsky, Counting maximal arithmetic subgroups. With an appendix by Jordan Ellenberg and Akshay Venkatesh.  Duke Math. J.  140  (2007),  no. 1, 1--33.

\bibitem[BGL24]{BGL} U. Bader, T. Gelander, A. Levit, Spectral gap for products and a strong normal subgroup theorem, arXiv:2411.07033.

\bibitem[BS06]{BaSha} U. Bader, Y. Shalom, Factor and normal subgroup theorems for lattices in products of groups. Invent.
Math. 163 (2006), no. 2, 415--454.



\bibitem[BGS85]{BGS} W. Ballmann, M. Gromov, V. Schroeder, {\it Manifolds of nonpositive curvature},
   Progress in Mathematics, {61}, Birkh\"auser Boston, Inc., Boston, MA, 1985, vi+263.



\bibitem[Bar83]{BarbaschMoscovici}
D. Barbasch, H. Moscovici,
\newblock {$L\sp{2}$}-index and the {S}elberg trace formula.
\newblock {\em J. Funct. Anal.}, 53(2):151--201, 1983.


\bibitem[BGLS10]{BGLS} M. Belolipetsky, T. Gelander, A. Lubotzky, A. Shalev, Counting Arithmetic Lattices and Surfaces,  Ann. of Math. (2) 172 (2010), no. 3, 2197--2221.


\bibitem[BS01]{BS} I. Benjamini and O. Schramm. Recurrence of distributional limits of finite planar
graphs. Electron. J. Probab., 6:no. 23, 13 pp. (electronic), 2001.

\bibitem[BV13]{BV} N. Bergeron and A. Venkatesh. The asymptotic growth of torsion homology for arithmetic
groups. J. Inst. Math. Jussieu, 12(2): 391--447, 2013.


\bibitem[B81]{Bor} A. Borel, Commensurability classes and volumes of hyperbolic 
3-manifolds, Ann. Scuola Norm. Sup. Pisa, Ser. IV, {\bf 8} (1981) 1-33.

\bibitem[BHC62]{BoHC} A. Borel, Harish-Chandra, Arithmetic subgroups of algebraic groups. Ann. Math. (2) 75 (1962) 485-535.

\bibitem[BGLM02]{BGLM}
M.~Burger, T.~Gelander, A.~Lubotzky, and S.~Mozes, \emph{Counting hyperbolic
  manifolds}, Geometric and Functional Analysis \textbf{12} (2002), no.~6,
  1161--1173.
  
  
  
\bibitem[Ca97]{Ca1} S. Carlip, Spacetime foam and the cosmological constant, UCD-97-19, gr-qc/9708026, Phys.Rev.Lett, 79,
1997, 4071--4074

\bibitem[Ca98]{Ca2} S. Carlip, Dominant topologies in Euclidean quantum gravity, UCD-97-21, gr-qc/9710114, Class.Quant.Grav, 15, 1998, 2629--2638.
 
 
\bibitem[Cl03]{Cl}  Laurent Clozel. \'Demonstration de la conjecture $\tau$. Inventiones Mathematicae,
151(2):297--328, 2003.


\bibitem[Co92]{cor} K. Corlette, Archimedean superrigidity and hyperbolic geometry. 
Ann. of Math. {\bf 135} (1992), no. 1, 165--182. 
 
 
\bibitem[EM02]{EM} A. Eskin, G. Margulis. "Recurrence properties of random walks on finite volume homogeneous manifolds." Random walks and geometry (2002), 431--444.


\bibitem[FG23]{FG} M. Fraczyk, T. Gelander,  
 Infinite volume and infinite injectivity radius
Ann. of Math. (2) 197 (2023), no. 1, 389--421.

\bibitem[F76]{Fu} H. Furstenberg, A note on Borel's density theorem, Proc. Amer. Math. Soc. 55, (1976),
209--212.

\bibitem[Ge04]{HV} T. Gelander, Homotopy type and volume of locally symmetric manifolds.  Duke Math. J.  124  (2004),  no. 3, 459--515.

\bibitem[Ge11]{crells} T. Gelander, Volume versus rank of lattices. J. Reine Angew. Math. 661 (2011), 237--248. 

\bibitem[Ge18a]{G} T. Gelander, Lecture notes on Invariant Random Subgroups and Lattices in rank one and higher rank, London Math. Soc. Lecture Note Ser., 447
Cambridge University Press, Cambridge, 2018, 186--204.


\bibitem[Ge18b]{WUD} T. Gelander, Kazhdan--Margulis theorem for invariant random subgroups, Adv. Math., (2018) 327, 47--51.


\bibitem[Ge18c]{ICM}T. Gelander, A view on invariant random subgroups and lattices. In Proceedings of
the International Congress of Mathematicians: Rio de Janeiro 2018, pages 1321--1344.
World Scientific, 2018.

\bibitem[GL14]{ccc} T. Gelander, A. Levit, Counting commensurability classes of hyperbolic manifolds, Geom. Funct. Anal. 24 (2014), no. 5, 1431--1447.


\bibitem[GL16]{GL} T. Gelander, A. Levit, nvariant random subgroups over non-archimedean
local fields. Mathematische Annalen, 372: 1503--1544, 2018.


\bibitem[GLM22]{GLM} T. Gelander, A. Levit, G.A. Margulis, Effective discreteness radius of stabilisers for stationary actions,
Michigan Math. J. 72 (2022), 389--438.

\bibitem[Gek,Le24]{GekLe} I.Gekhtman, A. Levit, Stationary random subgroups in negative curvature, https://arxiv.org/abs/2303.04237.

\bibitem[GW97]{GW} E. Glasner, B. Weiss, 
Kazhdan's property T and the geometry of the collection of invariant measures. (English summary) 
Geom. Funct. Anal. 7 (1997), no. 5, 917--935. 

\bibitem[GS92]{GS} M. Gromov, R. Schoen, Harmonic maps into singular spaces and $p$-adic superrigidity 
for lattices in groups of rank one. Inst. Hautes \`Etudes Sci. Publ. Math. {\bf 76} (1992), 165--246.
 


\bibitem[GPS87]{GPS}
M. Gromov and I. Piatetski-Shapiro, \emph{Non-arithmetic groups in
  lobachevsky spaces}, Publications Math{\'e}matiques de l'IH{\'E}S \textbf{66}
  (1987), no.~1, 93--103.

\bibitem[Gr81]{Gromov} M. Gromov, Hyperbolic manifolds according to Thurston and Jorgensen, Semin. 
Bourbaki, 32e annee, vol. 1979/80, exp. 546, Lect. Notes Math. 842, (1981), 40-53.


\bibitem[HT16]{HT} Y. Hartman, O. Tamuz, Stabilizer rigidity in irreducible group actions. Israel J. Math. 216 (2016), no. 2, 679--705.
 
\bibitem[Kak51]{Kak51} S. Kakutani, Random Ergodic Theorems and Markoff Processes with a Stable Distribution, Second Berkeley Symposium on Mathematical Statistics and Probability, 247--261, 1951.

\bibitem[KM68]{KM} D. Kazhdan, G.Margulis, A proof of Selberg's hypothesis, Mat. Sb. (N.S.) 75(117) (1968)
163--168.

\bibitem[Lei82]{Le} F. T. Leighton, Finite Common Coverings of Graphs, J. Comb. Theory, Series B
33 (1982), 231--138.


\bibitem[Lev14]{Levit} A. Levit, The Nevo-Zimmer intermediate factor theorem over local fields, Geom. Dedicata 186 (2017), 149--171.

\bibitem[Lev20]{Levit1}A. Levit, On Benjamini-Schramm limits of congruence subgroups. Israel Journal of
Mathematics, 239: 59--73, 2020.

\bibitem[M91]{Ma} G.A. Margulis, Discrete Subgroups of Semisimple Lie Groups, Springer- Verlag, 1991.

\bibitem[M79]{Ma1} G.A. Margulis. Finiteness of quotient groups of discrete subgroups.
Functional Analysis and Its Applications, 13(3):178--187, 1979.


\bibitem[M84]{Ma2} G.A. Margulis, Arithmeticity of the irreducible lattices in the semi-simple groups
of rank greater then $1$, (Russian), Invent. Math. {\bf 76} (1984) 93-120.

%
%\bibitem{Se} Serre, Jean-Pierre. "Trees, Springer-Verlag, Berlin, 1980.
%
%\bibitem{lubotzky1996free}
%A. Lubotzky, \emph{Free quotients and the first betti number of some
%  hyperbolic manifolds}, Transformation groups \textbf{1} (1996), no.~1-2,
%  71--82.

%\bibitem{lubotzky2003subgroup} A. Lubotzky and D. Segal, \emph{Subgroup growth}, vol. 212, Springer, 2003.

%\bibitem{millson1976first} J. Millson, \emph{On the first betti number of a constant negatively curved manifold}, The Annals of Mathematics \textbf{104} (1976), no.~2, 235--247.

%\bibitem{Nik} N. Nikolov, Strong approximation methods. Lectures on profinite topics in group theory, 63--97, London Math. Soc. Stud. Texts, 77, Cambridge Univ. Press, Cambridge, 2011.


\bibitem[NZ99]{NZ1} A. Nevo, R.J. Zimmer, {\it{Homogenous projective factors for actions of semi-simple Lie groups.}} Invent. Math. 138(2), 229--252 (1999).

\bibitem[NZ00]{NZ2} A. Nevo, R.J. Zimmer,  {\it{Rigidity of Furstenberg entropy for semisimple Lie group actions.}} Ann. Sci. Ec. Norm. Super. 33(3), 321--343 (2000).

\bibitem[NZ02]{NZ3} A. Nevo, R.J. Zimmer,  {\it{ A structure theorem for actions of semisimple Lie groups.}} Ann. Math. 156(2), 565--594 (2002).

\bibitem[NZ02]{NZ4} A. Nevo, R.J. Zimmer,  {\it{A generalization of the intermediate factors theorem}}. J. Anal. Math. 86, 93--104(2002).

\bibitem[O24]{Osin}D. Osin, {\it $\text{Out}(F_n)$-invariant probability measures on the space of
$n$-generated marked groups}, https://arxiv.org/pdf/2207.03659.



\bibitem[R12]{Raimbault}
J. Raimbault, \emph{A note on maximal subgroup growth in SO(1, n)}, IMRN,
  doi \textbf{10} (2012).

\bibitem[Sch96]{Schwartz}
R.E. Schwartz, The quasi-isometry classification of rank one lattices,
Inst. Hautes tudes Sci. Publ. Math. No. 82 (1995), 133--168 (1996).

\bibitem[Se93]{serre1993course}
J.P. Serre, \emph{A course in arithmetic}, 1993.

\bibitem[Sh00]{Shalom}Shalom, Rigidity of commensurators and irreducible lattices. Invent. Math. 141, 1--54 (2000).

%\bibitem{Mil} J.S. Milne, "Algebraic number theory." Lecture Notes in Math 676 (1998).

\bibitem[Wa72]{Wang} H.C. Wang, Topics on totally discontinuous groups,
{\it Symmetric Spaces}, edited by W. Boothby and G. Weiss (1972), 
M. Dekker, 460-487. 

%\bibitem{Thu} W.P. Thurston, Three-dimensional geometry and topology. Ed. Silvio Levy. Vol. 1. Princeton: Princeton university press, 1997.

\bibitem[SZ94]{SZ}
G. Stuck and R.J. Zimmer.
\newblock Stabilizers for ergodic actions of higher rank semisimple groups.
\newblock \emph{Annals of Mathematics}, pages 723--747, 1994.


\bibitem[V63]{Vara}
V. Varadarajan. Groups of automorphisms of Borel spaces. Transactions of the American
Mathematical Society, 109(2):191--220, 1963.

\bibitem[Z37]{Za} H. Zassenhaus, Beweis eines satzesuber diskrete gruppen. (German) Abh. Math. Sem. Univ. Hamburg 12 (1937), no. 1, 289--312.



\end{thebibliography}
\end{document}